\numberwithin{equation}{section}
\definecolor{darkspringgreen}{rgb}{0.05, 0.5, 0.06} %{0.09, 0.45, 0.27}
\newcommand\new[1]{#1} %{\color{darkspringgreen}{#1}}}
\newtheorem{theorem}{Theorem}[section]
\newtheorem{hypothesis}[theorem]{Hypothesis}
\newtheorem{claim}[theorem]{Claim}
\newtheorem{lemma}[theorem]{Lemma}
\newtheorem{corollary}[theorem]{Corollary}
\newtheorem{definition}[theorem]{Definition}
\newtheorem{conjecture}[theorem]{Conjecture}
\newtheorem{proposition}[theorem]{Proposition}
\theoremstyle{definition}
\newtheorem{remark}[theorem]{Remark}
\theoremstyle{definition}
\newcommand{\cL}{\mathcal{L}}
\newcommand{\bC}{\mathbb{C}}
\newcommand{\bR}{\mathbb{R}}
\newcommand{\bZ}{\mathbb{Z}}
\newcommand{\bQ}{\mathbb{Q}}
\newcommand{\bN}{\mathbb{N}}
\newcommand{\tphi}{\tilde{\phi}}
\newcommand{\tlambda}{\tilde{\lambda}}
\newcommand{\tchi}{\tilde{\chi}}
\newcommand{\tLambda}{\tilde{\Lambda}}
\newcommand{\liea}{\mathfrak{a}}
\newcommand{\lieg}{\mathfrak{g}}
\newcommand{\lieh}{\mathfrak{h}}
\newcommand{\liek}{\mathfrak{k}}
\newcommand{\liem}{\mathfrak{m}}
\newcommand{\lien}{\mathfrak{n}}
\newcommand{\liep}{\mathfrak{p}}
\newcommand{\liez}{\mathfrak{z}}
\newcommand{\bfG}{\mathbf{G}}
\newcommand{\bfH}{\mathbf{H}}
\newcommand{\bfJ}{\mathbf{J}}
\newcommand{\bfT}{\mathbf{T}}
\newcommand{\bfA}{\mathbf{A}}
\newcommand{\rmm}{\mathrm{m}}
\newcommand{\SL}{\operatorname{SL}}
\newcommand{\GL}{\operatorname{GL}}
\newcommand{\PSL}{\operatorname{PSL}}
\newcommand{\Ad}{\operatorname{Ad}}
\newcommand{\rank}{\operatorname{rank}}
\newcommand{\Aut}{\operatorname{Aut}}
\newcommand{\supp}{\operatorname{supp}}
\newcommand{\Orb}{\operatorname{Orb}}
\newcommand{\Stab}{\operatorname{Stab}}
\newcommand{\SO}{\operatorname{SO}}
\newcommand{\Gal}{\operatorname{Gal}}
\newcommand{\Diff}{\operatorname{Diff}}
\newcommand{\onto}{\xymatrix{\ar@{>>}[r]&}}
\newcommand{\da}[4]{\xymatrix{#1 \ar@<.5ex>[r]^{#2} \ar@<-.5ex>[r]_{#3} & #4}}
\newcounter{subconst}[subsection]
\newcounter{const}
\newcounter{CONST}
\renewcommand{\emph}[1]{{\bf #1}}
\newcommand{\td}{\tilde}
\newcommand{\sm}{\smallsetminus}
\newcommand{\R}{\mathbb {R}}
\newcommand{\Q}{\mathbb {Q}}
\newcommand{\Z}{\mathbb {Z}}
\newcommand{\T}{\mathbb {T}}
\newcommand{\inv}{^{-1}}
\newcommand{\restrict}[2]{{#1}{\restriction_{{ #2}}}}
\newcommand{\diff}{\Diff}
\def\Homeo{\mathrm{Homeo}}
\def\homeo{\Homeo}
\newcommand{\hide}[1]{}
\newcommand{\namedthm}[3]{\theoremstyle{plain}
   \newtheorem*{thm#1}{#2}\begin{thm#1}#3\end{thm#1}}
\begin{document}

\title[Global rigidity of hyperbolic lattice actions]{Global smooth and topological rigidity of hyperbolic lattice actions}
\author[A.~Brown]{Aaron Brown}
\address{University of Chicago, Chicago, IL 60637, USA}
\email{awb@uchicago.edu}

\author[F.~Rodriguez Hertz]{Federico Rodriguez Hertz}
\address{Pennsylvania State University, State College, PA 16802, USA}
\email{hertz@math.psu.edu}

\author[Z.~Wang]{Zhiren Wang}
\address{Pennsylvania State University, State College, PA 16802, USA}
\email{zhirenw@psu.edu}
%\date{\today}

\setcounter{page}{1}
\begin{abstract}
In this article we prove global rigidity results for hyperbolic actions of higher-rank lattices.

Suppose $\Gamma$ is a lattice in semisimple Lie group, all of whose factors have rank $2$ or higher. Let  $\alpha$ be a smooth $\Gamma$-action on a compact nilmanifold $M$ that lifts to an action on the universal cover. If the linear data $\rho$ of $\alpha$ contains a hyperbolic element, then there is a continuous semiconjugacy intertwining the actions of  $\alpha$ and $\rho$, on a finite-index subgroup of $\Gamma$. If $\alpha$ is a $C^\infty$ action and contains an Anosov element, then the semiconjugacy is a $C^\infty$ conjugacy.

As a corollary, we obtain $C^\infty$ global rigidity for Anosov actions by cocompact lattices in semisimple Lie group with all factors rank $2$ or higher.  We also obtain global rigidity of Anosov actions of $\SL(n,\bZ)$ on $\T^n$ for $ n\geq 5$ and probability-preserving Anosov actions of arbitrary higher-rank lattices on nilmanifolds.   %on a torus that contains an Anosov diffeomorphism is $C^\infty$-conjugate to an action by toral automorphisms, after restricting to a finite-index subgroup.
\end{abstract}
\maketitle
\small\tableofcontents

\newcommand{\foot}[1]{\mbox{}\marginpar{\raggedleft\hspace{0pt}
\Tiny #1}}

%\red{[Working version for technical theorem. To be polished later]

%\section*{changes}
%\begin{enumerate}
%\item fixed some reference notation
%\item changed $(P_i)_{*}$ to $P_{*,i}$ throughout.  Indeed, as I consider deck transformations rather than fundamental groups, the induced map $P_{*,i}$ is associated to a choice of lift $\td P_i\colon \td M \to N_i$ rather than the base map $M\to N_i/\Lambda_i$.  
%\end{enumerate}
\section{Introduction and statement of main results}

\subsection{Background and motivation}
Let $G$ be a connected, semisimple Lie group with finite center, no compact factors, and all almost-simple factors of real-rank at least 2.  Let $\Gamma\subset G$ be a lattice; that is $\Gamma$ is a discrete subgroup of $G$ such that $G/\Gamma$ has finite Haar volume.
The celebrated superrigidity theorem of Margulis states that, for $G$ and $\Gamma$ as above, any linear representation $\psi\colon \Gamma \to \PSL(d, \R)$ is of algebraic nature; that is $\psi$   extends to a continuous representation $\psi'\colon G\to \PSL(d, \R)$ up to a compact error.  See Theorem \ref{Superrigidity} and Proposition \ref{prop:ourSR} below for more formal statements.

Shortly after, based on the analogy between linear groups and  diffeomorphism groups $\Diff^\infty(M)$ of   compact manifolds, Zimmer proposed a number of  conjectures for representations of $\Gamma$ into  $\Diff^\infty(M)$.   These and related conjectures are referred to as the  {\it Zimmer program}, which aims to understand and classify smooth actions by higher-rank lattices.
We refer the reader to the excellent survey \cite{F11} by Fisher for a detailed account of the Zimmer program.

A major direction of research in the Zimmer program is the classification of actions containing some degree of  hyperbolicity  (see \cite{MR1271833} and \cite[Section 7]{F11} for further discussion). For instance, the following conjecture is motivated by works of Feres--Labourie \cite{MR1643954} and Goetze--Spatzier \cite{MR1740993}.

\begin{conjecture}[{\cite[Conjecture 1.3]{F11}}] \label{conj:Fish}If $\Gamma$ is a lattice in $SL(n,\R)$ where $n\geq 3$, then all $C^\infty$-actions by $\Gamma$ on a compact manifold that both preserves a volume form and contains an Anosov diffeomorphism are algebraically defined. \end{conjecture}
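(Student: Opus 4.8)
Since this is stated as a conjecture, what follows is a strategy rather than a finished argument; the theorems announced above realize it in the case where $M$ is, up to finite cover, a nilmanifold. Write $G=\SL(n,\R)$, fix $\gamma_0\in\Gamma$ with $f:=\alpha(\gamma_0)$ Anosov, and let $\rho$ denote the induced action of $\Gamma$ on $\pi_1(M)$. \textbf{Step 1 (linear data via superrigidity).} One first argues that the hypotheses force $M$ to be aspherical with $\pi_1(M)$ a lattice in a simply connected nilpotent Lie group $N$, so that $M$ is finitely covered by a nilmanifold $M_0=N/\pi_1(M)$ and $\rho$ becomes a homomorphism $\Gamma\to\Aut(N)$. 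Linearizing at the identity and restricting to $H_1$ produces a linear representation of $\Gamma$; by Margulis superrigidity (Theorem~\ref{Superrigidity}, Proposition~\ref{prop:ourSR}) its restriction to a finite-index subgroup $\Gamma'$ is a rational representation of $G$ up to a bounded error, and since $G$ is simple of higher rank the error is absorbed, so $\rho|_{\Gamma'}$ is conjugate to an affine action $\rho_0$ on $M_0$.

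\textbf{Step 2 (semiconjugacy).} As $f$ is Anosov and induces $\rho_0(\gamma_0)$ on $\pi_1(M)$, the Franks--Manning construction provides a unique continuous $h\colon M\to M_0$, lifting to a bounded-distance perturbation of the identity on universal covers, with $h\circ f=\rho_0(\gamma_0)\circ h$. Uniqueness then upgrades this to $h\circ\alpha(\gamma)=\rho_0(\gamma)\circ h$ for every $\gamma\in\Gamma'$: each $\alpha(\gamma)$ conjugates the Franks--Manning map of $f$ to that of $\alpha(\gamma)f\alpha(\gamma)^{-1}$, and by uniqueness the two must coincide with $h$ after the appropriate precomposition. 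This already yields the continuous semiconjugacy part of the statement.

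\textbf{Step 3 (conjugacy and smoothness).} Here volume-preservation and higher rank are both essential. Suspend $\alpha$ and $\rho_0$ to $G$-actions; for almost every point the fiber of the suspended $h$ is a compact set invariant under a conjugate of a maximal split torus, and invariant-measure rigidity for the resulting higher-rank Weyl-chamber actions --- in the spirit of Goetze--Spatzier and Katok--Spatzier, and of the non-uniform measure rigidity developed by the authors --- shows that $h_*(\mathrm{vol})$ is the Haar measure on $M_0$ and that $\deg h=1$; absence of wandering behavior then forces each fiber to be a single point, so $h$ is a homeomorphism. Finally $h$ conjugates $f$ to the affine Anosov map $\rho_0(\gamma_0)$, so matching of Lyapunov data across the commuting higher-rank action, non-stationary linearization along the coarse Lyapunov foliations, Liv\v{s}ic-type regularity, and Journ\'e's lemma give $h\in C^\infty$; hence $\alpha|_{\Gamma'}$ is smoothly conjugate to $\rho_0$.

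\textbf{The main obstacle.} The genuinely hard point is the reduction in Step~1 for an \emph{arbitrary} compact $M$: one must show $M$ is virtually a nilmanifold, and this subsumes the notoriously open problem of whether every Anosov diffeomorphism lives on an infranilmanifold. The leverage one hopes to exploit is that $f$ commutes with the higher-rank $\Gamma$-action, so superrigidity applied to the $\Gamma$-action on $\pi_1(M)$ should force $\pi_1(M)$ to be virtually nilpotent and thereby pin down $M$ --- but making this precise without presupposing a nilmanifold structure is exactly what is missing (and is why the present results assume it). Even granting that reduction, the passage from semiconjugacy to homeomorphism in Step~3, i.e.\ ruling out ``derived-from-Anosov''-type collapsing of the fibers of $h$, is where the volume hypothesis and higher rank are both used in an essential way, and it constitutes the technical core of the argument.
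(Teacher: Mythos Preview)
The paper does not prove this conjecture; it establishes the nilmanifold case via Theorems~\ref{ConjThm} and~\ref{SmoothThm}, and you correctly flag Step~1 as the open obstacle. Your Step~2, however, has a genuine gap even on nilmanifolds: the Franks--Manning uniqueness argument you sketch does not propagate to a non-abelian $\Gamma$. If $h$ is the conjugacy for $f=\alpha(\gamma_0)$, then $h_\gamma:=\rho_0(\gamma)\,h\,\alpha(\gamma)^{-1}$ is indeed the unique Franks--Manning conjugacy for $\alpha(\gamma\gamma_0\gamma^{-1})$, but nothing forces $h$ itself to semiconjugate $\alpha(\gamma\gamma_0\gamma^{-1})$ to its linear data, so uniqueness yields no relation between $h$ and $h_\gamma$; the mechanism you invoke works only when $\gamma$ commutes with $\gamma_0$. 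Producing a single $h$ intertwining the full $\Gamma$-action is exactly the content of Theorem~\ref{ConjThm}, and the paper obtains it by a different route: one suspends to a $G$-action, builds the semiconjugacy for a single element of a split Cartan subgroup via a Franks-type series whose convergence is controlled by the subexponential estimate of Lemma~\ref{claim:login} (this is where the quasi-isometric embedding of $\Gamma$ in $G$ enters), and then extends to all of $G$ via weak non-resonance (Propositions~\ref{CentralizerSemiconj} and~\ref{UnipotentSemiconj}).

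Your Step~3 also diverges from the paper. Once $h$ exists and $\alpha(\gamma_0)$ is Anosov, Manning's theorem already makes $h$ a bi-H\"older homeomorphism; no measure rigidity or volume hypothesis is used for injectivity. For smoothness the paper does not run a direct Lyapunov/Journ\'e argument on the $\Gamma$-action. Instead it builds a Zariski-dense semigroup of Anosov elements (Proposition~\ref{semigroup}, using cocycle superrigidity over a countable family of invariant measures produced from Ratner's theorem as in Lemma~\ref{lemma:measureshomo}), then invokes Prasad--Rapinchuk to locate a free abelian $\Sigma\subset\Gamma$ of rank at least $2$ whose linear action has no rank-one algebraic factor and which contains an Anosov element (Proposition~\ref{AbelianSubaction}), and finally applies \cite{MR3260859} to $\alpha|_\Sigma$; uniqueness of the Franks--Manning conjugacy for that Anosov element then identifies the resulting smooth conjugacy with $h$. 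The volume hypothesis in Conjecture~\ref{conj:Fish} plays no role in this nilmanifold argument beyond guaranteeing the lift (Remark~\ref{LiftingRmk}).
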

Here, being algebraically defined means the action is  smoothly conjugate to an action on an infranilmanifold by affine automorphisms.
See also \cite[Conjecture 1.1]{MR1271833} and \cite[Conjecture 1.1]{MR1380646} for related conjectures.
We recall that it is conjectured that infranilmanifolds are the only manifolds supporting Anosov diffeomorphisms.

The assumption in Conjecture \ref{conj:Fish} that the action preserves a volume is a standard assumption in results on the rigidity of group actions.
The majority of advances in the Zimmer program, including most predecessors of the results discussed in this paper (\cite{MR1367273, MR1740993, MR1826664}), assume the  action $ \Gamma \to  \diff^\infty(M)$ preserves a Borel probability measure on $M$.  In such settings, Zimmer's superrigidity theorem for cocycles (generalizing Margulis's  superrigidity theorem for linear representations; see Theorem \ref{CocycleSuperrigidity} below) gives that the derivative cocycle is measurably cohomologous to a linear representation of $G$ up to a compact correction.  This provides evidence for the conjectures behind the Zimmer program and is the starting point for many of the local and global rigidity results preceding this paper.

For the remainder, we consider representations $\alpha \colon\Gamma \to \diff^\infty(M) $ where $M$ is either a torus $\T^d$ or a compact nilmanifold $N/\Lambda$.
 If $M$ is a torus (or if $M$ is a nilmanifold and the action $\alpha$ lifts to an action $\td \alpha \colon \Gamma \to \diff^\infty(N)$) one can define a linear representation $\rho\colon \Gamma \to \GL(d,\Z)$ (or $\rho\colon \Gamma \to \Aut(\Lambda)$) associated to the action $\alpha$ called the \emph{linear data of $\alpha$}. We then obtain an action $\rho\colon \Gamma \to \Aut(\T^d)$ (or $\rho\colon \Gamma \to \Aut(N/\Lambda)$.) See Section \ref{sec:lineardata} for more details.
We assume throughout that $\rho$ is hyperbolic, that is $D\rho(\gamma)$ is a hyperbolic linear transformation for some $\gamma\in \Gamma$.

As a primary example of such an action consider any homomorphism $\rho\colon \Gamma \to \Aut(\Lambda)$ where $\Lambda $ is a (cocompact) lattice in a nilpotent, simply connected Lie group $N$.  Then $\rho$ induces and action by automorphisms $\rho\colon\Gamma \to \diff^\infty(N/\Lambda)$ and hence coincides with its linear data.  Similarly, one can build model algebraic actions of $\Gamma$ by affine transformations of $N/\Lambda$ (see \cite{MR1236179} for constructions and discussion.)
Early rigidity results in this setting focused on various notions of {rigidity} for non-linear perturbations of affine actions.  For instance, in \cite{MR1154597} Hurder proved a number of \emph{deformation rigidity} results for certain standard affine actions; that is, under certain hypotheses, a 1-parameter family of perturbations of an affine action $\rho$ a  smoothly conjugate to $\rho$.  A related rigidity phenomenon, the \emph{infinitesimal rigidity}, has been studied for affine actions in  \cite{MR1154597, MR1338481, MR1058434, MR1415754}.

The primary rigidity phenomenon studied  for perturbations is \emph{local rigidity}; that is, given an affine action $\rho\colon \Gamma \to \diff^\infty(N/\Lambda)$ and $\alpha \colon\Gamma\to \diff^\infty(N/\Lambda)$ with $\alpha(\gamma_i)$ sufficiently $C^1$-close to $\rho(\gamma_i)$  for a finite generating set $\{\gamma_i\}\subset \Gamma$, one wishes to find a $C^\infty$ change of coordinates $h\colon N/\Lambda\to N/\Lambda$ with
$
h\circ \alpha(\gamma) = \rho(\gamma)\circ h
$
for all $\gamma\in \Gamma$.
For isometric actions,  local rigidity has been shown to hold for cocompact lattices considered above \cite{MR1779610} and for property (T) groups \cite{MR2198325}.
%See also \cite{MR2521112} for local rigidity results in other settings.
For hyperbolic affine actions on tori and nilmanifolds, local rigidity has been established for a  number of specific   actions or under additional dynamical hypotheses in \cite{MR1154597, MR1631740, MR1164591, MR1367273,MR1740993,MR1332408}.

For the general case of actions by higher-rank lattices on nilmanifolds, the local rigidity problem for affine Anosov actions was settled by Katok and Spatzier in \cite{MR1632177}. In \cite{MR1826664} Margulis and Qian extended local rigidity to weakly hyperbolic affine actions.  Fisher and Margulis \cite{MR2521112} established local rigidity in full generality for quasi-affine actions by higher-rank lattices  which, in particular, includes actions by nilmanifold automorphisms without assuming any hyperbolicity.  We note that the local rigidity results discussed above require  property (T); in particular, they do not hold for irreducible lattices in products of rank-1 Lie groups.

We turn our attention for the remainder to  the question of \emph{global rigidity} of  actions on tori and nilmanifolds.  That is, given an action $\alpha \colon \Gamma \to \Diff^\infty(N/\Lambda)$ with linear data $\rho\colon \Gamma \to \Aut(N/\Lambda)$ we ask:
\begin{enumerate}
	\item {\it topological rigidity:} is there a continuous $h\colon N/\Lambda\to N/\Lambda$ with $h\circ \alpha(\gamma) = \rho(\gamma)\circ h$ for all $\gamma$ in a finite index subgroup?
	\item {\it smooth rigidity:} if so, is $h$ a $C^\infty$ diffeomorphism?
\end{enumerate}

Note that for a general finitely generated discrete group $\Gamma$ and an action $\alpha \colon \Gamma \to \Diff^\infty(N/\Lambda)$ there is no expectation that such an $h$ would exist.  Indeed when $\Gamma$ is a finitely generated free group, examples of actions $\alpha$ (including actions containing Anosov elements) exist for which no $h$ as above exists.
On the other hand, for $C^\infty$ actions  on nilmanifolds of higher-rank lattices $\Gamma$ as introduced above,
one may expect such a continuous $h$ to exist.  However, examples constructed in \cite{MR1380646} by blowing up fixed points show that (even when $\alpha$ is real analytic, volume preserving, and ergodic) such $h$ need not be invertible.  However, if the non-linear action possess an {Anosov} element $\alpha(\gamma_0)$ then any $h$ as above is necessarily invertible. In this setting, one may expect such $h$ to be $C^\infty$.

Global rigidity results under strong dynamical hypotheses appear already in \cite{MR1154597}.
Global rigidity for Anosov actions by $SL(n,\Z)$ on $\T^n$, $n\geq 3$, were obtained in \cite{MR1380646,MR1367273}. Other global rigidity results appear in \cite{MR1401783}.
We also remark that Feres-Labourie \cite{MR1643954} and Goetze-Spatzier \cite{MR1740993} established very strong global rigidity properties for Anosov actions, in which no assumptions on the topology of  $M$ are made.
  In both works, under strong dynamical hypotheses  including that the dimension of $M$ is  small relative  to $G$,
  it is shown that  $M$ is necessarily an infranimanifold and the action is algebraically defined.  

Global topological rigidity results for Anosov actions on general nilmanifolds were proven in \cite[Theorem 1.3]{MR1826664}.  Here, a $C^0$-conjugacy is obtained assuming the existence of a fully supported invariant measure for the non-linear action. Topological conjugacies between  actions on more general manifolds $M$ whose action on $\pi_1(M)$ factors through an action of a finitely-generated, torsion-free, nilpotent group are studied in \cite{MR1866848}.  (See Section \ref{sec:factors} and Theorem \ref{main:factorsfull} for related results in this direction.)

In this paper we study the global rigidity problem for actions of higher-rank lattices on nilmanifolds with hyperbolic linear data. See Theorems \ref{ConjThm} and \ref{SmoothThm} below.   We provide complete solutions to the global rigidity questions above   under the mild assumption that the action lifts to an action on the universal cover (see Remark \ref{LiftingRmk} and Section \ref{sec:Lifting} for discussion on when the lifting is guaranteed to hold.)  In particular, for such actions $\alpha$ with hyperbolic linear data, we construct a continuous semiconjugacy to the linear data when restricted to a finite index subgroup.  Moreover if the action contains an Anosov element we show that the semiconjugacy (which is necessarily a homeomorphism in this case) is, in fact, a $C^\infty $ diffeomorphism.

We remark  that the majority of  global rigidity results discussed above assume the existence of an (often smooth or fully supported) invariant measure for the action.  We emphasize that   we do not assume the existence of an invariant measure in Theorems \ref{ConjThm} and \ref{SmoothThm}.

%We remark that a separate but related program exists studying the local and global rigidity for Anosov actions of higher-rank Abelian groups.  For actions on nilmanifolds and tori, major global rigidity results were recently obtained in \cite{MR2983009} and \cite{MR3260859}.  For hyperbolic and partially hyperbolic actions higher-rank Abelian groups arising from restrictions of  diagonal actions on homogeneous spaces to  has recently  been shown to hold in large generality \cite{1510.00848} generalizing many previous results \cite{MR1307298,MR1632177,MR2342703, MR2838045, MR1266758, MR2753946, MR2672298}

%\subsection{Motivation}
\subsection{Topological rigidity for maps}   \label{sec:topo}
Consider a homeomorphism $f\colon \T^n\to \T^n$. Recall that there exists a unique $A\in \GL(n, \Z)$ such that any lift $\td f\colon \R^n \to \R^n$ is of the form \begin{align}\label{eq:1}\td f(x) = Ax + u(x)\end{align} where $u\colon \R^n\to \R^n$ is $\Z^n$-periodic.  We call $A$ the linear data of $f$.
 As $A$ preserves the lattice $\Z^n$ in $\R^n$ we have an induced map $L_A\colon \T^n\to \T^n$.  It follows that $f$ is homotopic to $L_A$.  A similar construction holds for diffeomorphisms of nilmanifolds.

%We let $L_A$ denote the natural induced action of $A$ on the torus.
 %Let $f_*$ denote the induced action on $\pi_1(\T^d)$.  Fixing the standard identification $\pi_1(\T^d)= \Z^d$ with the group of deck transformations of the cover $\R^d\to \R^d$ we have that $f_*$ is represented by a matrix $A\in \GL(d,\Z)$.

The starting point for the global rigidity problem we study is the following classical theorem of Franks.

%\cite{MR0271990}, Franks proved the following.

\namedthm{1}{Franks' Theorem \cite{MR0271990}}{Assume $A$ has no eigenvalues of modulus 1.  Then there is a  continuous $h\colon \T^d\to \T^d$, homotopic to the identity,  such that \begin{equation}\label{eq12} L_A\circ h = h \circ f.\end{equation}
}

Moreover, fixing a lift  $\td f $ of $f$, the map $h\colon \T^n\to \T^n$ is unique among the continuous maps having a lift $\td h\colon \R^n \to \R^n $ satisfying $\td h \circ \td f = A\circ \td h$. %for some lift $\td f $ of $f$.

 A map $h$ satisfying \eqref{eq12} is called a \emph{semiconjugacy} between $f$ and $L_A$.

Recall that a diffeomorphism $f$ of a manifold $M$ is \emph{Anosov} if $TM$ admits a continuous decomposition $E^u\oplus E^s$ that is preserved by  $Df$ such that $E^u$ and $E^s$ are, respectively, uniformly expanded and contracted by $Df$. The fundamental examples of Anosov diffeomorphisms are affine automorphisms of nilmanifolds and tori;  that is,  diffeomorphisms of the form $b\cdot A(x)$ on a nilmanifold $M=N/\Lambda$ where $b\in N$ and $A\in \Aut(M)$ such that $\restrict{DA}{T_eN}$ is a hyperbolic linear transformation of the Lie algebra $\lien$ of $N$.  It is conjectured that the only manifolds admitting  Anosov diffeomorphisms are finite quotients of tori or nilmanifolds.

%If $f\colon M\to M$ is an Anosov diffeomorphism of a nilmanifold $M = N/\Lambda$ the it  is well known that
In the case that $f$ is an Anosov diffeomorphism of a torus or nilmanifold, it is well known that the linear data of $f$ is hyperbolic.  Moreover, it follows from the work of Franks \cite{MR0271990} and Manning \cite{MR0358865}  that
the map  $h$ satisfying \eqref{eq:1} is a  homeomorphism; in this case we call such an $h$ a \emph{conjugacy} between $f$ and $L_A$.  Moreover, one can show in this case that $h$ is bi-H\"older.  However, in general one can not obtain any additional regularity of $h$ even when $f$ is Anosov.

\subsection{Setting for main  results}
%The global rigidity problem we study in this paper is the generalization of  Franks' Theorem to  actions on tori and nilmanifolds  by lattices $\Gamma\subset G$ in higher-rank semisimple Lie groups $G$.
Let $\Gamma $ be a discrete group and $\alpha$ an action of $\Gamma$ by homeomorphisms on a compact nilmanifold $N/\Lambda$.  In Section \ref{sec:lineardata} we define the \emph{linear data} $\rho\colon \Gamma \to \Aut(N/\Lambda)$ for such actions
%Suppose the linear data $\rho\colon \Gamma\to \Aut(N)$ is well defined,
under the assumption either that $N$ is abelian or that the action $\alpha $ lifts to an action by homeomorphisms of $N$.  % lifting hypotheses or because $N$ is abelian.
We assume for the time being that the linear data $\rho$ associated to $\alpha$ is defined.
For   individual elements $\alpha(\gamma)$ of the action such that $\rho(\gamma)$ is hyperbolic, one can build a semiconjugacy between the elements $\alpha(\gamma)$ and $\rho(\gamma)$.  %to the corresponding element of the linear action.
However, even assuming the action lifts,   one rarely expects to be able to build a single map $h\colon N/\Lambda\to N/\Lambda$ such that
 $$\rho(\gamma) \circ h = h \circ \alpha (\gamma) $$  holds for {\it every element} of $\Gamma$, or even for every element in a finite-index subgroup of $\Gamma$.

 We focus in this paper on  discrete groups $\Gamma$ exhibiting certain rigidity properties---namely lattices in higher-rank, semisimple Lie groups.  For such $\Gamma$, we can exploit certain properties of $\Gamma$ to
study the rigidity of actions of $\Gamma$.
%  construct a semiconjugacy between the linear and non-linear {actions}.  Additionally, if the non-linear action has an Anosov element, we show the conjugacy is smooth.
%Let $G$ be a semisimple Lie group and let $\Gamma\subset G$ be a lattice.
 For most  results, we will assume the following hypothesis.
\begin{hypothesis}\label{HigherRank}$G$ is a connected semisimple Lie group with finite center, all of whose non-compact almost-simple factors  have $\R$-rank $2$ or higher. $\Gamma$ is a lattice in $G$. \end{hypothesis}

\subsection{Topological rigidity for actions with hyperbolic linear data}\label{sec:CorTopo}

%The global rigidity problem we study in this paper is the generalization of  Franks' Theorem to  actions on tori and nilmanifolds  by lattices $\Gamma\subset G$ in higher-rank semisimple Lie groups $G$.

Our first main theorem is a solution to the topological global rigidity problem assuming the action lifts and the linear data $\rho(\gamma_0)$ is hyperbolic for some $\gamma_0\in \Gamma$.

\begin{theorem}\label{ConjThm} Let $G$ and $\Gamma$ be as in Hypothesis \ref{HigherRank}. Let $\alpha$ be a $C^0$ action of
 $\Gamma$ on a compact nilmanifold $M=N/\Lambda$. Suppose $\alpha$ can be lifted to an action on the universal cover $N$ of $M$ and let $\rho$ be the associated linear data of $\alpha$.
If $D\rho(\gamma)$ is hyperbolic for some element $\gamma\in\Gamma$ then there is a finite-index subgroup $\Gamma_1<\Gamma$ and a surjective continuous map $h\colon M\to M$, homotopic to identity,   such that $\rho(\gamma)\circ h=h\circ\alpha(\gamma)$ for all $\gamma\in\Gamma_1$. If $\alpha$ acts by Lipschitz homeomorphisms, then$h$ is H\"older continuous.
\end{theorem}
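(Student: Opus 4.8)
The plan is to first apply Franks' Theorem to a single hyperbolic element and then use the rigidity of $\Gamma$ — specifically Margulis superrigidity applied to $\rho$ — to upgrade the individual semiconjugacy to an equivariant one on a finite-index subgroup. Fix $\gamma_0\in\Gamma$ with $A_0:=D\rho(\gamma_0)$ hyperbolic. Working on the universal cover $N$, identify $N$ with its Lie algebra via $\exp$ and fix the lift $\tilde\alpha$ given by hypothesis; then $\widetilde{\alpha(\gamma_0)}$ has the form $\rho(\gamma_0)$ composed with a $\Lambda$-periodic (bounded) perturbation. The nilmanifold version of Franks' Theorem (which follows from Franks \cite{MR0271990} and Manning \cite{MR0358865} in the Anosov case, and from the hyperbolicity of $A_0$ together with a standard graph-transform/fixed-point argument in general) produces a unique continuous $h\colon M\to M$ homotopic to the identity with a lift $\tilde h\colon N\to N$ satisfying $\tilde h\circ\widetilde{\alpha(\gamma_0)}=\rho(\gamma_0)\circ\tilde h$, and $\tilde h - \id$ is bounded. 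Uniqueness of $\tilde h$ among maps with bounded displacement intertwining these two maps is the crucial feature I will exploit.

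Next I would promote this to equivariance. For any $\gamma\in\Gamma$ commuting with $\gamma_0$, the map $\rho(\gamma)\circ\tilde h\circ\widetilde{\alpha(\gamma)}^{-1}$ also intertwines $\widetilde{\alpha(\gamma_0)}$ and $\rho(\gamma_0)$ and has bounded displacement, hence equals $\tilde h$ by uniqueness; so $h$ is automatically equivariant for the centralizer of $\gamma_0$. The issue is that this centralizer need not be finite-index. To handle a general $\gamma\in\Gamma$, consider $h_\gamma:=\rho(\gamma)\circ h\circ\alpha(\gamma)^{-1}$: this is a semiconjugacy between $\alpha(\gamma\gamma_0\gamma^{-1})$ and $\rho(\gamma\gamma_0\gamma^{-1})=D\rho(\gamma)A_0D\rho(\gamma)^{-1}$, which is again hyperbolic, so both $h$ and $h_\gamma$ (after the appropriate adjustment) are characterized by Franks-type uniqueness statements. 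The mechanism that forces enough elements to behave like the centralizer case is the structure of $\rho$: by Margulis superrigidity (Theorem \ref{Superrigidity}, Proposition \ref{prop:ourSR}) $\rho$ virtually extends to a representation of $G$, and $G$ has abundant commuting relations; passing to the finite-index subgroup $\Gamma_1$ on which the superrigid extension is available, one obtains, for every $\gamma\in\Gamma_1$, enough elements $\delta$ with $\delta\gamma_0\delta^{-1}$ hyperbolic and with controlled commutation to pin down $h_\gamma=h$. Concretely I expect one builds a finite set of hyperbolic elements whose associated semiconjugacies are forced to coincide with $h$, and whose centralizers together generate $\Gamma_1$, so that equivariance for the generators gives equivariance for all of $\Gamma_1$.

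Surjectivity of $h$ follows because $h$ is homotopic to the identity, hence has degree one and is therefore onto $M$ (for the torus this is immediate from degree theory; for a general nilmanifold one uses that a degree-one self-map of a closed aspherical manifold is surjective, or the explicit boundedness of $\tilde h-\id$ forces $\tilde h$ to be a proper surjection of $N$). For the regularity claim, suppose $\alpha$ acts by Lipschitz homeomorphisms. Then the perturbation term in the lift of $\alpha(\gamma_0)$ is Lipschitz, and the standard argument giving Hölder regularity of the conjugacy/semiconjugacy for Anosov-type maps applies: along the stable and unstable directions of the hyperbolic $A_0$ one contracts/expands at definite rates, and matching the Lipschitz bound on the nonlinearity against these rates yields a Hölder exponent for $\tilde h$ (this is the classical argument behind bi-Hölder regularity of Franks–Manning conjugacies).

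The main obstacle is the middle step: leveraging the rigidity of $\Gamma$ to pass from "equivariant for the centralizer of one hyperbolic element" to "equivariant for a finite-index subgroup." Franks' Theorem alone gives only the $\langle\gamma_0\rangle$-equivariance for free; everything beyond that must come from the large-scale structure of $\Gamma$ and $\rho$. The delicate point is that $D\rho(\gamma)$ is hyperbolic only for some $\gamma$, not all, so one cannot naively apply Franks to every group element; one must instead use the superrigid form of $\rho$ to locate a rich enough family of hyperbolic elements with prescribed commutation, and then invoke the uniqueness clause of Franks' Theorem repeatedly to glue. I expect this to require a careful choice of $\gamma_0$ inside a suitable $\R$-split torus of $G$ and an argument that the finite-index subgroup $\Gamma_1$ can be taken to be (a finite-index subgroup of) the kernel of the action of $\Gamma$ on $H_1$-torsion, so that the linear data is genuinely defined and the extension to $G$ is available.
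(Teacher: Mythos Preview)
Your proposal has a genuine gap at exactly the place you flag as ``the main obstacle'': the promotion from equivariance under $\langle\gamma_0\rangle$ (or its centralizer in $\Gamma$) to equivariance under a finite-index subgroup. The uniqueness clause of Franks' Theorem gives $h_\gamma=h$ only when $\gamma$ commutes with $\gamma_0$; for general $\gamma$, $h_\gamma$ semiconjugates $\alpha(\gamma\gamma_0\gamma^{-1})$ to $\rho(\gamma\gamma_0\gamma^{-1})$, and there is no uniqueness principle forcing this to equal $h$. Your suggestion to ``build a finite set of hyperbolic elements whose centralizers together generate $\Gamma_1$'' does not close the gap: even if such elements exist, you have not explained why their individual Franks semiconjugacies coincide, and in general they need not. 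Superrigidity tells you that $\rho$ virtually extends to $G$, but the nonlinear action $\alpha$ does not extend to $G$, so the ``abundant commuting relations'' in $G$ are not directly available on the $\alpha$ side.

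The paper resolves this by a fundamentally different mechanism. After using superrigidity to extend $\rho$ to (a compact extension of) $G$, it passes to \emph{suspension spaces} $M^\alpha$ and $(N/\Lambda)_\rho$, which carry $G$-actions (the former by left translation on the $G$-factor, the latter twisted by $\rho$). One then builds the semiconjugacy for the $G$-actions: first for a single element $s$ in a Cartan subgroup $A\subset G$, then for the centralizer $C_G(s)$ (which is large---it contains all of $A$), and finally for all of $G$ by exploiting the root-space decomposition and a \emph{non-resonance} condition between the roots of $\lieg$ and the weights of $D\rho$ (Lemma~\ref{NonResonance}, Proposition~\ref{UnipotentSemiconj}). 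The construction at the level of $s$ is not a direct application of Franks: because $G/\Gamma$ is noncompact, the central ``defect'' one must correct is unbounded, and the key analytic input is an integrability estimate (Lemma~\ref{claim:login}) that relies on the Lubotzky--Mozes--Raghunathan quasi-isometric embedding of $\Gamma$ in $G$. None of these ingredients---suspension, the Cartan element $s\in G$ rather than $\gamma_0\in\Gamma$, the $L^1$ defect estimate, or the non-resonance extension argument---appear in your outline, and they are what actually replaces the missing ``gluing'' step.
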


\begin{remark} As  genuinely affine actions exist,  the restriction to a finite-index subgroup $\Gamma_1$ is necessary. This is demonstrated by an example of Hurder \cite[Theorem 2]{MR1236179}.
\end{remark}

In the case that $N= \R^d$ and $M= \T^d$, the obstruction to lifting the action $\alpha$ of $\Gamma$ on $\T^d$ to an action   of $\Gamma$ on $\R^d$ is represented by an element in the group cohomology   $H^2_{\rho}(\Gamma, \Z^d)$. If this element vanishes in $H^2_\rho(\Gamma, \bR^d)$ then it vanishes in $H^2_{\rho}(\Gamma, \Z^d)$ after possibly passing to a finite-index subgroup of $\Gamma$ and
the lifting of the action is automatic.
For actions on nilmanifolds, the action lifts assuming the vanishing of certain obstructions in the group cohomology associated to a finite number of induced representations.
Sufficient conditions for the vanishing of the cohomological obstructions are given by   \cite[Theorem 3.1]{MR0228504}  and \cite[Theorem 4.4]{MR642850}.
  In particular, the lifting hypotheses can be verified using only knowledge about the linear data $\rho$ (in the case $M = \T^d$) or the induced $\alpha_\#\colon \Gamma \to \mathrm{Out}(\Lambda)$ (in the case $M = N/\Lambda$).
See Section \ref{sec:Lifting} for more details.

In particular,
%
% This argument was given in \cite{MR1866848}*{Section2 and paragraph following Theorem 3.2}.

\begin{remark}\label{LiftingRmk}\rm
%The assumption that the restriction of $\alpha$ to a finite-index subgroup lifts to an action on $N$ is often automatically satisfied. For example, it is true if one of the following condition holds.
The  restriction of $\alpha$ to a finite-index subgroup lifts to an action on $N$ assuming any of the  following condition holds.
\begin{enumerate}[ref = (\arabic*)]
%\item  \label{liftrem:1} $\Gamma=\SL(n,\Z)$, $n\geq 9$.  \marginnote{do we believe this}
\item\label{liftrem:2} $\Gamma=\SL(d,\Z)$ acting on $\T^d$, $d\ge 5$.
\item \label{liftrem:3}$\Gamma$ is a cocompact lattice in $G$.
\item \label{liftrem:4}  $\alpha$ is an action of  $\Gamma$    on a torus $\T^d$ which preserves a probability measure $\mu$.
\item \label{liftrem:5} $\Gamma$ is as in Hypotheses \ref{HigherRank}, $\alpha$ is an action of  $\Gamma$    on a compact nilmanifold $N/\Lambda$ which preserves a probability measure $\mu$, and $\alpha(\gamma_0)$ is Anosov for some $\gamma_0$. \end{enumerate}
%Additionally, \cite[Theorem 3.1]{MR0228504} together with the discussion in Section \ref{sec:Lifting} gives sufficient conditions for the action to lift on a finite index subgroup. The condition in \cite[Theorem 3.1]{MR0228504} can be verified from the linear data $\rho$.

\end{remark}

%When $M=\bT^d$, t

The main advantage of our method of proof is that, unlike the majority of previous results discussed above, we do not assume the existence of an invariant measure for the action $\alpha$ in order to construct a semiconjugacy.  Note that given a conjugacy between the linear and non-linear actions, one can obtain a $\Gamma$-invariant measure for the non-linear action.  However, in the case of a semiconjugacy, the existence of an invariant measure for the non-linear actions is more subtle.  We present as a corollary of Theorem  \ref{ConjThm} certain  conditions under which a non-linear non-Anosov action of $\Gamma$ has a ``large''  invariant measure.

\begin{theorem}\label{MeasureThmCase}Let $\Gamma \subset \SL(n+1,\Z)$, $n\ge 2$ be of finite index and let $\alpha $ be an action of $\Gamma$ on $\T^{n+1}$ by $C^{1+\beta}$ diffeomorphisms.  Suppose the action $\alpha$ lifts to an action on $\R^{n+1}$, and let $h$ denote the semiconjugating map guaranteed by Theorem \ref{thm:mainnilmanifoldfull}.
Moreover, suppose that the linear data $\rho\colon \Gamma \to \GL(n+1,\Z)$ is the identity representation $\rho(\gamma) = \gamma$.

Then there exists a unique, $\alpha$-invariant, absolutely continuous probability measure $\mu$ on $\T^{n+1}$ such that $h_*\mu$ is the Haar measure on $\T^{n+1}$.
Moreover, $\mu$ is the unique ergodic    $\alpha$-invariant measure on $\T^n$ with $h_*\mu$ is not atomic. % has positive Hausdorff dimension in $\T^{n+1}$.  %$h_\mu(\alpha(\gamma))>0$ has positive entropy and such that $h_{h_*\mu}(\rho(\gamma))>0$ for some $\gamma\in \Gamma$.

\end{theorem}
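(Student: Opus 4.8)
\emph{Proof strategy.} Let $\Gamma_1<\Gamma$ be the finite-index subgroup on which $\rho(\gamma)\circ h=h\circ\alpha(\gamma)$ holds. Since $\rho$ is the identity representation, $\rho|_{\Gamma_1}$ is the restriction to a finite-index subgroup of the standard action of $\SL(n+1,\bZ)$ on $\T^{n+1}$; in particular it is ergodic for Haar measure $m$ and contains hyperbolic automorphisms. Fix $\gamma_0\in\Gamma_1$ with $A:=\rho(\gamma_0)$ hyperbolic and write $f:=\alpha(\gamma_0)$, $L:=L_A$. The first step is to produce an ergodic $\alpha(\Gamma_1)$-invariant probability measure $\mu$ with $h_*\mu=m$. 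The set $\cP_m:=\{\nu\in\cP(\T^{n+1}):h_*\nu=m\}$ is nonempty (push $m$ forward along a Borel section of the surjection $h$), convex, weak-$*$ compact, and $\alpha(\Gamma_1)$-invariant because $h_*\bigl(\alpha(\gamma)_*\nu\bigr)=\rho(\gamma)_*h_*\nu=\rho(\gamma)_*m=m$. Fixing an admissible probability measure $\kappa$ on $\Gamma_1$, all Ces\`aro averages $\tfrac1N\sum_{k<N}\kappa^{*k}*m$ lie in $\cP_m$ (as $m$ is $\rho$-stationary), so a weak-$*$ limit is a $\kappa$-stationary $\mu\in\cP_m$; one then upgrades stationarity to genuine $\alpha(\Gamma_1)$-invariance, exploiting that the image $h_*\mu=m$ is already $\rho(\Gamma_1)$-invariant together with the rigidity of the linear factor (equivalently, one may work in the $G$-suspension and produce a $G$-invariant probability projecting to the canonical invariant probability measure of the $\rho$-suspension, whose fibers over that suspension are compact). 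As $\rho(\Gamma_1)$ is $m$-ergodic, almost every ergodic component of $\mu$ again lies in $\cP_m$, so we may take $\mu$ ergodic.

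The heart of the argument is that this $\mu$ is absolutely continuous. I would apply Zimmer cocycle superrigidity (Theorem~\ref{CocycleSuperrigidity}) to the derivative cocycle $D\alpha\colon\Gamma_1\times\T^{n+1}\to\GL(n+1,\bR)$ over the ergodic system $(\T^{n+1},\mu)$: it is measurably cohomologous, up to a compact correction, to a constant cocycle coming from an algebraic representation $\pi\colon\Gamma_1\to\GL(n+1,\bR)$. Matching $\pi$ against the linear data along the semiconjugacy $h$ — and using that the only $(n+1)$-dimensional algebraic representations in play are the standard one and its dual, with no room for a compact factor on $\SL$, while a trivial summand would create a zero $\mu$-Lyapunov exponent incompatible with the hyperbolic, degree-one semiconjugacy $h$ — forces the $\mu$-Lyapunov spectrum of $f$ to agree with that of the hyperbolic automorphism $L$; in particular $\mu$ is a hyperbolic measure whose stable and unstable dimensions and exponent sums coincide with those of $L$. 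Next, since $h$ is homotopic to the identity a lift $\tilde h$ stays a bounded distance from $\mathrm{id}$, and $h$ conjugates $f$ to the expansive map $L$; this shows $h$ is injective along $\mu$-a.e.\ unstable manifold of $f$ and along $\mu$-a.e.\ stable manifold, so by the local product structure of the hyperbolic measure $\mu$ the map $h$ is injective on a set of full $\mu$-measure, hence $m$-essentially injective, and $\mu=(h^{-1})_*m$ is the unique $\alpha(\Gamma_1)$-invariant element of $\cP_m$. Finally, $m$-essential injectivity makes $f$ on $(\T^{n+1},\mu)$ measurably isomorphic to $L$ on $(\T^{n+1},m)$, so its metric entropy equals the topological entropy of $L$, i.e.\ the sum of the positive $\mu$-Lyapunov exponents of $f$; the same holds for $f^{-1}$ (here $\det A=1$ guarantees $L$ and $L^{-1}$ have equal entropy), so by the Ledrappier--Young equality characterization $\mu$ is SRB for both $f$ and $f^{-1}$, whence it is absolutely continuous with respect to Lebesgue measure on $\T^{n+1}$.

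To conclude, averaging $\mu$ over a set of coset representatives for $\Gamma/\Gamma_1$ produces an absolutely continuous $\alpha(\Gamma)$-invariant probability measure whose $h$-image is the average of the $\rho(g)_*m=m$, i.e.\ Haar measure. For the characterization, let $\mu'$ be any ergodic $\alpha$-invariant probability measure with $h_*\mu'$ non-atomic; then $h_*\mu'$ is a $\rho(\Gamma_1)$-invariant, ergodic, non-atomic probability measure on $\T^{n+1}$, and by the classical classification of invariant measures for the standard $\SL(n+1,\bZ)$-action — whose ergodic invariant measures are Haar measure and measures supported on finite sets of rational points, using $n+1\ge3$ — we get $h_*\mu'=m$. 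The previous paragraph applied to $\mu'$ then gives $\mu'=(h^{-1})_*m$, which simultaneously yields uniqueness of the absolutely continuous measure with $h_*\mu=m$ and identifies it as the unique ergodic $\alpha$-invariant measure whose $h$-image is non-atomic.

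The main obstacle is the absolute-continuity step of the second paragraph: extracting from the algebraic output of cocycle superrigidity the precise matching of Lyapunov data, ruling out zero exponents via the topological semiconjugacy, and then converting $m$-essential injectivity plus the entropy identities into the SRB-in-both-time-directions conclusion — this is where the $C^{1+\beta}$ regularity and the hyperbolicity of the linear data are genuinely used. The upgrade from $\kappa$-stationarity to honest $\alpha(\Gamma_1)$-invariance in the construction of $\mu$ is the other point requiring real care.
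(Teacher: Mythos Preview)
Your approach is quite different from the paper's, and the paper's route is both shorter and avoids the gaps you flag yourself.

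The paper does not work with the full lattice $\Gamma$ (or $\Gamma_1$) at all in the measure-theoretic step. Instead it restricts to a free abelian subgroup $\Sigma\cong\bZ^n\subset\Gamma$ chosen so that $\rho|_\Sigma$ is a linear Cartan action on $\T^{n+1}$. Since $\bZ^n$ is amenable, an $\alpha|_\Sigma$-invariant measure projecting to Haar under $h$ exists immediately by averaging---no stationarity-to-invariance upgrade is needed. Absolute continuity of any such $\mu$ is then a direct citation of Kalinin--Katok \cite{MR2261075}, and uniqueness of the lift of Haar is a direct citation of Katok--Rodriguez Hertz \cite{MR2285730}; this uniqueness for the $\bZ^n$-action is what forces $\mu$ to be invariant under the full $\alpha(\Gamma)$-action. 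The non-atomic characterization at the end is the same as yours (a Fourier argument on the linear side), combined with the Katok--Spatzier result \cite{MR1406432} that any positive-dimensional ergodic $\rho|_\Sigma$-invariant measure is Haar.

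Your proposal, by contrast, stays with the non-amenable $\Gamma_1$ throughout, and this creates the two difficulties you yourself identify. The upgrade from $\kappa$-stationarity to genuine $\alpha(\Gamma_1)$-invariance is a real gap: there is no fixed-point theorem available for a property-(T) lattice acting on $\cP_m$, and the vague appeal to ``rigidity of the linear factor'' or working in the suspension does not, as stated, produce an invariant measure. Your second paragraph---cocycle superrigidity, Lyapunov matching, essential injectivity of $h$ along stable/unstable leaves, local product structure of a general hyperbolic measure, and the two-sided SRB/Ledrappier--Young argument---is a plausible outline of an independent proof of the Kalinin--Katok result in this special case, but several of those steps (notably local product structure for an arbitrary hyperbolic measure and the precise matching ruling out the dual representation or a compact factor) are nontrivial and not justified here. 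The paper's key insight you are missing is simply to pass to an amenable Cartan $\bZ^n$ and invoke the abelian measure-rigidity machinery as a black box.
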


\begin{proof}
We may find a copy of $\Z^n$ inside $\Gamma$ so that, in the terminology of \cite{MR2261075}, the corresponding linear action $\restrict{\rho}{\Z^n}$ is a  linear Cartan action.  It follows from the results of \cite{MR1406432} that  any ergodic, $\restrict{\rho}{\Z^n}$-invariant measure with positive Hausdorff dimension is Haar measure.
As $\Z^d$ is amenable, there exists an ergodic, $\restrict{\alpha}{\Z^n}$-invariant, measure on $\T^{n+1}$ projecting to the Haar measure under $h$.
From the main Theorem \cite{MR2261075}, it follows that any such $\mu$ is absolutely continuous.

Moreover, from the main result of \cite{MR2285730} it follows that there is a unique measure $\mu$ on $\T^{n+1}$ such that $h_*\mu$ is the Haar measure.  It follows from this uniqueness criterion that $\mu$ is invariant under the entire action $\alpha$. Finally, if $\nu$ is $\alpha$-invariant and if $h_*\nu$ is not atomic  then, as  $h_*\nu$ is  $\rho$-invariant, a Fourier analysis argument shows it must be   Haar measure.
\end{proof}

\subsection{Smooth rigidity for Anosov actions}\label{sec:CorSmooth}
The main result of the paper is the following solution to the global smooth rigidity problem.  We show that, in the setting of Theorem \ref{ConjThm}, if $\alpha$ is an action by $C^\infty$ diffeomorphisms and if  $\alpha(\gamma)$ is an Anosov  for some element $\gamma$ of $\Gamma$, then semiconjugacy $h$ (which is necessarily invertible) is  a diffeomorphism.  %smooth.

\begin{theorem}\label{SmoothThm} Let $G$ and $\Gamma$ be as in Hypothesis \ref{HigherRank}. Let $\alpha$ be a $C^\infty$ action of
 $\Gamma$ on a compact nilmanifold $M=N/\Lambda$. Suppose $\alpha$ can be lifted to an action on the universal cover $N$ of $M$ and let  $\rho$ be the associated linear data of $\alpha$.
If $\alpha(\gamma)$ is Anosov for some element $\gamma\in\Gamma$, then there is a finite-index subgroup $\Gamma'<\Gamma$ and a $C^\infty$ diffeomorphism $h\colon M\to M$, homotopic to identity,  such that $h \circ\alpha(\gamma)=\rho(\gamma)\circ h $ for all $\gamma\in\Gamma'$.\end{theorem}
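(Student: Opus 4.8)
The plan is to build on Theorem \ref{ConjThm}, which already provides a continuous, identity-homotopic semiconjugacy $h$ on a finite-index subgroup $\Gamma_1$, and then to bootstrap regularity using the Anosov element. First I would observe that since $\alpha(\gamma_0)$ is Anosov and $\rho(\gamma_0)$ is its linear data (hence hyperbolic by the discussion in Section \ref{sec:topo}), the semiconjugacy $h$ intertwining $\alpha(\gamma_0)$ and $\rho(\gamma_0)$ is forced to be a homeomorphism by the Franks--Manning theory; moreover $h$ is bi-H\"older. The remaining task is purely a matter of upgrading the bi-H\"older homeomorphism $h$ to a $C^\infty$ diffeomorphism, and for this the presence of the whole higher-rank action (not just the single Anosov element) is essential, since a single Anosov diffeomorphism generally admits only a H\"older conjugacy to its linearization.

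The key mechanism I expect to invoke is the \emph{nonstationary normal forms} / invariant foliation machinery together with higher-rank rigidity of the invariant measure. Concretely: conjugating the action by $h$ we may assume $\alpha$ and $\rho$ agree topologically, i.e. $h$-conjugating reduces us to an action $\beta$ that is $C^0$-conjugate to the affine action $\rho$. Push the Haar (Lebesgue) measure forward; by a higher-rank measure-rigidity argument (as in the $\SL(n,\Z)$ case, e.g.\ using that a suitable abelian subgroup $\Z^k<\Gamma_1$ acts by a Cartan or genuinely higher-rank action, together with the Katok--Spatzier / Kalinin--Spatzier type results quoted in the proof of Theorem \ref{MeasureThmCase}) one shows the invariant measure of $\beta$ absolutely continuous, then smooth. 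Once there is a smooth invariant measure, the derivative cocycle over the Anosov element is cohomologous, via Zimmer cocycle superrigidity (Theorem \ref{CocycleSuperrigidity}) applied to the action restricted to a lattice, to the constant cocycle $D\rho$ up to a compact correction; combined with the hyperbolicity this pins down the Lyapunov data. The coarse Lyapunov/stable-unstable foliations of $\alpha(\gamma_0)$ are then smooth (by regularity of the invariant distributions coming from the normal-form analysis and commuting relations), and along each leaf $h$ conjugates an Anosov map to a linear one while matching smooth structures, forcing $h$ to be $C^\infty$ along leaves. A Journ\'e-type lemma (regularity along two transverse foliations implies global regularity) then gives $h\in C^\infty$.

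Slightly more carefully, the order of steps I would carry out is: (1) invoke Theorem \ref{ConjThm} to get the continuous $h$ on $\Gamma_1$, homotopic to the identity, and note $\alpha(\gamma_0)$ Anosov $\Rightarrow$ $D\rho(\gamma_0)$ hyperbolic $\Rightarrow$ $h$ is a bi-H\"older homeomorphism by Franks--Manning; (2) produce a $\Gamma_1$-invariant smooth volume for $\alpha$ using higher-rank measure rigidity applied to an abelian subgroup whose $\rho$-image is higher-rank hyperbolic, transferring absolute continuity back through $h$ (the argument in the proof of Theorem \ref{MeasureThmCase} is the template); (3) apply cocycle superrigidity to the derivative cocycle of $\alpha$ to match its fine Lyapunov/coarse-Lyapunov structure with that of $\rho$ up to a measurable, and then (using suspension and rank-one-factor arguments) a smooth, coordinate change; (4) show the coarse Lyapunov foliations of the Anosov element $\alpha(\gamma_0)$ are $C^\infty$ and that $h$ is uniformly $C^\infty$ along each of them, by comparing the nonstationary linearization of $\alpha$ along a leaf with the genuinely linear $\rho$; (5) conclude via Journ\'e's lemma that $h$ is $C^\infty$, and finally pass to a possibly smaller finite-index $\Gamma'\le\Gamma_1$ if needed so that all the above hypotheses (lifting, existence of the commuting abelian subgroup, etc.) hold simultaneously.

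The main obstacle, I expect, is step (2)--(4): obtaining a smooth invariant measure \emph{without} assuming one a priori, and then leveraging it to get smoothness of the invariant foliations and of $h$ along them. Measure rigidity results typically require the abelian subgroup to act with enough hyperbolicity and enough ``higher-rank'' (no rank-one factors in the relevant directions), so the delicate point is arranging, inside $\Gamma_1$, an abelian subgroup $A$ whose $\rho$-image is simultaneously diagonalizable over $\bR$ with a Lyapunov spectrum rich enough to trigger the exponential-mixing/measure-classification input, and then controlling the atoms. Equivalently, one must rule out the ``blow-up'' pathologies of \cite{MR1380646} — which is exactly where Anosovness of $\alpha(\gamma_0)$ is used, since it forces $h$ to be injective and hence its pushforward of any fully supported measure to be fully supported. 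Coordinating these ingredients (cocycle superrigidity, measure rigidity, normal forms, Journ\'e) into a single smooth conjugacy on a uniform finite-index subgroup is the crux of the argument.
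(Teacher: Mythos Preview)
Your first step matches the paper exactly: invoke Theorem \ref{ConjThm} to get a continuous $h$ on a finite-index subgroup, then use Franks--Manning to upgrade $h$ to a bi-H\"older homeomorphism since $\alpha(\gamma_0)$ is Anosov. From there, however, your route diverges from the paper's, and your outline leaves the central difficulty unresolved.

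The paper does \emph{not} attempt to prove smoothness of $h$ directly via measure rigidity, normal forms, and Journ\'e. Instead, it reduces to a black-box result (Proposition \ref{SmoothAbelian}, which is the main theorem of \cite{MR3260859}): if a $C^\infty$ abelian action on a nilmanifold contains an Anosov element and its linearization has no rank-one algebraic factor, then the conjugacy to the linearization is $C^\infty$. The entire content of Section \ref{sec:Smooth} is then to produce, inside $\Gamma$, a free abelian subgroup $\Sigma$ such that (i) $\rho|_\Sigma$ has no rank-one algebraic factor and (ii) $\alpha(\gamma_1)$ is Anosov for some $\gamma_1\in\Sigma$ (Proposition \ref{AbelianSubaction}). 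Once $\Sigma$ is found, uniqueness of the semiconjugacy forces the $h$ from Theorem \ref{ConjThm} to coincide with the smooth conjugacy from \cite{MR3260859}.

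You correctly flag in your last paragraph that ``arranging, inside $\Gamma_1$, an abelian subgroup $A$'' with the right hyperbolicity is ``the delicate point'' --- but you offer no mechanism for doing so, and this is precisely where the paper does all of its work. The construction has two parts. First (Proposition \ref{semigroup}), the paper shows that the set $S\subset\Gamma$ of elements $\gamma$ for which $\alpha(\gamma)$ is Anosov (with uniform cone estimates) is a Zariski-dense sub-semigroup; this uses cocycle superrigidity, but not in the way you suggest --- it is applied to each ergodic invariant measure (produced via $h$ and Ratner's theorem, Proposition \ref{prop:Ratner}) to control the stable/unstable bundles of conjugates of $\alpha(\gamma_0)$ and show that a certain transversality condition is Zariski-open. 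Second (Proposition \ref{RegularCentralizer}), the paper invokes the Prasad--Rapinchuk theory of regular/$\bR$-regular elements in Zariski-dense subgroups to find $\gamma\in S$ whose centralizer contains a maximal $\bQ$-torus, and then uses the Galois action on characters (containing the Weyl group) to rule out rank-one factors of $\rho|_\Sigma$ for $\Sigma = Z_\bfH(\gamma)^\circ\cap\Gamma$.

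Your steps (2)--(5) are, in spirit, a sketch of what happens \emph{inside} the proof of \cite{MR3260859} --- but even there, the argument requires the abelian action to have no rank-one factors, which is exactly the hypothesis you have not secured. Without Proposition \ref{AbelianSubaction} (or an equivalent), neither the measure-rigidity input nor the normal-forms/Journ\'e argument can be started.
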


In light of Remark \ref{LiftingRmk}, from Theorem \ref{SmoothThm} it immediately follows that:

\begin{corollary}\label{AnosovLifting}Suppose any one of the following:
\begin{enumerate}
\item $n\geq 5$, and $\alpha$ is a $C^\infty$ action by $\Gamma=\SL(n,\Z)$ on $M=\T^n$;
%\item $n\geq 9$, and $\alpha$ is a $C^\infty$ action by $\Gamma=\SL(n,\Z)$ on any nilmanifold $M$; \marginnote{believe?}
\item $G$ is as in Hypothesis \ref{HigherRank}, $\Gamma$ is a cocompact lattice in $G$, and $\alpha$ is a $C^\infty$ action by $\Gamma$ on any nilmanifold $M$;
%\item   $\Gamma$ is as in Hypotheses \ref{HigherRank} and  $\alpha$ is an action of  $\Gamma$    on a torus $\T^d$ which preserves a probability measure $\mu$;
\item   $\Gamma$ is as in Hypotheses \ref{HigherRank} and  $\alpha$ is an action of  $\Gamma$    on a compact nilmanifold $N/\Lambda$ which preserves a probability measure $\mu$.  %\item  $G$ is as in Hypothesis \ref{HigherRank}, $\Gamma$ is a lattice in $G$, and $\alpha$ is a $C^\infty$ action by $\Gamma$ on any nilmanifold $M$ preserving a probability measure.
\end{enumerate}
If $\alpha(\gamma)$ is Anosov for some $\gamma\in\Gamma$, then there is a finite-index subgroup $\Gamma'<\Gamma$, a $\Gamma'$-action $\rho$ on $M$ by linear automorphisms and a $C^\infty$ diffeomorphism $h\colon M\to M$, homotopic to identity,  such that $h\circ \alpha(\gamma)= \rho(\gamma)\circ h$ for all $\gamma\in\Gamma'$.
\end{corollary}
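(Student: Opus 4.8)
The plan is to reduce the corollary directly to Theorem~\ref{SmoothThm}; the only work is to check, in each of the three cases, that after passing to a suitable finite-index subgroup the hypotheses of that theorem hold: that the acting group satisfies Hypothesis~\ref{HigherRank}, that the action lifts to the universal cover $N$, and that some element of the action is Anosov.

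First I would record that $\Gamma$ satisfies Hypothesis~\ref{HigherRank} in all three cases. In case (1), $G=\SL(n,\R)$ is connected and semisimple with finite center and no compact factors, and has $\R$-rank $n-1\ge 4$ when $n\ge 5$; cases (2) and (3) assume Hypothesis~\ref{HigherRank} outright. Moreover, any finite-index subgroup of a lattice in $G$ is again a lattice in $G$, so Hypothesis~\ref{HigherRank} is inherited by finite-index subgroups, and likewise the restriction of a $C^\infty$ action to a subgroup is again a $C^\infty$ action.

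Next I would invoke Remark~\ref{LiftingRmk} to produce a finite-index subgroup $\Gamma_0<\Gamma$ for which $\restrict{\alpha}{\Gamma_0}$ lifts to an action on $N$: case (1) is covered by Remark~\ref{LiftingRmk}\ref{liftrem:2}, case (2) by \ref{liftrem:3}, and case (3)---in which $\alpha$ preserves a probability measure and $\alpha(\gamma_0)$ is Anosov for some $\gamma_0$---by \ref{liftrem:5}. Once the lifting is available, the linear data $\rho\colon\Gamma_0\to\Aut(N/\Lambda)$ of $\restrict{\alpha}{\Gamma_0}$ is defined as in Section~\ref{sec:lineardata}, and it is an action by linear automorphisms of $M$.

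Finally I would arrange for the Anosov element to lie in $\Gamma_0$: given $\gamma\in\Gamma$ with $\alpha(\gamma)$ Anosov, choose $k\ge 1$ with $\gamma^k\in\Gamma_0$, which is possible since $[\Gamma:\Gamma_0]<\infty$; then $\alpha(\gamma^k)=\alpha(\gamma)^k$ is again Anosov, with the same $D(\cdot)$-invariant splitting $E^u\oplus E^s$. Thus $\restrict{\alpha}{\Gamma_0}$ is a $C^\infty$ action of the lattice $\Gamma_0$ on $M$ that lifts to $N$ and contains an Anosov element, so Theorem~\ref{SmoothThm} applies and yields a finite-index subgroup $\Gamma'<\Gamma_0<\Gamma$ and a $C^\infty$ diffeomorphism $h\colon M\to M$, homotopic to the identity, with $h\circ\alpha(\gamma)=\rho(\gamma)\circ h$ for all $\gamma\in\Gamma'$, which is the assertion of the corollary. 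There is no genuine obstacle here beyond the bookkeeping; the single point deserving a little care is that the element whose $\alpha$-image is Anosov need not a priori belong to the finite-index subgroup on which lifting is guaranteed, which is why one replaces it by an appropriate power.
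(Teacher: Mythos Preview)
Your proof is correct and follows exactly the approach the paper takes: the corollary is stated as following ``immediately'' from Theorem~\ref{SmoothThm} in light of Remark~\ref{LiftingRmk}, and you have simply spelled out the details (including the minor point of replacing the Anosov element by a power so that it lies in the finite-index subgroup).
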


We remark that from Corollary \ref{AnosovLifting} one recovers the  previous local rigidity results for affine Anosov actions.

\subsection{Organization of Paper}
%In the remainder of this section we review some terminology and present some corollaries of the main technical theorems.
  In Section \ref{sec:Prelim} we present the major technical background and definitions for the paper. In Section \ref{sec:3}, we present the main technical theorems,  Theorems \ref{thm:mainnilmanifoldfull} and \ref{main:factorsfull}, which assert the existence of a semiconjugacy between a non-linear action and its linear data  under a number of technical hypotheses.   In Section \ref{sec:4} we introduce suspension spaces which converts the problem of building a semiconjugacy between $\Gamma$-actions into a problem of building a semiconjugacy between   $G$-actions. In Section \ref{sec:5}, we obtain this semiconjugacy for $G$-actions by first constructing it for a single element of a  Cartan subgroup in $G$, and then extending to a    semiconjugacy  between entire $G$-actions.
%  is also a semiconjugacy for the Cartan subgroup and the root subgroups.
In Section \ref{sec:bigfacts} we present a number of classical results that will be used in the following sections.
  In Section \ref{sec:6}, we  show that the technical assumptions required by Theorem \ref{thm:mainnilmanifoldfull} are   satisfied in the setting of Theorem \ref{ConjThm}. In Section \ref{sec:Smooth}, we prove Theorem \ref{SmoothThm} by finding a large abelian subgroup of $\Gamma$ whose action contains an Anosov diffeomorphism. Finally, the lifting hypothesis and Remark \ref{LiftingRmk} are discussed in Section \ref{sec:Lifting}.\newline

\noindent {\bf Acknowledgments.} We are grateful to Anatole Katok, David Fisher, and Ralf Spatzier for suggesting the problem and for inspiring discussions. We also thank Amir Mohammadi for helpful discussion regarding superrigidity.  A.~B. was partially supported by an NSF postdoctoral research fellowship DMS-1104013.
F.~R.~H. was supported by NSF grants DMS-1201326 and DMS-1500947.  Z.~W. was supported by NSF grants DMS-1201453 and DMS-1501295, as well as a research membership at the Institute for Advanced Study.

\section{Preliminaries}\label{sec:Prelim}

In this section we present the main definitions and constructions that will be used for our main technical theorems in Section \ref{sec:3}.  We also recall and prove some related facts.

\subsection{Linear data associated to torus and nilmanifold actions}\label{sec:lineardata}
%The main  goal of this project is to extend  Theorem \ref{thm:semiconj} to  continuous actions of discrete groups on tori and nilmanifolds.   To do so,
  To extend  Franks' Theorem  to  continuous actions of discrete groups on tori and nilmanifolds we need an appropriate notion of the linearization of   such an action.  % higher-rank lattices $\Gamma$.  % Our groups will be lattices $\Gamma\subset G$ in higher-rank semisimple Lie groups $G$.

\subsubsection{Linear data associated to torus actions.}
Given $B\in \GL(d,\Z)$, we write $L_{B}\colon \T^d\to \T^d$ for the the map on $\T^d$ induced by $B$.
Given $f\in \Homeo(\T^d)$, recall the unique $A_f\in \GL(d, \Z)$ as in \eqref{eq:1}  with $f$  homotopic to $L_{A_f}$.
For $f,g\in \Homeo(\T^d)$ we verify from the characterization  \eqref{eq:1}  that  $A_{f\circ g}= A_f A_g$.

Consider a discrete group $\Gamma$ and an action $\alpha\colon \Gamma\to \homeo(\T^d)$. % where $\T^d= \R^d/\Z^d$ is the standard torus.
It follows that there exists an induced homomorphism \begin{equation}\label{eq:lindatatorus} \rho\colon \Gamma \to \GL(d, \R),\quad \rho\colon \gamma\to A_{\alpha(\gamma)}.\end{equation}  Moreover, for each $\gamma\in \Gamma$,   $\alpha (\gamma)$ is homotopic to $L_{\rho(\gamma)}$.  %We all $\rho$ the \emph{linear data associated to $\alpah$}.
Below, we abandon the notation $L_{\rho(\gamma)}$ and simply write $\rho(\gamma) \colon \T^d\to \T^d$; whether $\rho(\gamma)$ is an element of $\GL(d,\R)$ or $\homeo (\T^d)$ will be clear from context.
The representation $\rho\colon \Gamma \to \GL(d, \R)$ is called the \emph{linear data} of $\alpha$.

\subsubsection{Linear data associated to  nilmanifold actions.}
In the case of  actions on nilmanifolds, the above situation is more complicated.  Indeed let $M= N/\Lambda$ where $N$ is a simply connected nilpotent Lie group and $\Lambda$ is a finite volume discrete subgroup.  Consider an action $\alpha \colon \Gamma\to \homeo(M)$.  As the map $\alpha(\gamma)\colon N/\Lambda\to N/\Lambda$ need not fix a base point,   the map $\Gamma\to \Aut(\Lambda)$ corresponding to \eqref{eq:lindatatorus} is  defined only up to conjugation.  Thus, one has an induced homomorphism $\alpha_\#\colon \Gamma \to \mathrm{Out}(\Lambda) = \Aut(\Lambda)/\mathrm{Inn}(\Lambda)$.  % (Note in the case $N= \R^d$ above, $\mathrm{Inn}(\Lambda)$ is trivial.)

However, under the additional assumption that $\alpha\colon \Gamma\to \homeo(M)$ lifts to an action $\td \alpha \colon \Gamma \to \homeo (N)$  by the canonical identification of  $\Lambda$ with the group of deck transformations for the cover $N\to N/\Lambda$, we obtain a well defined  action $\rho\colon \Gamma\to \Aut (\Lambda)$.  As it is necessary in our method of proof to assume the lift  $\td \alpha$ of the action exists, this is not a very restrictive assumption.
By \cite[Theorem 5]{MR0039734}, every element of $\Aut (\Lambda)$ extends uniquely to an element of $\Aut (N)$; in particular,  we may extend $\rho$ to a homomorphism $\rho\colon \Gamma\to \Aut (N)$.  Moreover, for each $\gamma\in \Gamma$ we have that $\alpha (\gamma) $ is homotopic to $\rho(\gamma)\colon N/\Lambda\to N/\Lambda$.  Here, as above, we use $\rho(\gamma)$ to indicate  both an element of $\Aut(N)$ and the induced element of  $\Aut(N/\Lambda)$.
%However, under the additional assumption that $\alpha\colon \Gamma\to \homeo(M)$ lifts to an action $\td \alpha \colon \Gamma \to \homeo (N)$, by the canonical identification of  $\Lambda$ with the group of deck transformations for the cover $N\to N/\Lambda$, we obtain a well defined  action $\rho\colon \Gamma\to \Aut (\Lambda)$.
%By \cite{MR0039734}*{Theorem 5}, every element of $\Aut (\Lambda)$ extends uniquely to an element of $\Aut (N)$; in particular,  we may extend $\rho$ to a homomorphism $\rho\colon \Gamma\to \Aut (N)$.  Moreover, for each $\gamma\in \Gamma$ we have that $\alpha (\gamma) $ is homotopic to $\rho(\gamma)\colon N/\Lambda\to N/\Lambda$.  Here, as above, we use $\rho(\gamma)$ to indicated both an element of $\Aut(N)$ and the induced element of  $\Aut(N/\Lambda)$.
\begin{definition}
If $\alpha\colon \Gamma\curvearrowright N/\Lambda$ either acts on a torus or lifts to the universal cover $N$, we call $\rho\colon \Gamma\to \Aut(N)$ (or the induced $\rho\colon \Gamma\to \Aut(N/\Lambda)$)   the \emph{linear data} associated to $\alpha$.
\end{definition}

\subsection{Structure of compact nilmanifolds}\label{sec:StructureNil}

We collect some standard facts about nilpotent Lie groups and their lattices. A standard reference is \cite{MR0507234}.

Let $N$ be a simply connected, nilpotent Lie group and $\Lambda\subset N$ be a lattice. Write $M=N/\Lambda$ for the quotient nilmanifold.  We have that $M$ is compact.
The set $\exp^{-1}(\Lambda)$ generates a lattice in the Lie algebra $\lien$. This, together with the coordinate system $\exp\colon \lien\to N$, determines a $\bQ$-structure on $N$.  For a connected closed normal subgroup $N'\lhd N$, the following are equivalent:
\begin{enumerate}
 \item $N'$ is defined over $\bQ$;
 \item $N'\cap\Lambda$ is a lattice in $N$;
 \item $\Lambda/(N'\cap\Lambda)$ is a lattice in $N/N'$;
 \item $N/N'\Lambda=(N/N')\Big/\big(\Lambda/(N'\cap\Lambda)\big)$ defines a compact nilmanifold that is naturally a quotient of $N/\Lambda$.
\end{enumerate}
In any of the above  cases, we say $N'$ is {\bf rational} and $M'=N/N'\Lambda$ is an {\bf algebraic factor} of $M$.

Recall that for an automorphism  $f\in \mathrm{Aut}(N)$, $D_e f$ is an automorphism of $\lien$, and $f\circ\exp=\exp\circ D_e f$. $f$ preserves $\Lambda$ if and only if $D_e f$ preserves $\exp^{-1}(\Lambda)$. In this case, $f$  descends to an automorphism of $M$. Hence $\Aut(M)=\Aut(N)$ can be regarded as a subgroup of $\GL(d,\bZ)$  if we identify the subgroup generated by $\exp^{-1}(\Lambda)\subset\lien$ with $\bZ^d\subset\bR^d$.

If in addition $ f$ preserves a rational normal subgroup $N'$, then it further descends an automorphism of $M'$.

Let $Z(N)$ denote the center of $N$. Then
\begin{enumerate}
	\item $Z(N)$ is normal and rational;
	\item Any element in $\Aut(M)$ preserves $Z(N) $, and thus descends to an element of $\Aut(N/Z(N)\Lambda)$.
\end{enumerate}

It follows that we have a series of central extensions
\begin{equation}\label{eq:centralextensionN} N= N_0 \to N_1 \to N_2\to \dots \to N_{r-1} \to N_r = \{e\}.%\label{eq:centralextensionN}
\end{equation}
Here $r$ is the degree of nilpotency.  Each $N_i$ is a simply connected, nilpotent Lie group and the kernel of the map $N_i\to N_{i+1}$ is the center of $N_i$.
We have a corresponding  series of central extensions
\begin{equation} \label{eq:centralextensionL}\Lambda= \Lambda_0 \to \Lambda_1 \to \Lambda_2\to \dots \to  \Lambda_r = \{e\}\end{equation}
where $\Lambda_{i+1} = \Lambda_i/(\Lambda_i\cap Z(N_i))$
and $\Lambda_i$ a lattice in $N_i$.  %As automorphisms preserve the center of  a group, an automorphism $ f$ of $N$ preserving $\Lambda$  descends inductively to an automorphism of $M_i=N_i/\Lambda_i$ for each $i$.

Suppose $\Gamma$ is a discrete group and suppose we have an action $\rho\colon \Gamma\to \Aut (\Lambda)$. % By Malcev it follows that
 $\rho$ extends uniquely to an action $\rho\colon \Gamma\to \Aut (N)$ and induces an action by homeomorphisms on $N/\Lambda$.  Moreover,
%that
$\rho$ descends to a $\Lambda_i$-preserving action $\rho_i\colon \Gamma\to \Aut(N_i)$ for every element of the central series \eqref{eq:centralextensionN} and \eqref{eq:centralextensionL}.  As above, we write $\rho_i(\gamma)$ to denote both an element of $\Aut(N_i)$ and the induced element of $\Aut(M_i)$.

\subsection{$\pi_1$-factors} \label{sec:factors}
In the sequel, we establish the existence of a semiconjugacy between actions in a more general setting than in the introduction.
  Consider $M$ any connected finite CW-complex.  %differentiable  manifold.  % with fundamental group $\Lambda_M$.
  Let $\td M$ be any normal %\footnote{check that this is needed}
  covering of $M$ and let $\Lambda_M$ denote the corresponding group of deck transformations.  We denote the action of $\Lambda_M$ on $\td M$ on the  right.   Let $\Gamma$ be a discrete group and $\alpha\colon \Gamma\to \homeo(M)$ an action.  We assume $\alpha$ lifts to an  action $\td \alpha\colon \Gamma\to \homeo(\td M)$; we then obtain an   induced  action $\alpha_* \colon \Gamma\to \Aut(\Lambda_M)$ defined by $$\td \alpha(\gamma)(x\lambda) = \td \alpha(\gamma)(x)  \alpha_*(\gamma)(\lambda).$$

Let $N$ be a simply connected, nilpotent Lie group and let $\Lambda\subset N$ be a lattice.
Suppose there is a surjective homomorphism $P_*\colon \Lambda_M\to \Lambda$.
%Recall   $\alpha$ lifts to an action of $\td M$, we obtain an $\alpha_*\colon \Gamma\to \Aut(\Lambda_M)$.
 We moreover assume that $\alpha_*(\gamma)(\ker P_*) = \ker P_*$  for all $\gamma\in \Gamma$.  Then   $\alpha_*$ induces an action
 $$\rho\colon \Gamma\to \Aut(\Lambda),\quad \rho(\gamma)(P_*(\lambda)) = P_*\alpha_*(\gamma)(\lambda).$$
 We extend $\rho$ to  $\rho\colon \Gamma\to \Aut(N)$.   As $N/\Lambda$ is a  $K(\Lambda,1)$,  there is a continuous  $ P\colon M\to N/\Lambda$ such that $ P$ lifts to $\td P\colon \td M\to N$ and the map between deck transformation groups  $\Lambda_M$ and  $\Lambda$  induced by $\td P$  coincides with $P_*$; \new{that is, $\td P(x\lambda) = \td P(x) \cdot  P_*(\lambda)$}.  %(See \cite{MR1866848}*{p.g. 184} and  \cite{MR1867354}*{Prop 1B.9}.)
In particular,  for each $\gamma\in \Gamma$ we have $P\circ \alpha (\gamma) \colon M\to N/\Lambda$ is homotopic to $\rho(\gamma)\circ P \colon M\to N/\Lambda$.

\begin{definition}
Under that above hypotheses, we say that the action  $\rho\colon \Gamma\to \Aut(\Lambda)$ (or $\rho\colon \Gamma\to \Homeo(N/\Lambda)$)  is a \emph{$\pi_1$-factor of $\alpha$} induced by the map $P_*$.
%$\alpha$ \emph{factors through $\rho\colon \Gamma\to \Aut(N/\Lambda)$} via $\td P\colon \td M\to N$.
\end{definition}

Let $P_0= P$ and $P_i$ be the composition of $P$ with the natural map $N/\Lambda\to N_i/\Lambda_i$ where $N_i$ and $\Lambda_i$ are as in \eqref{eq:centralextensionN} and \eqref{eq:centralextensionL}.  We similarly obtain maps $\td P_i\colon \td M \to N_i$ and \new{$P_{*,i} \colon \Lambda_M\to \Lambda_i$.}   Let $\rho_i$ be the action of $\Gamma$ on $N_i$ induced by $\rho$.

We have the following
\begin{claim}
 If $\rho$ is a $\pi_1$-factor of $\alpha$ induced by $P_*$ then  for every $i$, $\rho_i$ is a $\pi_1$-factor of $\alpha$ induced by $P_{*,i}$.
%   $\alpha $ factors through $\rho_i\colon \Gamma\to \Aut(N_i/\Lambda_i)$ via $\td P_i\colon \td M\to N_i$ then
\end{claim}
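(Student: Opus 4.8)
The claim is essentially a bookkeeping statement: if the datum $(P, P_*, \rho)$ exhibiting $\rho$ as a $\pi_1$-factor of $\alpha$ is given, then composing with the quotient maps $N/\Lambda \to N_i/\Lambda_i$ produces the datum exhibiting $\rho_i$ as a $\pi_1$-factor. The plan is to verify, for each $i$, the three hypotheses in the definition of a $\pi_1$-factor for the triple $(P_i, P_{*,i}, \rho_i)$: (a) $P_{*,i} \colon \Lambda_M \to \Lambda_i$ is a surjective homomorphism; (b) $\alpha_*(\gamma)(\ker P_{*,i}) = \ker P_{*,i}$ for all $\gamma \in \Gamma$; and (c) the induced action on $\Lambda_i$ (which is well-defined by (b)) coincides with $\rho_i$, and $P_i$ lifts to $\td P_i \colon \td M \to N_i$ intertwining the deck actions via $P_{*,i}$.

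First I would set up notation cleanly. Let $q_i \colon N \to N_i$ denote the canonical quotient homomorphism from the central series \eqref{eq:centralextensionN}, which restricts to $q_i \colon \Lambda \to \Lambda_i$ by \eqref{eq:centralextensionL}; both are surjective, and $q_i$ is $\rho$-equivariant by construction (this is exactly the statement that $\rho$ descends to $\rho_i$, recorded at the end of Section \ref{sec:StructureNil}). By definition $P_{*,i} = q_i \circ P_*$ and $\td P_i = q_i \circ \td P$, and $P_i = q_i \circ P$ on the level of the nilmanifolds. Then (a) is immediate since a composite of surjective homomorphisms is a surjective homomorphism. For (b): since $\rho_i(\gamma) \circ q_i = q_i \circ \rho(\gamma)$ and (by the $\pi_1$-factor hypothesis on $\rho$) $\rho(\gamma) \circ P_* = P_* \circ \alpha_*(\gamma)$, we get $\rho_i(\gamma) \circ P_{*,i} = P_{*,i} \circ \alpha_*(\gamma)$; in particular $\alpha_*(\gamma)$ carries $\ker P_{*,i}$ into itself, and applying the same to $\gamma^{-1}$ gives equality. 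This relation $\rho_i(\gamma) \circ P_{*,i} = P_{*,i} \circ \alpha_*(\gamma)$ is also precisely statement (c) at the level of $\Lambda_i$, so the action induced on $\Lambda_i$ by $\alpha_*$ through $P_{*,i}$ is $\rho_i$.

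It remains to check the lifting/intertwining property for $P_i$. By the $\pi_1$-factor hypothesis for $\rho$ we have $\td P(x\lambda) = \td P(x) \cdot P_*(\lambda)$ for $x \in \td M$, $\lambda \in \Lambda_M$. Applying the homomorphism $q_i$ gives $\td P_i(x\lambda) = q_i(\td P(x)) \cdot q_i(P_*(\lambda)) = \td P_i(x) \cdot P_{*,i}(\lambda)$, so $\td P_i$ lifts $P_i$ and induces $P_{*,i}$ on deck groups, as required. Finally, since $P \circ \alpha(\gamma)$ is homotopic to $\rho(\gamma) \circ P$ (part of the hypothesis on $\rho$), post-composing with the continuous map $q_i \colon N/\Lambda \to N_i/\Lambda_i$ preserves the homotopy, so $P_i \circ \alpha(\gamma)$ is homotopic to $\rho_i(\gamma) \circ P_i$; alternatively this is automatic from the deck-group intertwining since $N_i/\Lambda_i$ is a $K(\Lambda_i,1)$. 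This completes the verification that $\rho_i$ is a $\pi_1$-factor of $\alpha$ induced by $P_{*,i}$.

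**Main obstacle.** There is no serious obstacle — the entire content is the functoriality of the construction under the quotient maps $q_i$, together with the already-recorded fact that $q_i$ is $\rho$-equivariant. The only point requiring a moment's care is confirming that $P_{*,i}$ being a group homomorphism (not merely a set map) follows because $q_i$ and $P_*$ are both homomorphisms, so that "$\ker P_{*,i}$" and the induced action on the quotient make sense; once that is noted, each of the defining conditions transports mechanically. One could streamline the whole argument by induction on $i$, reducing to the single step $N \to N/Z(N)$ and then iterating, but the direct verification via $q_i$ is just as short.
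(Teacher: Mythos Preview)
Your proof is correct. The paper itself does not provide a proof of this claim --- it is stated without argument, being a routine consequence of the definitions --- and your verification via the quotient homomorphisms $q_i\colon N\to N_i$ is exactly the intended bookkeeping.
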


\begin{remark} The requirement that $P_*$ is surjective is not very restrictive. In fact, if the image of $P_*$ is a proper subgroup $\Lambda'$ of $\Lambda$, then $\Lambda'$ will be a lattice in its Zariski closure $N'$ (\cite[Theorem II.2.3]{MR0507234}). After replacing $N$ and $\Lambda$ with $N'$ and $\Lambda'$, we have a $\pi_1$-factor.\end{remark}

\def\wrd{d_{\mathrm{word}}}

\subsection{Coarse geometry of lattices}\label{sec:QI}
Let $G$ be a connected semisimple Lie group equipped with a right-invariant metric  and let  $\Gamma\subset G$ be a finitely generated discrete subgroup.
We may equip $\Gamma$ with two metrics: the word metric $\wrd$ induced by a fixed choice of generators and a right-invariant metric $d_G$ on $\Gamma$ inherited as a subset of $G$ from a right invariant Riemannian metric on $G$.

We say these two metrics are \emph{quasi-isometric} (or   that $\Gamma$ is \emph{quasi-isometrically embedded in $G$}) if there are $A>1$ and $B>0$ such that for any $\gamma_1, \gamma_2\in \Gamma$ we have
\begin{equation}\label{eq:QI}A\inv\cdot \wrd (\gamma_1, \gamma_2) - B\le d_G(\gamma_1, \gamma_2) \le A\cdot \wrd (\gamma_1, \gamma_2) + B.\end{equation}
Note that all word metrics are quasi-isometric.  For the remainder  we fix a finite generating set  $F=\{\gamma_\ell\}$ for $\Gamma$ with induced word metric $\wrd(\cdot, \cdot)$.

Note that if $\Gamma\subset G$ is a cocompact lattice then $\Gamma$ is automatically quasi-isometrically embedded in $G$.
For non-uniform lattices, we have the following result.  % from \cite{MR1828742}.  %, we have:

\begin{theorem}[Lubotzky-Mozes-Raghunathan, \cite{MR1828742}] \label{QI}%{\rm (Lubotzky-Mozes-Raghunathan)}
$\Gamma$ is quasi-isometricallly embedded in $G$ if the projection of $\Gamma$ to any $\R$-rank 1 factor is dense. In particular,  $\Gamma$ is quasi-isometrically embedded in $G$ under Hypothesis \ref{HigherRank}.   \end{theorem}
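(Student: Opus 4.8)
The plan is to follow the strategy of Lubotzky--Mozes--Raghunathan \cite{MR1828742}; as a complete argument is long, I only sketch it. One of the two inequalities in \eqref{eq:QI} is automatic: since $F$ is finite there is $A$ with $d_G(e,\gamma_\ell)\le A$ for every $\gamma_\ell\in F$, and right-invariance of $d_G$ gives $d_G(\gamma_1,\gamma_2)\le A\cdot\wrd(\gamma_1,\gamma_2)$. For the reverse inequality, right-invariance reduces us to bounding $\wrd(e,\gamma)$ linearly in $d_G(e,\gamma)$; and since $d_G(e,\gamma)$ is, up to multiplicative and additive constants, comparable to $\log\big(2+\|\gamma\|+\|\gamma\inv\|\big)$ in a fixed faithful linear representation, the task is to show that word length grows at most linearly in $\log\|\gamma\|$.

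If $\Gamma$ is cocompact this is the Milnor--\v{S}varc lemma: $\Gamma$ acts properly discontinuously and cocompactly by isometries on $G$ (equivalently on its symmetric space), so the orbit map $\Gamma\to G$ is a quasi-isometry. In general, the hypothesis on projections to $\R$-rank $1$ factors forces $\Gamma$ to be arithmetic (by Margulis's arithmeticity theorem, the hypothesis precisely rules out the situations not covered by it), so after passing to a commensurable group and using restriction of scalars we may assume $\Gamma=\mathbf{G}(\bZ)$ for a semisimple $\bQ$-group $\mathbf{G}$ with $\mathbf{G}(\bR)$ equal to $G$ up to compact factors, the latter being harmless. If $\mathbf{G}$ has $\bQ$-rank $0$ then $\Gamma$ is cocompact and we conclude as above. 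If the $\bQ$-rank is positive, reduction theory (the geometry of Siegel sets) shows that controlling $\wrd(e,\gamma)$ for a general $\gamma\in\Gamma$ reduces to controlling it for elements of $\mathbf{U}(\bZ)$, where $\mathbf{U}$ is the unipotent radical of a minimal $\bQ$-parabolic---the remaining torus and compact contributions being linear in $\log$ of the relevant sizes.

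The crux is therefore the distortion estimate $\wrd_\Gamma(e,u)=O\big(\log(2+\|u\|)\big)$ for $u\in\mathbf{U}(\bZ)$, i.e.\ that the inclusion $\mathbf{U}(\bZ)\hookrightarrow\Gamma$ is at most exponentially distorted. This is exactly where higher rank is indispensable: for $\SL(2,\bZ)\subset\SL(2,\bR)$, whose projection to its single rank-$1$ factor is discrete rather than dense, the cyclic unipotent subgroup is undistorted and the conclusion genuinely fails. The estimate is obtained by exploiting the rich commutator structure of $\mathbf{U}$ inside a higher-rank group: elements of one root subgroup appear as commutators of elements of other root subgroups with far smaller entries, and iterating such identities---organized by the root system, and, when the $\bQ$-rank is too small to supply enough root subgroups, by first embedding into a larger group where they are available, which the density of the projections permits---produces words of length $O(\log\|u\|)$. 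This commutator and bounded-generation analysis is the main difficulty and occupies the bulk of \cite{MR1828742}. The final assertion is then immediate: under Hypothesis \ref{HigherRank} the group $G$ has no almost-simple factor of $\R$-rank $1$ at all, so the hypothesis on projections to rank-$1$ factors holds vacuously and the preceding applies to $\Gamma$.
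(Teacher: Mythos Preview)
The paper does not prove this theorem at all: it is quoted as a result of Lubotzky--Mozes--Raghunathan and cited to \cite{MR1828742} without argument. Your sketch therefore goes well beyond what the paper provides, and as an outline of the LMR strategy it is broadly accurate---the reduction to the cocompact case via Milnor--\v{S}varc, the passage to an arithmetic setting, the use of reduction theory to isolate the unipotent contribution, and the identification of the logarithmic distortion of $\mathbf{U}(\bZ)$ in $\Gamma$ as the crux are all correct, as is the $\SL(2,\bZ)$ counterexample showing the hypothesis is sharp.

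Two places where your sketch is a bit loose: first, the sentence ``the hypothesis precisely rules out the situations not covered by [arithmeticity]'' elides the reduction from a general lattice to a product of irreducible ones and the observation that each irreducible piece then lives in a subproduct of total rank at least two; this is routine but worth stating. Second, the parenthetical about ``first embedding into a larger group\ldots which the density of the projections permits'' does not quite capture what LMR do when the $\bQ$-rank is one: that case is handled by a separate and rather delicate argument (occupying a substantial portion of \cite{MR1828742}) using $p$-adic places and strong approximation, not simply an embedding into a bigger real group. Since the paper only needs the statement under Hypothesis~\ref{HigherRank}, where every factor already has $\bR$-rank at least two, these subtleties are in any case not needed for the application here.
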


\subsection{Non-resonant linear representations}\label{sec:NonRes}
Let $G$ be a semisimple Lie group.   Let  $\lieg$  be the Lie algebra of $G$.
%As usual, write $\lieg$ for the lie algebra of $G$.
We fix a Cartan involution  $\theta$  of $\lieg$ and write $\liek$ and $\liep$, respectively,  for the $+1$ and $-1$ eigenspaces of $\theta$.  Denote by $\liea$   the maximal abelian subalgebra of $\liep$ and by $\liem$ the centralizer of $\liea$ in $\liek$.  %Note that the elements of $\Sigma$ are real linear functionals on $\liea$.
Recall that $\dim_\R(\liea)$ is the $\R$-rank of $G$.

Consider  a linear representation  $\tau \colon G\to \GL(n, \R)$ of $G$.  Then $\tau$ induces a representation   $d\tau\colon \lieg\to \mathfrak{gl}(n,\R)$.  
Let $\{\chi_i\}$  denote the restricted weights of $d\tau$ relative to $\liea$ and let  $\Sigma:= \{\zeta_j\}$ denote the restricted roots of $\lieg$ relative to $\liea$; that is $\Sigma$ is the restricted weights of the adjoint representation.
%We let $\Sigma$ denote the set of
%restricted roots of $\lieg$ with respect to $\liea$.
Given a simple factor $\lieg'\subset \lieg$ we denote by  $\Sigma(\lieg')$ the irreducible restricted root system.
 Recall that each $\chi_i$ and $\zeta_j$ is a  real linear functional on $\liea$.  Given $\zeta\in \Sigma$, let $\lieg^\zeta$ denote the corresponding subspace.  Recall that $\lieg^0= \liem \oplus \liea.$

\begin{definition}
Let $\psi\colon  \lieg\to \mathfrak{gl}(n,\R)$ be a linear representation.
We say a restricted root $\zeta_j$ of $\lieg$ is {\bf resonant} (with $\psi$) if there is a $c>0$ and a restricted weight $\chi_i$ of $\psi$ such that $\zeta_j= c\chi_i$;  otherwise we say $\zeta_j$ is {\bf non-resonant}.
Let $\Sigma_{NR}$ denote the set of non-resonant restricted roots.

We say the representation $\psi$ is  {\bf  strongly non-resonant} (with respect to $\mathrm{Ad}$) if  every non-trivial restricted root of $\lieg$ is non-resonant with $\psi$.
 We say the representation $\psi$ is  {\bf weakly non-resonant} (with respect to $\mathrm{Ad}$) if the set $$\lieg^0\cup\bigcup_{\zeta\in \Sigma_{NR}}\lieg^\zeta$$ generates $\lieg$ as a Lie algebra.

\end{definition}
If $\psi = d\tau$ for $\tau \colon G\to \GL(n, \R)$, we say $\tau$ is  weakly non-resonant if $d\tau$ is.  
Note that the existence of a non-resonant root implies that the $\R$-rank of $G$ is at least 2.

In the remainder, we will be interested only in representations with all weights \emph{non-trivial}; that is, representations for which the  weight space corresponding to  the zero weight is trivial.  We note that for many classical simple Lie groups, in particular for $G= \SL(n,\R)$,  there are infinitely many irreducible representations with all weights non-trivial.

Given a semisimple Lie algebra $\lieg$ containing rank-1 factors, a representation $\psi\colon\lieg \to  \mathfrak{gl}(n,\R)$ with all weights non-trivial may or may not be weakly non-resonant.  However, if all non-compact factors have $\R$-rank at least 2, the following lemma guarantees that all  representations we consider in the sequel are  weakly non-resonant.  Note however, that there are representations with  all weights non-trivial
for which there are resonant restricted roots.

\begin{lemma}\label{NonResonance}Suppose $\lieg$ is a semisimple real Lie algebra such that every non-compact factor has $\R$-rank $2$ or higher.  %, $A$ is a maximal $\R$-split torus, and
Let $\psi$ be a finite-dimensional, real representation of $\lieg$ such that all restricted weights of $\psi$ are non-trivial.
Then $\psi$ is weakly non-resonant.

Moreover, if no non-compact simple factors of $\lieg$ have restricted root system of type  $C_\ell$ then $\psi$ is strongly non-resonant.
 \end{lemma}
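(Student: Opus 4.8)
The plan is to reduce the statement about a general semisimple $\lieg$ to the case of a single irreducible non-compact factor and then to check weak (resp.\ strong) non-resonance for each irreducible restricted root system of rank $\geq 2$ by a direct examination of its root structure. First I would observe that it suffices to treat each non-compact almost-simple factor $\lieg'$ separately: the restricted roots of $\lieg$ are exactly the (extended-by-zero) restricted roots of the individual non-compact factors, and $\lieg^0$ together with \emph{all} root spaces generates $\lieg$; so if within each factor $\lieg'$ the spaces $\lieg'^0$ together with the non-resonant root spaces of $\lieg'$ already generate $\lieg'$, then assembling over factors gives generation of $\lieg$. (For compact factors there are no nonzero restricted roots, so they cause no trouble, and the hypothesis that every \emph{non-compact} factor has $\R$-rank $\geq 2$ is exactly what lets us run the argument factor-by-factor.) I would also reduce to $\psi$ irreducible, since the set of restricted weights of a direct sum is the union of the weight sets, so the non-resonant roots for $\psi$ are the intersection of the non-resonant root sets of the irreducible constituents; a root resonant with $\psi$ must be resonant with one constituent, and this only shrinks $\Sigma_{NR}$, so proving the claim for every irreducible $\psi$ with nontrivial weights yields it in general. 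Wait — that reduction is backwards for the "weakly" conclusion, since shrinking $\Sigma_{NR}$ could destroy generation; so instead I would keep $\psi$ arbitrary and argue directly that \emph{enough} roots are non-resonant, using that resonance $\zeta = c\chi_i$ with $c>0$ is a strong constraint on the ratios of root lengths versus weight lengths.

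The key step is the following dichotomy for an irreducible restricted root system $\Sigma(\lieg')$ of rank $\geq 2$. Fix a choice of simple restricted roots and note that every restricted weight $\chi$ of a representation with nontrivial weights is a dominant-conjugate of a nonzero dominant weight, hence lies in the (closed) dominant cone but is not $0$. A resonance $\zeta_j = c\,\chi_i$ forces $\chi_i$ to be a positive multiple of a restricted root; since $\chi_i$ is a weight, this happens only for rather special weights, and crucially the \emph{long} roots (or the roots of a fixed length) can be shown to be non-resonant unless the system is of type $C_\ell$. Concretely: in a reduced irreducible system not of type $C_\ell$, no restricted root is a positive rational multiple of a dominant weight that can occur with multiplicity in the relevant way — one checks this caseby-case ($A_\ell, B_\ell, D_\ell, E_6, E_7, E_8, F_4, G_2$, plus the non-reduced $BC_\ell$) using that a weight proportional to a root must be, after scaling, the root itself or (in the non-reduced case) its double, and then that such a vector fails to be a weight of any representation with all weights nontrivial, or only fails to be resonant for the long roots. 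This gives strong non-resonance away from type $C_\ell$. For type $C_\ell$ (and $BC_\ell$), the short roots may be resonant (indeed the standard representation of $\mathfrak{sp}(2\ell)$ has weights that are positive multiples of the short roots), but the long roots $2e_i$ are never positive multiples of a weight of a nontrivial representation when $\ell \geq 2$; the long root spaces together with $\lieg'^0$ already generate $\lieg'$ (a rank-$\geq 2$ fact about $C_\ell$: the long-root $\mathfrak{sl}_2$'s plus the Cartan generate), so $\psi$ is weakly non-resonant.

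I would organize the write-up as: (i) the factor-by-factor and structural reductions, reducing to one irreducible $\Sigma(\lieg')$ of rank $\geq 2$; (ii) a lemma that the long roots of $\Sigma(\lieg')$ are non-resonant with any $\psi$ all of whose weights are nontrivial, proved by the normalization "a positive multiple of a weight that is a positive multiple of a root" argument and a short case check that the only obstruction is type $C_\ell$/$BC_\ell$ where short (not long) roots can resonate; (iii) the Lie-theoretic fact that $\lieg'^0$ together with the long-root spaces of an irreducible rank-$\geq 2$ system generates $\lieg'$, which handles the "weakly non-resonant" conclusion in all cases including $C_\ell$; and (iv) for systems not of type $C_\ell$ every root has the same length as some long root up to the two-length dichotomy — more precisely every root is non-resonant, giving strong non-resonance. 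The main obstacle I anticipate is part (ii): making the case analysis clean and uniform rather than a list of eight special computations, and in particular pinning down exactly which weights of which representations can be proportional to a root — the cleanest route is probably to use the classification of restricted root systems together with the observation that if $\zeta = c\chi$ with $\chi$ a weight, then $\chi$ lies on a root line, so $\chi = \pm\tfrac{1}{2}\beta$ or $\chi = \pm\beta$ for a root $\beta$ collinear with $\zeta$, and then to rule these out for long $\zeta$ using dominance and the structure of fundamental weights. Type $C_\ell$ must be singled out because there $\tfrac12(\text{long root})$ is a short root and a legitimate weight, which is precisely the resonance that the theorem allows to survive.
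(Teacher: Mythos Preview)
Your proposal has the long/short dichotomy in type $C_\ell$ exactly reversed, and this breaks the argument in two places.

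In $C_\ell$ the long roots are $\pm 2e_i$ and the short roots are $\pm e_i \pm e_j$ ($i\neq j$). The standard representation of $\mathfrak{sp}(2\ell)$ has weights $\pm e_i$, so $2e_i = 2\cdot e_i$ exhibits each \emph{long} root as a positive multiple of a weight: the long roots are the resonant ones, not the short ones. (This is precisely the paper's worked example immediately preceding the lemma: for $C_2$ the resonant roots are $\{\pm 2\epsilon_1,\pm 2\epsilon_2\}$ and the non-resonant roots are $\{\pm\epsilon_1\pm\epsilon_2\}$.) Conversely, no $\pm e_i$ is proportional to any $e_i\pm e_j$, so the short roots are non-resonant for the standard representation. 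Thus your step~(ii), ``long roots are never resonant,'' is false.

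Your step~(iii) also fails: in $C_\ell$ the long roots $\pm 2e_i$ are pairwise orthogonal, and $2e_i\pm 2e_j$ is never a root for $i\neq j$, so the subalgebra generated by $\lieg'^0$ together with the long-root spaces is just $\lieg'^0\oplus\bigoplus_i(\lieg'^{2e_i}\oplus\lieg'^{-2e_i})$, a proper subalgebra (a sum of commuting $\mathfrak{sl}_2$'s and the Cartan). It is the \emph{short} root spaces that generate, since $(e_i+e_j)+(e_i-e_j)=2e_i$ gives $[\lieg'^{e_i+e_j},\lieg'^{e_i-e_j}]\subset\lieg'^{2e_i}$.

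The paper's actual mechanism is different from your length-based heuristic: it uses the Weyl group to reduce a resonant root to a simple root $\beta_{i_0}$, writes the proportional weight $\chi=t\beta_{i_0}$ in the basis of fundamental weights, and uses integrality of the coefficients $m_j=tC_{i_0 j}$ to force $t\in\Z$ whenever the $i_0$-th row of the Cartan matrix has $\gcd 1$. Then the weight string $-t\beta_{i_0},\dots,t\beta_{i_0}$ contains $0$, contradicting the hypothesis. The only rows with $\gcd>1$ occur in types $B_\ell,C_\ell,BC_\ell$; a short further check handles $B_\ell$ (only $B_2\cong C_2$ is exceptional) and $BC_\ell$ (strongly non-resonant), leaving $C_\ell$ with at most one simple root---the long one---possibly resonant, and the Weyl orbits of the remaining (short) simple roots generate. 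Your unjustified assertion that a weight on a root line must equal $\pm\beta$ or $\pm\tfrac12\beta$ is essentially the content of this Cartan-matrix argument, and getting it right is what identifies the resonant roots as long rather than short.
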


 As an example showing that we must consider  weakly non-resonant representations,  consider the standard action of $\mathrm{Sp}(4, \R)$ on $\R^4$.  The Lie algebra $\lieg$ of $\mathrm{Sp}(4, \R)$  is of type $C_2$ (and is moreover a split real form).  Relative to a certain basis $\{e_1, e_2\}$ of $\liea$, the restricted roots of $\lieg$ are  $\{\pm \epsilon _1 \pm \epsilon_2\} \cup \{\pm 2 \epsilon_1, \pm 2 \epsilon_2\}$ where $\epsilon_i (e_j) =\delta_{ij}.$  Take a representation whose highest weight  is given by $\lambda= \epsilon_1$.  Then the weights of the representation are $\{\pm \epsilon_1, \pm \epsilon_2\} $, and  hence are all non-trivial.
 The resonant restricted roots are $\{\pm 2 \epsilon_1, \pm 2 \epsilon_2\}$; however the Lie algebra is generated by
$\lieg^0$ and the root spaces corresponding to the set of non-resonant roots $\{\pm \epsilon _1 \pm \epsilon_2\}$.

\begin{proof}  [Proof of Lemma \ref{NonResonance}]
We recall some facts from the representation theory of semisimple Lie algebras.  Though  usually stated for complex representations of complex Lie algebras, all facts used here hold for real representations of real Lie algebras.  Consider first $\psi$   irreducible.  Then there is a restricted weight $\lambda$, called the \emph{highest weight}, such that
every other weight $\chi$ of $\psi$ is of the form $$\chi = \lambda- \sum n_i \beta_i$$
where $\{\beta_i\}$ is a set of simple positive roots and $n_i$ are positive integers.
We have that every weight $\chi$ is \emph{algebraically integral}: that is,
$2\frac{\langle \chi,\xi\rangle}{\langle \xi,\xi\rangle} \in \Z$   for every root $\xi\in \Sigma$,  where $\langle \cdot,\cdot\rangle$ is the inner product on $\liea^*$ induced from the Killing form.
Given a simple positive root $\beta_j$ there is a distinguished \emph{fundamental weight} $\varpi_j$ defined by  $2\frac{\langle \varpi_i,\beta_j\rangle}{\langle \beta_j,\beta_j\rangle} = \delta_{ij}.$
We have that every highest weight $\lambda$ is a positive integer combination of the fundamental weights $\varpi_i$.
%We note that the highest weight $\lambda$ may not uniquely determine an irreducible representation.

If $\lieg$ is simple and the root system  $\Sigma(\lieg)$ is reduced with   Cartan matrix $C= [C_{ij}]$, then the simple roots and fundamental weights are related by $\beta_i =\sum C_{ij} \varpi_j$.
The only simple non-reduced root system is of type $BC_\ell$; the roots of $BC_\ell$ are the union of the roots of $B_\ell$ and $C_\ell$ and the fundamental weights are those from $C_\ell$.

We proceed with the proof of the lemma.  Suppose that a weight $\chi$ and a root $\xi$ are positively proportional.    We can assume $\chi$ is a weight of an irreducible component  of $\psi$ and that $\xi$ is a root of a non-compact simple factor $\lieg_k$  of $\lieg=\oplus \lieg_k$.
Moreover, as the fundamental weights for distinct simple factors of $\lieg$ are linearly independent, we may assume $\chi = \lambda- \sum n_i \beta_i$ where $\beta_i$ are simple roots for $\Sigma(\lieg_k)$ and $\lambda= \sum k_j \varpi_j$ where $\varpi_j$ are the fundamental weights of the root system $\Sigma(\lieg_k )$ and $k_j$ are non-negative integers.
We may also take $\xi$ so that    $\frac 1 2 \xi$ is not a root.  Then there is an element of the Weyl group of $\lieg_k$ that sends $\xi$ to a simple root $\beta_{i_0}$ of $\Sigma(\lieg_k)$ \cite[Proposition 2.62]{MR1920389}. Moreover, the Weyl group preserves weights of $\psi $ hence we may assume that $\chi$ is positively proportional to a simple positive root $\beta_{i_0}$ of $\lieg_k$.

First consider the case that $\Sigma(\lieg_k)$ is not of type $B_\ell, C_\ell$ or $BC_\ell$.
Suppose $\chi= t\beta_{i_0}$.  We have $\chi = \sum_{j} m_j \varpi_j$ for some integers $m_j$.  Since the functionals $\varpi_j$ are linearly independent, it follows that $m_j = t   C_{i_0j}$ for every $j$.  Then $t$ is rational and $t= \frac{p}q$ where $q\in \mathbb{N}$ is smaller than the greatest common factor of all entries in the ${i_0}$th row of $[C_{ij}]$.  For $\Sigma(\lieg_k)$   not of type $B_\ell, C_\ell$ or $BC_\ell$, the entries of every row of the corresponding  Cartan matrix $[C_{ij}]$ have greatest common factor of $1$.  Thus $t$ is an integer.
Then, the weights of $\psi$ include the chain $-t\beta_{i_0}, -(t-1)\beta_{i_0},\cdots, (t-1)\beta_{i_0}, t\beta_{i_0}$.  As we assume $0$ is not a (non-trivial) weight, it follows that no such positively proportional pair $\chi$ and $\beta_{i_0}$ exist.
It follows that if all simple factors $\lieg_k$ are not of type $B_\ell, C_\ell$ or $BC_\ell$ then $\psi$ is strongly non-resonant.

In the case that $\Sigma(\lieg_k)$ is of type $C_\ell$, the Cartan matrix contains 1 row whose entries have greatest common factor 2; all other rows have greatest common factor 1.   Then there is at most 1 simple root that is resonant with $\psi$.  The orbits of the remaining simple roots under the Weyl group generate all of $\lieg_k$.
In the case that $\Sigma(\lieg_k)$ is of type $B_\ell$, from the tables of root and fundamental weight data (c.f.\ \cite[Appendix C]{MR1920389}), the only root  system of type $B_\ell$ admitting a representation with resonant roots and all weights non-trivial occurs for $B_2$.  However, $B_2 $ is isomorphic to $C_2$.

Finally, if   $\Sigma(\lieg_k)$ is of type $BC_\ell$ then the fundamental weights of $\Sigma(\lieg_k)$ coincide with those of type $C_\ell$ and, comparing tables of root data (c.f.\ \cite[Appendix C]{MR1920389}), it follows that the   fundamental weights of  $\Sigma(\lieg_k)$ are linear combinations of restricted roots of  $\Sigma(\lieg_k)$.  In particular, if $\chi$ is resonant with a root of $\lieg_k$ of type $BC_\ell$, then $\chi  =  k \beta_i$ for some positive integer $k$.  It follows that $0$ is a non-trivial weight  of $\psi$.   \end{proof}

\section{The main technical theorem}\label{sec:3}
To state the main technical theorem,  fix $G$ to be a connected semisimple Lie group with finite center.  %We may assume $G = \prod G_i$.
\subsection{Main theorem: actions on nilmanifolds}
\begin{theorem}\label{thm:mainnilmanifoldfull}
%Let $G$ be a semisimple Lie group and
Let $\Gamma\subset G$ be a lattice. Let $N$ be a simply connected nilpotent Lie group with Lie algebra $\lien$ and let $\Lambda\subset N$ be a lattice. Let  $M = N/\Lambda$  and let $\alpha\colon \Gamma\to \Homeo(M)$ be an action.
Assume   $\alpha \colon \Gamma\to \Homeo(M)$ lifts to an action $\td \alpha \colon \Gamma\to \Homeo(N)$ and let $\rho\colon \Gamma\to \Aut(N)$ denote the associated linear data.

Assume the following technical hypotheses are satisfied:

\begin{enumerate}
	\item the linear data $\rho\colon \Gamma \to \Aut(N)$ is the restriction to $\Gamma$ of a continuous morphism $\rho\colon G\to \Aut(N)$;
	\item $\Gamma$ is quasi-isometrically embedded in $G$;
	\item the representation $D\rho\colon G\curvearrowright\lien$ is weakly non-resonant with $\mathrm{Ad}\colon G\curvearrowright \lieg$;
	\item all restricted weights of the representation $D\rho$ with respect to $\liea$ are non-trivial, where $\liea$ is as in Section \ref{sec:NonRes}.
\end{enumerate}
Then, there exists a surjective,  continuous  map $h\colon M\to M$, homotopic to the identity, such that
	$$h\circ \alpha(\gamma)= \rho(\gamma)\circ h$$ for every $\gamma\in \Gamma$.
\end{theorem}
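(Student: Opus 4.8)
The plan is to pass to suspension spaces, so as to replace the $\Gamma$-actions by $G$-actions, to build the desired semiconjugacy first for a single well-chosen element of a Cartan subgroup, and then to bootstrap it to a fully $G$-equivariant map using the non-resonance hypotheses. Concretely, form the suspensions $X^\alpha = (G\times M)/\Gamma$ and $X^\rho = (G\times M)/\Gamma$, where $\Gamma$ acts by $\gamma\cdot(g,m)=(g\gamma^{-1},\alpha(\gamma)m)$, respectively by $(g\gamma^{-1},\rho(\gamma)m)$, and $G$ acts by left translation on the first coordinate; projection to $G/\Gamma$ makes each of these a $G$-equivariant bundle with fiber $M$. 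Since by hypothesis (1) $\rho$ extends to a $G$-action on $M$, the bundle $X^\rho$ is $G$-equivariantly homeomorphic to $(G/\Gamma)\times M$ with the product action $g\cdot(x,y)=(gx,\rho(g)y)$. It then suffices to produce a continuous $G$-equivariant bundle map $H\colon X^\alpha\to X^\rho$ over $G/\Gamma$ whose restriction to each fiber is homotopic to the identity: identifying the fibers over $e\Gamma$ with $M$ and using $\gamma\cdot[e,m]=[e,\alpha(\gamma)m]$ in $X^\alpha$ and $\gamma\cdot[e,m]=[e,\rho(\gamma)m]$ in $X^\rho$, the restriction $h:=H|_{\text{fiber over }e\Gamma}\colon M\to M$ is continuous, homotopic to the identity, and satisfies $h\circ\alpha(\gamma)=\rho(\gamma)\circ h$ for all $\gamma\in\Gamma$; surjectivity of $h$ is automatic, since a self-map of the closed nilmanifold $M$ homotopic to the identity has degree one and is therefore onto.

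\emph{The single Cartan element.} Choose $a=\exp H_0$ with $H_0\in\liea$ regular and $\chi(H_0)\neq 0$ for every restricted weight $\chi$ of $D\rho$; such $H_0$ exist by hypothesis (4), because the finitely many weight and root hyperplanes do not exhaust $\liea$. Then $D\rho(a)$ has all eigenvalues off the unit circle, so $\rho(a)$ is an Anosov automorphism of $M$, while on $X^\alpha$ the element $a$ acts covering the Weyl-chamber element $a$ on $G/\Gamma$. I would then construct a continuous, $\langle a\rangle$-equivariant bundle map $H_a\colon X^\alpha\to X^\rho$, fiberwise homotopic to the identity, by a higher-rank analogue of the Franks--Manning argument: lift to the relevant covers and solve the functional equation intertwining the lifted $a$-action with the hyperbolic automorphism $\rho(a)$ of $N$ for a map $\wtd H_a$ of bounded displacement, running the standard graph-transform/contraction on the stable and unstable pieces of $\rho(a)$. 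When $N$ is non-abelian this is organized through the central series \eqref{eq:centralextensionN}--\eqref{eq:centralextensionL} as in Manning's proof, each extension step being controlled by $D\rho(a)$ on the corresponding central fiber, whose restricted weights are again nontrivial and hence hyperbolic. Here hypothesis (2) is essential: when $G/\Gamma$ is noncompact, the quasi-isometric embedding of $\Gamma$ is what forces the lifted maps, and the solution $\wtd H_a$, to have displacement bounded uniformly over the base, which is needed both for $\wtd H_a$ to descend and for the \emph{uniqueness} of $H_a$ in its class. The Hölder statement comes out of the same hyperbolic estimates when $\alpha$ acts by Lipschitz homeomorphisms.

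\emph{Bootstrapping to $G$-equivariance.} Using uniqueness, I would enlarge the subgroup under which $H_a$ is equivariant. For $g\in Z_G(a)$ the substitution $H'(p):=g^{-1}\cdot H_a(g\cdot p)$ again solves the $a$-equation — because the $G$-action on $X^\rho$ commutes with $a$ exactly along $Z_G(a)$ — and is fiberwise homotopic to the identity, so $H'=H_a$; thus $H_a$ is $Z_G(a)$-equivariant, in particular equivariant under $A$ and under the compact $\liem$-part. Next, for each non-resonant restricted root $\zeta\in\Sigma_{NR}$ and $u\in\exp\lieg^\zeta$, conjugation by $a^n$ contracts (or expands) $a^{-n}ua^{n}$ at the rate $e^{n\zeta(H_0)}$, whereas the induced motion along the fiber $M$ is governed by the rates $e^{n\chi_i(H_0)}$ coming from $D\rho$; the non-resonance $\zeta\neq c\chi_i$ makes the resulting telescoping estimate close up and forces the defect measuring the failure of $u$-equivariance of $H_a$ to vanish, so that $H'(p):=u^{-1}\cdot H_a(u\cdot p)$ again solves the $a$-equation and hence equals $H_a$. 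By hypothesis (3) the subspaces $\lieg^0$ and $\{\lieg^\zeta:\zeta\in\Sigma_{NR}\}$ generate $\lieg$, so $Z_G(a)$ together with the subgroups $\exp\lieg^\zeta$ generate the connected group $G$, and therefore $H_a$ is $G$-equivariant. Taking $H=H_a$ and restricting to the fiber over $e\Gamma$ as above completes the proof.

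\emph{The main obstacle.} The crux is the non-resonant-root step: making the "defect vanishes" telescoping rigorous requires uniform control of non-stationary estimates over a possibly noncompact $G/\Gamma$ — again leaning on hypothesis (2) — and, more importantly, it must use \emph{only} the non-resonant roots together with the generation hypothesis (3), since resonant roots genuinely occur, as the $\mathrm{Sp}(4,\R)$ example in Section \ref{sec:NonRes} shows, and are invisible to this mechanism. A close second difficulty is establishing the existence and uniqueness of $H_a$ over a noncompact base, which is exactly where the Lubotzky--Mozes--Raghunathan theorem is indispensable.
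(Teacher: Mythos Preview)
Your overall architecture matches the paper closely: suspension to a $G$-action, construction of a semiconjugacy for a single Cartan element, extension to the centralizer by uniqueness, extension across non-resonant root subgroups, and finally generation of $G$ via hypothesis (3). The inductive use of the central series is also right. But the mechanism you describe for the key technical step is wrong, and this is not a minor detail.

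You write that the quasi-isometric embedding ``forces the lifted maps, and the solution $\wtd H_a$, to have displacement bounded uniformly over the base.'' Over a noncompact $G/\Gamma$ this is false, and the paper does not claim it. What hypothesis (2) actually buys is much weaker: writing $\psi^{s/u}(g,x)$ for the stable/unstable components of the central defect of the approximate conjugacy, the functions $C^{s/u}(g)=\sup_x\|\psi^{s/u}(g,x)\|$ descend to $G/\Gamma$ and satisfy $\log^+ C^{s/u}\in L^1(G/\Gamma)$. The QI embedding enters exactly here, to bound the word length $n(g)$ of the return element $\gamma_{sg}$ by the Riemannian distance on $G$, which is $L^1$ by the standard fact that $d_{G/\Gamma}(\cdot,\Gamma)\in L^1$. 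From log-integrability one gets, via a tempering-kernel argument, a measurable $\Gamma$-invariant $L_\epsilon$ with $C^{s/u}(g)\le L_\epsilon(g)$ and $L_\epsilon(s^kg)\le e^{\epsilon|k|}L_\epsilon(g)$. This \emph{subexponential growth} is what makes the correction series
\[
h_g(x)=\tphi_g(x)\exp\Big(\sum_{k\ge1}S^{k-1}\psi^s(s^{-k}g,x)\Big)\exp\Big(-\sum_{k\ge0}S^{-k-1}\psi^u(s^kg,x)\Big)
\]
converge for a.e.\ $g$; it does not converge by any uniform bound. Likewise, uniqueness of $h_g$ is proved by Poincar\'e recurrence to sets on which the defect is bounded, not by a global bound. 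A direct Franks--Manning contraction-mapping argument over the base, as you sketch, simply does not close up when $G/\Gamma$ is noncompact.

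A second, smaller point: your description of the non-resonant step (``conjugation by $a^n$ contracts $a^{-n}ua^n$ at rate $e^{n\zeta(H_0)}$ while the fiber moves at rates $e^{n\chi_i(H_0)}$'') is not the mechanism the paper uses, and it is not clear it can be made to work since $a$ does not commute with $u$. The paper instead exploits non-resonance to choose $s_1,s_2\in A$ with $\zeta(s_1)=\zeta(s_2)=0$ (so $s_j$ commutes with $u$) and a splitting $\liez_i=E\oplus F$ with $\|S_1|_E\|<1$, $\|S_2|_F\|<1$; the defect $\eta(g,x)$ of $u$-equivariance then satisfies $\eta(s_jg,x)=S_j\eta(g,x)$, and Poincar\'e recurrence forces $\eta=0$.
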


The surjectivity of the map $h$ in Theorem  \ref{thm:mainnilmanifoldfull} follows from elementary degree arguements.

 \subsection{Main theorem: $\pi_1$-factors}
Under the setup introduced in Section \ref{sec:factors}, we have the following generalization of Theorem  \ref{thm:mainnilmanifoldfull}.
\begin{theorem}\label{main:factorsfull}
%Let $G$ be a semisimple Lie Group and
Let $\Gamma\subset G$ be a lattice.
Let $M$ be a connected  finite CW-complex  and $\alpha\colon \Gamma\to \Homeo(M)$ an action.   Suppose for some normal cover $\td M$ of $M$ with deck group $\Lambda_M$ we have that $\alpha$ lifts to an action $\td \alpha \colon \Gamma \to  \Homeo(\td M)$.

Let $N$   be a nilmanifold and $\Lambda\subset N$ a lattice.  Assume there is a surjective homomorphism $P_*\colon \Lambda_M\to \Lambda$ inducing a $\pi_1$-factor $\rho\colon \Gamma\to \Aut (N)$.
%Moreover assume there  is a map $\td P\colon \td M\to  N$  such that  $\alpha $ factors through $\rho\colon G\to \Aut (N)$ via $\td P \colon \td M \to N$.
Assume the following technical hypotheses are satisfied:
\begin{enumerate}
	\item the linear representation $\rho\colon \Gamma \to \Aut(N)$ is the restriction to $\Gamma$ of a continuous morphism $\rho\colon G\to \Aut(N)$;
	\item $\Gamma$ is quasi-isometrically embedded in $G$;
	\item the representation $D\rho\colon G\curvearrowright\lien$ is weakly non-resonant with $\mathrm{Ad}\colon G\curvearrowright \lieg$;
	\item all restricted weights of the representation $D\rho$ with respect to $\liea$ are non-trivial, where $\liea$ is as in Section \ref{sec:NonRes}.
\end{enumerate}
Then, there exists a continuous map $h\colon M\to N/\Lambda$, homotopic to  $ P\colon M\to N/\Lambda$, such that
	$$h\circ \alpha(\gamma)= \rho(\gamma)\circ h$$ for every $\gamma\in \Gamma$.
\end{theorem}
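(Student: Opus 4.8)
The plan is to prove Theorem~\ref{main:factorsfull} by the same scheme as Theorem~\ref{thm:mainnilmanifoldfull}, carried out with the map $P\colon M\to N/\Lambda$ and its lift $\td P\colon \td M\to N$ in the role that the identity plays when $M=N/\Lambda$. First I would pass to suspensions. Form $M^\alpha=(M\times G)/\Gamma$, where $\Gamma$ acts by $\gamma\cdot(x,g)=(\alpha(\gamma)x,g\gamma^{-1})$; it fibers over $G/\Gamma$ with fiber $M$, and $G$ acts by $a\cdot[x,g]=[x,ag]$. Form the linear suspension $(N/\Lambda)^\rho$ analogously. Since, by hypothesis (1), $\rho$ is the restriction of a homomorphism $G\to\Aut(N)$, the bundle $(N/\Lambda)^\rho$ is $G$-equivariantly trivial, isomorphic to $(N/\Lambda)\times(G/\Gamma)$ with $G$ acting by $a\cdot(x,g\Gamma)=(\rho(a)x,ag\Gamma)$. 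A direct computation then shows that producing $h\colon M\to N/\Lambda$ homotopic to $P$ with $h\circ\alpha(\gamma)=\rho(\gamma)\circ h$ is equivalent to producing a continuous $G$-equivariant bundle map $M^\alpha\to(N/\Lambda)^\rho$ over $G/\Gamma$, homotopic to the bundle map $\hat P$ induced by $P$; under the trivialization this is the same as a continuous $\hat h\colon M^\alpha\to N/\Lambda$, homotopic to $\hat P$, with $\hat h(a\cdot p)=\rho(a)\,\hat h(p)$ for all $a\in G$. One recovers $h$ as the restriction of $\hat h$ to the fiber over the identity coset.

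Next I would reduce to the case that $N$ is abelian (so $N/\Lambda$ is a torus) by inducting on the degree of nilpotency, using the central series \eqref{eq:centralextensionN}--\eqref{eq:centralextensionL} and the Claim of Section~\ref{sec:factors} that each $\rho_i$ is again a $\pi_1$-factor of $\alpha$, induced by $P_{*,i}$. In the abelian case, hypothesis (4) (all restricted weights of $D\rho$ non-trivial) guarantees that a generic element $a_0$ of the Cartan subgroup $A=\exp\liea$ has $D\rho(a_0)$ hyperbolic, so $\rho(a_0)$ is an Anosov automorphism of the fiber. I would then construct a continuous $\hat h_0\colon M^\alpha\to N/\Lambda$, homotopic to $\hat P$, satisfying $\hat h_0(a_0\cdot p)=\rho(a_0)\hat h_0(p)$, by a fibered version of Franks' argument: pass to the cover $\bar M^\alpha=(\td M\times G)/\Gamma$ with deck group $\Lambda_M$ and to $(N\times G)/\Gamma\cong N\times(G/\Gamma)$ with deck group $\Lambda$; the map on $\bar M^\alpha$ induced by $\td P$ intertwines the $a_0$-translation with its linear model up to an error term which is governed by the discrepancy between $\td\alpha(\gamma)$ and $\rho(\gamma)$ as $\gamma$ ranges over $\Gamma$. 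Here hypothesis (2) (the Lubotzky--Mozes--Raghunathan quasi-isometric embedding, Theorem~\ref{QI}) is exactly what makes this error tempered along the non-compact base $G/\Gamma$; choosing $a_0$ sufficiently regular so that the fiberwise hyperbolicity rates dominate, a telescoping/contraction argument yields a unique bounded correction, hence $\hat h_0$. As in Franks' theorem, $\hat h_0$ is the \emph{unique} continuous map homotopic to $\hat P$ obeying the $a_0$-equivariance, once lifts are fixed.

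Then I would upgrade $\hat h_0$ to a $G$-equivariant map. For $a$ in the connected subgroup with Lie algebra $\lieg^0=\liem\oplus\liea$ (i.e.\ in $A$ or in the identity component of the centralizer of $A$ in $K$), the map $p\mapsto\rho(a)^{-1}\hat h_0(a\cdot p)$ is again $a_0$-equivariant (since $a$ commutes with $a_0$ and $\rho(a)$ with $\rho(a_0)$) and is homotopic to $\hat P$ by connectedness; uniqueness forces it to equal $\hat h_0$, giving equivariance under that subgroup. For each non-resonant restricted root $\zeta\in\Sigma_{NR}$ with root subgroup $G^\zeta$, consider the ``deviation from equivariance'', i.e.\ the fiber discrepancy between $\hat h_0(u\cdot p)$ and $\rho(u)\hat h_0(p)$ as a map of $u\in G^\zeta$ vanishing at $u=e$; conjugating by $a_0$ rescales the $G^\zeta$-parameter by $e^{\zeta(\log a_0)}$ while acting on the fiber by $\rho(a_0)$, whose rates are the weights $\chi_i(\log a_0)$, so the deviation is self-similar with incommensurate rates, and since $\zeta$ is not a positive multiple of any $\chi_i$ a standard non-stationary-linearization (resonance) argument forces it to vanish identically. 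By hypothesis (3) the subalgebras $\lieg^0$ and $\lieg^\zeta$, $\zeta\in\Sigma_{NR}$, generate $\lieg$, and since equivariance is preserved under composition, $\hat h_0$ is $G$-equivariant; restricting to the fiber over the identity coset yields the desired $h\colon M\to N/\Lambda$, homotopic to $P$, with $h\circ\alpha(\gamma)=\rho(\gamma)\circ h$ for all $\gamma\in\Gamma$.

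The main obstacle I anticipate is the single-element step: running Franks' scheme fiberwise over the non-compact homogeneous space $G/\Gamma$ demands quantitative control, uniform (or at worst sub-exponential) along the base, of the comparison between $\td\alpha$ and its linearization, which is precisely where the quasi-isometry input is essential and where one must be careful that the fiberwise hyperbolicity, after choosing $a_0$ deep in a Weyl chamber, dominates the accumulated error. A close second difficulty is making the resonance argument of the last paragraph rigorous in the continuous (rather than smooth) category, using only that $\zeta$ is non-resonant with the fiber weights to rigidify $\hat h_0$ along the unipotent directions.
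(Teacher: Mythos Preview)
Your overall architecture matches the paper: suspension, induction through the central series (the paper packages this as Theorem~\ref{thm:inductive}, correcting one central layer at a time), a Franks-type construction for a single hyperbolic $s\in A$, uniqueness to propagate to the centralizer of $s$, and then extension to $G$ root-by-root via weak non-resonance. Two points deserve comment.

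For the single-element step, the paper does not ``choose $a_0$ deep in a Weyl chamber so that hyperbolicity dominates the error.'' Over the non-compact base $G/\Gamma$ the central defect is unbounded, so no fixed fiber rate beats it uniformly. Instead the paper shows (Lemma~\ref{claim:login}) that the defect is $\log$-integrable over $G/\Gamma$---this is exactly where the quasi-isometric embedding enters, via the estimate on the word length of $\gamma_{sg}$---and then invokes a tempering kernel (Proposition~\ref{prop:subexp}) to get subexponential growth along $s$-orbits for almost every base point. The formal correction series \eqref{eq:hdef} then converges for a.e.\ $g$; continuity in $g$ is recovered only after $G$-equivariance is established. Any hyperbolic $s\in A$ works; making it ``more regular'' is not the mechanism.

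More substantively, your mechanism for extending along a non-resonant root direction is not the paper's, and in the $C^0$ category it has a gap. You propose to conjugate by $a_0$, rescaling the unipotent parameter by $e^{\zeta(\log a_0)}$ while acting on the fiber by $\rho(a_0)$, and then appeal to a rate-mismatch/linearization argument. But the deviation is only continuous in $u$, with no modulus to compare against fiber rates, and the base point simultaneously escapes in $G/\Gamma$; you yourself flag this as the ``close second difficulty.'' The paper sidesteps it entirely (Proposition~\ref{UnipotentSemiconj}): non-resonance of $\zeta$ with the weights means $\ker\zeta\cap\liea$ is not contained in the kernel of any weight, so one can choose $s_1,s_2\in A\cap\ker\zeta$ with $D\rho(s_1)$ contracting one summand of $\liez_i$ and $D\rho(s_2)$ the complementary summand. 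Since $s_j$ commutes with $u=\exp(X)$, the deviation satisfies $\eta(s_j g,x)=D\rho(s_j)\,\eta(g,x)$ with $u$ fixed; Poincar\'e recurrence to a set on which $\sup_x\|\eta(g,x)\|$ is bounded then forces $\eta\equiv 0$. This uses no regularity beyond continuity and is the key idea you are missing.
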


We prove Theorem  \ref{main:factorsfull} inductively on the step of nilpotency.  As in Section \ref{sec:factors}, given $\rho\colon G\to \Aut (N)$, let $\rho_i\colon G\to \Aut(N_i)$ denote the induced action on the factor $N_i$ of \eqref{eq:centralextensionN}.  \new{Recall for $0\le i\le r$ we let  $ P_i\colon  M \to N_i/\Lambda_i$,  $ \td P_i\colon  \td M \to N_i$, and $P_{*,i}\colon \Lambda_M\to \Lambda_i$  be the compositions   of $ P$,  $\td P$, and $P_*$ followed by the the natural projection $N/\Lambda \to N_i/\Lambda_i$, $N\to N_i$, $\Lambda\to \Lambda_i$.    Note   that   $\td P_i(x\cdot \lambda ) = \td P_i(x)\cdot P_{*,i}(\lambda)$.  If $h\colon M\to N_i/\Lambda_i$ is homotopic to $P_i$ we say a lift $\td h \colon \td M_i \to N_i$ is $\Lambda_M$-equivariantly homotopic to $\td P_i$ if there is a homotopy from $\td h$ to $\td P_i$ that factors over a homotopy from $h $ to $P_i$.  If $\td h$ is  $\Lambda_M$-equivariantly homotopic to $\td P_i$ we have $\td h(x\cdot \lambda) = \td h(x) \cdot P_{*,i}(\lambda)$.  }
%and $\rho_i$ for the induced linear data.

Also note that $N_i/\Lambda_i$ has a natural structure of a  fiber bundle  over $N_{i+1}/\Lambda_{i+1}.$  Note that if conditions (1), (3), and (4) of Theorem \ref{main:factorsfull} hold, then they hold for the  action $\rho_i\colon G\to \Aut (N_i)$.

\begin{theorem}\label{thm:inductive}
Let $M$, $G$, $\Gamma$, $\alpha$ and $\rho$ be as in Theorem \ref{main:factorsfull}.
 Let $N_i/\Lambda_i$ be one of the factors appearing in \eqref{eq:centralextensionN} and \eqref{eq:centralextensionL}.  Assume there exists a map $h_{i+1}\colon M \to N_{i+1}/\Lambda_{i+1}$, homotopic to  $P_{i+1}\colon M\to N_{i+1}/\Lambda_{i+1}$, such that
 	$$h_{i+1}\circ \alpha(\gamma) = \rho_{i+1} (\gamma)\circ h_{i+1}$$
	for all $\gamma\in \Gamma$.  Moreover, assume $h_{i+1}\colon M \to N_{i+1}/\Lambda_{i+1}$ lifts to $\td h_{i+1}\colon \td M \to N_{i+1} $ with %\begin{equation}\label{eq10}
	$\td h_{i+1}\circ \td \alpha(\gamma) = \rho_{i+1} (\gamma)\circ \td h_{i+1} $ \new{and $\td h_{i+1}$   $\Lambda_M$-equivariantly homotopic to $\td P_{i+1}$.} 
	
	% \end{equation}

Then there exists a continuous map $h_i\colon M \to N_{i}/\Lambda_{i}$ such that $h_i$ is homotopic to  $P_{i}\colon M\to N_{i}/\Lambda_{i}$, $h_i\colon M\to N_i/\Lambda_i$ lifts $h_{i+1}$, and
 	$$h_{i}\circ \alpha(\gamma) = \rho_{i} (\gamma)\circ h_{i}$$
	for all $\gamma\in \Gamma$.
Moreover, $h_i$ is the \new{unique} map having a lift  $\td h_i\colon \td M\to N_i$ with $\td h_{i}\circ \td \alpha(\gamma) = \rho_{i} (\gamma)\circ \td h_{i}$    \new{and $\td h_{i}$   $\Lambda_M$-equivariantly homotopic to $\td P_{i}$.} % and the map  $h_i$ is uniquely determined by the above properties.}

\end{theorem}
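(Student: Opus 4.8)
The plan is to prove Theorem~\ref{thm:inductive} by first reducing the extension problem across the central factor $Z(N_i)$ to a twisted cohomological equation over the $\Gamma$-action on $M$, and then solving that equation by passing to the suspension over $G/\Gamma$ and exploiting the hyperbolicity of a generic Cartan element together with the non-resonance hypotheses. Note that the base case $i=r-1$ is the degenerate case $N_r=\{e\}$, in which $N_{r-1}$ is abelian and ``lifting $h_r$'' is vacuous, so this lemma uniformly subsumes the torus case; in general the induction runs from $i=r-1$ down to $i=0$. For the reduction, write $Z_i=Z(N_i)$ and $\liez_i=\mathrm{Lie}(Z_i)$. Since $N_i$ is simply connected nilpotent, $\exp\colon\liez_i\to Z_i$ is an isomorphism, $Z_i$ is rational, $T_i:=Z_i/(Z_i\cap\Lambda_i)$ is a torus, and the projection $q_i\colon N_i/\Lambda_i\to N_{i+1}/\Lambda_{i+1}$ is a principal $T_i$-bundle. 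Because $h_{i+1}$ is homotopic to $P_{i+1}$ and $q_i\circ P_i=P_{i+1}$, the pullback $h_{i+1}^{*}(N_i/\Lambda_i)$ is isomorphic as a $T_i$-bundle to $P_{i+1}^{*}(N_i/\Lambda_i)$, which has the section $P_i$; so I would fix a section $h_i^{0}$ of $h_{i+1}^{*}(N_i/\Lambda_i)$ homotopic to $P_i$, and lifting along the homotopy obtain $\td g:=\td h_i^{0}\colon\td M\to N_i$ lifting $\td h_{i+1}$ with $\td g(x\lambda)=\td g(x)P_{*,i}(\lambda)$. Every lift of $\td h_{i+1}$ to $N_i$ has the form $\td g\cdot\eta$ for continuous $\eta\colon\td M\to Z_i$; $\Lambda_M$-equivariance forces $\eta$ to be $\Lambda_M$-invariant, hence to descend to $M$, and since $\eta$ is central the identity $\td h_i\circ\td\alpha(\gamma)=\rho_i(\gamma)\circ\td h_i$ becomes, in the additive identification $Z_i\cong\liez_i$,
\[
\eta\bigl(\alpha(\gamma)x\bigr)-B(\gamma)\,\eta(x)=c(\gamma,x),\qquad\gamma\in\Gamma,\ x\in M,
\]
where $B:=D\rho_i|_{\liez_i}$ and $c(\gamma,x):=\td g(\td\alpha(\gamma)x)^{-1}\,\rho_i(\gamma)\td g(x)$. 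I would then check that $c$ is a continuous, $B$-twisted $\liez_i$-valued $1$-cocycle over $(\Gamma,\alpha,M)$: it lands in $Z_i$ because both factors project in $N_{i+1}$ to $\rho_{i+1}(\gamma)\td h_{i+1}(x)$, and it is $\Lambda_M$-invariant because $P_{*,i}\circ\alpha_{*}(\gamma)=\rho_i(\gamma)\circ P_{*,i}$. Conversely, a continuous solution $\eta$ yields an $h_i$ with every required property: it lifts $h_{i+1}$ since $\eta$ is central, and it is $\Lambda_M$-equivariantly homotopic to $P_i$ via $t\mapsto\td g\cdot(t\eta)$. Finally $B$ is a subquotient of $D\rho$, so by assumptions~(1) and~(4) it extends to a continuous representation $B\colon G\to\GL(\liez_i)$ all of whose restricted weights relative to $\liea$ are non-trivial, and every root non-resonant with $D\rho$ is a fortiori non-resonant with $B$.

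To solve the cohomological equation I would pass to the suspension $Y:=(M\times G)/\Gamma$, an $M$-bundle over $G/\Gamma$ carrying the left $G$-action $g\cdot[x,g_0]=[x,gg_0]$. Using that the twist $B$ extends to $G$, the cocycle $c$ suspends to a continuous $B$-twisted cocycle $\hat c\colon G\times Y\to\liez_i$ whose restriction to the fibre $M\times\{e\}$ is $c$; a continuous $\hat\eta\colon Y\to\liez_i$ with $\hat\eta(g\cdot y)-B(g)\hat\eta(y)=\hat c(g,y)$ for all $g\in G$ then restricts on that fibre to the desired $\eta$. I would build $\hat\eta$ by first constructing it for a single Cartan element: choose $H\in\liea$ with $\chi(H)\ne0$ for every restricted weight $\chi$ of $D\rho$ and set $a=\exp H$, so that $B(a)$ is hyperbolic on $\liez_i$ with splitting $\liez_i=E^{+}\oplus E^{-}$; the cohomological equation for the single translation $y\mapsto a\cdot y$ is then solved by the Franks--Manning telescoping formula \cite{MR0271990,MR0358865}, summing $B(a)^{-n-1}$ against the $E^{+}$-component of $\hat c(a,a^{n}\cdot{})$ and $B(a)^{n-1}$ against the $E^{-}$-component of $\hat c(a,a^{-n}\cdot{})$. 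The real work here is convergence: $G/\Gamma$ is non-compact for non-uniform lattices, so I would use the quasi-isometric embedding of $\Gamma$ in $G$ (assumption~(2), guaranteed by Theorem~\ref{QI}) to control the growth of $\hat c(a,a^{\pm n}\cdot{})$ — measured by word length, which is comparable to $G$-distance — relative to the hyperbolicity rate of $B(a)$, so that the series define a continuous (tempered) solution of the $a$-equation on $Y$.

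Next I would upgrade this single-element solution to a genuinely $G$-equivariant one. For $g\in G$ consider the defect $\varepsilon_g:=\hat\eta(g\cdot{})-B(g)\hat\eta-\hat c(g,{})$; it vanishes at $a$ and at $e$, and the cocycle identity for $\hat c$ turns it into a transformation rule under $y\mapsto a\cdot y$ and under conjugation by $a$. If $g\in\exp\lieg^{0}$, so that $g$ centralizes $a$ (as $\lieg^{0}=\liem\oplus\liea$ centralizes $\liea$), then $\varepsilon_g$ satisfies the homogeneous equation $\varepsilon_g(a\cdot{})=B(a)\varepsilon_g$, and the growth estimate together with hyperbolicity of $B(a)$ forces $\varepsilon_g\equiv0$. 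If $g=u$ lies in a root group $U^{\zeta}$ with $\zeta$ non-resonant with $D\rho$, then conjugation by $a$ contracts (or dilates) $U^{\zeta}$; iterating the rule expresses $\varepsilon_u(a^{n}\cdot{})$ through $B(a)^{n}\varepsilon_{u_n}$ with $u_n\to e$, and decomposing $\varepsilon_u$ into $B(a)$-weight spaces shows that the temperedness needed for consistency would force a restricted weight of $D\rho$ to be positively proportional to $\zeta$, contradicting non-resonance; hence $\varepsilon_u\equiv0$. Since $\{g:\varepsilon_g\equiv0\}$ is a subgroup containing $\exp\lieg^{0}$ and every non-resonant root group, weak non-resonance of $D\rho$ (assumption~(3), i.e.\ the generation statement of Lemma~\ref{NonResonance}) shows it equals $G$. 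Restricting $\hat\eta$ to $M\times\{e\}$ produces $\eta$, hence $h_i$. Uniqueness comes by the same mechanism: a second admissible $\td h_i'$ projects to an admissible lift at level $i+1$, which by the inductive uniqueness equals $\td h_{i+1}$, so $\td h_i'=\td g\cdot\zeta$ with $\zeta\colon M\to\liez_i$ solving the same equation; then $\eta-\zeta$ solves the homogeneous equation, suspends to the continuous $B$-equivariant map $[x,g]\mapsto B(g)(\eta-\zeta)(x)$ on $Y$, and hyperbolicity of $B(a)$ with the growth estimate forces it to vanish, so $h_i=h_i'$.

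I expect the decisive difficulties to be the two analytic points above: making the Franks--Manning telescoping converge over the \emph{non-compact} suspension, which is exactly where the quasi-isometric embedding of $\Gamma$ in $G$ is indispensable, and bootstrapping the solution for a single Cartan element to a $G$-equivariant one, where both non-resonance hypotheses and the generation conclusion of Lemma~\ref{NonResonance} are used to propagate equivariance through the root groups to all of $G$. The reduction in the first paragraph and the uniqueness argument are essentially formal.
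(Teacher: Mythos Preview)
Your overall architecture matches the paper's: lift $h_{i+1}$ to an approximate $\tilde\phi$ (your $\tilde g$), measure the central defect, pass to the suspension, solve for one hyperbolic Cartan element, then propagate to the centralizer and to non-resonant root groups. But there is a genuine gap at the convergence step, and a secondary one at the root-group step.

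\textbf{Convergence.} You expect the Franks--Manning telescoping series to converge pointwise to a continuous $\hat\eta$ on $Y$, by comparing the growth of $\hat c(a,a^{\pm n}\cdot{})$ (controlled via word length and the quasi-isometric embedding) to the hyperbolicity rate of $B(a)$. This does not work. The central defect grows \emph{exponentially} in word length: unwinding $\rho_i(\gamma)^{-1}\tilde g(\tilde\alpha(\gamma)x)$ along a word in generators gives a bound of the form $n(\gamma)\,C^{\,n(\gamma)}$ with $C=\max_\ell\|D\rho_i(\gamma_\ell)\|$, and there is no reason for this rate to be beaten by $\|B(a)^{\mp 1}\|$. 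Over a non-uniform $G/\Gamma$, the $a$-orbit of a base point makes arbitrarily deep excursions into the cusp, so the series diverge at many points. The paper's fix is different in kind: one shows that the fiberwise supremum of the defect has $\log^+$ in $L^1(G/\Gamma)$ (this is exactly where the quasi-isometric embedding and the integrability of $d_{G/\Gamma}(\cdot,\Gamma)$ enter), then applies a tempering-kernel argument to get \emph{subexponential} growth along the $a$-orbit for \emph{almost every} base point. The upshot is a family $g\mapsto h_g$ that is continuous in $x$ but only measurable in $g$. One works with this measurable family throughout (uniqueness, extension to the centralizer, extension to root groups all use Poincar\'e recurrence to sets where the defect is bounded), and recovers continuity in $g$ only at the very end, once full $G$-equivariance forces $h_{ag}=\rho_i(a)h_g$ for every $a\in G$.

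\textbf{Root groups.} Your extension to $U^\zeta$ via conjugation by the fixed element $a$ is not how the paper proceeds, and your sketch does not close: with a single $a=\exp H$ you only compare $\chi(H)$ to $\zeta(H)$, which says nothing about positive proportionality of $\chi$ and $\zeta$. The paper instead uses that after the centralizer step the whole torus $A$ is already intertwined; for a non-resonant $\zeta$ one then chooses $s_1,s_2\in A\cap\ker\zeta$ (so they commute with $\exp\lieg^\zeta$) whose combined contraction covers $\liez_i$, and kills the defect by Poincar\'e recurrence to a set where it is uniformly bounded. Non-resonance is precisely what guarantees such $s_1,s_2$ exist.

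The reduction to a central cohomological equation and the uniqueness argument are fine and agree with the paper.
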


Theorem \ref{main:factorsfull} follows immediately from backwards induction on Theorem \ref{thm:inductive} (with base case $i= r-1$ and  $N_{i+1} = N_r= \{e\}$).  Theorem \ref{thm:mainnilmanifoldfull} follows immediately from Theorem \ref{main:factorsfull} taking $P_*$ and $P$ to be the identity maps.

\section{Preparatory constructions for the proof of Theorem \ref{thm:inductive}}\label{sec:4}
%In what follows we omit the subscripts on the actions $\rho_i$.

\subsection{Lifting property}\label{sec:lifting}
We retain all notation  appearing in Theorem \ref{thm:inductive}.
Recall that $N_{i}/\Lambda_i$ has the structure of  a fiber bundle over $N_{i+1}/\Lambda_{i+1}$ (with fiber $Z_i/(\Lambda_i\cap Z_i)$ isomorphic to $\T^{d_i}$.)
By construction,  $P_{i+1}\colon  M \to N_{i+1}/\Lambda_{i+1}$  lifts to $ P_i\colon  M \to N_{i}/\Lambda_{i}$.
Write $p_{i, i+1}\colon N_{i}/\Lambda_{i} \to N_{i+1}/\Lambda_{i+1}$  and 
$\td p_{i, i+1}\colon N_{i} \to N_{i+1} $ for the natural projection maps. 
As we assume $h_{i+1}$  is homotopic to $P_{i+1}$, by the lifting property of fiber bundles we may find a continuous  $\phi\colon  M \to N_{i}/\Lambda_{i}$ such that
	\begin{enumerate}
		\item $\phi$ is homotopic to $P_i$, 
		\item $\phi$ is a lift of $h_{i+1}$,
		\item \new{the homotopy from $\phi$ to $P_i$ factors through $p_{i, i+1}$ to the homotopy from $h_{i+1} $ to $P_{i+1}$.}
	\end{enumerate}
%Write $p_{i, i+1}\colon N_{i}/\Lambda_{i} \to N_{i+1}/\Lambda_{i+1}$ for the natural projection map.  
%In particular$p_{i, i+1}\circ \phi = h_{i+1}$.  
In particular, as $h_{i+1}$ intertwines the linear and non-linear $\Gamma$-actions, we have equality of maps from $M\to N_{i+1}/\Lambda_{i+1}$
\begin{equation}\label{eq:partialconj}p_{i, i+1}( \phi (\alpha(\gamma)(x))) = \rho_{i+1}(\gamma) (p_{i, i+1}\circ \phi (x))\end{equation} for all $\gamma\in \Gamma$.
Our goal in proving Theorem \ref{thm:inductive} will be to correct $\phi$ so that \eqref{eq:partialconj} remains valid without the projection factor $p_{i, i+1}$.

\new{
Applying the homotopy lifting property to the bundle $\td M\to M$ we may select a distinguished lift  $\td \phi\colon M\to N_i$ such that $\td \phi$ is $\Lambda_M$-equivariantly homotopic to $\td P_i$.  
Note that  $\td p_{i,i+1}\circ \td \phi$ is a lift $p_{i,i+1}\circ \phi= h_{i+1}$.  Moreover, the image of the homotopy from $\td \phi$ to $\td P_i$ under   $\td p_{i,i+1}$ is a lift of the homotopy from $h_{i+1}$ to $P_{i+1}$.  Since $\td p_{i,i+1}\circ \td P_{i} = \td P_{i+1}$ it follows that $\td p_{i,i+1}\circ \td \phi = \td h_{i+1}$.}
%$\td p_{i,i+1}\circ \td \phi = 
In particular for  $\td \phi$
we have %may select a lift $\td \phi\colon M\to N_i$ with
\begin{equation}\label{eq4}\td p_{i, i+1} \circ \td \phi \circ \td \alpha (\gamma )= \rho_{i+1}(\gamma) \circ \td p_{i, i+1} \circ \td \phi \end{equation}
and $\td \phi(x\cdot \lambda) = \td \phi(x) \cdot P_{*,i}(\lambda)$. 

%Write $\td p_{i,i+1}\colon N_i\to N_{i+1}$ for the natural projection.  Recall $h_{i+1}$ is assumed to lift to $\td h_{i+1}\colon \td M\to N_{i+1}$ intertwining the $\Gamma$-actions $\td \alpha$ and $\rho$ and $\Lambda_M$-equivariantly homotopic to $\td P_{i+1}$.  
%
%
%
%Note that given any lift $\td \phi:\td M\to N_i$ of $\phi$,  $\td p_{i,i+1}\circ \td \phi$ is a lift $p_{i,i+1}\circ \phi= h_{i+1}$.  It follows we may select a lift $\td \phi$ of $\phi$ with $\td p_{i,i+1}\circ \td \phi = \td h_{i+1}$; in particular for such a lift $\td \phi$
%we may select a lift $\td \phi\colon M\to N_i$ with
%\begin{equation}\label{eq4}\td p_{i, i+1} \circ \td \phi \circ \td \alpha (\gamma )= \rho_{i+1}(\gamma) \circ \td p_{i, i+1} \circ \td \phi.\end{equation}

%We fix the distinguished lift $\td \phi$ for this and the following section.

\subsection{Suspension spaces.}\label{sec:suspspace}
Recall that, as $\alpha$ is assumed to lift to $\td \alpha \colon \Gamma\to \Homeo(\td M)$,  we have an action $\alpha_* \colon \Gamma\to \Aut(\Lambda_M)$.
We define the (right) semi-direct product $\Gamma\ltimes_{\alpha_*}\Lambda_M$ by $$( \gamma,\lambda) \cdot (\bar \gamma, \bar \lambda) = ( \gamma \bar \gamma, \alpha_*(\bar \gamma\inv) (\lambda) \bar \lambda).$$
$\Gamma\ltimes_{\alpha_*} \Lambda_M$  acts on $G\times \td M$ on the right by
	$$(g,x) \cdot ( \gamma, \lambda) = (g\gamma, [\alpha(\gamma\inv)(x)]\lambda)$$
We similarly define the (right) semi-direct product $\Gamma\ltimes_\rho\Lambda_i$ by
%$$( \gamma,\lambda) \cdot (\bar \gamma, \bar \lambda) = ( \gamma \bar \gamma, \rho(\bar \gamma\inv) (\lambda) \bar \lambda)$$
$$( \gamma,\lambda) \cdot (\bar \gamma, \bar \lambda) = ( \gamma \bar \gamma, \rho_i(\bar \gamma\inv) (\lambda) \bar \lambda)$$
acting on
$G\times N_i$  by $$
(g,n) \cdot  (\gamma,\lambda) = (g\gamma, n  \rho_i(g\gamma)(\lambda)).$$
We remark that the asymmetry in the actions is intentional.

We have right $\Gamma$- and $\Lambda_M$-actions (respectively $\Gamma$- and $\Lambda_i$-actions)  on $G\times \td M$ (resp.\ $G\times N_i$) induced by the natural embeddings of $\Gamma$ and $\Lambda_M$ into $\Gamma\ltimes_{\alpha_*}\Lambda_M$ (resp.\ $\Gamma$ and $\Lambda_i$ into $\Gamma\ltimes_{\rho}\Lambda_i$.)

$P_{*,i}\colon \Lambda_M \to \Lambda_i$ can be extended to $$\Psi\colon \Gamma\ltimes_{\alpha_*}\Lambda_M\to \Gamma\ltimes_\rho\Lambda_i$$
by $$\Psi(\gamma, \lambda) = (\gamma, P_{*,i}(\lambda)).$$
We check that $\Psi$ defines a homomorphism.

Recall we have a continuous representation $\rho\colon G\to \Aut(N)$ which in turn descends to $\rho_i\colon G\to \Aut(N_i)$.
We define left $G$-actions on $G\times \td M$ and $G\times N_i$ by $$a\cdot (g, x) = (ag, x),\quad \quad a\cdot (g,n) = (ag, \rho_i(a)n)$$
for all $a\in G, g\in G, x\in M$ and $n\in N_i$.   (Again the asymmetry in the definitions is intentional.)

As the left and right actions defined above commute, we obtain left $G$-actions on the quotient spaces
 \begin{enumerate}
	\item $M^\alpha:= G\times M /\Gamma\ltimes_{\alpha_*}\Lambda_M$;
	\item $(N_i/\Lambda_i)_\rho:= G\times N_i/\Gamma\ltimes_\rho\Lambda_i$.
\end{enumerate}
Here, the upper subscript denotes the standard suspension space construction.   The lower subscript denotes a twisted \emph{Lyapunov suspension} space.

%%\begin{remark}
%We remark that the twisted Lyapunov suspension $(N_i/\Lambda_i)_\rho$ is equivalent to the standard suspension construction induced by $(g,n)\cdot (\gamma,\lambda)=(g\gamma,\rho_i(\gamma^{-1})(n)\cdot\lambda)$. We choose the current construction as the  hyperbolicity of the left $G$-action is  better observed through this construction.
%Additionally, in this construction we may view $\Gamma \ltimes_\rho\Lambda_i$ as a subgroup of $G\ltimes_\rho N_i$ whence
%$(N_i/\Lambda_i)_\rho = G\ltimes_\rho N_i/\Gamma\ltimes_\rho\Lambda_i$ is a homogeneous space.  We use this point of view  in Proposition \ref{prop:Ratner} below.

%\end{remark}
\new{
\begin{remark}\label{rem:homospace}

We use the twisted Lyapunov suspension $(N_i/\Lambda_i)_\rho$ in this and the next section as the hyperbolicity of the left $G$-action on the fibers is best observed through this construction.  % is  better observed through.
%We remark that the  is equivalent to the standard suspension construction induced by $(g,n)\cdot (\gamma,\lambda)=(g\gamma,\rho_i(\gamma^{-1})(n)\cdot\lambda)$. 

However, the standard suspension of $\rho$ acting on $N_i/\Lambda_i$ has the advantage that it can be viewed  as a homogenous space, which we will use in  Proposition \ref{prop:Ratner} below.  Indeed, consider the semi-direct product $G \ltimes_\rho N_i$   given by $$(g,n)\cdot (\bar g, \bar n) = (g\bar g, \rho_i(\bar g\inv)(n) \bar n).$$
Then $\Gamma \ltimes_\rho\Lambda_i$ is a subgroup of $G \ltimes_\rho N_i$ and acts on the right as
$$(g,n)\cdot (\gamma,\lambda) = (g\gamma, \rho_i(\gamma\inv)(n)\lambda).$$
 For $a\in G$ we have $$(a,e)\cdot (g,n) = (ag, n)$$   inducing a left $G$-action which commutes with the right action of  $\Gamma \ltimes_\rho\Lambda_i$. We then obtain a natural $G$-action on the homogeneous space  $(N_i/\Lambda_i)^\rho:=(G \ltimes_\rho N_i)/(\Gamma \ltimes_\rho\Lambda_i).$   

Let $\td \Upsilon \colon G \ltimes_\rho N_i\to G \times N_i$ be given by $$\td \Upsilon (g,n) = (g, \rho_i(g)(n)).$$
We claim that $\Upsilon$ intertwines left $G$-actions and right $(\Gamma \ltimes_\rho\Lambda_i)$-actions and hence induces a continuous $$ \Upsilon\colon (N_i/\Lambda_i)^\rho\to  (N_i/\Lambda_i)_\rho$$ intertwining $G$-actions.  
Thus the two suspension spaces are equivalent.  

%We will use the homogenous space  point of view  $(N_i/\Lambda_i)^\rho$   in Proposition \ref{prop:Ratner} below.
\end{remark}
}

\subsection{Approximate conjugacy}

%Let $\td \phi\colon \td M\to N_i$ be a lift of the $\phi$ constructed in Section \ref{sec:lifting}.
%Let $\td p_{i, i+1}\colon N_i \to N_{i+1}$ denote the natural projection.
%
%\begin{lemma}
%We can pick $\td \phi$ so that for all $\gamma\in \Gamma$
%\begin{equation}\label{eq4}\td p_{i, i+1} \circ \td \phi \circ \td \alpha \gamma = \rho(\gamma) \circ \td p_{i, i+1} \circ \phi.\end{equation}
%\end{lemma}
%\begin{proof}
%This should follow from \eqref{eq1} and that $\tphi_{i+1}$ is homotopic to $P_{i+1}.$
%\end{proof}

We extend the map $\td \phi$ constructed above to a $\Psi$-equivariant map $\Phi\colon G\times \td M \to G\times N_i$ which %will induce a map $\hat \Phi\colon M^\alpha \to (N_i/Lambda_i)_\rho$ which in turn
intertwines the $G$-actions up to a defect that we will later correct.  This will in turn induce a semiconjugacy between the $G$-actions on $M^\alpha$ and $(N_i/\Lambda_i)_\rho$.

Fix a right-invariant Riemannian metric $d_G$ on $G$.  This  induces a metric $d_{G/\Gamma}$ on $G/\Gamma$. We  fix for the remainder a {\bf Dirichlet fundamental domain} for  $\Gamma$: that is, let  $D\subset G$  be a fundamental domain  for $\Gamma$ such that:
\begin{enumerate}
 \item $D$ contains an open neighborhood of the identity $e$;
 \item $D$ contains an open dense subset of full Haar measure;
 \item If $g\in D$, then $d(g, e)=\min_{\gamma\in\Gamma}d(g,\Gamma)=d_{G/\Gamma}(g\Gamma,\Gamma)$.
\end{enumerate}
%We notice such a domain is not unique,
%We will use the following elementary fact about lattices in conjunction with the quasi-isometry between the word and Riemannian metrics discussed above.
We will frequently use the following standard fact.
\begin{lemma}\label{Integrable}If $\Gamma$ is a lattice in  a semisimple Lie group $G$  then
$$d_{G/\Gamma}(g\Gamma,\Gamma)\in L^1(G/\Gamma,\rmm_G).$$\end{lemma}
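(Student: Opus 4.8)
\textbf{Proof plan for Lemma \ref{Integrable}.} The plan is to reduce the statement to the classical fact that a lattice in a semisimple Lie group has finitely many cusps and that the volume of $G/\Gamma$ near each cusp decays exponentially in the distance to a fixed compact core, which easily dominates the linear growth of $d_{G/\Gamma}(g\Gamma,\Gamma)$. First I would recall the reduction theory of Borel, Harish-Chandra, and Garland--Raghunathan: there is a finite union of Siegel sets $\bigcup_{j=1}^{k} \mathfrak{S}_j$ whose image covers $G/\Gamma$, where each $\mathfrak{S}_j = \Omega_j A_{t_j}^+ K$ with $\Omega_j$ relatively compact, $K$ maximal compact, and $A_{t_j}^+$ a shifted positive Weyl chamber in a maximal $\bR$-split torus. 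The key point is that the Haar measure in the Iwasawa/Langlands coordinates on a Siegel set carries a Jacobian factor $e^{-2\rho(\log a)}$ (twice the half-sum of positive roots paired with $\log a$), so the part of $\mathfrak{S}_j$ at distance $\ge T$ from a fixed basepoint has Haar measure decaying like $e^{-cT}$ for some $c>0$ depending only on $G$.

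The second step is to compare $d_{G/\Gamma}(g\Gamma,\Gamma)$ with the ``height'' coordinate $\|\log a\|$ on the Siegel set. On one hand, for $g = \omega a k$ in a Siegel set one has $d_G(g,e) \le d_G(\omega,e) + \|\log a\| + d_G(k,e) \le C + \|\log a\|$ since $\Omega_j$ and $K$ are compact; on the other hand $d_{G/\Gamma}(g\Gamma,\Gamma) \le d_G(g,e)$ trivially. Hence on the image of $\mathfrak{S}_j$ we get the pointwise bound $d_{G/\Gamma}(g\Gamma,\Gamma) \le C + \|\log a\|$, and it suffices to show $\int \|\log a\|\, e^{-2\rho(\log a)}\, da < \infty$ over the shifted chamber $A_{t_j}^+$. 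Since $2\rho$ is a sum of positive roots and the chamber is strictly inside the positive cone (shifted by $t_j$), $2\rho(\log a)$ is bounded below by $c_0\|\log a\|$ for $\|\log a\|$ large; therefore the integrand is dominated by $\|\log a\| e^{-c_0 \|\log a\|}$, which is integrable on a Euclidean cone. Summing over the finitely many $j$ gives $d_{G/\Gamma}(g\Gamma,\Gamma)\in L^1(G/\Gamma,\rmm_G)$.

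There is a mild subtlety to address: the Siegel sets overcount, i.e.\ the map $\bigsqcup_j \mathfrak{S}_j \to G/\Gamma$ is finite-to-one rather than injective, but this only helps, since it means $\int_{G/\Gamma} d_{G/\Gamma}(g\Gamma,\Gamma)\,d\rmm_G \le \sum_j \int_{\mathfrak{S}_j} d_{G/\Gamma}(g\Gamma,\Gamma)\, d\rmm_G$ and each summand was just shown finite. One also needs the comparison $d_{G/\Gamma}(g\Gamma,\Gamma)\le d_G(g,e)$, which is immediate from the definition of the quotient metric. The only genuinely non-elementary input is the finiteness of the Siegel cover together with the exponential volume decay in the cusp, which is standard reduction theory; I expect that — rather than any estimate — to be the ``hard'' part, in the sense that it is the one place where real structure theory of arithmetic (or general) lattices is invoked. (For cocompact $\Gamma$ the statement is trivial since $d_{G/\Gamma}$ is bounded, so the content is entirely in the non-uniform case.)
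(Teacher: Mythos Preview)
The paper does not actually prove Lemma~\ref{Integrable}: it is introduced as a ``standard fact'' and immediately followed only by the remark that the statement is equivalent to $d(g,e)\in L^1(D,\rmm_G)$ on the Dirichlet domain. So there is no proof to compare against; your sketch supplies exactly the argument the authors are implicitly invoking.

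Your approach via reduction theory is the standard one and is correct. A couple of minor points worth tightening if you write this out in full: first, the statement is for an arbitrary lattice in a semisimple Lie group, so you should note that in higher rank the lattice is arithmetic by Margulis and Borel--Harish-Chandra reduction theory applies, while in rank one (or products with rank-one factors) you need the Garland--Raghunathan description of the cusps---you cite both, which is right, but make the case split explicit. Second, the inequality $2\rho(\log a)\ge c_0\|\log a\|$ on the shifted chamber is exactly where the finite-volume hypothesis enters (it forces the chamber to sit strictly inside the cone where $2\rho$ is positive), so it is worth one sentence explaining why $c_0>0$. With those small clarifications your argument is complete.
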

Note that this is equivalent to saying that $d(g,e)$ is in $L^1(D,\rmm_G)$.

Let $D$ be the Dirichlet fundamental domain fixed above.  % in Section \ref{sec:QI}.
Given $g\in G$ let $\gamma_g\in \Gamma$ be the unique element with $g\gamma_g \inv \in D$; that is, $g\in D \gamma_g$.
Define  $\tphi_g\colon \td M \to  N_i$ by
%$$\tphi_g(x) = \rho(g) \rho (\gamma_g ) \td \phi( \alpha( \gamma_g\inv)x).$$
\begin{equation}\label{eq1}\tphi_g(x) = \rho_i(g\gamma_g\inv) \td \phi(\td  \alpha( \gamma_g)(x)).\end{equation}
Define  $\Phi \colon G\times \td M \to G\times N_i$ by $\Phi(g, x) = (g, \tphi_g(x))$.
Note that the kernel of $\td p_{i,i+1}$ is the center of  $N_i$ which is necessarily preserved by $\rho_i(g)$ for every $g$.  It follows that $$\rho_{i+1}(g) \circ \td p_{i, i+1} = \td p_{i, i+1} \circ \rho_{i} (g).$$
In particular,  for $g\in G$ we have \begin{equation}\td p_{i,i+1} \circ \tphi_g = \rho_{i+1}(g) \circ \td h_{i+1}.\end{equation}

We note that for any $a\in G$ we have
\begin{equation}\label{eq6}
%\td p_{i, i+1}\left( \Phi(sg, x) \right)= s \cdot\td p_{i, i+1}(\Phi(g,x)), \text{ or} \quad
\td p_{i, i+1} \circ \tphi_{ag}( x)= \rho_{i+1}(a)( \td p_{i, i+1}\circ \tphi_g(x)).
\end{equation}
Indeed,
%Then \eqref{eq6} followa aa
%\begin{align}
%\td p_{i, i+1}\left( \Phi(ag, x) \right)&= \left(ag, \td p_{i, i+1} \left(\tphi_{ag}(x) \right)\right) \notag\\
%\notag	&= \left(ag, \td p_{i, i+1} \left( \rho_{i}(ag\gamma_{ag}\inv) \td \phi(\td  \alpha( \gamma_{ag})x)  \right)\right)\\
%\notag		&= \left(ag, \rho_{i+1}(ag\gamma_{ag}\inv)  \td p_{i, i+1} \left(\td \phi( \td \alpha( \gamma_{ag})x)  \right)\right)\\
%\label{eq2}&= \left(ag, \rho_{i+1}(ag\gamma_{ag}\inv)\rho_{i+1}( \gamma_{ag})  \td p_{i, i+1} \left(\td \phi( x)  \right)\right)\\
%\notag	&= \left(ag, \rho_{i+1}(a)\rho_{i+1}(g\gamma_g\inv)\rho_{i+1}(\gamma_g)\td p_{i, i+1} \left(\td \phi(x)  \right)\right)\\
%\label{eq3}		&= \left(ag,\rho_{i+1}(a)  \rho_{i+1}(g\gamma_{g}\inv)  \td p_{i, i+1} \left(\td \phi( \td \alpha( \gamma_{g})x)  \right)\right)\\
%\notag&= \left(ag,\rho_{i+1}(a) \td p_{i, i+1} \left( \rho_i(g\gamma_{g}\inv)  \td \phi( \td \alpha( \gamma_{g})x)  \right)\right)\\
%	&= a \cdot\td p_{i, i+1}(\Phi(g,x)).\notag
%\end{align}
\begin{align}
 \td p_{i, i+1} \left(\tphi_{ag}(x) \right) \notag
\notag	&= \td p_{i, i+1} \left( \rho_{i}(ag\gamma_{ag}\inv) \td \phi(\td  \alpha( \gamma_{ag})x)  \right)\\
\notag		&= \rho_{i+1}(ag\gamma_{ag}\inv)  \td p_{i, i+1} \left(\td \phi( \td \alpha( \gamma_{ag})x)   \right)\\
\label{eq2}&=   \rho_{i+1}(ag\gamma_{ag}\inv)\rho_{i+1}( \gamma_{ag})  \td p_{i, i+1} \left(\td \phi( x)   \right)\\
\notag	&=   \rho_{i+1}(a)\rho_{i+1}(g\gamma_g\inv)\rho_{i+1}(\gamma_g)\td p_{i, i+1} \left(\td \phi(x)  \right)\\
\label{eq3}		&=  \rho_{i+1}(a)  \rho_{i+1}(g\gamma_{g}\inv)  \td p_{i, i+1} \left(\td \phi( \td \alpha( \gamma_{g})x)  \right) \\
\notag&=  \rho_{i+1}(a) \td p_{i, i+1} \left( \rho_i(g\gamma_{g}\inv)  \td \phi( \td \alpha( \gamma_{g})x)  \right).
%	&= a \cdot\td p_{i, i+1}(\Phi(g,x)).\notag
\end{align}
Above, \eqref{eq2} and \eqref{eq3} follow from \eqref{eq4}.
Our goal below will be to modify the family $\tphi_g$ so  that \eqref{eq6} holds without the projection.  %$$ \tphi_{sg}(x) = \rho(s) \tphi_g(x). $$

%\begin{align}\label{eq:babyequivaction}
%
%\end{align}
We claim
\begin{lemma}\label{claim:eqiv}
$\Phi$ is $\Psi$-equivariant: $$\Phi((g,x)\cdot (\gamma, \lambda)) = \Phi(g,x) \cdot \Psi(\gamma, \lambda).$$
In particular,
\begin{enumerate}
	\item $\tphi_{g\gamma}(\td \alpha(\gamma\inv)(x))=  \tphi_g(x)$;
	\item $\tphi_g(x\lambda) = \tphi_g(x) \rho_i(g) (P_{*,i}(\lambda))$.
\end{enumerate}
\end{lemma}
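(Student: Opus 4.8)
\textbf{Proof proposal for Lemma \ref{claim:eqiv}.}

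The plan is to verify $\Psi$-equivariance directly from the definitions, reducing the verification of the displayed identity $\Phi((g,x)\cdot(\gamma,\lambda)) = \Phi(g,x)\cdot\Psi(\gamma,\lambda)$ to the two assertions (1) and (2), which correspond to the two generating types of elements of $\Gamma\ltimes_{\alpha_*}\Lambda_M$, namely $(\gamma,e)$ and $(e,\lambda)$. Since $\Psi(\gamma,\lambda) = (\gamma, P_{*,i}(\lambda))$ is a homomorphism (as checked just before the lemma) and $\Phi(g,x) = (g,\tphi_g(x))$ leaves the first coordinate untouched in both the domain and target right actions, equivariance is equivalent to the equality of the $N_i$-coordinates. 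So I would first record that for the element $(\gamma,e)$ the right actions give $(g,x)\cdot(\gamma,e) = (g\gamma, \td\alpha(\gamma\inv)(x))$ on the source and $(g,\tphi_g(x))\cdot(\gamma,e) = (g\gamma, \tphi_g(x))$ on the target, so (1) is exactly the $(\gamma,e)$-case; similarly $(g,x)\cdot(e,\lambda) = (g, x\lambda)$ and $(g,\tphi_g(x))\cdot(e,\lambda) = (g, \tphi_g(x)\,\rho_i(g)(P_{*,i}(\lambda)))$, so (2) is the $(e,\lambda)$-case. Multiplicativity of both $\Psi$ and the right actions then upgrades these two special cases to the general statement.

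For (1): the key point is that $\gamma_{g\gamma} = \gamma_g\gamma$, since $g\gamma\cdot(\gamma_g\gamma)\inv = g\gamma_g\inv\in D$ and the element of $\Gamma$ moving $g\gamma$ into $D$ is unique. Then, plugging into \eqref{eq1},
\[
\tphi_{g\gamma}(\td\alpha(\gamma\inv)(x)) = \rho_i(g\gamma(\gamma_g\gamma)\inv)\,\td\phi\big(\td\alpha(\gamma_g\gamma)(\td\alpha(\gamma\inv)(x))\big) = \rho_i(g\gamma_g\inv)\,\td\phi\big(\td\alpha(\gamma_g)(x)\big) = \tphi_g(x),
\]
using that $\gamma(\gamma_g\gamma)\inv = \gamma_g\inv$ and that $\td\alpha$ is an action so $\td\alpha(\gamma_g\gamma)\td\alpha(\gamma\inv) = \td\alpha(\gamma_g)$. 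For (2): here $\gamma_g$ is unchanged since $x\lambda$ and $x$ live in the same fiber, and I use the $\Lambda_M$-equivariance of $\td\phi$ established at the end of Section \ref{sec:lifting}, namely $\td\phi(x\lambda) = \td\phi(x)\cdot P_{*,i}(\lambda)$, together with the fact that $\rho_i(g\gamma_g\inv)$ is an automorphism of $N_i$ and $\rho_i$ respects $P_{*,i}$ in the sense that $\rho_i(\delta)(P_{*,i}(\lambda)) = P_{*,i}(\alpha_*(\delta)(\lambda))$ for $\delta\in\Gamma$. Concretely,
\[
\tphi_g(x\lambda) = \rho_i(g\gamma_g\inv)\,\td\phi\big(\td\alpha(\gamma_g)(x\lambda)\big) = \rho_i(g\gamma_g\inv)\,\td\phi\big(\td\alpha(\gamma_g)(x)\cdot\alpha_*(\gamma_g)(\lambda)\big),
\]
and then applying $\Lambda_M$-equivariance of $\td\phi$ and the automorphism property of $\rho_i(g\gamma_g\inv)$ gives $\tphi_g(x)\cdot\rho_i(g\gamma_g\inv)(P_{*,i}(\alpha_*(\gamma_g)(\lambda))) = \tphi_g(x)\cdot\rho_i(g\gamma_g\inv)\rho_i(\gamma_g)(P_{*,i}(\lambda)) = \tphi_g(x)\cdot\rho_i(g)(P_{*,i}(\lambda))$.

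The main obstacle I anticipate is purely bookkeeping: keeping the left versus right conventions straight (the semidirect products $\Gamma\ltimes_{\alpha_*}\Lambda_M$ and $\Gamma\ltimes_\rho\Lambda_i$ are defined with an inverse twist, and the deck action is written on the right), and making sure the identity $\gamma_{g\gamma} = \gamma_g\gamma$ is used precisely where the Dirichlet domain $D$ enters. There is no genuine analytic content here — it is an identity about how $\tphi_g$ was built — so the risk is a sign or composition-order slip rather than a conceptual gap. Once (1) and (2) are verified, the passage to the full $\Psi$-equivariance statement is a one-line induction on word length in $\Gamma\ltimes_{\alpha_*}\Lambda_M$ using that $\Psi$ and both right actions are multiplicative.
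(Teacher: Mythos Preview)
Your proposal is correct and uses the same key ingredients as the paper's proof: the identity $\gamma_{g\gamma}=\gamma_g\gamma$, the $\Lambda_M$-equivariance $\td\phi(x\lambda)=\td\phi(x)P_{*,i}(\lambda)$, and the compatibility $\rho_i(\delta)(P_{*,i}(\lambda))=P_{*,i}(\alpha_*(\delta)(\lambda))$. The only difference is organizational---you split into the generators $(\gamma,e)$ and $(e,\lambda)$ and combine via multiplicativity, whereas the paper carries out a single direct computation for general $(\gamma,\lambda)$---but the content is the same.
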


\begin{proof}
Note  that $\gamma_{g\gamma} = \gamma_g \gamma$.   We then have
	\begin{align*}
	\Phi\left((g, x) \cdot (\gamma, \lambda)\right)& = \left(g\gamma, \tphi_{g\gamma} \big(\td \alpha(\gamma\inv)(x)\lambda\big)\right)\\
	& =  \left(g\gamma,   \rho_i((g\gamma) \gamma_{g\gamma} \inv) \td \phi\left( \td \alpha( \gamma_{g\gamma})\big(\td \alpha(\gamma\inv)(x)\lambda\big)\right)   \right)\\
	& =  \left(g\gamma,   \rho_i(g\gamma_{g} \inv) \td\phi\left( \td \alpha( \gamma_{g\gamma})(\td \alpha(\gamma\inv)(x)) \alpha_*( \gamma_{g\gamma}) (\lambda)\right)\right)   \\
		& =  \left(g\gamma,  \left[ \rho_i(g\gamma_{g} \inv) \td\phi\big( \td \alpha( \gamma_{g})(x)\big) \right] \rho_i(g\gamma_{g} \inv)  P_{*,i}\left(  \alpha_*( \gamma_{g\gamma}) (\lambda)\right)\right)\\
	& =  \left(g\gamma,  \left[ \rho_i(g\gamma_{g} \inv) \td\phi\big( \td \alpha( \gamma_{g})(x)\big) \right]    \rho_i(g\gamma_{g} \inv)  \rho_i( \gamma_{g\gamma}) \left(P_{*,i}(\lambda))\right)\right)   \\
	& =  \left(g\gamma,  \left[ \rho_i(g\gamma_{g} \inv) \td\phi\big( \td \alpha( \gamma_{g})(x)\big) \right]    \rho_i(g\gamma )\left(P_{*,i}(\lambda\right))\right)   \\
	& =  \left(g,   \tphi_g(x)   \right)  \cdot \Psi(\gamma, \lambda) . \qedhere\\
%		& =  \left(g\gamma,   \rho(g\gamma_{g} \inv) \td\phi\left( \alpha( \gamma_{g\gamma})\big(\alpha(\gamma\inv)(x))\right) +   \rho(g\gamma_{g} \inv)\left(\rho( \gamma_{g\gamma}) (\lambda)\right) \right)  \\
		\end{align*}
\end{proof}

% We extend the definition of $\td p_{i, i+1}\colon N_i \to N_{i+1}$  to  $ \td p_{i, i+1}\colon G\times N_i \to G\times N_{i+1}$ by acting trivially in the first coordinate.
%that it is $G$-equivariant as well.

Note that $\Phi$ is only a measurable function.  However $\tphi_g\colon \td M\to N_i$ is  defined and continuous for {\it every} $g\in G$.  Moreover, from Lemma \ref{claim:eqiv}, for each $g\in G$ the map $\tphi_g$ factors to a map   $\phi_g\colon M\to N_i/\left(\rho_i(g) \Lambda\right)$.  In particular,  $\tphi_g$ is uniformly continuous for each $g\in G$.

\subsubsection{Central defect} \label{sss:central}
\def\liez{\mathfrak z}
Recall that the center $Z_i$ of $N_i$ is the kernel of $\td p_{i, i+1}\colon N_i\to N_{i+1}$.  Let $\liez_i$ denote the Lie algebra of $Z_i$.
%Identifying $Z_i$ and $z_i$ This gives $N_i$ the structure of an affine bundle\footnote{Not a vector bundle as there is no origin} over $N_i$.
%Fix an identification of $Z_i$ with $\R^d$.

\new{Recall the Lie algebra  $\liea $ defined in section \ref{sec:NonRes} and let $A$ be the analytic subgroup associated with $\liea$.}   By condition (4) of Theorem \ref{main:factorsfull}, for some $s\in A$, $S= \restrict{D\rho(s)}{T_eN} \in \Aut(\lien_i)$ is a hyperbolic matrix. Therefore, the restriction of $S$ to $\liez_i$ is hyperbolic.

We fix such an element $s$ from now on. Let $E^s$ and $E^u$ be the {stable and unstable subspaces for the restriction} of $S$ to $\liez_i$. % action of  $\rho(s)$ on the kernel $\R^d$.

From \eqref{eq6} it follows that given $g\in G$ and $x\in \td M$ there are unique vectors $\psi^{s}(g,x) \in E^{s}$ and $\psi^{u}(g,x) \in E^{u}$ such that
\begin{equation}\rho_i(s) \tphi_g(x) = \tphi _{sg} (x) \exp( \psi^{s}(g,x) )\exp(\psi^{u}(g,x))\end{equation}
where $\exp\colon \lien_i\to N_i$ is the Lie-exponential map.

\begin{lemma}\label{claim:invar}
The maps $G\times \td M\to E^{s/u}$,  given by $(g,x) \to \psi^{s/u}(g,x) $ are $\Gamma\ltimes_{\alpha_*}\Lambda_M$-invariant.
\end{lemma}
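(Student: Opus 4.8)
The goal is to show that the two maps $(g,x)\mapsto\psi^{s/u}(g,x)$ are invariant under the right $\Gamma\ltimes_{\alpha_*}\Lambda_M$-action on $G\times\td M$. Since $\Gamma\ltimes_{\alpha_*}\Lambda_M$ is generated by the images of $\Gamma$ and $\Lambda_M$ under their natural embeddings, it suffices to check invariance separately under the right $\Gamma$-action $(g,x)\cdot\gamma=(g\gamma,\td\alpha(\gamma\inv)(x))$ and under the right $\Lambda_M$-action $(g,x)\cdot\lambda=(g,x\lambda)$. In each case the strategy is the same: apply $\rho_i(s)$ to the defining relation
\[
\rho_i(s)\,\tphi_g(x) = \tphi_{sg}(x)\,\exp(\psi^{s}(g,x))\exp(\psi^{u}(g,x)),
\]
rewrite both sides at the translated point using the equivariance properties of $\tphi$ established in Lemma~\ref{claim:eqiv}, and then invoke uniqueness of the decomposition of a central element of $N_i$ as $\exp(E^s)\exp(E^u)$.

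\textbf{Invariance under $\Gamma$.} Here I would use Lemma~\ref{claim:eqiv}(1): $\tphi_{g\gamma}(\td\alpha(\gamma\inv)(x))=\tphi_g(x)$, which also gives $\tphi_{sg\gamma}(\td\alpha(\gamma\inv)(x))=\tphi_{sg}(x)$ after replacing $g$ by $sg$. Substituting these into the defining relation for $(g\gamma,\td\alpha(\gamma\inv)(x))$,
\[
\rho_i(s)\,\tphi_{g\gamma}(\td\alpha(\gamma\inv)(x)) = \tphi_{sg\gamma}(\td\alpha(\gamma\inv)(x))\,\exp\big(\psi^{s}(g\gamma,\td\alpha(\gamma\inv)(x))\big)\exp\big(\psi^{u}(g\gamma,\td\alpha(\gamma\inv)(x))\big),
\]
the left side equals $\rho_i(s)\tphi_g(x)$ and the first factor on the right equals $\tphi_{sg}(x)$, so comparing with the defining relation for $(g,x)$ and using that the product decomposition in $Z_i$ along $\exp(E^s)\cdot\exp(E^u)$ is unique, I conclude $\psi^{s/u}(g\gamma,\td\alpha(\gamma\inv)(x))=\psi^{s/u}(g,x)$.

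\textbf{Invariance under $\Lambda_M$.} Here I would use Lemma~\ref{claim:eqiv}(2): $\tphi_g(x\lambda)=\tphi_g(x)\,\rho_i(g)(P_{*,i}(\lambda))$, and likewise $\tphi_{sg}(x\lambda)=\tphi_{sg}(x)\,\rho_i(sg)(P_{*,i}(\lambda))$. Write $z=\rho_i(g)(P_{*,i}(\lambda))\in Z_i$, which is central, and note $\rho_i(s)(z)=\rho_i(sg)(P_{*,i}(\lambda))$, the correction appearing in the $sg$ identity. Then
\[
\rho_i(s)\,\tphi_g(x\lambda) = \rho_i(s)\big(\tphi_g(x)\,z\big) = \big(\rho_i(s)\tphi_g(x)\big)\,\rho_i(s)(z) = \tphi_{sg}(x)\,\exp(\psi^s(g,x))\exp(\psi^u(g,x))\,\rho_i(s)(z),
\]
and since $\rho_i(s)(z)=\tphi_{sg}(x)\inv\tphi_{sg}(x\lambda)$ is central, it commutes past the $\exp$ factors, giving $\rho_i(s)\tphi_g(x\lambda)=\tphi_{sg}(x\lambda)\exp(\psi^s(g,x))\exp(\psi^u(g,x))$. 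Uniqueness of the $\exp(E^s)\exp(E^u)$ decomposition again yields $\psi^{s/u}(g,x\lambda)=\psi^{s/u}(g,x)$.

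\textbf{Main obstacle.} There is no deep difficulty here; the one point requiring care is the bookkeeping with $\rho_i(s)$ interacting with the translates $sg$ versus $g$ in the subscripts, and making sure the central corrections $\rho_i(g)(P_{*,i}(\lambda))$ transform correctly under $\rho_i(s)$ so that they cancel against the analogous corrections in the $sg$-relation. I would also remark explicitly that since $Z_i$ is abelian the factorization $z=\exp(v^s)\exp(v^u)$ with $v^{s/u}\in E^{s/u}$ is unique (the map $E^s\times E^u\to Z_i$, $(v^s,v^u)\mapsto\exp(v^s)\exp(v^u)=\exp(v^s+v^u)$ is a linear isomorphism since $\liez_i=E^s\oplus E^u$), which is what legitimizes "comparing components" in both cases.
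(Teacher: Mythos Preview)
Your proof is correct in outline and follows essentially the same strategy as the paper. The paper carries out a single computation for a general element $(\gamma,\lambda)\in\Gamma\ltimes_{\alpha_*}\Lambda_M$, using the $\Psi$-equivariance of $\Phi$ from Lemma~\ref{claim:eqiv} together with the commutation of the left $G$-action and the right $(\Gamma\ltimes_{\alpha_*}\Lambda_M)$-action; you instead split into the $\Gamma$-case and the $\Lambda_M$-case using parts (1) and (2) of that lemma separately. The content is identical.

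There is one small error in your $\Lambda_M$-invariance argument: you write $z=\rho_i(g)(P_{*,i}(\lambda))\in Z_i$ and call it central. This is false in general, since $P_{*,i}(\lambda)$ is an arbitrary element of $\Lambda_i\subset N_i$, not necessarily of the center $Z_i$. However, the argument survives unchanged once you observe that it is the factors $\exp(\psi^s(g,x))$ and $\exp(\psi^u(g,x))$ that lie in $Z_i$ (since $\psi^{s/u}(g,x)\in\liez_i$ by construction), and hence \emph{they} commute with $\rho_i(sg)(P_{*,i}(\lambda))$ regardless of whether the latter is central. With this correction the computation goes through exactly as you wrote, and the uniqueness of the $E^s\oplus E^u$ decomposition finishes the proof.
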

\begin{proof}
We have
\begin{align*}
s \cdot \Phi(g,x) = (sg, \rho_i(s)\tphi_g(x)) =  (sg,  \tphi _{sg} (x) \exp(\psi^{s}(g,x))\exp( \psi^{u}(g,x))).
\end{align*}
Moreover, since the left and right actions commute, repeatedly using Lemma \ref{claim:eqiv} we have
\begin{align*}
 s\cdot  \Phi&\left(g\gamma,\td \alpha(\gamma\inv)(x)\lambda)\right)\\
%	=&\rho(s) \Phi\left((g,x)\cdot (\gamma, \lambda)\right)\\
	 =& s\cdot \left(\Phi(g,x)\cdot \Psi (\gamma, \lambda)\right)\\
	 =&  \left(sg,  \tphi _{sg} (x) \exp(\psi^{s}(g,x))\exp( \psi^{u}(g,x))\right)\cdot\Psi (\gamma, \lambda)\\
 	 =& \left (sg\gamma,  \tphi _{sg} (x) \exp(\psi^{s}(g,x))\exp( \psi^{u}(g,x)))\rho_i(sg\gamma)(P_{*,i}(\lambda))\right)\\
 	 =& \left (sg\gamma,  \tphi _{sg} (x) \rho_i(sg\gamma)(P_{*,i}(\lambda))    \exp(\psi^{s}(g,x))\exp( \psi^{u}(g,x)))\right)\\
 	% =& {\red \left (sg\gamma,  \tphi _{sg} (x)P_{*,i}( \alpha_*(sg\gamma)(\lambda))    \exp(\psi^{s}(g,x))\exp( \psi^{u}(g,x)))\right) } \\
 	 =& { \left (sg\gamma,  \tphi _{sg} (x)\rho_i(sg)(P_{*,i}( \alpha_*(\gamma)(\lambda)))    \exp(\psi^{s}(g,x))\exp( \psi^{u}(g,x)))\right) } \\
	 =& {\left (sg\gamma,  \tphi _{sg} (x\alpha_*(\gamma)(\lambda))     \exp(\psi^{s}(g,x))\exp( \psi^{u}(g,x)))\right)}\\
%
%	 =& {\green \left (sg\gamma,  \tphi _{sg} (x\rho(\gamma)(P_{*,i}(\lambda)))     \exp(\psi^{s}(g,x))\exp( \psi^{u}(g,x)))\right)}\\
% 	 =& {\green \left (sg\gamma,  \tphi _{sg\gamma} (\alpha(\gamma\inv)(x\rho(\gamma)(P_{*,i}(\lambda)))      \exp(\psi^{s}(g,x))\exp( \psi^{u}(g,x)))\right)}\\
 	 =& \left (sg\gamma,  \tphi _{sg\gamma} (\td \alpha(\gamma\inv)(x\alpha_*(\gamma)(\lambda)))      \exp(\psi^{s}(g,x))\exp( \psi^{u}(g,x)))\right)\\
 	 =& \left (sg\gamma,  \tphi _{sg\gamma} ([\td \alpha(\gamma\inv)(x)]\lambda)     \exp(\psi^{s}(g,x))\exp( \psi^{u}(g,x))\right).
\end{align*}
%By Lemma \ref{claim:eqiv} we have
%$$
% \tphi _{sg} (x) \rho(sg)(\lambda)    \exp(\psi^{s}(g,x))\exp( \psi^{u}(g,x))) =
%\tphi _{sg\gamma} (\alpha(\gamma\inv) x\lambda)     \exp(\psi^{s}(g,x))\exp( \psi^{u}(g,x)))$$
It follows that
$$\rho_i(s) \tphi_{g\gamma}([\td \alpha(\gamma\inv)(x)]\new{\lambda}) = \tphi _{sg\gamma} ([\td \alpha(\gamma\inv)(x)]\new{\lambda}) \exp( \psi^{s}(g,x) )\exp(\psi^{u}(g,x)).\qedhere $$\end{proof}

\subsubsection{Subexponential growth of central defects} Fix any norm on $\lien$.
By the  invariance in Lemma \ref{claim:invar}, the maps $(g,x) \to \psi^{s/u}(g,x) $ descend to  maps on the suspension space  $M^\alpha\to E^{s/u}$.
In particular, as $M = \td M/\Lambda_M$ is compact, for every $g\in G$, the functions $\|\psi^{s/u}(g,x)\|$ are bounded in $x$ .
The main technical obstruction to building the conjugacy is that (as $G/\Gamma$ is not assumed   compact) the  functions $\| \psi^{s/u}(g,x)\|$ need not be bounded over $(g,x).$

Let $$C^{s/u}(g):= \max_{x\in \td M} \|\psi^{s/u}(g,x)\|.$$
For $\gamma\in \Gamma$, the above invariance gives  $C^{s/u}(g) = C^{s/u}(g\gamma).$
In particular, the functions $C^{s/u}(g)$ descend to functions on $G/\Gamma$.

%\red{bound in $\lien$ or in $N$?}
%As $\td M/\Lambda$ is compact, this max exists and is finite.
\begin{lemma}\label{claim:login} for $\sigma \in \{s,u\}$ we have
$$\int_{G/\Gamma} \log^+( C^\sigma(g\Gamma) ) \ d (g\Gamma)<\infty.$$
\end{lemma}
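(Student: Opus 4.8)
The goal is to bound the logarithmic integral of $C^\sigma(g\Gamma)$ by relating the central defect $\psi^{s/u}(g,x)$ to the distance from $g$ to $\Gamma$ in $G/\Gamma$. The key observation is the defining relation
\begin{equation*}
\rho_i(s)\,\tphi_g(x) = \tphi_{sg}(x)\,\exp(\psi^s(g,x))\exp(\psi^u(g,x)),
\end{equation*}
together with the explicit formula $\tphi_g(x) = \rho_i(g\gamma_g\inv)\,\td\phi(\td\alpha(\gamma_g)(x))$. When $g\in D$ (the Dirichlet fundamental domain), the "correction'' element $g\gamma_g\inv$ equals $g$ itself, so $\tphi_g$ is directly governed by how far $g$ and $sg$ sit from $\Gamma$. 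The plan is: first, express $\exp(\psi^s)\exp(\psi^u) = \tphi_{sg}(x)\inv \rho_i(s)\tphi_g(x)$ and estimate each factor separately; second, observe that $\td\phi$, being a lift of a map on the compact manifold $M$, is uniformly continuous and $\Lambda_M$-equivariant, so $\tphi_g(x)$ is — after quotienting by $\rho_i(g)\Lambda$ — confined to a compact set, and the only source of unboundedness is the ambient factor $\rho_i(g\gamma_g\inv)$; third, use that $\|\rho_i(g\gamma_g\inv)\|$ and $\|\rho_i(sg\gamma_{sg}\inv)\|$ grow at most exponentially in $d(g,e)$ and $d(sg,e)$ respectively, since $\rho_i\colon G\to\Aut(N_i)$ is a continuous homomorphism of Lie groups and hence Lipschitz-on-the-log-scale.

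\textbf{Key steps in order.} (1) Reduce to $g\in D$: since $C^\sigma$ is $\Gamma$-invariant, it suffices to bound $\int_D \log^+ C^\sigma(g)\,d\rmm_G(g)$, and on $D$ we have $d(g,e) = d_{G/\Gamma}(g\Gamma,\Gamma)$. (2) Bound $\|\psi^{s/u}(g,x)\|$: writing $w := \tphi_{sg}(x)\inv\rho_i(s)\tphi_g(x) \in N_i$, one gets $\|\log w\| \le$ (a constant depending on the uniform continuity modulus of $\td\phi$ and on $M$'s compactness) times a polynomial in $\max(\|\rho_i(g\gamma_g\inv)\|, \|\rho_i(sg\gamma_{sg}\inv)\|, \|\rho_i(s)\|)$ and their inverses; then $\psi^{s/u}$ are the projections of $\log w$ (or rather of its image under the BCH-type splitting into $E^s, E^u$-coordinates on $\liez_i$), so $\|\psi^{s/u}\| \lesssim \|\log w\|$ up to a fixed constant. (3) Exponential bound on $\|\rho_i(h)^{\pm 1}\|$: since $\rho_i$ is a continuous representation, $\log\|\rho_i(h)^{\pm1}\| \le \const\cdot d(h,e) + \const$ for all $h\in G$; apply this with $h = g$ (for $g\in D$, $\gamma_g = e$) and $h = sg\gamma_{sg}\inv \in D$, noting $d(sg\gamma_{sg}\inv, e) = d_{G/\Gamma}(sg\Gamma,\Gamma) \le d_{G/\Gamma}(g\Gamma,\Gamma) + d(s,e)$ by the triangle inequality and right-invariance of $d_G$. (4) Combine: $\log^+ C^\sigma(g) \le \const\cdot d_{G/\Gamma}(g\Gamma,\Gamma) + \const$ for $g\in D$. (5) Conclude via Lemma \ref{Integrable}: since $d_{G/\Gamma}(g\Gamma,\Gamma) \in L^1(G/\Gamma,\rmm_G)$, the right-hand side is integrable, hence so is $\log^+ C^\sigma$.

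\textbf{Main obstacle.} The delicate point is step (2): controlling $\|\log w\|$ where $w = \tphi_{sg}(x)\inv\rho_i(s)\tphi_g(x)$. One must be careful that although $\tphi_g(x)$ individually can be "large'' in $N_i$ (it lives in $\rho_i(g)\cdot(\text{compact set})\cdot\Lambda$ and the coset representative can drift), the \emph{combination} appearing in $w$ is controlled because both $\tphi_g$ and $\tphi_{sg}$ cover the \emph{same} map $h_{i+1}$ on the base $N_{i+1}/\Lambda_{i+1}$ (up to the known twist \eqref{eq6}), so $w$ actually lies in the \emph{center} $Z_i$ — which is exactly why $\psi^{s/u}$ are well-defined in $\liez_i$ in the first place. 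Thus $w$ is a central element, and its size is measured by the fiber $\T^{d_i}$-coordinate, which picks up at most a single application of $\rho_i(g\gamma_g\inv)$ acting on a bounded vector. The estimate then reduces to: $\|\log w\| \le \|\rho_i(sg\gamma_{sg}\inv)\|\cdot\big(\text{bounded}\big) + \|\rho_i(s)\|\cdot\|\rho_i(g\gamma_g\inv)\|\cdot\big(\text{bounded}\big)$, where the "bounded'' terms come from compactness of $M$ and uniform continuity of $\td\phi$ composed with the (compactly-supported-on-$D$) deck transformations. Making this last bookkeeping precise, keeping track of which quantities are genuinely bounded versus which grow like $\exp(\const\cdot d_{G/\Gamma})$, is the technical heart of the argument; everything else is a routine application of Lemma \ref{Integrable}.
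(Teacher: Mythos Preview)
There is a genuine gap in step (2). Your claimed estimate
\[
\|\log w\| \le (\text{constant from compactness of }M)\cdot P\big(\|\rho_i(g\gamma_g^{-1})\|,\ \|\rho_i(sg\gamma_{sg}^{-1})\|,\ \|\rho_i(s)\|\big)
\]
is not correct, and the phrase ``$\td\phi$ composed with the (compactly-supported-on-$D$) deck transformations'' does not describe anything that is actually bounded. Concretely, for $g\in D$ one has $\tphi_g(x)=\rho_i(g)\td\phi(x)$, but
\[
\tphi_{sg}(x)=\rho_i(sg\gamma_{sg}^{-1})\,\td\phi\big(\td\alpha(\gamma_{sg})(x)\big),
\]
and the point $\td\alpha(\gamma_{sg})(x)\in\td M$ has been moved by the \emph{nonlinear} action of $\gamma_{sg}$; its image under $\td\phi$ is \emph{not} confined to a fixed compact set as $\gamma_{sg}$ varies. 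Equivalently, the accumulated central defect $z_{\gamma}(x):=\td\phi(x)^{-1}\rho_i(\gamma)^{-1}\td\phi(\td\alpha(\gamma)(x))$ is $\Lambda_M$-invariant for each fixed $\gamma$ (hence bounded in $x$), but grows with $\gamma$: already on the torus, if $\td\alpha(\gamma_0)(x)=A x+v(x)$ with $v$ periodic, then $v_{\gamma_0^n}\sim\sum_{k<n}A^{n-1-k}v(\cdot)$ is typically of size $\|A\|^{n}$. Your ``bounded'' term is therefore not bounded.

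The paper closes this gap by writing $\gamma_{sg}$ as a word of length $n(g)$ in a fixed generating set, telescoping $z_{\gamma_{sg}}$ over generators (each contributing a uniformly bounded $z_{\gamma_\ell}$), and obtaining $\|\psi(g,x)\|\le \|S\|\,\|S_g\|\,n(g)\,C^{n(g)}D$. The word length $n(g)$ is then controlled by the \emph{quasi-isometric embedding} of $\Gamma$ in $G$ (Theorem~\ref{QI}), yielding $n(g)\lesssim d_G(e,\gamma_{sg})\lesssim d_{G/\Gamma}(g\Gamma,\Gamma)+\text{const}$, after which Lemma~\ref{Integrable} finishes the argument. Your proposal never invokes the quasi-isometric embedding; this is hypothesis (2) of Theorem~\ref{main:factorsfull}, and it is there precisely for this lemma. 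Any argument that does not use it should be regarded with suspicion, since without it the lemma can fail for lattices in rank-one groups.
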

\begin{proof}
Recall our fundamental domain $D$.
We show  $\int_D\log^+( C^\sigma( g)) \ d g<\infty. $

Let $\psi(g,x) = \psi^{s}(g,x) +  \psi^{u}(g,x).$
For $g\in D$ we have $\gamma_g= e$ and
\begin{align*}
\exp(-\psi(g,x))
%	:&= \rho(s) (\tphi_g(x) )( \tphi _{sg} (x) )\inv\\
%	&=\rho(s)(\rho(g\gamma_g\inv) \td \phi( \alpha( \gamma_g)(x))( \rho(sg\gamma_{sg}\inv) \td \phi( \alpha( \gamma_{sg})x))\inv\\
%	&=\rho(sg)\left( \rho(\gamma_g\inv) (\td \phi( \alpha( \gamma_g)(x)))( \rho(\gamma_{sg}\inv) \td \phi( \alpha( \gamma_{sg})x))\inv\right)\\
%	&=\rho(sg)\left(\td \phi( x)( \rho(\gamma_{sg}\inv) \td \phi( \alpha( \gamma_{sg})x))\inv\right)
	:&= (\rho_i(s) (\tphi_g(x) ))\inv( \tphi _{sg} (x)) \\
	&=\rho_i(s)(\rho_i(g\gamma_g\inv) \td \phi( \alpha( \gamma_g)(x))\inv( \rho_i(sg\gamma_{sg}\inv) \td \phi( \td \alpha( \gamma_{sg})(x)))\\
	&=\rho_i(sg)\left( \rho_i(\gamma_g\inv) (\td \phi( \alpha( \gamma_g)(x))\inv)( \rho_i(\gamma_{sg}\inv) \td \phi( \td \alpha( \gamma_{sg})(x)))\right)\\
	&=\rho_i(sg)\left(\td \phi( x)\inv( \rho_i(\gamma_{sg}\inv) \td \phi( \td \alpha( \gamma_{sg})(x)))\right)
\end{align*}
Let $F=\{\gamma_\ell\}$ be a the finite set of generators for $\Gamma$ fixed above and write
$$\gamma_{sg} = \gamma_{\ell(1)}\gamma_{\ell(2)}\cdots \gamma_{\ell (n(g))}$$
where $n(g)$ is the word-length of $\gamma_{sg}$ relative to the generators $\{\gamma_\ell\}.$
%word length of $\gamma_{sg}$.  %\prod _{i = 1} ^{n(g)} \gamma_{\ell(i)}.$$
%\red{\st{ Recall that   $\td\phi(\td \alpha( \gamma_{\ell})(x)))$ is $P_{*,i}$-equivariantly homotopic to $\rho_i(\gamma_{\ell})(\td \phi(x))$.}}    %In particular, f
 From \eqref{eq4}, for each $x\in \td M$, we have 
$$\rho_i (\gamma_\ell)\inv \td\phi(\td \alpha( \gamma_{\ell})(x))) =\td \phi(x) z$$
for some $z=z_{\gamma_\ell}(x)\in Z_i$.   Moreover, the function $z_{\gamma_\ell}$ is $\Lambda_M$-invariant, hence  there is a uniform constant $D_\ell>0$ such that $$\|\exp\inv(z_{\gamma_\ell})\|\le D_\ell.$$ % $$\sup_{x\in \td M}\left \{\|\exp\inv\left(\rho_i (\gamma_\ell)\inv \td\phi(\td \alpha( \gamma_{\ell})(x)))\right)\| \right\}\le D_\ell.$$

Then for each $x$ we have a sequence $z_j\in Z_i$ for $1\le j\le n(g)$ with  $\|\exp\inv(z_j)\| \le D_{\ell(j)}$ and

\begin{align*}
\rho_i(\gamma_{sg}\inv) &\td \phi( \td \alpha( \gamma_{sg})(x)))\\
 :&=  %\rho(\gamma_{\ell(n(g))})\inv\rho(\gamma_{\ell(n(g)-1)})\inv\cdots \rho(\gamma_{\ell(1)})\inv\left(
% \td \phi(\alpha(\gamma_{\ell(n(g))})\cdots \alpha(\gamma_{\ell(2)})\alpha(\gamma_{\ell(1)})(x)))\right)\\
% &= \rho(\gamma_{\ell(n(g))})\inv\rho(\gamma_{\ell(n(g)-1)})\inv\cdots \rho(\gamma_{\ell(1)})\inv
% \left(
% \td \phi(\alpha(\gamma_{\ell(1)}) \alpha(\gamma_{\ell(2)})\cdots \alpha(\gamma_{\ell(n(g))})(x)))\right)\\
% &=
 \rho_i(\gamma_{\ell(n(g))})\inv\rho_i(\gamma_{\ell(n(g)-1)})\inv\cdots \rho_i(\gamma_{\ell(1)})\inv\left(
 \td \phi( \td \alpha(\gamma_{\ell(1)})\cdots \td \alpha(\gamma_{\ell(n(g))})(x))\right)\\
  &= \rho_i(\gamma_{\ell(n(g))})\inv\rho_i(\gamma_{\ell(n(g)-1)})\inv\cdots \rho_i(\gamma_{\ell(2)})\inv\left(
 \td \phi( \td \alpha(\gamma_{\ell(2)})\cdots \td \alpha(\gamma_{\ell(n(g))})(x))z_1\right)\\
  &= \rho_i(\gamma_{\ell(n(g))})\inv\rho_i(\gamma_{\ell(n(g)-1)})\inv\cdots \rho_i(\gamma_{\ell(3)})\inv \\&\quad \quad \left(
 \td \phi( \td \alpha(\gamma_{\ell(3)})\cdots \td \alpha(\gamma_{\ell(n(g))})(x))\rho_i(\gamma_{\ell(2)})\inv(z_1) z_2\right)\\
&\quad    \vdots\\
 &  =\td \phi(x) \prod_{j = 1} ^{n(g) } \rho_i(\gamma_{\ell(n(g))})\inv\rho_i(\gamma_{\ell(n(g)-1)})\inv\cdots \rho_i(\gamma_{\ell(j+1)})\inv(z_j).
\end{align*}

Let
\begin{itemize}
%\item $X_i(x) = \exp\inv(z_i)$;
\item $S_\ell= \restrict{D \rho_i(\gamma_\ell)}{T_eN_i}$;
\item $S = \restrict{D \rho_i(s)}{T_eN_i}$;
\item  $S_g = \restrict{D  \rho_i(g)}{T_eN_i}$;
\item $C = \max \|S_\ell\|$;
\item $D= \max D_\ell$.
\end{itemize}

Then, as
\begin{align*}
\exp(-\psi(g,x)) &=
\rho_i(sg)\left(\td \phi( x)\inv( \rho_i(\gamma_{sg}\inv) \td \phi( \td \alpha( \gamma_{sg})(x)))\right)\\
	&= \rho_i(sg)\left( \prod_{j = 1} ^{n(g) } \rho_i(\gamma_{\ell(n(g))})\inv\rho_i(\gamma_{\ell(n(g)-1)})\inv\cdots \rho_i(\gamma_{\ell(j+1)})\inv(z_j)\right)
\end{align*}
we have 	$$\|\psi(g,x) \| \le \|S\| \|S_g \| n(g) C^{n(g)}D.$$

%\red{need citation: should be comparable to $d(g\Gamma, \Gamma)$}

Note that (as $D\rho\colon G\to \Aut(\lien)$ is a continuous representation) there is a constant $C_1$ with
$$\log\|D\rho (g)\|\le  C_1d(g,e),$$  %\marginnote{I think the $+C_1$ here was extraneous}
%hence by \eqref{eq:FDQI} we have
%$$\int _D \log \|S_g\| \le C_1 + \int _D ad_G(g\Gamma, \Gamma) \ d g + b.$$
%\begin{mdframed}
hence  we have
$$\int _D \log \|S_g\| \le   C_1\int _D d_G(g\Gamma, \Gamma) \ d g \Gamma.$$
By Lemma \ref{Integrable}, $\int _D \log \|S_g\|  <\infty$.

Moreover,  from \eqref{eq:QI}
\begin{align}
\int _D n(g) \ d g& \le \int_D A d_G(e, \gamma_{sg}) + B\ d g \notag\\
			& \le \int_D A\left[ d_G(e, g) + d_G(g,sg) + d_G( sg, \gamma_{sg})\right] + B\ d g \notag\\
			& \le \int_D A\left[ d_G(e, g) + d_G(e,s) + d_G( sg \gamma_{sg}\inv, e)\right] + B\ d g\label{eq:bound}
\end{align}

From the choice of fundamental domain, we have
$$d_G(e, g)= d_G(\Gamma, g\Gamma)$$ and
 $$d_G( sg \gamma_{sg}\inv, e) =
d_G( sg \gamma_{sg}\inv\Gamma, \Gamma) \le d_G(g\Gamma,\Gamma) + d_G(sg\Gamma,g\Gamma)$$
hence
$$\int _D n(g) \ d g\le   \int_D 2 A\left[ d_G(g\Gamma,\Gamma) + d_G(sg\Gamma,g\Gamma) \right] + B\ d g $$
and
 it follows that again from Lemma \ref{Integrable} that $\int _D n(g) \ d g$
  is finite.  The claim then follows.
\end{proof}

%From \eqref{eq:FDQI} we have $$\int_D d_G(e, g) \ d g \le  \int _{G/\Gamma}\left(a d_G(\Gamma, g\Gamma) + b\right) \ d (g\Gamma)$$  {and} $$
%\int_D d_G(sg\gamma_{sg}, e) \ d g \le  \int_{G/\Gamma}\left( a d_G( sg \Gamma, \Gamma) + b\right) \ d (g\Gamma).$$
%As $g\Gamma\to sg\Gamma$ is measure preserving on $G/\Gamma$  it follows that again from \eqref{eq99} that   \eqref{eq:bound} is finite.  The claim then follows.
%\end{proof}

From Lemma \ref{claim:login} and  standard  tempering kernel arguments  (\cite[Lemma 3.5.7]{MR2348606})  we immediately obtain the following.
\begin{proposition}\label{prop:subexp}
For any $\epsilon>0$, there is a measurable, $\Gamma$-invariant function $L= L_\epsilon \colon G\to [0, \infty)$ so that for almost every $g\in G$ and every $k\in \Z$
\begin{enumerate}
\item $C^{s/u}(g) \le L(g)$;
\item $ L(s^k g) \le e^{\epsilon |k|} L(g)$.
\end{enumerate}
\end{proposition}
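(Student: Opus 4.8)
The plan is to deduce the proposition directly from Lemma \ref{claim:login} via the tempering kernel lemma, applied to the measure‑preserving system on $G/\Gamma$ given by left translation by the fixed element $s$. First I would record the dynamical setup: since $G$ is semisimple, hence unimodular, and $\Gamma$ is a lattice, the finite Haar measure $\rmm_G$ on $G/\Gamma$ is invariant under the transformation $T\colon g\Gamma\mapsto sg\Gamma$, which is moreover invertible, with inverse $g\Gamma\mapsto s\inv g\Gamma$. By the invariance $C^{s/u}(g)=C^{s/u}(g\gamma)$ established above, $C^{s}$ and $C^{u}$ descend to measurable functions on $G/\Gamma$; I would set $f:=\max(C^{s},C^{u})$ and note $\log^+ f\le\log^+C^{s}+\log^+C^{u}$, so that Lemma \ref{claim:login} yields $\log^+ f\in L^1(G/\Gamma,\rmm_G)$.

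Next, given $\epsilon>0$, I would apply the tempering kernel lemma \cite[Lemma 3.5.7]{MR2348606} to the system $(G/\Gamma,\rmm_G,T)$ and the function $f$, whose only hypothesis beyond $T$ being an invertible measure‑preserving transformation of a finite measure space is exactly $\log^+ f\in L^1$. This produces a measurable function $\hat L_\epsilon\colon G/\Gamma\to[0,\infty)$ — explicitly $\hat L_\epsilon(x)=\sup_{k\in\Z}f(T^{k}x)e^{-\epsilon|k|}$, which is finite $\rmm_G$‑a.e.\ because $\log^+ f\in L^1$ forces $\tfrac1n\log^+ f(T^{\pm n}x)\to0$ a.e.\ — satisfying $f\le\hat L_\epsilon$ a.e.\ and $\hat L_\epsilon(Tx)\le e^{\epsilon}\hat L_\epsilon(x)$ for a.e.\ $x$; applying the same bound to $T\inv$ and iterating gives $\hat L_\epsilon(T^{k}x)\le e^{\epsilon|k|}\hat L_\epsilon(x)$ for every $k\in\Z$.

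Finally I would pull this back to $G$: define $L=L_\epsilon\colon G\to[0,\infty)$ by $L(g):=\hat L_\epsilon(g\Gamma)$, which is measurable and $\Gamma$‑invariant by construction. Then $C^{s/u}(g)\le f(g\Gamma)\le\hat L_\epsilon(g\Gamma)=L(g)$ for a.e.\ $g$, which is (1), and
\[
L(s^{k}g)=\hat L_\epsilon\big(T^{k}(g\Gamma)\big)\le e^{\epsilon|k|}\hat L_\epsilon(g\Gamma)=e^{\epsilon|k|}L(g)
\]
for every $k\in\Z$ and a.e.\ $g$, which is (2).

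I do not expect a genuine obstacle at this stage: the substantive work — controlling the $L^1$‑norm of $\log^+ C^{s/u}$ over the non‑compact quotient $G/\Gamma$ — has already been carried out in Lemma \ref{claim:login}, and what remains is the routine verification that the tempering kernel lemma applies (finiteness and $T$‑invariance of $\rmm_G$, descent of the $C^{s/u}$ to $G/\Gamma$) together with the pull‑back bookkeeping. If anything deserves a word of care, it is only the observation that $L^1$‑integrability of $\log^+ f$ implies the subexponential orbit growth $\tfrac1n\log^+ f(T^{\pm n}x)\to0$ a.e.\ needed for finiteness of $\hat L_\epsilon$, which is the standard Birkhoff‑type fact that $g(T^{n}x)/n\to0$ a.e.\ for $g\in L^1$.
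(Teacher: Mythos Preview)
Your proposal is correct and follows exactly the approach the paper takes: the paper simply states that the proposition follows from Lemma~\ref{claim:login} together with the standard tempering kernel argument \cite[Lemma 3.5.7]{MR2348606}, without spelling out any details. Your write-up supplies precisely those details (descent to $G/\Gamma$, invariance of Haar under left $s$-translation, and the explicit construction of $\hat L_\epsilon$), so there is nothing to add or correct.
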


\section{Construction of the semiconjugacy}\label{sec:5}
In this section, we build a continuous semiconjugacy $H$ between the left $G$-actions on $G\times \td M$ and $G\times N_i$.  Moreover, the conjugacy will be $\Psi$-equivariant and hence descend to a semiconjugacy between left $G$-actions on $M^\alpha$ and $(N_i/\Lambda_i)_\rho$.

We first construct a measurable function $H$ intertwining the action of $s$.  We then extend $H$ to intertwine the actions of the centralizer of $s$ and finally all of $G$.  That $H$ agrees almost everywhere  with a  continuous function will follow  from construction and the fact that  $H$ intertwines the left $G$-actions.

\subsection{Semiconjugating the action of $s$}

\def\sterm{ \exp\left(\sum_{k=1} ^\infty S^{k-1} \Big( \psi ^s(s ^{-k } g, x)  \Big) \right)
}
\def\uterm{ \exp\left(- \sum_{k=0} ^\infty S^{-k-1}\Big(\psi ^u(s ^{k } g, x)  \Big)\right)
}
		\def
		\hdef{
%		\begin{align*}
		h_g( x): =
		 \tphi_g(x)
		 	\sterm	 \uterm
%				\end{align*}
		}
		Recall we write $S\in \Aut(\lien)$ for $D\rho_i(s)$ where $s$ is a distinguished element of $A$ fixed in \ref{sss:central}.  %\in \Aut(N)$.
Given $g\in G$ define a \emph{formal conjugacy} by adding (central) correction terms to the family $\tphi_g$:
\begin{equation}\label{eq:hdef}\hdef .\end{equation}
We check formally that
	$$\rho_i(s)(h_g(x)) = h_{sg}(x).$$
Indeed,
\begin{align*}
%s\cdot H(g,x) &= (sg,
&\rho_i(s)(h_g(x))\\
=& \rho_i(s)\left[ \tphi_g(x) \sterm\uterm\right]\\
%	&= \rho(s)\left( \tphi_g(x) \exp\left(-\sum_{k=0} ^\infty S^k \Big( \psi ^s(s ^{-k } g, x)  \Big) \right)\exp\left( \sum_{k=1} ^\infty S^{-k}\Big(\psi ^u(s ^{k } g, x)  \Big)\right)\right)\\
	=&\tphi _{sg} (x) \exp( \psi^{s}(g,x) )\exp(\psi^{u}(g,x))\\
	& \quad \quad \quad \cdot	\exp\left(\sum_{k=1} ^\infty S^{k} \Big( \psi ^s(s ^{-k } g, x)  \Big) \right)\exp\left( - \sum_{k=0} ^\infty S^{-k}\Big(\psi ^u(s ^{k } g, x)  \Big)\right)\\
	=&\tphi _{sg} (x) 	\exp\left(\sum_{k=0} ^\infty S^{k} \Big( \psi ^s(s ^{-k } g, x)  \Big) \right)\exp\left( - \sum_{k=1} ^\infty S^{-k}\Big(\psi ^u(s ^{k } g, x)  \Big)\right)\\
	=&\tphi _{sg} (x) 	\exp\left(\sum_{\ell =1} ^\infty S^{\ell-1} \Big( \psi ^s(s ^{-\ell } (s g), x)  \Big) \right)\exp\left( - \sum_{\ell=0} ^\infty S^{-\ell-1}\Big(\psi ^u(s ^{\ell} (sg), x)  \Big)\right)\\
	=& h_{sg}(x).
\end{align*}

We say a family of maps $g\to h_g\colon \td M\to N_i$ parametrized by $g\in G$ is \emph{$\Psi$-equivariant} if the map $G\times \td M\to G\times N_i$ defined by $(g, x)\to (g, h_g(x))$ is $\Psi$-equivariant.

\begin{lemma}
There is a full measure  set of $g\in G$ such that
$h_g\colon \td M\to N_i$ is well-defined, continuous, and $\Psi$-equivariant.  Moreover, for such $g$ $$\td p_{i,i+1} \circ h_g = \rho_{i+1}(g) \td h_{i+1}.$$
\end{lemma}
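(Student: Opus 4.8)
The plan is to establish, in turn: (i) convergence of the two infinite products defining $h_g$ in \eqref{eq:hdef}, uniformly in $x\in\td M$; (ii) continuity of $h_g$; (iii) $\Psi$-equivariance of the family $g\mapsto h_g$; and (iv) the projection formula. The full-measure set of $g$ will be the set $\Omega\subset G$ on which the conclusions of Proposition \ref{prop:subexp} hold, for a value of $\epsilon$ fixed once the hyperbolicity data of $S$ is known.

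For (i), fix $\epsilon>0$ and take $L=L_\epsilon$ and $\Omega$ as in Proposition \ref{prop:subexp}, so that on $\Omega$ one has $C^{s/u}(g)\le L(g)$ and $L(s^kg)\le e^{\epsilon|k|}L(g)$ for all $k\in\Z$ (and $\Omega$ is $\Gamma$-invariant since $C^{s/u}$ and $L$ are). Since $S=D\rho_i(s)$ restricted to $\liez_i=E^s\oplus E^u$ is hyperbolic, there are $C_0\ge 1$ and $\mu\in(0,1)$ with $\|S^n|_{E^s}\|\le C_0\mu^n$ and $\|S^{-n}|_{E^u}\|\le C_0\mu^n$ for all $n\ge 0$. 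Using $\|\psi^{s/u}(h,x)\|\le C^{s/u}(h)$, for $g\in\Omega$ and every $x\in\td M$ the $k$-th terms of the two series in \eqref{eq:hdef} are bounded by $C_0\mu^{k-1}e^{\epsilon k}L(g)$ and $C_0\mu^{k+1}e^{\epsilon k}L(g)$ respectively. Choosing $\epsilon<-\log\mu$ once and for all makes both series converge absolutely, with bounds uniform in $x\in\td M$; hence $h_g(x)$ is well defined, and the two correction factors lie in $Z_i\subset N_i$, so $h_g(x)\in N_i$.

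For (ii), $\tphi_g$ is continuous, and $x\mapsto\psi^{s/u}(g,x)$ is continuous because $\exp\colon\liez_i\to Z_i$ is a diffeomorphism and $x\mapsto\tphi_{sg}(x)^{-1}\rho_i(s)\tphi_g(x)\in Z_i$ is continuous; thus each partial product in \eqref{eq:hdef} is continuous in $x$, and by the uniform convergence from (i), $h_g$ is continuous. For (iii) it suffices to verify the two identities of Lemma \ref{claim:eqiv} with $\tphi$ replaced by $h$. Identity (1): since $s^{\mp k}(g\gamma)=(s^{\mp k}g)\gamma$, Lemma \ref{claim:invar} shows every correction term is unchanged when $(g,x)$ is replaced by $(g\gamma,\td\alpha(\gamma^{-1})(x))$, while $\tphi_{g\gamma}(\td\alpha(\gamma^{-1})(x))=\tphi_g(x)$ by Lemma \ref{claim:eqiv}(1); hence $h_{g\gamma}(\td\alpha(\gamma^{-1})(x))=h_g(x)$. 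Identity (2): the correction terms are likewise unchanged under $x\mapsto x\lambda$ by Lemma \ref{claim:invar}, and $\tphi_g(x\lambda)=\tphi_g(x)\rho_i(g)(P_{*,i}(\lambda))$ by Lemma \ref{claim:eqiv}(2); since the correction factors lie in the center $Z_i$ of $N_i$ they commute with $\rho_i(g)(P_{*,i}(\lambda))$, giving $h_g(x\lambda)=h_g(x)\rho_i(g)(P_{*,i}(\lambda))$. Finally, for (iv), the correction factors lie in $Z_i=\ker\td p_{i,i+1}$, so applying $\td p_{i,i+1}$ to \eqref{eq:hdef} and using the identity $\td p_{i,i+1}\circ\tphi_g=\rho_{i+1}(g)\circ\td h_{i+1}$ recorded just after \eqref{eq1} yields $\td p_{i,i+1}\circ h_g=\rho_{i+1}(g)\circ\td h_{i+1}$.

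The step I expect to be the main obstacle is (i): one must balance the exponential contraction and expansion rates of $S$ on $\liez_i$ against the only-subexponential growth of the central defects $C^{s/u}(s^{\pm k}g)$ along the $s$-orbit. This is exactly what the tempering-kernel estimate of Proposition \ref{prop:subexp} is designed to supply, and it dictates the order of quantifiers: the hyperbolicity gap $\mu$ must be fixed first and $\epsilon$ chosen small afterward. The remaining steps are bookkeeping with the semidirect-product equivariance already prepared in Lemmas \ref{claim:eqiv} and \ref{claim:invar}.
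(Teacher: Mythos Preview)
Your proof is correct and follows the same approach as the paper. The paper's own proof is a single sentence (``Taking $\epsilon<\|S\|/100$, the claim holds for all $g$ such that Proposition~\ref{prop:subexp} holds''), and you have carefully expanded this into the four verifications (i)--(iv); your choice $\epsilon<-\log\mu$ is the precise form of the smallness needed, and the equivariance and projection steps are exactly the bookkeeping with Lemmas~\ref{claim:eqiv} and~\ref{claim:invar} that the paper leaves implicit.
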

\begin{proof}
Taking $\epsilon< \|S\|/100$, the claim holds for all $g$ such that Proposition \ref{prop:subexp} holds.
\end{proof}

Define $H\colon G\times \td M \to G\times N_i$ by $H(g,x) = (g, h_g(x))$.  Then $H$ defines a measurable conjugacy between the action of $s$:  for almost every $g\in G$  $$H(s\cdot (g, x)) = s\cdot H(g,x).$$

\begin{lemma}\label{clm:unique}
The family of maps $g\to h_g$ is unique among the measurable family of  $\Psi$-equivariant, continuous functions $\td M\to N_i$ with the property that $ h_{sg}(x) = \rho_i(s) h_g(x)$ and $\td p_{i, i+1}\circ h_g = \rho_{i+1}(g) \td h_{i+1}$.
\end{lemma}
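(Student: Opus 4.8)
The plan is to show that any two families $g\mapsto h_g$ and $g\mapsto h'_g$ satisfying the listed properties agree for almost every $g$. First I would exploit the projection hypothesis: since $\td p_{i,i+1}\circ h_g=\rho_{i+1}(g)\,\td h_{i+1}=\td p_{i,i+1}\circ h'_g$, the two maps have the same image in $N_{i+1}$, so for each $(g,x)$ in the (co-null) common domain there is a \emph{unique} $\xi_g(x)\in\liez_i$ with $h'_g(x)=h_g(x)\exp(\xi_g(x))$, using that $Z_i=\ker\td p_{i,i+1}$ is the center of $N_i$ and $\exp\colon\liez_i\to Z_i$ is a diffeomorphism. For each fixed $g$ the map $x\mapsto\xi_g(x)$ is continuous, and $(g,x)\mapsto\xi_g(x)$ is measurable.

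Next I would read off the transformation rules for $\xi$. Unwinding $\Psi$-equivariance of both families exactly as in Lemma \ref{claim:eqiv} gives the two identities $h_{g\gamma}(\td\alpha(\gamma\inv)(x))=h_g(x)$ and $h_g(x\lambda)=h_g(x)\rho_i(g)(P_{*,i}(\lambda))$, and likewise for $h'$; combining these with the centrality of $\exp(\xi_g(x))$ yields $\xi_{g\gamma}(\td\alpha(\gamma\inv)(x))=\xi_g(x)$ and $\xi_g(x\lambda)=\xi_g(x)$. Thus $\xi_g$ descends to a continuous function on the compact manifold $M=\td M/\Lambda_M$, so $\phi(g):=\sup_x\|\xi_g(x)\|$ is finite for a.e.\ $g$, is $\Gamma$-invariant, is measurable, and hence descends to a measurable function on $G/\Gamma$ that is finite a.e. Applying $\rho_i(s)$ to $h'_g(x)=h_g(x)\exp(\xi_g(x))$ and using $h_{sg}=\rho_i(s)\circ h_g$, $h'_{sg}=\rho_i(s)\circ h'_g$, together with $\rho_i(s)(\exp\xi)=\exp(S\xi)$ for $S=D\rho_i(s)$, produces $\xi_{sg}(x)=S(\xi_g(x))$, hence $\xi_{s^kg}(x)=S^k(\xi_g(x))$ for all $k\in\Z$.

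Then I would invoke the hyperbolicity of $\restrict{S}{\liez_i}$ through a recurrence argument. Since $\phi$ is finite a.e.\ on the finite-measure space $G/\Gamma$, the sublevel set $\{g\Gamma:\phi(g)\le C\}$ has positive measure for some $C$. The $\Z$-action of $s$ on $G/\Gamma$ preserves Haar measure, so Poincar\'e recurrence (applied both to $s$ and to $s\inv$) shows that for a.e.\ $g$ the orbit $\{s^kg\}$ returns to this set along sequences $k_j\to+\infty$ and $k'_j\to-\infty$. Writing $\xi_g(x)=v^s+v^u$ in the splitting $\liez_i=E^s\oplus E^u$, the bound $\|S^{k_j}\xi_g(x)\|=\|\xi_{s^{k_j}g}(x)\|\le C$ combined with $\|S^{k_j}v^s\|\to0$ and $\|S^{k_j}v^u\|\ge c\mu^{k_j}\|v^u\|$ (some $\mu>1$) forces $v^u=0$; symmetrically, running $k'_j\to-\infty$ forces $v^s=0$. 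Hence $\xi_g\equiv0$, i.e.\ $h'_g=h_g$, for a.e.\ $g$, which is the asserted uniqueness.

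The main obstacle is precisely the passage from the orbit relation $\xi_{s^kg}=S^k\xi_g$ to the vanishing of the unstable component: because $G/\Gamma$ is not compact, $\phi$ could a priori be unbounded along an orbit, so one cannot conclude directly. The resolution—reducing to recurrence to a fixed positive-measure sublevel set of $\phi$, available since $G/\Gamma$ has finite volume and $\phi$ is $\Gamma$-invariant and a.e.\ finite—is the crux; everything else is bookkeeping with the semidirect-product identities already recorded in Lemma \ref{claim:eqiv}.
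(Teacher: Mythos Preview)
Your proof is correct and follows essentially the same route as the paper: introduce the central defect $\xi_g(x)\in\liez_i$ via the projection hypothesis, use $\Psi$-equivariance to show it descends to $M$ so that $\sup_x\|\xi_g(x)\|$ is a.e.\ finite and $\Gamma$-invariant, derive the equivariance $\xi_{s^kg}=S^k\xi_g$, and then kill the stable and unstable parts by Poincar\'e recurrence to a sublevel set combined with the hyperbolicity of $\restrict{S}{\liez_i}$. The only cosmetic difference is that the paper splits into stable and unstable components from the outset and recurs on each separately, whereas you split at the end; the substance is identical.
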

\begin{proof} Suppose $g\to \bar h_g$ is another such family.  % of measurable, $\Psi$-equivariant, continuous functions with the property that $\bar h_{sg}(x) = \rho(s) h_g(x)$ and $\td p_{i, i+1}(\bar h_g) = \rho(g) \td h_{i+1}$.
 As $\td p_{i,i+1} \circ \bar h_g =  \td p_{i,i+1} \circ  h_g $ it follows that $\bar h_g(x) = h_g(x) \exp(\hat \psi(g,x))$ for some $\hat \psi\colon G\times \td M\to \liez_i$.  Write $\hat \psi(g,x)  = \hat \psi^s(g,x)  +\hat \psi^u(g,x)  $

By the $\Psi$-equivariance of $h_g$ and $\bar h_g$ and as $\hat \psi^s(g,x) \in \liez_i$ it follows that $$\hat \psi^s((g,x)\cdot (\gamma, \lambda))=  \hat \psi^s(g,x).$$
Similarly, $\hat \psi^u((g,x)\cdot (\gamma, \lambda))=  \hat \psi^u(g,x).$  In particular, % $\hat \psi^{s/u}$ descends to a map on $M^\alpha$ and
 for almost every $g\in G$, the function $\hat \psi^{s/u}$ descends to a   continuous function from  $M$ to $\liez_i$.  It follow that $\|\hat \psi^{s/u}(g,x)\|$ is bounded uniformly in $x$ for almost every $g$.

As the families $h_g$ and $\bar h_g$ intertwine the dynamics we moreover have that
\begin{itemize}
\item  $\hat \psi^s(g,x)= S^k \hat \psi^s\left(s^{-k}\cdot (g,x)\right)$
\item $\hat \psi^u(g,x)= S^{-k} \hat \psi^u\left(s^{k}\cdot (g,x)\right)$
\end{itemize}
By Poincar\`e recurrence to sets on which $g\mapsto \max_{x\in \td M} \|\hat \psi^{s/u}(g,x)\|$ is uniformly bounded, it follows that $\hat \psi^s(g,x)= 0= \hat \psi^u(g,x)$ for almost every $g$ and every $x\in M$.    Hence $h_g= \bar h_g$ almost everywhere.
\end{proof}
\subsection{Extending  the semiconjugacy  to the centralizer of $s$}  
{\new{Recall $s\in A$ where $A\subset G$ is a maximal split Cartan subgroup.}}
Let $C_G(s)\subset G$ denote the centralizer of $s$ in $G$.  Note in particular that $A\subset C_G(s)$.  Moreover, every compact almost simple factor of  $G$ is contained in $C_G(s)$.

\begin{proposition}\label{CentralizerSemiconj}
Let $\bar s\in C_G(s)$.  Then for almost every $g\in G$
$$  h_{\bar sg} = \rho_i(\bar s) h_g.  $$ %,\quad \text{ hence } \quad  H(\bar sg, x) =\bar s\cdot H(g,x).$$
\end{proposition}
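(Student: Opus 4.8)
The plan is to reduce the statement to the uniqueness property established in Lemma \ref{clm:unique}. Fix $\bar s\in C_G(s)$ and define, for almost every $g\in G$, a competing family
\[
\bar h_g := \rho_i(\bar s)^{-1}\circ h_{\bar s g}\colon \td M\to N_i .
\]
Since $G$ is unimodular, left translation by $\bar s$ preserves the Haar measure class, so $\bar h_g$ is defined for almost every $g$, depends measurably on $g$, and is continuous for almost every $g$ (as $h_{\bar s g}$ is continuous for almost every $g$ and $\rho_i(\bar s)^{-1}$ is a fixed homeomorphism of $N_i$). I would then check that $g\mapsto \bar h_g$ has all three properties that characterize $g\mapsto h_g$ in Lemma \ref{clm:unique}.

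First, the intertwining of the $s$-action: using that $s\bar s=\bar s s$ and that $\rho_i$ is a homomorphism,
\[
\bar h_{sg}= \rho_i(\bar s)^{-1} h_{s\bar s g}= \rho_i(\bar s)^{-1}\rho_i(s) h_{\bar s g}= \rho_i(s)\rho_i(\bar s)^{-1} h_{\bar s g}= \rho_i(s)\,\bar h_g .
\]
Second, $\Psi$-equivariance: since the family $h_g$ is $\Psi$-equivariant, the two identities of Lemma \ref{claim:eqiv} hold for $h$, and applying them with parameter $\bar s g$ in place of $g$ gives $\bar h_{g\gamma}(\td\alpha(\gamma^{-1})(x))=\rho_i(\bar s)^{-1}h_{\bar s g}(x)=\bar h_g(x)$, and, because $\rho_i(\bar s)^{-1}$ is an automorphism of $N_i$ and $\rho_i(\bar s)^{-1}\rho_i(\bar s g)=\rho_i(g)$,
\[
\bar h_g(x\lambda)= \rho_i(\bar s)^{-1}\big(h_{\bar s g}(x)\,\rho_i(\bar s g)(P_{*,i}(\lambda))\big)= \bar h_g(x)\,\rho_i(g)(P_{*,i}(\lambda)) .
\]
Third, the projection condition: since $\ker\td p_{i,i+1}=Z_i$ is $\rho_i(G)$-invariant we have $\td p_{i,i+1}\circ\rho_i(\bar s)^{-1}=\rho_{i+1}(\bar s)^{-1}\circ\td p_{i,i+1}$, so
\[
\td p_{i,i+1}\circ\bar h_g= \rho_{i+1}(\bar s)^{-1}\circ\td p_{i,i+1}\circ h_{\bar s g}= \rho_{i+1}(\bar s)^{-1}\rho_{i+1}(\bar s g)\,\td h_{i+1}= \rho_{i+1}(g)\,\td h_{i+1} .
\]

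Having verified these three properties, Lemma \ref{clm:unique} forces $\bar h_g=h_g$ for almost every $g$, i.e.\ $h_{\bar s g}=\rho_i(\bar s)\,h_g$ for almost every $g\in G$, which is the assertion of the proposition. There is no serious dynamical obstacle here: the one point requiring care is the bookkeeping of the asymmetric roles of the right $\Gamma\ltimes_{\alpha_*}\Lambda_M$-action and the left $G$-action built into $h_g$, and of the order in which $\rho_i(\bar s)^{-1}$ and left translation by $\bar s$ are composed. The commutativity $s\bar s=\bar s s$ is precisely what guarantees that conjugating $h_{\bar s g}$ by $\rho_i(\bar s)$ preserves the $s$-intertwining identity, so no input beyond Lemma \ref{clm:unique} is needed.
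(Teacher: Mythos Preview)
Your proposal is correct and follows essentially the same approach as the paper: both define $\bar h_g=\rho_i(\bar s)^{-1}h_{\bar s g}$, verify the hypotheses of Lemma~\ref{clm:unique} (with the key step being that $s\bar s=\bar s s$ makes the $s$-intertwining survive), and conclude $\bar h_g=h_g$ by uniqueness. Your write-up is in fact more explicit than the paper's in spelling out the $\Psi$-equivariance and projection checks.
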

\begin{proof}
Let $\bar s\in C_G(s)$.  For $g\in G$ define $\bar h_g:=\rho_i(\bar s\inv)h_{\bar s g}$.  We check  that $g\to \bar h_g$ defines a measurable  family of  $\Psi$-equivariant, continuous functions $\td M\to N_i$ with the property that $\td p_{i, i+1}(\bar h_g) = \rho_i(g) \td h_{i+1}$.  Moreover
\begin{align*}\bar h_{sg}(x) :&= \rho_i(\bar s\inv)h_{\bar s s g} =
\rho_i(\bar s\inv)h_{s \bar s  g} =
\rho_i(\bar s\inv)\rho_i(s)h_{ \bar s  g} \\&=
\rho_i(s) \rho_i(\bar s\inv)h_{ \bar s  g} =
 \rho_i(s) \bar h_g(x).\end{align*}
By Lemma \ref{clm:unique}, $\bar h_g = h_g$ for almost every $g$.
\end{proof}
It follows from standard ergodic theoretic constructions that there is a full measures subset $X_0$ subset $G$ such that---after  modifying the family $g\to h_g$ on a set of measure zero---we have $$\rho_i(s')h_g = h_{s'}(g) $$ for all $g\in X_0$ and all $s'\in C_G(s)$.
In particular, the map $H\colon G\times \td  M\to G\times N_i$ defines a measurable conjugacy between the action of $C_G(s)$.

\subsection{Extension of the semiconjugacy to $G$}
We show that $H$ intertwines the full $G$-actions on $G\times \td  M$ and $G\times N_i$ via the following proposition.
\begin{proposition}\label{UnipotentSemiconj}
Let $\zeta $ be a restricted root of $\lieg$ that is  non-resonant with the representation $D\rho_i$.  Let $X\in \lieg^\zeta$ and let $a= \exp(X).$   Then  for almost every $g\in G$
$$ h_{ag} = \rho_i(a) h_g.$$
%Hence $H$ defines a measurable conjugacy between the action of $a$: $$H(ag, x) = aH(g,x).$$
\end{proposition}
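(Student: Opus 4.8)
The plan is to exploit the commutation relation $s \cdot \exp(X) = \exp(e^{-\zeta(\log s)} X) \cdot s$ in $G$ — equivalently, in terms of the conjugation action, $s\, a\, s^{-1} = \exp(e^{-\zeta(\log s)}X) =: a_\zeta$, where (after possibly replacing $s$ by a fixed power, using the freedom in the choice of $s\in A$) we may arrange $|e^{-\zeta(\log s)}| \ne 1$; say $\mu := e^{-\zeta(\log s)}$ satisfies $|\mu|<1$. The key observation is that the map $H$ already intertwines the left action of $s$ (and of the whole centralizer $C_G(s)$, by the previous subsection), so for almost every $g$ the ``error'' of $H$ along $a$ measures a defect which is contracted under iteration of $s$. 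Concretely, for $g\in G$ define $\bar h_g := \rho_i(a^{-1}) h_{ag}$. As in the proof of Lemma \ref{clm:unique} and Proposition \ref{CentralizerSemiconj}, one checks $g\mapsto \bar h_g$ is a measurable family of $\Psi$-equivariant continuous maps $\td M\to N_i$ with $\td p_{i,i+1}\circ \bar h_g = \rho_{i+1}(g)\td h_{i+1}$: indeed $\td p_{i,i+1}$-projection of the $a$-twist is controlled because $\rho_{i+1}(a)\circ \td p_{i,i+1} = \td p_{i,i+1}\circ\rho_i(a)$ and $\td p_{i,i+1}\circ h_{ag} = \rho_{i+1}(ag)\td h_{i+1}$. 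Hence $\bar h_g(x) = h_g(x)\exp(\hat\psi(g,x))$ with $\hat\psi\colon G\times\td M\to\liez_i$, and $\hat\psi$ splits $\hat\psi = \hat\psi^s+\hat\psi^u$ along the $S$-stable/unstable decomposition of $\liez_i$, with both components $\Gamma\ltimes_{\alpha_*}\Lambda_M$-invariant, hence bounded in $x$ for a.e.\ $g$.

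Next I would run the dynamics of $s$. Using $h_{sg} = \rho_i(s)h_g$ and the relation $a s = s a_\zeta$ (so $ag = s^{-1}a_\zeta s g$, etc.), a direct computation gives a cocycle-type identity relating $\hat\psi(g,x)$ to $\hat\psi$ evaluated at $s^{\pm 1}\cdot(g,x)$, but with the element $a$ replaced by $a_\zeta$. To make this usable I would instead set up, for each $n\in\bZ$, the defect $\hat\psi_n$ of $H$ against $a_n := \exp(\mu^n X)$ and derive $\hat\psi^s_{n}(g,x) = S\,\hat\psi^s_{n+1}(s^{-1}\cdot(g,x))$ and a matching contracted relation for the unstable part run the other way; the non-resonance of $\zeta$ with $D\rho_i$ is exactly what guarantees that the growth rate of $\mu^n$ never matches the expansion/contraction rate of $S$ on $\liez_i$, so telescoping these relations against Proposition \ref{prop:subexp} (subexponential growth of the central defects along $s$-orbits) forces $\|\hat\psi^s_n\|\to 0$ along a subsequence of $n\to-\infty$ and $\|\hat\psi^u_n\|\to 0$ along $n\to+\infty$. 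Since $a_0 = a$ and (using that the family $g\mapsto h_g$ and the defect are continuous in $g$, together with $a_n\to e$ as $n\to-\infty$ if $|\mu|<1$, or handling the unstable direction symmetrically) the defects $\hat\psi_n$ converge to $\hat\psi_0$ as $n\to\pm\infty$ in the appropriate direction, Poincar\'e recurrence to sets where $g\mapsto\max_x\|\hat\psi^{s/u}(g,x)\|$ is uniformly bounded — exactly the argument in Lemma \ref{clm:unique} — yields $\hat\psi^s\equiv 0\equiv\hat\psi^u$ a.e., i.e.\ $\bar h_g = h_g$ for almost every $g$.

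The main obstacle I expect is \textbf{packaging the recurrence argument correctly when $X$ lies in a root space rather than in $A$}: unlike the centralizer case, $a=\exp(X)$ does \emph{not} commute with $s$, so one cannot simply copy Proposition \ref{CentralizerSemiconj}; one must track how the root-space element is rescaled under conjugation by $s^n$ and show the rescaling is \emph{dominated} by the contraction/expansion of $S$ on the center $\liez_i$. This domination is where non-resonance of $\zeta$ (part of hypothesis (3) of Theorem \ref{main:factorsfull}, via $\zeta\in\Sigma_{NR}$) enters essentially: it ensures $|\mu|^{\pm 1}$ is not comparable to any eigenvalue modulus of $S|_{\liez_i}$, so the telescoped products of the defect terms, weighted by the subexponential function $L_\epsilon$ from Proposition \ref{prop:subexp}, genuinely decay. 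Once the decay is in hand, the uniqueness mechanism of Lemma \ref{clm:unique} closes the argument verbatim. A minor bookkeeping point is to choose $\epsilon$ in Proposition \ref{prop:subexp} small relative to both $\log\|S\|$ and $|\log|\mu||$, and to handle the two signs of $\mu^n$ (and hence the two choices of which of $\hat\psi^s,\hat\psi^u$ is killed in which time direction) symmetrically.
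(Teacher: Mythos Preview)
There is a genuine gap in your use of non-resonance. Non-resonance of $\zeta$ with $D\rho_i$ means that $\zeta$ is not a positive multiple of any restricted weight $\chi$ \emph{as linear functionals on $\liea$}; it does \emph{not} assert that $|\zeta(\log s)|$ differs from $|\chi(\log s)|$ for your particular fixed $s\in A$. For instance, if $\liea=\bR^2$, $\zeta=e_1^*$, and the weights of $D\rho_i$ on $\liez_i$ are $\pm e_2^*$, then $\zeta$ is non-resonant, yet for $s$ on the diagonal one has $|\zeta(\log s)|=|\chi(\log s)|$. So the rate comparison you invoke (``$|\mu|^{\pm1}$ is not comparable to any eigenvalue modulus of $S|_{\liez_i}$'') need not hold, and replacing $s$ by a power cannot help since that rescales both rates by the same factor.

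The paper uses non-resonance quite differently, and in a way that avoids the whole bookkeeping of the varying elements $a_n$. Since $\zeta$ is not proportional to any weight $\chi$, the hyperplane $\ker\zeta\subset\liea$ is not contained in any $\ker\chi$; hence one can choose $s_1,s_2\in A$ with $\zeta(\log s_j)=0$ and a splitting $\liez_i=E\oplus F$ so that $S_1:=D\rho_i(s_1)$ contracts $E$ and $S_2:=D\rho_i(s_2)$ contracts $F$. Because $\zeta(\log s_j)=0$, each $s_j$ \emph{commutes} with $a=\exp(X)$, so the central defect $\eta(g,x)$ defined by $\rho_i(a)h_g(x)=h_{ag}(x)\exp(\eta(g,x))$ satisfies the clean recursion $\eta(s_jg,x)=S_j\,\eta(g,x)$ (here one uses Proposition \ref{CentralizerSemiconj} applied to $s_j\in C_G(s)$). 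Poincar\'e recurrence to sets where $g\mapsto\max_x\|\eta(g,x)\|$ is bounded then forces $\eta^E\equiv0\equiv\eta^F$, exactly as in Lemma \ref{clm:unique}. Your scheme of tracking defects $\hat\psi_n$ for the rescaled elements $a_n$ never closes back on $a_0=a$ without a continuity-in-$g$ argument for the family $g\mapsto h_g$, and at this stage of the proof that family is only measurable; continuity is established only \emph{after} one knows $H$ intertwines all of $G$.
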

\begin{proof}
As $\zeta$ is not positively proportional to any weight of $\rho_i$, we may find $s_1, s_2\in A$ and a splitting $\liez_i = E\times F$ so that writing $S_j = D\rho(s_j),$
\begin{enumerate}
\item $\zeta(s_1) = \zeta(s_2) = 0$;
\item $\|\restrict{S_1}{E}\|<1$;
\item $\|\restrict{S_2}{F}\|<1$.
%\item $S_2$ contracts $F$.
\end{enumerate}
Note then that $s_i\inv a s_i = a$ for $i\in \{1,2\}.$

Recall that for almost every $g$ and any $x\in \td M$ we have $$\td p_{i,i+1} \circ \rho_i(a) h_g(x) = \td p_{i, i+1} h_{ag}(x).$$
Thus, given almost every $g\in G$ and any $x\in\td  M$ there is a unique $\eta(g,x)\in \liez_i$ with $$\rho_i(a) h_g(x) = h_{ag}(x) \exp (\eta(g,x)).$$
For almost every $g\in G$,  $h_g$ is continuous and descends to a  function  defined on $M$.  It follows that $\|\eta(g,x)\|$ is bounded uniformly in $x$ for almost every $g$.
As the family $h_g$ is $\Psi$-equivariant, we have  $\eta(g,x)= \eta(g\gamma, \td \alpha(\gamma\inv)(x))$ and hence the function $g\to \max_{x\in \td M} \|\eta(g,x)\|$ is $\Gamma$-invariant.  %descends to a function on $(G\times \td M)/\Gamma$
Fix $C>0$ and let $B\subset X_0\subset G/\Gamma$ be such that for $g\in B$, $\|\eta(g,x)\|<C$ for all $x$.  Taking $C$ sufficiently large we may ensure $B$ has measure arbitrarily close to $1$.

Now, consider $g\in B$ such that $s_j^{-k}g\Gamma\in B$ for $j=\{1,2\}$ and infinitely many $k\in \bN$.  % under the actions of $s_1$ and $s_2$.
Note that as
\begin{align*}
 \rho_i(a)  h_{s_jg}(x) &=
 \rho_i(a) \rho_i(s_j) h_g(x) \\
&=
\rho_i(s_j)\left( \rho_i(a) h_g(x) \right)\\
&= \rho_i(s_j) \left(h_{ag}(x) \exp (\eta(g,x))\right)\\
&=h_{s_jag}(x) \exp (S_j\eta(g,x))  \\
&=h_{as_jg}(x) \exp (S_j\eta(g,x))
\end{align*}
we have $\eta(s_jg, x) = S_j\eta(g,x)$.
Write $\eta(g,x) = \eta^E(g,x) + \eta^F(g,x)$.
We then have
$$\eta^E(g,x)= S_1^{k}(\eta^E(s_1^{-k}g,x)),\quad \quad \eta^F(g,x)= S_2^{k}(\eta^F(s_2^{-k}g,x)).$$
It then follows that $$\eta^E(g,x)= \eta^F(g,x)= 0$$
proving the proposition.
\end{proof}

Recall that we  assume that the representation $D\rho\colon G\to \Aut(\lien)$ is weakly non-resonant with the adjoint representation.
In particular, every $g\in G$ can be written as  $$g= \exp(X_1) \exp (X_2)\cdots \exp(X_\ell)$$ where each $X_j$ is either a vector in $\lieg^0$, or a vector in  $\lieg^\xi$ for some restricted root $\xi$ that is non-resonant with  the representation $D\rho_i$. Recall that for $X_j\in\lieg^0$, $\exp(X_j)$ lies in $C_G(s)$.

It follows from Proposition \ref{CentralizerSemiconj} and \ref{UnipotentSemiconj} that, after modifying $H$ on a set of measure zero, we have  $$a\cdot H(g,x) = H(a\cdot(g,x))$$ for almost every $g\in G$, every $x\in \td M$ and every $a\in G$.  As $G$ acts transitively on itself
$$a\cdot H(g,x) = H(a\cdot (g,x))$$ holds for  {\it every} $g\in G$, every $x\in \td M$ and every $a\in G$.
Now, fix $g$ so that $ h_g\colon \td M\to N_i$ is continuous.  As  $ h_{ag} = \rho(a)  h_g$, it follows that $ h_g$ is continuous for every $g\in G$ and, moreover, the family $g\to h_g$ various continuously in the parameter $g$ whence $H\colon G\times \td M\to G\times N_i$ is continuous.
%and, moreover, the function $\td H\colon G\times \td M\to G\times N_i$ is continuous.
%In particular,

Finally, recall that  the family $ h_g$ is $\Psi$-equivariant (whence the map $H\colon G\times \td M\to G\times N_i$ is $\Psi$-equivariant) and $\td p_{i, i+1} \circ  h_g = \rho_{i+1}(g) \td h_{i+1}$.  Indeed, these  properties hold for almost every $g$ and extend to every $g$ by  continuity.
In particular, this shows
\begin{corollary}\label{cor:conj}
There is a continuous, $\Psi$-equivariant function $H\colon G\times \td M\to G\times N_i$ of the form $H(g,x)= (g,h_g(x))$ with $a\cdot H(g,x)= H(a \cdot (g,x))$ for any $a\in G$.
\end{corollary}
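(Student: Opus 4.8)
The statement is the end-product of the construction carried out in Section \ref{sec:5}, so the plan is to assemble the pieces already in hand in the right order.

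First I would take the formal conjugacy $h_g$ introduced in \eqref{eq:hdef}. Choosing $\epsilon<\|S\|/100$ in Proposition \ref{prop:subexp}, the two telescoping series of central correction terms added to $\tphi_g$ converge for every $g$ in a conull set; on that set $h_g\colon\td M\to N_i$ is a well-defined continuous map, the family $g\mapsto h_g$ is $\Psi$-equivariant, it satisfies $\td p_{i,i+1}\circ h_g=\rho_{i+1}(g)\td h_{i+1}$, and $\rho_i(s)h_g=h_{sg}$. Thus $H(g,x)=(g,h_g(x))$ is a measurable map intertwining the left action of $s$.

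Next I would enlarge the group under which $H$ intertwines. Proposition \ref{CentralizerSemiconj} gives $h_{\bar sg}=\rho_i(\bar s)h_g$ for a.e.\ $g$ and each fixed $\bar s\in C_G(s)$; by a standard ergodic-theoretic argument one modifies $H$ on a null set so that $\rho_i(s')h_g=h_{s'g}$ for all $g$ in a conull set $X_0$ and all $s'\in C_G(s)$ simultaneously. Applying Proposition \ref{UnipotentSemiconj} and the same bootstrap to each root subgroup $\exp(\lieg^\zeta)$ with $\zeta\in\Sigma_{NR}$, one obtains, after a further null modification, $h_{ag}=\rho_i(a)h_g$ for a.e.\ $g$ whenever $a\in\exp(\lieg^\zeta)$. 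Lemma \ref{clm:unique} is what makes these successive modifications mutually consistent, since it pins down the family $h_g$ uniquely among $\Psi$-equivariant continuous families that intertwine $s$ and satisfy $\td p_{i,i+1}\circ h_g=\rho_{i+1}(g)\td h_{i+1}$.

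Then I would invoke weak non-resonance (hypothesis (3) of Theorem \ref{main:factorsfull}): the subgroups $C_G(s)\supset\exp(\lieg^0)$ and $\exp(\lieg^\zeta)$, $\zeta\in\Sigma_{NR}$, generate $G$, so factoring an arbitrary $a\in G$ as a product of elements of these subgroups and composing the relations above yields $a\cdot H(g,x)=H(a\cdot(g,x))$ for every $a\in G$ and a.e.\ $g$. To remove the ``a.e.'', I would fix a single $g_0$ in the conull good set; for every $g$, writing $a=gg_0\inv$, this gives $H(g,x)=a\cdot H(g_0,x)$, i.e.\ $h_g=\rho_i(gg_0\inv)h_{g_0}$, which is defined and jointly continuous. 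Hence $H$ is jointly continuous, the relation $a\cdot H(g,x)=H(a\cdot(g,x))$ now holds for every $g$, and the $\Psi$-equivariance of $H$ together with the identity $\td p_{i,i+1}\circ h_g=\rho_{i+1}(g)\td h_{i+1}$, valid on a conull set, pass to all $g$ by continuity. I expect the only genuinely delicate point to be this passage from almost-everywhere to everywhere intertwining (via transitivity of the left $G$-action on the $G$-factor) together with the ergodic-theoretic bookkeeping keeping the null-set modifications compatible; all of the hard analytic input — integrability of the displacement function, the quasi-isometric embedding of $\Gamma$, and the consequent subexponential control of the central defects along the $s$-orbit — has already been spent in Section \ref{sec:4}.
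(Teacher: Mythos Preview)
Your proposal is correct and follows essentially the same approach as the paper: assemble the measurable family $h_g$ from \eqref{eq:hdef}, extend the intertwining from $s$ to $C_G(s)$ and then to the non-resonant root subgroups via Propositions \ref{CentralizerSemiconj} and \ref{UnipotentSemiconj}, use weak non-resonance to reach all of $G$, and finally upgrade from almost-everywhere to everywhere by transitivity of the left $G$-action on itself (fixing one good $g_0$ and writing $h_g=\rho_i(gg_0^{-1})h_{g_0}$), with $\Psi$-equivariance and the fiber condition passing by continuity. Your identification of the a.e.-to-everywhere step as the one genuinely delicate point matches the paper's emphasis.
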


%
%Write $$H(g,x) = (g, \td h_g(x)).$$

To complete the proof of Theorem \ref{thm:inductive} define $\td h_i\colon  \td M\to N_i$  by $$\td h_{i}:=  h_e.$$
Then, $\td h_i$ satisfies
\begin{enumerate}
	\item $\td h_i(x\lambda)= \td h_i(x) P_{*,i}(\lambda)$ for $\lambda\in \Lambda_M$;
	\item $\td p_{i, i+1}\circ  \td h_i = \td h_{i+1}$;
	\item $\td h_i(\alpha (\gamma) (x)) =    h_e(\alpha (\gamma) (x)) =   h_\gamma(x) = \rho(\gamma)  h_e(x) = \rho(\gamma)\td h_i(x)$ for all $\gamma\in \Gamma$.
\end{enumerate}
Moreover, writing $\td h_i = \rho(g\inv ) h_g$ for some $g\in G$ such that $h_g$ coincides with the family defined by \eqref{eq:hdef}, it follows that $\td h_i$ is $\Lambda_M$-equivariantly homotopic to $\td \phi$.
Consequently, $\td h_i\colon \td  M\to N_i$ descends to a function $h_i \colon M \to N_i/\Lambda_i$ satisfying the conclusions of the theorem.

\new{Moreover, if $h \colon M \to N_i/\Lambda_i$ is any continuous function having a lift $\td h\colon \td M\to N_i $ as in the conclusion of the theorem, it follows that $\td h(x\lambda) = \td h(x) P_{*,i}(\lambda)$ and $\td p_{i,i+1} \td h = \td h_{i+1}$.  That $h = h_{i}$ then follows from the  uniqueness given in Lemma  \ref{clm:unique}.  }
\def\Gl{\GL}

\section{Superrigidity, arithmeticity, and orbit closures}\label{sec:bigfacts}
In this section we collect a number of classical facts that will be used in the sequel to prove Theorems \ref{ConjThm} and \ref{SmoothThm}.

 Theorem \ref{ConjThm} follows from verification of  the hypotheses of Theorem \ref{thm:mainnilmanifoldfull}.
To show (1) of Theorem \ref{thm:mainnilmanifoldfull}, we use the  superrigidity theorem of Margulis.  % which we present in Section \ref{sec:arithSR}.
Note that for a lattice  $\Gamma$ as in Hypothesis \ref{HigherRank} and a  linear representation  $\psi$ of $\Gamma$, the standard superridigity theorem of  Margulis (\cite[Therorem IX.6.16]{MR1090825}, see also \cite[16.1.4]{MR3307755}) guarantees (if $G$ is a simply connected real semisimple algebraic group or if the Zariski closure of $\rho(\Gamma)$ is center-free) that the linear representation $\rho\colon \Gamma \to \GL(d, \R)$ extends on a finite-index subgroup to a continuous representation $\rho\colon G\to \Gl(d,\R)$  up to a bounded  correction.  We use the arithmeticity theorem of Margulis and the arithmetic lattice version of superrigidity below to  ignore the compact correction by replacing $G$ with a compact extension.

The  proof of  Theorem  \ref{SmoothThm} will require two additional facts that we also present here: the  superrigidity theorem for cocycles due to Zimmer and the classification of orbit closures for the action of the linear data $\rho$ associated to an action $\alpha$ which follows from the orbit classification theorem of Ratner.

\subsection{Arithmeticity and superrigidity} \label{sec:arithSR}
Let $G$ be a connected, semisimple Lie group with finite center and let $\Gamma\subset G$ be a lattice.   For this section, we make the following standing assumption:
\begin{equation} \label{hyp} \text{$\Gamma$ has  dense image in every $\R$-rank 1, almost-simple factor of $G$}.\end{equation}  In particular, \eqref{hyp} holds under Hypothesis \ref{HigherRank}.

Let $G'$ denote the quotient of $G$ by the maximal compact normal subgroup and let $\Gamma'$ be the image of $\Gamma$ in $G'$. Then the projection to $\Gamma'$ has finite kernel in $\Gamma $. Moreover, $(G',\Gamma')$ still satisfies \eqref{hyp}.

Let $\lieg'$ denote the Lie algebra of $G'$.  Note that $G'$ acts on $\lieg'$ via the adjoint representation.  Let $G^*=\Ad G'\subset \GL(\lieg')$ denote the image of $G'$.  %Note that $\lieg'$ has a $\Q$-structure relative to which $G^*$ is an algebraic group defined over $\Q$.  %Moreover $G^*$ is semisimple and  has no compact factors and trivial center.
Let $\Gamma^*$ denote the image of $\Gamma'$ in $G^*$.    $\Gamma^*$ is a lattice in $G^*$.

Let $\psi\colon \Gamma \to \GL(d,\Q)$ be a linear representation.  As $ \psi(\Gamma)$ is a finitely generated subgroup of $\GL(d,\R)$, it  contains   finite-index, torsion-free, normal subgroup %  $\Delta\subset \psi(\Gamma)$ with no torsion elements
(\cite[Theorem 6.11]{MR0507234}).
Restricting to a finite-index subgroup of  $\Gamma$, we may assume $\psi(\Gamma)$ has no torsion.   On the other hand, note that the kernel of $\Gamma \to \Gamma^*$ is finite.  It then follows that $\psi(\gamma)$ is torsion for every $\gamma$ in the kernel of $\Gamma \to \Gamma^*$.  Thus we may assume
 the representation $\psi\colon   \Gamma \to \GL(d,\Q)$ factors through a representation $ \psi^*\colon   \Gamma^* \to \GL(d,\Q)$.

Note that $G^*$ is a semisimple Lie group without compact factors and with trivial center. In this case $G^*=\bfG^*(\bR)^\circ$ for a semisimple algebraic group $\bfG^*$.  Moreover, $\Gamma^*\subset G^*$ is a lattice whose projection to every rank-1 factor is dense.

\begin{theorem}[Margulis Arithmeticity Theorem \cite{MR1090825}]\label{Arithmeticity} Let $\bfG^*$ be a semisimple algebraic Lie group defined over $\bR$, and $\Gamma^*$  a lattice in $G^*=\bfG^*(\bR)^\circ$ that satisfies hypothesis \eqref{hyp}. Then $\Gamma^*$ is arithmetic: there exist a connected semisimple algebraic group $\bfH$ defined over \new{$\bQ$} and a surjective algebraic morphism $\phi\colon \bfH\to\bfG^*$ defined over $\bR$, such that
\begin{enumerate}
 \item $\phi$ is a quotient morphism between algebraic groups, and the kernel of the surjective morphism $\phi:\bfH(\bR)^\circ\to G^*$ is compact with compact kernel;
 \item $\phi(\bfH(\bZ)\cap \bfH(\bR)^\circ)$ is commensurable with $\Gamma^*$.
\end{enumerate}
\end{theorem}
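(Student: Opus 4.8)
The statement is the arithmeticity theorem of Margulis, and the plan is essentially to invoke \cite[Chapter IX]{MR1090825} (see also \cite{MR3307755}); I will only describe the structure of the argument and the reductions needed to obtain the precise formulation stated here. First I would reduce the reducible case to the irreducible one: up to passing to a finite-index subgroup and to commensurability, $\Gamma^*$ splits as a product of irreducible lattices $\Gamma^*_1,\dots,\Gamma^*_m$ in subproducts $G^*_1,\dots,G^*_m$ formed by grouping together the almost-simple factors of $G^*$, and the density hypothesis \eqref{hyp} is inherited by each piece (so each $\Gamma^*_j$ is an irreducible lattice in a subproduct of $\bR$-rank at least $2$ with dense projection to every rank-one factor). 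Granting arithmeticity for each $\Gamma^*_j$, with a $\bQ$-group $\bfH_j$ and an $\bR$-morphism $\phi_j\colon\bfH_j\to\bfG^*_j$ with compact kernel, one sets $\bfH=\prod_j\bfH_j$ and $\phi=\prod_j\phi_j$: this $\bfH$ is again defined over $\bQ$, the kernel of $\phi$ is a product of compact groups hence compact, and $\bfH(\bZ)$ is commensurable with $\prod_j\bfH_j(\bZ)$, so conclusions (1) and (2) follow from the corresponding statements for the $\Gamma^*_j$.

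For the irreducible case the key input is Margulis's superrigidity theorem (Theorem \ref{Superrigidity}), in both its archimedean and non-archimedean forms. Realizing $\Gamma^*\subset G^*\subset\GL(\lieg')$ through the adjoint representation, one notes that $\Gamma^*$ is finitely generated, so the subfield $k\subset\bR$ generated by the matrix entries of $\Ad\Gamma^*$ is finitely generated over $\bQ$. Non-archimedean superrigidity shows that for every place $v$ of $k$ the inclusion $\Gamma^*\hookrightarrow\GL_N(k_v)$ has relatively compact image: otherwise this representation would extend, on a finite-index subgroup, to a nontrivial continuous homomorphism from $G^*$ into a totally disconnected group, which is impossible since $G^*$ is connected and generated by one-parameter unipotent subgroups. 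Relative compactness at all but finitely many places forces the entries of $\Gamma^*$ to be $S$-integral for a finite set $S$, and a further normalization (replacing $k$ and the ambient $k$-group appropriately) lets one take $S$ to consist only of the archimedean places, so that $\Gamma^*$ is realized inside $\bfG(\cO_k)$ for a connected semisimple $k$-group $\bfG$ with $\bfG(\bR)^\circ$ surjecting onto $G^*$. Archimedean superrigidity is what pins down $k$ to be a number field, since any nontrivial continuous extension of $\Ad|_{\Gamma^*}$ to $G^*$ must be algebraic, which is incompatible with $k$ having positive transcendence degree.

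Finally I would apply restriction of scalars: set $\bfH=R_{k/\bQ}(\bfG)$, so that $\bfH$ is defined over $\bQ$, $\bfH(\bZ)$ is commensurable with $\bfG(\cO_k)$, and $\bfH(\bR)=\prod_{\sigma}\bfG(k_\sigma)$, the product being over the archimedean places of $k$. The identity embedding contributes the factor $\bfG(\bR)$, and projection onto it --- after passing to $\bfH(\bR)^\circ$ and a finite-index subgroup to absorb isogeny and component-group issues --- yields the surjection $\phi$ onto $G^*$. The essential point, and the only place where irreducibility together with higher rank / \eqref{hyp} is genuinely used, is that $\bfG(k_\sigma)$ is compact for every non-identity archimedean $\sigma$: if it were noncompact then $\sigma(\Gamma^*)$ would be an unbounded linear group, and superrigidity would force $\sigma\circ\Ad$ to extend to $G^*$, producing a second surjection competing with the identity factor and contradicting the discreteness of $\Gamma^*$ as a lattice in the full product. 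Hence $\ker\phi=\prod_{\sigma\neq\mathrm{id}}\bfG(k_\sigma)$ is compact, giving (1), and $\phi(\bfH(\bZ)\cap\bfH(\bR)^\circ)$ is commensurable with $\Gamma^*$, giving (2).

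The hard part is superrigidity itself, which underlies every step above; but since the paper treats Margulis's superrigidity and this arithmeticity theorem as external inputs, the honest shortest route is simply the citation to \cite[Chapter IX]{MR1090825}, with the reduction to irreducible factors being the only point that warrants a brief independent remark.
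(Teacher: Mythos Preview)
Your proposal is correct and matches the paper's approach: the paper likewise cites \cite[Introduction, Theorem 1']{MR1090825} directly for the irreducible case and reduces the general case by decomposing $\bfG^*$ into an almost direct product with irreducible lattices $\Gamma_i^*$ in each factor, noting that each pair $(G_i^*,\Gamma_i^*)$ inherits hypothesis \eqref{hyp}. Your additional sketch of how the irreducible case is proved via superrigidity and restriction of scalars is accurate in outline but goes beyond what the paper records, which is simply the citation together with the product reduction---exactly as you acknowledge in your final paragraph.
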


The case  where $\Gamma^*$ is irreducible follows from \cite[Introduction, Theorem 1']{MR1090825}, where irreducible lattices is defined as in  \cite[pg.\ 133]{MR1090825}. % \cite{MR1090825}*{III.5.9}. 
 In general, $\bfG$ decomposes as an almost direct product $\prod_i\bfG_i^*$ of normal subgroups defined over $\bR$, and there are irreducible lattices $\Gamma_i^*<G_i^*=\bfG_i^*(\bR)^\circ$  such that $\prod\Gamma_i$ has finite index in $\Gamma$.  Then each pair $(G_i,\Gamma_i)$ satisfies \eqref{hyp}. This reduces to the irreducible case.

 By passing to a finite cover we  may  assume the $\Q$-group $\bfH$ in Theorem \ref{Arithmeticity} is simply connected.  Let $ H= \bfH(\bR)^\circ $.  Let $\Delta= \phi\inv(\Gamma^*) \cap \bfH(\bZ)$.  Then $\Delta$ is commensurable with $\bfH(\bZ)\cap H$ and hence $\bfH(\bZ)$.  The map $ \phi$ induces a linear representation $\bar \psi\colon \Delta\to \Gl(d,\Q)$ given by $\bar \psi = \psi\circ \phi$.

As we assume  $\bfH$ is simply connected, $\bfH$ decomposes uniquely %\marginnote{\cite[Prop 1.4.10]{MR1090825}}
as the direct product of simply connected almost $\Q$-simple, $\Q$-groups $\bfH = \prod \bfH_i$. Moreover, since $\phi$ has compact kernel and $G^*$ has no compact simple factors, our assumption \eqref{hyp} implies every $\Q$-simple factor $\bfH_i$ has $\R$-rank at least-2.

We have the following version of Margulis Superrigidity.
%\marginnote{Took out the $\Q$-representation... I don't think it was needed}
\begin{theorem}[Margulis Superrigidity Theorem; arithmetic lattice case \cite{MR1090825}]\label{Superrigidity} Let $\bfH$ be a simply connected semisimple algebraic group defined over $\bQ$ all of whose almost $\bQ$-simple factors have $\R$-rank $2$ or higher,  and let $\Delta\subset\bfH(\bR)$ be a subgroup commensurable to $\bfH(\bZ)$. Suppose $k$ is a field of characteristic $0$, $\bfJ$ is an algebraic group defined over $k$, and $\psi\colon \Delta\to\bfJ(k)$ is a group morphism. Then there is a $k$-rational  morphism  $\psi'\colon \bfH\to\bfJ$ and a finite-index  normal subgroup $\Delta'\lhd\Delta$  such that $\psi(\Delta)=\psi'(\Delta)$ for all $\Delta\in\Delta'$.
\end{theorem}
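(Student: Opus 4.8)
The plan is to deduce this (classical) theorem of Margulis from the Lie-group form of superrigidity, cited above, together with a local--global analysis over the completions of $k$. First I would reduce the target. Since $\Delta$ is commensurable with $\bfH(\bZ)$ it is finitely generated, so $\psi(\Delta)$ lies in $\bfJ(k_0)$ for a subfield $k_0\subset k$ finitely generated over $\bQ$, and we may take $k=k_0$; replacing $\bfJ$ by the Zariski closure of $\psi(\Delta)$ (defined over $k$ after a finite separable extension, which we absorb into $k$) we may assume $\psi(\Delta)$ is Zariski dense. Next I would reduce to $\bfJ$ semisimple: because every almost $\bQ$-simple factor of $\bfH$ has $\bR$-rank at least $2$, the group $\bfH(\bR)$, hence its lattice $\Delta$, has Kazhdan's property (T), so $\Delta^{\mathrm{ab}}$ is finite and, more generally, the image of $\psi$ in the unipotent radical and in any torus quotient of $\bfJ$ is finite; passing to a finite-index normal subgroup $\Delta'\lhd\Delta$ we may assume $\bfJ$ is adjoint semisimple with $\psi(\Delta')$ Zariski dense.

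The core of the argument is a place-by-place analysis of $\psi\colon\Delta'\to\bfJ(k_v)$ over the completions $k_v$ of $k$. Splitting $\bfH=\prod\bfH_i$ into its simply connected $\bQ$-simple factors and noting that the corresponding subgroups of $\Delta'$ are irreducible lattices in the higher-rank groups $\bfH_i(\bR)$, I would reduce to the case $\bfH$ $\bQ$-simple. At a non-archimedean place $v$, property (T) forces $\psi(\Delta')$ to fix a point of the Bruhat--Tits building of $\bfJ(k_v)$, hence to be bounded; the same holds at any archimedean place at which the image is relatively compact. At an archimedean place where $\psi(\Delta')$ is unbounded, the hypotheses of the standard Margulis superrigidity theorem for semisimple Lie groups are met, so $\psi|_{\Delta'}$ extends to a continuous homomorphism $\bfH(\bR)\to\bfJ(k_v)$; by Zariski density of the image and the Borel density theorem this homomorphism is the restriction of a $k_v$-rational morphism $\psi'_v\colon\bfH\to\bfJ$.

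Finally I would glue the local pictures together: boundedness of $\psi(\Delta')$ at all the remaining places shows that $\psi(\Delta')$ consists of $S$-integral points of $\bfJ$ for a finite set $S$ of places, so the various $\psi'_v$ all agree with $\psi$ on the Zariski-dense subgroup $\psi(\Delta')\subset\bfJ(k)$; a field-of-definition (Galois descent) argument then shows each $\psi'_v$ is already defined over $k$ and that they coincide, producing a single $k$-rational morphism $\psi'\colon\bfH\to\bfJ$ with $\psi'=\psi$ on $\Delta'$ after passing to one further finite-index normal subgroup to absorb discrepancies coming from centers and torsion. I expect the main obstacle to be exactly this last step --- descending the place-by-place extensions to a single morphism over the original field $k$ --- which is where simple-connectedness of $\bfH$ and Zariski density of $\psi(\Delta')$ are essential; a secondary subtlety is the non-archimedean boundedness input, which is the reason one needs every $\bQ$-simple factor (not merely the ambient group) to have $\bR$-rank at least $2$.
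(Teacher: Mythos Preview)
The paper does not prove this theorem: it treats it as a black-box citation. The paper's entire ``proof'' is to invoke \cite[VIII, Theorem B]{MR1090825} (and \cite[Corollary 16.4.1]{MR3307755}) for the case where $\bfH$ is almost $\bQ$-simple, and then observe that in general a simply connected $\bfH$ splits as a direct product $\prod \bfH_i$ of $\bQ$-simple factors, with $\prod(\Delta\cap\bfH_i(\bR))$ of finite index in $\Delta$, so that the $\bQ$-simple case applied factor-by-factor assembles into the desired $k$-rational morphism $\psi'$.

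Your proposal, by contrast, is an attempt to reprove Margulis superrigidity itself, roughly along the lines of Margulis's original argument (reduction of the target, place-by-place analysis, and descent). This is a far more ambitious undertaking than anything the paper does, and your outline is broadly faithful to the standard architecture of the proof. A couple of technical points would need sharpening: the phrase ``the image of $\psi$ in the unipotent radical'' is not well-posed (the unipotent radical is a normal subgroup, not a quotient); what you want is that the Zariski closure of $\psi(\Delta)$ has trivial unipotent radical and no toral quotient, which does follow from property~(T). Also, at non-archimedean places the boundedness input is not a direct consequence of property~(T) alone but rather of the fixed-point property for actions on affine buildings; this is closely tied to, but not identical with, Kazhdan's property. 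You correctly identify the descent step as the crux, and indeed simple-connectedness and Zariski density are exactly what make it work.

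In short: your route is genuinely different and much harder than what the paper does. For the purposes of this paper, the intended argument is simply to cite Margulis and reduce to the $\bQ$-simple case by the product decomposition.
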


When $\bfH$ is almost $\bQ$-simple, this is a direct consequence of \cite[VIII, Theorem B]{MR1090825}. See also \cite[Corollary 16.4.1]{MR3307755}.  In general,  $\bfH$ is the direct product of its almost $\bQ$-simple factors $\bfH = \prod \bfH_i$ and $\prod \Delta_i$ has finite index in $\Delta$ where each  $\Delta_i$ is defined by $\Delta_i=\Delta\cap\bfH_i(\bR)$ and is commensurable with $\bfH_i(\bZ)$. On each $\Delta_i$, $\psi$ coincides with a $\bQ$-representation $\psi'_i$ of $\bfH_i$ on a finite-index subgroup.  Via projection to $\Q$-simple factors, the  $\psi_i'$ % form commuting representations of $\bf H$ which
assemble into a coherent representation $\psi'$; moreover $\psi'$ coincides with $\psi$ on a finite-index subgroup  of $\Delta$.

Replace $\Delta$ with $\Delta'$ coming from  Theorem \ref{Superrigidity}.
Recall we have   projections $p_1\colon G\to G^*$ and $p_2\colon H\to G^*$.
Consider the Lie group $$\bar L:= \{ (g,h)\in G\times H: p_1(g) = p_2(h)\}.$$
Let $L$ be the identity component  of $\bar L$ and let $\hat \Gamma \subset L$ be
 $$\hat \Gamma := \{ (\gamma ,\delta)\in \Gamma \times \Delta : p_1(\gamma) = p_2(\delta)\}\cap L.$$

We have natural maps $L\to G$ and $L\to H$ given by coordinate projections.  Because $G\to G^*$ and $H\to G^*$ are surjective, so are $L\to G$ and $L\to H$.  Moreover, the kernel of $L\to G$ is the set $\{(e, h) : p_2 (h) = e\}$.  As the kernel of $p_2$ is compact,   $L$ is a compact extension of $G$.  Moreover the image of $\hat \Gamma$ in $G$ has finite index in $\Gamma$.  Thus $\hat \Gamma$ is a lattice in $L$.  Restricting to a finite-index subgroup, we may assume $\hat\Gamma$ maps into $\Gamma$ and $\Delta$.

Summarizing the above we have the following.
\begin{proposition}\label{prop:ourSR}
For $G$ a connected, semisimple Lie group with finite center, $\Gamma\subset G$ a lattice satisfying \eqref{hyp},  an algebraic group $\bfJ$ defined over a field $k$ with $\mathrm{char}(k)=0$, and any group morphism $\psi\colon \Gamma \to \bfJ(k)$,  there are
\begin{enumerate}
\item semisimple Lie groups $G^*$, $H$ and $L$, such that $G^*$ has trivial center, and $H=\bfH(\bR)^\circ$ for some simply connected semisimple algebraic group $\bfH$ defined over   \new{$\bQ$;}
\item a finite-index subgroup $\bar \Gamma\subset \Gamma$;
%\item a linear representation $\psi' \colon H\to \Gl(d,\R)$;
\item lattices $\hat \Gamma\subset L, \Delta\subset H,$ and $ \Gamma^*\subset G^*$;
\item  surjective homomorphisms $\pi_1\colon L\to G, \pi_2\colon L\to H,
p_1\colon G\to G^*,$ and $p_2\colon H\to G^*$, all of which have compact kernels;
\item a   representation $\psi^* \colon \Gamma^*\to \bfJ(k)$ such that $\restrict{\psi}{\bar\Gamma} = \psi^*\circ p_1$;
\item a $k$-rational morphism $\psi' \colon \bfH \to\bfJ$,
\end{enumerate}
such that the following diagrams commute:
%$$\begin{CD}
%L @>\pi_2>> H@>\psi'>>\Gl(d,\R)\\ @V\pi_1VV @VVp_2V @.\\ G @>>p_1>G^*@.  \end{CD}
%\quad\quad\quad\quad\quad\
%\begin{CD}
%\hat \Gamma @>\pi_2>> \Delta @ .  \\ @V\pi_1VV @VVp_2V @.\\ \bar \Gamma @>>p_1>\Gamma^*@>>\psi^*>\Gl(d,\R)  \end{CD}
%$$

%\marginpar{figure temporarily hidden to avoid latex package conflict on my computer -ZW}
%\hide
{
$$\xymatrix{
L \ar[d]_{\pi_1} \ar[r]^{\pi_2} &H\ar[d]^{p_2} \ar[r]^-{\psi'} &\bfJ\\
G \ar[r]_{p_1}          &G^* & }
\quad\quad\quad
\xymatrix{
\hat \Gamma \ar[d]_{\pi_1} \ar[r]^{\pi_2} &\Delta\ar[d]^{p_2} \ar[r]^-{\psi'} &\bfJ(k) \\
\bar\Gamma \ar[r]_{p_1}          &\Gamma^* \ar[ru]_{\psi^*}& }
$$
}

%%%$$\xymatrix{
%%%\hat \Gamma \ar[d]_{\pi_1} \ar[r]^{\pi_2} &\Delta\ar[d]^{p_2} \ar[rd]^-{\psi'} & \\
%%%\bar\Gamma \ar[r]^{p_1} \ar@/_1pc/[rr]_-{\psi}         &\Gamma^* \ar[r]^-{\psi^*}&   \Gl(d,\R)} $$
\end{proposition}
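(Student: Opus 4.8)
The statement is essentially a repackaging of the constructions carried out in \S\ref{sec:arithSR}, so the plan is to run through those constructions once more in order, carefully tracking the finite-index passages, and then to verify the two diagrams.

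First I would reduce to the case where $\psi$ factors through $\Gamma^*$. Embed $\bfJ(k)$ in some $\GL(D,k)$; since $\psi(\Gamma)$ is finitely generated it lies in $\GL(D,k_0)$ for a finitely generated subfield $k_0\subset k$ of characteristic zero, hence embeds into $\GL(D,\bC)$, so by Selberg's lemma there is a finite-index subgroup of $\Gamma$ on which $\psi$ has torsion-free image. Replacing $\Gamma$ by this subgroup and recalling that, writing $G'$ for the quotient of $G$ by its maximal compact normal subgroup and $\Gamma^*$ for the image of $\Gamma$ in $G^*:=\Ad G'$, the kernel of the composite $\Gamma\to G^*$ is finite (this is where the finite center of $G$ enters), one sees that $\psi$ kills this kernel and therefore descends to $\psi^*\colon\Gamma^*\to\bfJ(k)$ with $\psi=\psi^*\circ p_1$, where $p_1\colon G\to G^*$ is the quotient map, which has compact kernel. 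Here $G^*=\bfG^*(\bR)^\circ$ for a semisimple $\bR$-group $\bfG^*$ with trivial center, and $\Gamma^*\subset G^*$ is a lattice still satisfying \eqref{hyp}.

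Next I would invoke the Margulis Arithmeticity Theorem \ref{Arithmeticity} for $\Gamma^*\subset G^*$ to produce a connected semisimple $\bQ$-group $\bfH$ together with a surjective $\bR$-morphism $\phi\colon\bfH\to\bfG^*$ whose restriction $\bfH(\bR)^\circ\to G^*$ has compact kernel and for which $\phi(\bfH(\bZ)\cap\bfH(\bR)^\circ)$ is commensurable with $\Gamma^*$. Passing to a finite cover makes $\bfH$ simply connected, hence a direct product $\bfH=\prod_i\bfH_i$ of simply connected almost $\bQ$-simple factors; since $\ker\phi$ is compact and $G^*$ has no compact factors, hypothesis \eqref{hyp} forces every $\bfH_i$ to have $\bR$-rank at least $2$. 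Set $H=\bfH(\bR)^\circ$, $p_2=\phi|_H\colon H\to G^*$, and $\Delta=\phi^{-1}(\Gamma^*)\cap\bfH(\bZ)$, which is commensurable with $\bfH(\bZ)$. Form $\bar\psi:=\psi^*\circ p_2|_\Delta\colon\Delta\to\bfJ(k)$; since each $\bfH_i$ has $\bR$-rank $\ge 2$, the arithmetic-lattice form of the Margulis Superrigidity Theorem \ref{Superrigidity} applies and yields a $k$-rational morphism $\psi'\colon\bfH\to\bfJ$ together with a finite-index normal subgroup $\Delta'\lhd\Delta$ on which $\psi'=\bar\psi$. It then remains to assemble $L$ and $\hat\Gamma$: put $\bar L=\{(g,h)\in G\times H:p_1(g)=p_2(h)\}$, let $L$ be its identity component with coordinate projections $\pi_1\colon L\to G$, $\pi_2\colon L\to H$, note that both $\pi_i$ are surjective (as $p_1,p_2$ are surjective and $G,H$ connected) with $\ker\pi_1\subset\{e\}\times\ker p_2$ and $\ker\pi_2\subset\ker p_1\times\{e\}$ compact, and set $\hat\Gamma=\{(\gamma,\delta)\in\Gamma\times\Delta':p_1(\gamma)=p_2(\delta)\}\cap L$, a discrete subgroup of $L$ projecting onto subgroups of the lattices $\Gamma$ and $\Delta'$ with compact kernels and hence a lattice in $L$ with $\pi_1(\hat\Gamma)$ of finite index in $\Gamma$. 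Shrinking once more I take $\bar\Gamma:=\pi_1(\hat\Gamma)$ to lie inside the finite-index subgroup of $\Gamma$ on which $\psi=\psi^*\circ p_1$ holds, giving item (5); the left squares of both diagrams commute by the definition of $\bar L$ ($p_1\circ\pi_1=p_2\circ\pi_2$), and on $\hat\Gamma$ one computes $\psi\circ\pi_1=\psi^*\circ p_1\circ\pi_1=\psi^*\circ p_2\circ\pi_2=\bar\psi\circ\pi_2=\psi'\circ\pi_2$, the last step using $\pi_2(\hat\Gamma)\subset\Delta'$, which is the required commutativity of the outer paths through $\bfJ$ and $\bfJ(k)$.

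The argument is bookkeeping rather than new mathematics, so there is no deep obstacle; the two points needing care are the virtual-torsion-freeness reduction that lets $\psi$ descend through $\Gamma^*$ (without it one cannot legitimately apply the arithmetic-group versions of Theorems \ref{Arithmeticity} and \ref{Superrigidity}), and arranging, across the several independent passages to finite-index subgroups (Selberg, the commensurability in arithmeticity, the subgroup $\Delta'$ from superrigidity, and the projection $\pi_1(\hat\Gamma)$), that a single finite-index $\bar\Gamma\subset\Gamma$ makes all the compatibility assertions hold at once.
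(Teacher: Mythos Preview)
Your proposal is correct and follows essentially the same approach as the paper: the proposition is indeed just a summary of the constructions laid out in \S\ref{sec:arithSR}, and you have reproduced them in the right order---Selberg/virtual torsion-freeness to descend $\psi$ to $\Gamma^*$, Arithmeticity to produce $\bfH$, Superrigidity on the arithmetic lattice $\Delta$ to produce $\psi'$, and the fiber-product construction of $L$ and $\hat\Gamma$. Your bookkeeping of the finite-index passages and the explicit verification of the diagram commutativity are, if anything, slightly more careful than the paper's own exposition.
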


\subsection{Cocycle Superrigidity}
The superrigidity theorem (with bounded error) for representations admits a generalization due to Zimmer for linear cocycles over measure preserving actions of higher-rank groups.  We state here a version that will be sufficient for our later purposes.
%
%To prove (\ref{zariskidensesemigroup}) of Proposition \ref{semigroup}, we follow an approach developed in   \cite{KLZ}.  This relies on the following theorem.
\begin{theorem}[Zimmer Cocycle Superrigidity Theorem \cite{MR776417,MR2039990}]\label{CocycleSuperrigidity}
Let  $\bfG$ be a simply connected semisimple algebraic group defined over $\R$ all of whose almost-simple factors %{\sout{\red {are anisotropic and}}}
 have $\R$-rank $2$ or higher.  Let $G= \bfG(\R)^\circ$, and let  $\Gamma\subset G$ be  a lattice.
%Let $G$ and $\Gamma$ be as in Hypothesis \ref{hyp7}.
Let $(X,\mu)$ be a standard probability space and  let $\alpha\colon \Gamma\to \Aut( X,\mu)$ be an ergodic action by measure-preserving transformations.  Let  $\psi\colon \Gamma\times X\to\GL(d,\bR)$ be a measurable cocycle over $\alpha$ such that for any $\gamma$, $\log^+\|\psi(\gamma, x)\|$ is in $L^1(X,\mu)$. Then there exist a representation $\psi'\colon G\to\GL(d,\bR)$, a cocycle $\beta\colon \Gamma\times X\to\SO(d)$, and a measurable map $\theta\colon   X\to\GL(d,\bR)$  such that:\begin{enumerate}                                                                                                                                                                                                                                                         \item $\psi(\gamma,x)=\theta(\alpha(\gamma)(x))\psi'(\gamma)\beta(\gamma,x)(\theta(x))^{-1}$ for every $\gamma$ and $\mu$-almost every $x$;                                                                                                                                                                                                                                                          \item for all $g\in G$, $\gamma\in\Gamma$ and $\mu$-almost every $x$, $\psi'(g)$ and $\beta(\gamma,x)$ commute.                                                                                                                                                                                                                                             \end{enumerate}
\end{theorem}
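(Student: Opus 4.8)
This is Zimmer's cocycle superrigidity theorem in the form we shall use; I would not reprove it from scratch but rather deduce it from the known statement for cocycles over actions of the ambient group $G$ by the usual induction procedure. The plan therefore has three parts: induce $\psi$ up to a cocycle over a $G$-action, run the Margulis--Zimmer boundary machinery on that $G$-cocycle, and then restrict and normalize.

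\textbf{Induction.} I would form the induced $G$-space $Y=(G\times X)/\Gamma$, where $\gamma\in\Gamma$ acts on the right by $(g,x)\cdot\gamma=(g\gamma,\alpha(\gamma\inv)x)$ and $G$ acts on the left by $a\cdot(g,x)=(ag,x)$, equipped with the finite measure descending from Haar measure on $G/\Gamma$ and $\mu$. The left $G$-action preserves this measure, and it is ergodic because a $G$-invariant function on $Y$ descends to a $\Gamma$-invariant function on $X$. Using a Dirichlet fundamental domain $D$ for $\Gamma$ (as constructed in Section \ref{sec:4}) to obtain a Borel section $G/\Gamma\to G$ and hence a section cocycle $c\colon G\times G/\Gamma\to\Gamma$, the induced cocycle $\hat\psi\colon G\times Y\to\GL(d,\bR)$ is built from $\psi$ and $c$ in the standard way. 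The hypothesis $\log^+\|\psi(\gamma,\cdot)\|\in L^1(X,\mu)$ for each fixed $\gamma$, the quasi-isometric embedding of $\Gamma$ in $G$ (Theorem \ref{QI}), and integrability of the displacement function on $G/\Gamma$ (Lemma \ref{Integrable}) together give $\log^+\|\hat\psi(g,\cdot)\|\in L^1(Y)$ for each fixed $g$ --- this is precisely the word-length estimate carried out in the proof of Lemma \ref{claim:login}. Finally, restricting a cohomology of $\hat\psi$ of the desired form over $\Gamma$ and the fibre $X$ over $e\Gamma$ produces a cohomology of $\psi$ of the desired form, so it suffices to treat $\hat\psi$.

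\textbf{Boundary machinery.} Here I would follow Zimmer. Let $\bfH$ be the algebraic hull of $\hat\psi$, so that after an $L^\infty$ cohomology change one may assume $\hat\psi$ takes values in $\bfH(\bR)$ and is not cohomologous into the real points of any proper algebraic subgroup (Zimmer's reduction lemma, using ergodicity of the $G$-action). Let $P<G$ be a minimal parabolic; amenability of $P$ produces a measurable $P$-equivariant map $Y\to\bfH(\bR)/Q$ for a parabolic $Q<\bfH(\bR)$, equivalently a measurable $G$-map $\Phi\colon G/P\times Y\to\bfH(\bR)/Q$. One then shows that $\Phi$ is, for almost every $y$, rational in the $G/P$ variable --- the cocycle analogue of the rationality of the Margulis boundary map --- using the algebraicity of the $G$-action on $G/P$, Radon--Nikodym/measure-class considerations, and measurable selection (Borel--Effros). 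The assumption that every almost-simple factor of $G$ has $\bR$-rank $2$ or higher enters exactly here, via $\SL_3$- and $\mathrm{Sp}_4$-type commutation relations, to propagate rationality out of rank-one subgroups and to rule out the exceptional ``twisted'' configurations; the conclusion is that $\hat\psi$ is measurably cohomologous to a cocycle $(g,y)\mapsto\psi'(g)\,\hat\beta(g,y)$, with $\psi'\colon G\to\bfH(\bR)\subset\GL(d,\bR)$ a continuous homomorphism and $\hat\beta$ taking values in a compact subgroup of $\bfH(\bR)$ which one arranges to centralize $\psi'(G)$ (the non-compact semisimple part of the hull being carried by $\psi'$).

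\textbf{Restriction and normalization.} Restricting this cohomology over $\Gamma$ and the fibre $X$ yields $\psi(\gamma,x)=\theta(\alpha(\gamma)x)\,\psi'(\gamma)\,\beta(\gamma,x)\,\theta(x)\inv$ with $\psi'$ the restriction of a representation of $G$, with $\beta$ compact-valued, and with $\psi'$ and $\beta$ commuting; conjugating $\theta$ so that the compact group lies in $\SO(d)$ (Weyl's unitary trick) gives the stated form. I expect the rationality-plus-higher-rank step in the boundary machinery to be the main obstacle: it is the technical heart of Margulis's and Zimmer's work and is exactly where the rank $\geq 2$ hypothesis is indispensable, while everything else is bookkeeping around the induction and the change of coordinates $\theta$.
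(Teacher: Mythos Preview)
The paper does not prove this theorem at all: immediately after the statement it simply records that ``the version of Zimmer Cocycle Superrigidity given in Theorem \ref{CocycleSuperrigidity} was proved by Fisher and Margulis in \cite[Theorem 1.4]{MR2039990}'' and moves on, treating it as a black box imported from the literature. So there is no proof in the paper to compare your proposal against.

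Your sketch is a reasonable outline of the standard argument (induce to $G$, apply Zimmer's algebraic-hull and boundary machinery, restrict back), and you correctly identify the rationality step as the substantive part requiring the higher-rank hypothesis. For the purposes of this paper, however, none of that is needed: the authors simply cite the result, and you should do the same rather than attempt to reprove it.
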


The version of Zimmer Cocycle Superrigidity given in Theorem \ref{CocycleSuperrigidity}  was   proved by Fisher and Margulis in \cite[Theorem 1.4]{MR2039990}.  Note that in the proof of  Theorem  \ref{SmoothThm} below, we do not assume the action $\alpha$ preserves a measure.   However because in this setting   the semiconjugacy $h$ between $\alpha$ and the linear data $\rho$ is a conjugacy, we are able to induce $\alpha$-invariant measures from  $\rho$-invariant measures.

\subsection{Orbit closures for linear data}
We present here a fact that will be important in Section \ref{Sec:llppookjjjd}.  We assume that $G$ and $ \Gamma$ are as in Hypothesis \ref{HigherRank}, $M= N/\Lambda$ is a compact nilmanifold, $\alpha\colon \Gamma\to \Homeo(M)$ is an action that lifts to an action on $N$, and   $\rho\colon \Gamma\to \Aut(N/\Lambda)$ is the linear data.
We recall all notation from Proposition \ref{prop:ourSR}.  In particular, there is a Lie group $L$, a lattice $\hat \Gamma\subset L$,  a surjective homomorphism $\pi\colon L\to G$ with  $\pi(\hat \Gamma)\subset  \Gamma$  with finite index,
and a continuous representation $\psi\colon L\to \Aut(\lien)$ with  $D\rho(\pi(\hat \gamma) ) = \psi (\hat \gamma)$ for all $\hat \gamma \in \Gamma$.  Then $\psi$ extends to a representation $\hat \rho\colon L\to \Aut (N)$ such that $\hat \rho(\hat \gamma) = \rho(\pi(\hat \gamma))$ for $\hat \gamma \in \hat \Gamma$.

Let $M^{\hat \rho}$ denote the suspension space  $(L\ltimes_{\hat \rho} N)/(\hat \Gamma\ltimes_{\hat \rho}\Lambda)$ discussed in Remark \ref{rem:homospace} of  in Section \ref{sec:suspspace}.   As remarked there, $\hat\Gamma\ltimes_{\hat \rho}\Lambda$ is a lattice in $L\ltimes_{\hat \rho} N$.    Let $L'\subset L$ be the connected subgroup generated by all non-compact, almost-simple factors.
% As discussed below the $L'$-orbit of the identity coset in $L/\hat \Gamma$ is homogeneous whence, replacing $\hat \Gamma$ with a finite-index subgroup, we may assume the $L'$ orbit of any point is dense in $L/\hat \Gamma$.
Note that $L'$ is generated by unipotent elements of $L$.   Moreover, if $u\in L$ is a unipotent element of $L$ then $(u,e)$ is unipotent in $(L\ltimes_{\hat \rho} N)$.    It follows that the natural embedding $L'\subset (L\ltimes_{\hat \rho} N)$ is generated by unipotent elements. 

 From the orbit classification theorem of  Ratner \cite[Theorem 11]{MR1403920}, it follows that the $L'$-orbit closure of any point in $M^{\hat \rho}$ is a homogeneous submanifold.  As $L$ is a compact extension of $L'$, it similarly follows that the $L$-orbit closure of any point in $M^{\hat \rho}$ is a homogeneous submanifold.
Note that the closures of $L$-orbits on $M^{\hat \rho}$ are in one-to-one correspondence with closures of $\hat \rho(\hat \Gamma)$-orbits on $N/\Lambda$.   It follows that all $\hat \rho(\hat \Gamma)$-orbit closures on $N/\Lambda$ are $\hat \rho(\hat \Gamma)$-invariant, homogeneous submanifolds.  Note that the Haar measure on every  $\hat \rho(\hat \Gamma)$-invariant, homogeneous submanifold is $\hat \rho(\hat \Gamma)$-invariant.
Moreover, if $\hat\rho(\hat \gamma_0)$ is hyperbolic for some $\hat \gamma_0$ then each of these measures are ergodic.

Since $\pi(\hat \Gamma)$ is of finite index in $\Gamma$, we have the following.
 \begin{proposition}\label{prop:Ratner}
 For $\Gamma$ and $\rho$ as above and any $x\in N/\Lambda$ the orbit closure $\overline{\rho(\Gamma)(x)}$ is the finite union of homogeneous sub-nilmanifolds of the same dimension.  In particular, every orbit closure $\overline{\rho(\Gamma)(x)}$ coincides with the support of a $\rho$-invariant probability measure $\mu_x$ on $N/\Lambda$.
 Moreover, if $\rho(\gamma_0)$ is hyperbolic for some $\gamma_0$, then the measures $\mu_x$ are ergodic.
 \end{proposition}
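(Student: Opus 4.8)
The plan is to reduce the study of $\rho(\Gamma)$-orbit closures on $N/\Lambda$ to a question about orbit closures for a Lie group action on a homogeneous space, where Ratner's orbit classification theorem applies, and then to transport the conclusion back and promote it from a finite-index subgroup to all of $\Gamma$.

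First I would invoke Proposition \ref{prop:ourSR} to replace $\Gamma$ by the finite-index subgroup $\pi(\hat\Gamma)$ and to obtain the compact extension $L$ of $G$, the lattice $\hat\Gamma\subset L$, and the continuous representation $\hat\rho\colon L\to\Aut(N)$ extending $D\rho\circ\pi$ on $\hat\Gamma$. Following Remark \ref{rem:homospace}, I would form the homogeneous model $M^{\hat\rho}=(L\ltimes_{\hat\rho}N)/(\hat\Gamma\ltimes_{\hat\rho}\Lambda)$ of the suspension, in which $\hat\Gamma\ltimes_{\hat\rho}\Lambda$ is a lattice. The left $L$-action on $M^{\hat\rho}$ fibers over $L/\hat\Gamma$ with fiber $N/\Lambda$, and the key elementary observation is that closures of $L$-orbits on $M^{\hat\rho}$ are in one-to-one correspondence with closures of $\hat\rho(\hat\Gamma)$-orbits on the fiber $N/\Lambda$: an $L$-orbit closure intersects the fiber over the base point in an $\hat\rho(\hat\Gamma)$-orbit closure, and conversely the $L$-saturation of an $\hat\rho(\hat\Gamma)$-invariant subset of the fiber is $L$-invariant.

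Next, let $L'\subset L$ be the connected subgroup generated by the non-compact almost-simple factors. Then $L'$ is generated by unipotent one-parameter subgroups, and under the embedding $L'\hookrightarrow L\ltimes_{\hat\rho}N$ these one-parameter subgroups remain unipotent. By Ratner's orbit classification theorem \cite[Theorem 11]{MR1403920}, every $L'$-orbit closure in $M^{\hat\rho}$ is a closed homogeneous submanifold supporting a unique $L'$-invariant probability measure; since $L$ is a compact extension of $L'$, the same holds for $L$-orbit closures. Transporting this through the fiber correspondence, every $\hat\rho(\hat\Gamma)$-orbit closure in $N/\Lambda$ is a single homogeneous sub-nilmanifold $Y$ carrying its Haar measure, which is then $\hat\rho(\hat\Gamma)$-invariant.

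Finally I would pass back to $\Gamma$. Writing $\Gamma=\bigsqcup_{k=1}^m\gamma_k\,\pi(\hat\Gamma)$ and noting $\rho(\pi(\hat\gamma))=\hat\rho(\hat\gamma)$, one gets $\overline{\rho(\Gamma)(x)}=\bigcup_{k=1}^m\rho(\gamma_k)(Y_x)$ where $Y_x=\overline{\hat\rho(\hat\Gamma)(x)}$ is the homogeneous sub-nilmanifold above; this exhibits $\overline{\rho(\Gamma)(x)}$ as a finite union of homogeneous sub-nilmanifolds, all diffeomorphic images of $Y_x$ under affine maps and hence of the same dimension. Averaging the Haar measures of the finitely many pieces over the $\rho(\Gamma)$-orbit yields a $\rho(\Gamma)$-invariant probability measure $\mu_x$ whose support is exactly $\overline{\rho(\Gamma)(x)}$. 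For the ergodicity claim, if $\rho(\gamma_0)$ is hyperbolic then some power $\rho(\gamma_0^\ell)$ stabilizes each piece and restricts to an affine automorphism of that sub-nilmanifold whose linear part is the restriction of the hyperbolic automorphism $D\rho(\gamma_0^\ell)$ to an invariant subspace, hence still hyperbolic; such automorphisms are ergodic with respect to Haar measure, and a standard argument then shows $\mu_x$ is ergodic under the full $\rho(\Gamma)$-action.

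The main technical point to get right is the fiber correspondence between $L$-orbit closures on the homogeneous suspension and $\hat\rho(\hat\Gamma)$-orbit closures on $N/\Lambda$ — in particular checking that homogeneity, the invariant measures, and the dimension count all descend correctly — together with the bookkeeping in the finite-index passage, so that the resulting pieces genuinely share the same dimension and the averaged measure is both $\rho(\Gamma)$-invariant and, under the hyperbolicity hypothesis, ergodic.
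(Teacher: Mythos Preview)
Your proposal is correct and follows essentially the same route as the paper: form the homogeneous suspension $M^{\hat\rho}=(L\ltimes_{\hat\rho}N)/(\hat\Gamma\ltimes_{\hat\rho}\Lambda)$, apply Ratner's theorem to the $L'$-action (and hence the $L$-action) there, use the correspondence between $L$-orbit closures on the suspension and $\hat\rho(\hat\Gamma)$-orbit closures on the fiber, and then pass from the finite-index subgroup $\pi(\hat\Gamma)$ to $\Gamma$. Your write-up is in fact somewhat more explicit than the paper's about the coset decomposition, the averaging to produce $\mu_x$, and the ergodicity argument via a hyperbolic power stabilizing each piece.
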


\section{Topological rigidity for actions with hyperbolic linear data}\label{sec:6}
\def\Gl{\GL}
In this section we prove Theorem \ref{ConjThm} by verifying the hypotheses of Theorem \ref{thm:mainnilmanifoldfull}.  Note that the H\"older continuity of $h$ in Theorem \ref{ConjThm}  follows from standard arguments once the action $\alpha$ is by $C^1$ diffeomorphisms  and the linear data is hyperbolic (\cite[Section 19.1]{MR1326374}).

\subsection{Verification of (1) of Theorem \ref{thm:mainnilmanifoldfull}}\label{sec:verify1}
Let $G$ and $\Gamma$ be as in Hypotheses \ref{HigherRank}.  Given a nilmanifold $M= N/\Lambda$, let $\alpha\colon \Gamma \to  \Homeo(M)$ be an action.  We assume $\alpha$ lifts to an action $\td \alpha \colon \Gamma\to \Homeo(N)$ and let $\rho\colon \Gamma \to\Aut(M) \subset \Aut (N)$ be the induced linear data.
%Recall that  $\rho$ preserves the lattice $\Lambda$ and   $\exp\inv(\Lambda)$ generates a $\Q$-structure on $\lien$.
Identifying $\lien$ with $\bR^d$, the derivative of $\rho$ induces a linear representation $\psi= D\rho\colon \Gamma \to\Aut(\lien)\subset  \GL(d,\R)$.  We remark that $\Aut(\lien)$ is a real algebraic group.

%Via the exponential map $\exp\colon \lien\to N$, we view $\rho\colon \Gamma \to \GL(d,\bR)$, where $\lien$ is identified with $\bR^d$.  % \marginnote{check that his makes sense; note $\exp^{-1}(\Lambda)$ is {\bf not} a lattice} and the lattice in $\R^d$ generated by the set $\exp^{-1}(\Lambda)$ is identified with $\bZ^d$ (see Section \ref{sec:StructureNil}). Under

Let $\bar\Gamma, L, \hat \Gamma, \Delta, \pi_1, \pi_2, \psi', \psi^*$ be as in Proposition \ref{prop:ourSR}, with $k=\R$ and $\bfJ=\Aut(\lien)$.  As $\pi_1(\hat \Gamma)\subset \bar \Gamma$ we have an induced action $\hat \alpha \colon \hat \Gamma \to \Homeo(M)$, by $\hat \alpha(\hat \gamma) = \alpha(\pi_1(\hat \gamma))$.  The action $\hat \alpha$ lifts to an action on $N$ and the induced linear data is $\hat \psi(\hat \gamma) = \psi^*(p_1(\pi_1(\hat \gamma)))= \psi'(\pi_2(\hat \gamma))$.  It follows that the linear data $\hat \psi$ of $\hat \alpha$ extends to a continuous representation $\hat \psi'\colon L\to \Aut(\lien)$ given by $\hat \psi'(\ell) = \psi'(\pi_2(\ell))$.
 Via the exponential map, we have $ \hat \rho\colon \hat \Gamma \to \Aut (N) $ extends to
$ \hat \rho\colon L \to \Aut (N) $ by $\hat \rho(\ell) = \exp(\hat \psi'(\ell)).$
 Thus, after replacing $(G,\Gamma, \alpha, \rho)$ with $(L, \hat \Gamma, \hat \alpha, \hat \rho)$,  (1) of Theorem \ref{thm:mainnilmanifoldfull} holds.
%
%
%Recall that $M= N/\Lambda$ and the linear data $\rho$ defines a group morphism $\Gamma\to\Aut(M)\subset\Aut(N)$.
%
% Moreover, $\Aut(M)$ can be identified with a subgroup in $\GL(d,\bZ)$, where $\lien$ is identified with $\bR^d$ \marginnote{check that his makes sense; note $\exp^{-1}(\Lambda)$ is {\bf not} a lattice} and the lattice in $\R^d$ generated by the set $\exp^{-1}(\Lambda)$ is identified with $\bZ^d$ (see Section \ref{sec:StructureNil}). Under this identification, $D\rho$ takes values in $ \GL(d,\bZ)$. Therefore, Corollary \ref{ArithSuper} applies with $\psi=D\rho$. Let $\bfG$, $\Gamma'$, $\psi'$ and $\phi$ be as in Corollary \ref{ArithSuper}.
%
%$\Gamma'$ acts on $M$ through $\Gamma$ by $\rho'=\rho\circ\phi$. As $\phi(\Gamma')$ is of finite index in $\Gamma$, we see that in order to show Theorem \ref{ConjThm}, it suffices to prove:
%
%
%

\subsection{All weights are non-trivial} \label{sec:allweight}\label{sec:HypRep}

%\subsection{Hyperbolic representations}
%We verify, in the case $G$ is a real algebraic group and $\rho$ is a rational morphism with $\rho(g)$ hyperbolic for some $g$, that all weights of the representation are nontrivial.

Since, in the previous section, the linear data $\hat \rho\colon \hat \Gamma \to \Aut(N)$ extends to a continuous $\hat \rho'\colon L\to \Aut(N)$ and $\hat \rho'$ factors through a $\rho'\colon H\to \Aut(N)$, to establish (4) of Theorem \ref{thm:mainnilmanifoldfull} for $(L,\hat\rho)$, it is sufficient to show non-triviality of the weights of $D\rho'=\psi'$.
% show that the hypothesis of Lemma \ref{NonResonance} is satisfied in the case that

As $H= \bfH(\R)^\circ$ is real algebraic, the Lie subalgebra $\liea$ is the lie algebra of a maximal $\bR$-split torus, so the result follows from the following basic fact.

\begin{lemma}\label{CartanHyp} Suppose $G=\bfH(\bR)^\circ$ for a semisimple algebraic group $\bfH$ defined over $\bR$ and $\psi:\bfH\to\mathbf{GL}(d)$ is a $\R$-rational representation. Let $\bfA$ be a maximal $\bR$-split torus in $\bfH$. Suppose $\psi(g)$ is hyperbolic for some $g\in G$.  %Then there is an element $a\in\bfA$ such that $\chi(a)\neq 1$ for all restricted weights $\chi$ of $\psi$ with respect to $\bfA$.
Then all restricted weights of $\chi$ of $\psi$ with respect to $\bfA$ are non-trivial.
\end{lemma}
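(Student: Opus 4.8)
The plan is to argue by contradiction: suppose some restricted weight $\chi$ of $\psi$ with respect to $\bfA$ is trivial, i.e.\ $\chi = 0$ on $\liea$. I want to show that then $\psi(g)$ has an eigenvalue of modulus $1$ for every $g \in G$, contradicting the hypothesis that $\psi(g_0)$ is hyperbolic for some $g_0$. First I would use the structure theory: since $\bfH$ is a semisimple $\bR$-algebraic group and $\bfA$ a maximal $\bR$-split torus, the representation space $V = \bR^d$ decomposes as $V = \bigoplus_\chi V_\chi$ into restricted weight spaces under the $\bfA$-action, and by assumption $V_0 \neq \{0\}$. The key point is that $V_0$ is invariant not just under $\bfA$ but under the full centralizer $Z_{\bfH}(\bfA)$, and more importantly, one can locate enough of $G$ acting on $V_0$ to conclude.

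The cleanest route: let $\liea$ act on $V$; the zero weight space $V_0 = \{v : d\psi(X)v = 0 \text{ for all } X \in \liea\}$ is the fixed space of $\bfA$ (up to taking the connected component / using that $\bfA$ is connected and split, its action is diagonalizable with the indicated weights, so $V_0$ is exactly $\mathrm{Fix}(\bfA)$). Now recall from the Bruhat/parabolic structure that $G = K \bfA(\R)^\circ K$ is \emph{not} quite what I need; instead I would use that any $g \in G$ is conjugate into a specific subgroup. More robustly: the semisimplification/Jordan decomposition of $g \in G$ writes $g = g_e g_h g_u$ (elliptic, hyperbolic, unipotent commuting parts) in $\bfH(\bR)$, and $g$ is $\bR$-conjugate so that $g_h$ lies in $\bfA(\R)^\circ$ (every $\bR$-split hyperbolic element is conjugate into the maximal split torus, after possibly enlarging — more precisely, $g_h$ is contained in \emph{some} maximal $\bR$-split torus, all of which are $G$-conjugate). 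Since eigenvalue moduli of $\psi(g)$ equal those of $\psi(g_h) = \psi(h g_h' h^{-1})$ for $g_h' \in \bfA(\R)^\circ$, and the eigenvalues of $\psi$ on $\bfA$ are exactly $e^{\chi(\log a)}$ over restricted weights $\chi$, the triviality of one weight $\chi = 0$ forces $\psi(g_h')$ to have eigenvalue $1$ on $V_0 \neq \{0\}$; hence $\psi(g)$ has an eigenvalue of modulus $1$. This holds for \emph{every} $g$, contradicting hyperbolicity of $\psi(g_0)$.

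So the skeleton is: (i) restricted weight space decomposition of $V$ under $\bfA$, with $V_0 \neq \{0\}$ by assumption; (ii) for any $g\in G$, reduce to its hyperbolic Jordan part $g_h$, which after $G$-conjugacy lies in $\bfA(\R)^\circ$ — here I use that all maximal $\bR$-split tori are conjugate over $\bfH(\R)^\circ$, and that the multiplicative Jordan decomposition is preserved by the rational representation $\psi$; (iii) conjugation doesn't change eigenvalue moduli, and on $\bfA(\R)^\circ$ the eigenvalues of $\psi$ are $a \mapsto e^{\chi(\log a)}$, so $\chi=0$ gives a modulus-$1$ eigenvalue of $\psi(g)$; (iv) conclude no $g$ can have $\psi(g)$ hyperbolic.

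The main obstacle I anticipate is step (ii): making precise that an arbitrary $g \in \bfH(\R)^\circ$ has its hyperbolic part conjugate into $\bfA(\R)^\circ$ rather than into some other maximal split torus or a torus that is only split over a larger field. The safe way around this is to argue at the level of the Zariski closure of $\langle g_h\rangle$: $g_h$ is $\bR$-diagonalizable (its eigenvalues on any faithful representation are real, being the hyperbolic part), so it lies in some maximal $\bR$-split torus $\bfA'$ of $\bfH$; by conjugacy of maximal $\bR$-split tori in $\bfH(\R)$ (a standard fact, e.g.\ from Borel--Tits), $\bfA'$ is $\bfH(\R)$-conjugate to $\bfA$, and the restricted weight systems match up. One must be slightly careful that $\psi(g)$ and $\psi(g_h)$ have the same eigenvalue moduli — this follows because $\psi(g_e)$ is elliptic (eigenvalues of modulus $1$) and $\psi(g_u)$ is unipotent (all eigenvalues $1$) and the three commute, so $|\lambda_i(\psi(g))| = |\lambda_i(\psi(g_h))|$. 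Once this reduction is in hand the rest is immediate from the definition of restricted weights.
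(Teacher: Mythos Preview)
Your proposal is correct and follows essentially the same approach as the paper. Both arguments reduce, via Jordan decomposition, to an element of a maximal $\bR$-split torus and then invoke conjugacy of such tori; the only cosmetic difference is packaging. The paper uses the algebraic Jordan decomposition $g = g_s g_u$ and then splits the torus containing $g_s$ as $\bfT = \bfT_s \bfT_a$ (using the auxiliary character $\tilde\lambda(t) = \lambda(t)\overline{\lambda(\bar t)}$ to isolate the $\bfT_s$-part), whereas you invoke the real Jordan decomposition $g = g_e g_h g_u$ directly and observe that $g_h$ already sits in an $\bR$-split torus. Your route is marginally cleaner in that it bypasses the explicit character computation; the paper's route is marginally more self-contained in that it derives the elliptic/hyperbolic splitting from the torus structure rather than citing it. Either way the substance is identical.
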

%\marginnote{there is some inconsistency here.  Here we are looking at the exponential of the weights rather than the weights themselves}
\begin{proof}
$g$ has a unique Jordan decomposition $g=g_sg_u=g_ug_s$ where $g_s$ is semisimple and $g_u$ is unipotent, and $g_s, g_u\in\bfH(\bR)$. Then $\psi(g)=\psi(g_s)\psi(g_u)$ is the Jordan decomposition of $\psi(g)$.  Since $\psi(g)$ and $\psi(g_s)$ have the same eigenvalues, $\psi(g_s)$ is a hyperbolic matrix.

The semisimple element $g_s\in\bfH(\bR)$ belongs to a $\bR$-torus $\bfT$.  It follows that for all weights $\lambda\in X^*(\bfT)$ of $\psi$, $|\lambda(g_s)|\neq 1$ where $X^*(\bfT)$ denotes the group of characters of $\bfT$.

There is a unique decomposition $\bfT=\bfT_s\bfT_a$ into an $\bR$-split torus $\bfT_s$ and an $\bR$-anisotropic torus $\bfT_a$. We further decompose $g_s=g_{s,s}g_{s,a}$ with $g_{s,s}\in\bfT_s(\bR)$ and $g_{s,a}\in\bfT_a(\bR)$.

Write $\tlambda(t)=\lambda(t)\overline{\lambda(\overline t)}$, then $\tlambda$ is a character of $\bfT$ defined over $\bR$ and is hence trivial on $\bfT_a$. Then $\tlambda(g_{s,s})=\tlambda(g_s)=|\lambda(g_s)|^2\neq 1$. Moreover, $\lambda|_{\bfT_s}$ is defined over $\bR$ for $\bfT_s$ is split, and thus $\tlambda(g_{s,s})=\lambda^2(g_{s,s})$. So $\lambda(g_{s,s})$, which is real, is not equal to $\pm1$. This shows $\psi(g_{s,s})$ is a hyperbolic matrix.

On the other hand, $\bfT_s$ is contained in some maximal $\bR$-split torus $\bfA'$. It follows that all restricted weights $\chi$ of $\bfA'$ are non-trivial. As all maximal $\bR$-split tori are $\bfH(\bR)$-conjugate, the lemma follows.
\end{proof}

\subsection {Proof of  Theorem \ref{ConjThm}}
As discussed above (1) and (4) of Theorem \ref{thm:mainnilmanifoldfull} hold for $(L, \hat \Gamma, \hat \alpha, \hat \rho)$.   (2)  of Theorem \ref{thm:mainnilmanifoldfull}   follows imediately from Theorem \ref{QI}.  Once we know that all weights of the representation given by the linear data are non-trivial,
 (3) of Theorem \ref{thm:mainnilmanifoldfull}   follows immediately   from Lemma \ref{NonResonance}.
  Theorem \ref{thm:mainnilmanifoldfull} then gives a map $h$ intertwining the actions $\hat \alpha$ and $\hat \rho$.  Since $\hat \alpha$ and $\hat \rho$ factor through the restriction of the actions $\alpha$ and $\rho$ to a finite index subgroup $\bar\Gamma\subset \Gamma\subset G$, the same $h$ intertwines $\restrict{\alpha}{\bar\Gamma}$ and $\restrict{\rho}{\bar\Gamma}$ and Theorem \ref{ConjThm} follows.

\section{Smooth rigidity for Anosov actions}\label{sec:Smooth}

In this section we prove Theorem \ref{SmoothThm}.  Our approach uses many of the same ideas  as \cite{MR1367273}.   %heavily uses Zimmer's Cocycle Superridigity Theorem which we recall in Section \ref{sec:ZCSR}

\subsection{Reductions and proof of Theorem \ref{SmoothThm}}\label{sec:SmoothReduction}
Let $G$ and $ \Gamma$ satisfy Hypothesis \ref{HigherRank} and let $\alpha$ be as in Theorem \ref{SmoothThm}.  Let $\rho$ be the linear data of $\alpha$.  Note that if $\alpha(\gamma)$ is Anosov then $D\rho(\gamma)$ is hyperbolic.   Replacing $\Gamma$ with a finite-index subgroup $\Gamma_1\subset \Gamma$, we may assume from Theorem \ref{ConjThm} that there is a (H\"older) continuous $h\colon M\to M$ intertwining  the actions  $\alpha$ and $\rho$.  Recall we assume $\alpha(\gamma_0)$ is Anosov for some $\gamma_0\in \Gamma$. Taking a power, it follows that $\alpha(\gamma_1)$ is Anosov for some $\gamma_1\in \Gamma_1$.  By Manning's Theorem \cite{MR0358865}, $h$ is a bi-H\"older  homeomorphism (see also \cite[Section 18.6]{MR1326374}); in particular, the linear data $\rho$ uniquely determines the nonlinear action $\alpha$ of $\Gamma_1.$

We recall the notation  the constructions from Section  \ref{sec:verify1}.  In particular, there is a center-free, semisimple Lie group $G^*$ without compact factors, a continuous surjective morphism $p_1\colon G\to G^*$, finite-index subgroup $\bar \Gamma\subset \Gamma_1$,  and representation $\rho^*\colon \Gamma^* \to \Aut(N)$ where $\Gamma^* =p_1(\bar \Gamma)$ such that  $\restrict \rho {\bar \Gamma} = \rho^* \circ p_1$.  Since the linear data $\restrict \rho {\bar \Gamma}$ uniquely determines the nonlinear action $\restrict \alpha {\bar \Gamma}$, it follows that $\alpha$ factors through an action $\td \alpha^*$ of $\Gamma^*$: $\td \alpha^*( \gamma^*) = \alpha (p_1^{-1}(\gamma^*))$.

Recall that the Lie group $G^*$ is a real algebraic group, that is, $G^*=\bfG^*(\bR)^\circ$ for a semisimple algebraic group $\bfG^*$ defined over $\bR$.
Let $\td\bfG$ be the algebraically simply connected cover of $\bfG^*$. Then $\td G=\td \bfG(\bR)^\circ$ is a finite cover of $G^*$, whence  $\td G$ has finite center and no compact factors. Let $\td \Gamma$ be the lift of $\Gamma^* $ to $\td G$. The  projection $\td \Gamma\to \Gamma^*$ induces an action $\td \alpha$ of $\td \Gamma$ by $C^\infty$ diffeomorphisms of $M$; moreover, the action $\td \alpha$ lifts to an action by diffeomorphism of  $N$ and induces linear data $\td \rho$ which factors through $\rho^*$.    %Moreover,

Note that map $h$ guaranteed by Theorem \ref{ConjThm} intertwining the actions of $\alpha$ and $\rho$ also intertwines the  actions $\td \rho$ and $\td \alpha$.
It is therefore sufficient to prove Theorem \ref{SmoothThm} under the following stronger hypotheses.

\begin{hypothesis}\label{hyp7}
 $\bfG$ is a simply connected semisimple algebraic group defined over \new{$\bR$} all of whose $\bR$-simple factors %{\sout{\red{are anisotropic and}}} 
 have $\R$-rank $2$ or higher, $G= \bfG(\R)^\circ$, and  $\Gamma\subset G$ is a lattice.
 $\alpha$ is an action of $\Gamma$ by $C^\infty$ diffeomorphisms of a nilmaniofld $M= N/\Lambda$  which lifts to an action by diffeomorphisms of $N$ and $\rho\colon \Gamma\to \Aut(M)$ is the associated linear data.
 $h\colon M\to M$ is a homeomorphism such that $h\circ \alpha(\gamma)= \rho(\gamma)\circ h$ for all $\gamma\in \Gamma$.
% be a finite-index subgroup of $\bfG(\bZ)\cap\bfG(\bR)^\circ$. Let $\alpha$ be a smooth $\Gamma$-action on a nilmanifold $M=N/\Lambda$.  Assume the action $\alpha$ lifts to an action on $N$ \marginnote{added lifting here so that linear data is defined} and let $\rho\colon \Gamma\to\Aut(M)$ denote the associated linear data.  Suppose:
\end{hypothesis}

Assuming Hypothesis \ref{hyp7}, the proof of  Theorem \ref{SmoothThm} proceeds by studying the restriction of $\alpha$ and $\rho$ to an appropriately chosen finitely generated discrete abelian subgroup $\Sigma\subset \Gamma$.

We recall the following definition.
\begin{definition}
For an abelian group $\Sigma$ and two actions $\rho\colon \Sigma\to\Aut(M)$, $\rho'\colon \Sigma\to\Aut(M')$ by nilmanifold automorphisms, we say $\rho'$ is an {\bf algebraic factor action} of $\rho$, if there is an algebraic factor map $\pi\colon M\to M'$ such that $\pi\circ\rho(g)=\rho'(g)\circ\pi$ for all $g\in\Sigma$. We further say $\rho'$ is a {\bf rank-one algebraic factor action} of $\rho$  if in addition there is a finite-index subgroup $\Sigma'<\Sigma$ such that the image group $\rho'(\Sigma')<\Aut(M')$ is cyclic.
\end{definition}

%Theorem \ref{SmoothThm} will follow immediately  from the following two facts:

The following proposition is the main result of this section.
\begin{proposition}\label{AbelianSubaction}
%Let $G$ and $\Gamma$ be as in Hypothesis \ref{HigherRank} and \eqref{hyp2}, $\alpha$ be a $C^\infty$ action  of $\Gamma$ on a nilmanifold $M$.  Suppose $\alpha$ can be lifted to an action on $\Gamma$ and let $\rho$ be the associated linear data.
Let $G$, $\Gamma$, $\alpha$, and $\rho$ be as in Hypothesis \ref{hyp7}.
Suppose $\alpha(\gamma_0)$ is an Anosov diffeomorphism for some $\gamma_0\in\Gamma$. Then  %after restricting to a finite-index subgroup of $\Gamma$,
 there exists a free abelian subgroup $\Sigma\subset\Gamma$  such that $\rho|_\Sigma$ has no rank-one algebraic factor actions and $\alpha(\gamma_1)$ is Anosov for some $\gamma_1\in\Sigma$.
\end{proposition}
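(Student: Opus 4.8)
The plan is to produce the abelian subgroup $\Sigma$ inside a maximal $\bR$-split torus of $G$, using the structure of the root system to simultaneously arrange (i) that $\rho|_\Sigma$ has no rank-one algebraic factor and (ii) that $\Sigma$ still contains an element whose $\alpha$-image is Anosov. First I would fix the element $\gamma_0$ with $\alpha(\gamma_0)$ Anosov, so that $D\rho(\gamma_0)$ is hyperbolic; by the Jordan decomposition argument (as in Lemma \ref{CartanHyp}) the semisimple part of $\gamma_0$ already has hyperbolic image under $D\rho$, and after conjugating we may assume this semisimple part lies in a fixed maximal $\bR$-split torus $A = \mathbf{A}(\bR)^\circ$. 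The point is that hyperbolicity of $D\rho(a)$ for $a\in A$ is controlled by the restricted weights $\chi$ of $D\rho$: $D\rho(a)$ is hyperbolic iff $\chi(\log a)\neq 0$ for every weight $\chi$, and since all weights are non-trivial (Section \ref{sec:allweight}), the set of such $a$ is the complement in $A$ of finitely many hyperplanes, hence open and dense.

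Next I would choose a lattice $\Sigma_0 \cong \bZ^k$ inside $A\cap\Gamma$-commensurable data — more precisely, using arithmeticity (Theorem \ref{Arithmeticity}) and the fact that $\Gamma$ intersects $A$ in a lattice of $A$ up to commensurability (Borel density / arithmetic structure of $\mathbf{A}$), I can find a finite-index free abelian $\Sigma_0$ in $\Gamma\cap A$ of rank equal to $\dim A = \operatorname{rank}_\bR G \geq 2$. Shrinking $\Sigma_0$ to a finite-index subgroup avoids the finitely many weight-hyperplanes, so every nontrivial $\gamma\in\Sigma_0$ has $D\rho(\gamma)$ hyperbolic — in particular (taking a power of $\gamma_0$'s semisimple part that lies in $\Sigma_0$) some $\gamma_1\in\Sigma_0$ has $\alpha(\gamma_1)$ Anosov, since by Hypothesis \ref{hyp7} (Manning's theorem) the nonlinear action is determined by the linear data and an element with hyperbolic linear data is Anosov. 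The remaining issue is to pass to a further subgroup $\Sigma\subset\Sigma_0$ on which $\rho|_\Sigma$ has no rank-one algebraic factor action.

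The main obstacle is exactly this last step: ruling out rank-one algebraic factors. An algebraic factor of $\rho|_{\Sigma}$ on a nilmanifold corresponds, after passing to Lie algebras, to a $\Sigma$-invariant rational subalgebra quotient $\lien \to \lien/\lien'$, and "rank one" means the image of $\Sigma$ in $\Aut(\lien/\lien')$ is virtually cyclic, i.e. all restricted weights of $D\rho$ appearing in $\lien/\lien'$ are proportional as functionals on $\liea$. So I must show: if $\Sigma\subset A$ is Zariski-dense in $A$ (which holds after passing to any finite-index subgroup, by Borel density applied to the torus), then no nonzero $\Sigma$-rational quotient of $\lien$ has all its weights proportional. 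Here I would use that the $\mathbf{H}$-action (equivalently the $G$-action) on $\lien$ has all weights nontrivial and that the weights of a faithful-enough piece span $\liea^*$ — this is where the higher-rank hypothesis and the semisimplicity of $\mathbf{G}$ enter, via the classification of which rational invariant subspaces of $\lien$ can occur: any $G$-rational (hence $\Sigma$-rational, by Zariski density and Margulis superrigidity identifying $\Sigma$-invariant rational structures with $G$-invariant ones) subquotient is a subrepresentation of the $G$-action, and a nonzero $G$-subrepresentation of $\lien$ cannot have all weights proportional because $\lieg$ acts nontrivially across root spaces in more than one direction (rank $\geq 2$). I would make this precise by arguing that if all weights in a quotient were proportional to a single $\chi$, then the corresponding weight spaces would be permuted by the Weyl group into a one-dimensional subspace of $\liea^*$, contradicting irreducibility of the relevant factor's root system together with $\R$-rank $\geq 2$; passing from $\Sigma$-rational to $G$-rational subspaces is the technical heart and uses superrigidity exactly as in Section \ref{sec:verify1}. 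Once this is established, $\Sigma := \Sigma_0$ (possibly after one more finite-index reduction to clear the weight hyperplanes) has both required properties, completing the proof.
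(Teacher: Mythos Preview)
Your proposal has two genuine gaps that break the argument.

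\textbf{First, and most serious:} you assert that ``an element with hyperbolic linear data is Anosov'' for $\alpha$, citing Hypothesis~\ref{hyp7} and Manning. This is false. Hypothesis~\ref{hyp7} gives only a \emph{homeomorphism} $h$ (bi-H\"older at best) with $h\circ\alpha(\gamma)=\rho(\gamma)\circ h$. If $D\rho(\gamma)$ is hyperbolic then $\rho(\gamma)$ is Anosov, but the Anosov property is not preserved under topological conjugacy, so nothing forces $\alpha(\gamma)$ to be Anosov. This is precisely why the paper needs the entire machinery of Proposition~\ref{semigroup}: cone fields, Zimmer's cocycle superrigidity applied to a family of ergodic $\alpha$-invariant measures (coming from Ratner via Proposition~\ref{prop:Ratner}), and the transversality argument of Section~\ref{Sec:llppookjjjd}, all to produce a Zariski-dense \emph{semigroup} $S\subset\Gamma$ of elements with $\alpha(\gamma)$ genuinely Anosov. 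Your shortcut bypasses the core analytic difficulty of the proposition.

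\textbf{Second:} you claim $\Gamma\cap A$ is (commensurably) a lattice in the fixed maximal $\bR$-split torus $A$. This is generally false. For $\Gamma=\SL(n,\bZ)$ and $A$ the diagonal subgroup of $\SL(n,\bR)$, the intersection $\Gamma\cap A$ consists of diagonal matrices with entries $\pm 1$, hence is finite. The paper does not fix $A$ in advance; instead it passes to the arithmetic group $\bfH$ and invokes Prasad--Rapinchuk (Proposition~\ref{RegularCentralizer}) to find a \emph{specific} regular, $\bR$-regular element $\gamma\in S$ whose centralizer $Z_\bfH(\gamma)^\circ$ is a maximal $\bQ$-torus containing a maximal $\bR$-split torus, with the additional property that the Galois action on its character group contains the full Weyl group. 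Only then does $Z_\bfH(\gamma)^\circ\cap\Gamma$ contain the required $\bZ^{\rank_\bR\bfH}$. This Galois/Weyl property is also what makes the no-rank-one-factor argument work: a rational $\Sigma$-invariant quotient is only $\bfT$-invariant (a sum of weight spaces), not $G$-invariant, so your appeal to ``$G$-subrepresentations cannot have proportional weights'' does not apply; the paper instead uses that the Weyl group, acting via Galois, must preserve the set of weights of the quotient.
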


Having found an appropriate $\Sigma\subset \Gamma$, Theorem \ref{SmoothThm}
follows from the following proposition.
We recall that for an action $\alpha$ of a discrete abelian group $\Sigma$ by diffeomorphisms of a nilmanifold  with an Anosov element, there is always an abelian action, called the \emph{linearization}  of $\alpha$, by affine nilmanifold transformations.  Moreover these two actions are   conjugate. The  main result of \cite{MR3260859} shows this conjugacy is smooth.

\begin{proposition}[\cite{MR3260859}]\label{SmoothAbelian}
Let $\alpha$ be a $C^\infty$ action by a discrete abelian group $\Sigma$ on a nilmanifold $M$, and $\rho$ be its linearization. Suppose that $\rho$ has no rank-one algebraic factor action and $\alpha(\gamma_1)$ is Anosov for some $\gamma_1\in\Sigma$.  Then $\alpha$ is conjugate to $\rho$ by a $C^\infty$ diffeomorphism that is homotopic to the identity.
\end{proposition}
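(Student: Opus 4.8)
The plan is to adapt the strategy developed for higher-rank abelian Anosov actions (Katok--Spatzier, and its refinements by Rodriguez Hertz and Rodriguez Hertz--Wang), in four stages. \emph{Stage 1 (topological conjugacy).} Since $\alpha(\gamma_1)$ is Anosov and $M=N/\Lambda$ is a nilmanifold, $D\rho(\gamma_1)$ is hyperbolic, and the theorems of Franks and Manning produce a bi-H\"older homeomorphism $h\colon M\to M$, homotopic to the identity, with $h\circ\alpha(\gamma_1)=\rho(\gamma_1)\circ h$. Because $h$ is the \emph{unique} homeomorphism homotopic to the identity with this property, and every $\alpha(a)$, $a\in\Sigma$, commutes with $\alpha(\gamma_1)$ and is homotopic to $\rho(a)$, the same $h$ intertwines the full actions: $h\circ\alpha(a)=\rho(a)\circ h$ for all $a\in\Sigma$. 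The whole problem is thus to upgrade the regularity of this fixed $h$.

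\emph{Stage 2 (coarse Lyapunov structure).} Over $\Sigma\otimes\bR$ the Lyapunov exponents of $\rho$ are linear functionals $\chi$; grouping those that are positively proportional into coarse classes $[\chi]$ yields a decomposition $\lien=\bigoplus_{[\chi]}\lien^{[\chi]}$ compatible with the central series and with the stable/unstable filtrations $\bigoplus_{\chi(a)<0}\lien^{\chi}$ of individual elements. For a regular $a\in\Sigma$, the stable foliation $\mathcal F^{s}_{\alpha(a)}$ is a continuous foliation with uniformly $C^\infty$ leaves (stable manifold theory), and $h$ carries it onto the affine stable foliation of $\rho(a)$; intersecting the stable foliations of finitely many regular elements, each \emph{coarse Lyapunov foliation} $\mathcal F^{[\chi]}_{\alpha}$ is a topological foliation with uniformly $C^\infty$ leaves, and $h$ maps $\mathcal F^{[\chi]}_{\alpha}$ onto the affine foliation $\mathcal F^{[\chi]}_{\rho}$.

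\emph{Stage 3 (smoothness of $h$ along coarse Lyapunov foliations).} This is the heart of the argument, and the hypothesis that $\rho|_{\Sigma}$ has \emph{no} rank-one algebraic factor action is exactly what it supplies: it guarantees that for every coarse class $[\chi]$ the dynamics along and transverse to $\mathcal F^{[\chi]}$ is genuinely higher-rank. One can then choose an element $a$ with $\chi(a)<0$ whose action on the smooth leaves of $\mathcal F^{[\chi]}_{\alpha}$ has no resonances among its Lyapunov data beyond those forced by the nilpotent bracket, together with a commuting element $b$ realizing $\mathcal F^{[\chi]}$ as a proper intersection of stable foliations. On a leaf, $h$ conjugates the smooth contraction $\alpha(a)$ to the affine contraction $\rho(a)$; the non-stationary linearization / normal-forms theory of Guysinsky--Katok (applied inductively along the central series of the leaf, the no-resonance condition providing the needed spectral gap) promotes this H\"older conjugacy to a $C^\infty$ one near a fixed point, and the commuting element $b$ together with a transitivity (Liv\v{s}ic-type) argument spreads $C^\infty$-regularity, with uniform estimates, over the whole leaf. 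I expect this stage --- marrying the combinatorics of the no-rank-one-factor condition to non-stationary linearization on nilpotent leaves --- to be the principal obstacle.

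\emph{Stage 4 (globalization).} The stable foliation $\mathcal F^{s}_{\alpha(a)}$ of a regular element is tangent to a jointly integrable sum of coarse Lyapunov distributions, so repeated application of Journ\'e's lemma --- joint $C^\infty$-regularity along two transverse H\"older foliations with $C^\infty$ leaves --- gives that $h$ is $C^\infty$ along $\mathcal F^{s}_{\alpha(a)}$, and symmetrically along $\mathcal F^{u}_{\alpha(a)}$. One further application of Journ\'e's lemma to the transverse pair $(\mathcal F^{s}_{\alpha(a)},\mathcal F^{u}_{\alpha(a)})$ then yields that $h$ is globally $C^\infty$; running the same argument for $h^{-1}$ (the topological conjugacy from $\rho$ to $\alpha$, which exists because $\alpha(\gamma_1)$ Anosov forces $h$ to be a homeomorphism) shows $h$ is a $C^\infty$ diffeomorphism, homotopic to the identity by construction.
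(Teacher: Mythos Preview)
The paper does not prove this proposition: it is quoted verbatim from \cite{MR3260859} and used as a black box in the deduction of Theorem~\ref{SmoothThm}. So there is no ``paper's own proof'' to compare against beyond the citation.

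That said, your outline is broadly in the spirit of \cite{MR3260859}: Franks--Manning conjugacy, coarse Lyapunov decomposition, leafwise smoothness, then Journ\'e. Where your sketch diverges from the actual argument is in Stage~3. The mechanism in \cite{MR3260859} is \emph{not} Guysinsky--Katok non-stationary normal forms plus a Liv\v{s}ic-type propagation. Rather, the key input is that the linearization $\rho$ acts by affine automorphisms with exponential mixing, and the no-rank-one-factor hypothesis is used to produce, for each coarse class $[\chi]$, elements of $\Sigma$ that act isometrically along $[\chi]$ while contracting complementary directions; this lets one compare the $\alpha$- and $\rho$-holonomies directly and run a cocycle-rigidity argument over the algebraic action to show the conjugacy is smooth along each coarse foliation. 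Your normal-forms route would need a non-resonance condition that the hypothesis does not supply in general (resonances within a coarse class coming from the nilpotent bracket are unavoidable), so as written Stage~3 has a gap. Stages~1, 2, and 4 are fine.
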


Let $G$, $\Gamma$, $\alpha$, and $\rho$ be as in Hypothesis \ref{hyp7}.
Suppose $\alpha(\gamma_0)$ is an Anosov diffeomorphism for some $\gamma_0\in\Gamma$.   Let $ \Sigma\subset \Gamma$ and $\gamma_1\in \Sigma $ be as in Proposition \ref{AbelianSubaction}
Note that,  the conjugacy $h$ in Hypothesis \ref{hyp7}  guarantees that the linearization of $\restrict \alpha \Sigma$ coincides with the restriction of the linear data $\rho$ of $\alpha$ to $\Sigma$.
By   Proposition \ref{SmoothAbelian}, there is
a $C^\infty$ diffeomorphism $h'\colon M\to M$ homotopic to the identity which intertwines $\restrict \alpha \Sigma$ and the linearization of $\restrict \alpha \Sigma$.  Furthermore, %for any     $\gamma_1\in \Sigma$ such that  $\alpha(\gamma_1) $ is Anosov,
 there is a lift of $h'$ intertwining the lifts of $\alpha(\gamma_1)$ and $\rho(\gamma_1)\in \Aut(N)$.
From the uniqueness criterion of  semiconjugacies, it follows that  $h$ coincides with $h'$ and hence is $C^\infty$.   Theorem \ref{SmoothThm} follows immediately

In the remainder of this section, we prove Proposition \ref{AbelianSubaction}.  % in the rest of this section.

\subsection{The semigroup of Anosov elements} We recall the setting of  Proposition \ref{AbelianSubaction}.
Let $\gamma_0\in \Gamma$ be such that $\alpha(\gamma_0)$ is an Anosov diffeomorphism. Let $E^\sigma_{\gamma_0}(x)$, $x\in M$, $\sigma=s,u$, be the stable and unstable bundles for $\alpha(\gamma_0)$. Given $\epsilon>0$ and $\sigma=s,u$, let $C^{\sigma}_{\epsilon}(x)$ be the $\epsilon$-cone around $E^\sigma_{\gamma_0}(x)$; that is, for $\sigma=u$, decomposing $v=v^s+v^u$ with respect to the splitting $E^s_{\gamma_0}(x)\oplus E^u_{\gamma_0}(x)$ we have  $v\in C^u_{\epsilon}(x)$ if and only if $|v^s|\leq \epsilon |v^u|$. %(Here $|\cdot|$ is assumed to be an adapted norm for convenience.)

Let $\epsilon>0$ be small (to be determined later). Let $S$ be the set of all $\gamma\in \Gamma$ such that for every $x\in M$,  $$D_x\alpha(\gamma) C^u_{\epsilon}(x)\subset  C^u_{\frac{1}{2}\epsilon}(\alpha(\gamma) (x))$$ and $$D_x(\alpha(\gamma))^{-1} C^s_{\epsilon}(x)\subset  C^s_{\frac{1}{2}\epsilon}((\alpha(\gamma))^{-1}(x))$$
  (that is, $\alpha(\gamma)$ preserves the $\epsilon$-stable and unstable cones)
and, moreover, for every vector $v\in C^u_{\epsilon}(x), |D_x\alpha(\gamma)v|\geq 2|v|$ and for $v\in C^s_{\epsilon}(x), |D_x(\alpha(\gamma))^{-1}v|\geq 2|v|$.

\begin{proposition}\label{semigroup} Suppose $\alpha$,  $\Gamma$, and $\gamma_0$ are as in the assumptions of Proposition \ref{AbelianSubaction}.
For every $0<\epsilon\leq 1$, the set $S$ defined above satisfies the following conditions:
\begin{enumerate}
\item $S$ is a semigroup;

\item\label{anosovsemigroup} for every $\gamma\in S$, $\alpha(\gamma)$ is an Anosov diffeomorphism;

\item for some $N_0>0$, $\gamma_0^{N_0}\in S$;

\item\label{zariskidensesemigroup} there is a Zariski open set $W\subset G$ such that for every $\eta\in \Gamma\cap W$, there is $N>0$ such that $\gamma_0^N\eta\gamma_0^N\in S$;

\item $S$ is Zariski dense in $G$.
\end{enumerate}
\end{proposition}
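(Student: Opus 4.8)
\textbf{Proof plan for Proposition \ref{semigroup}.}

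The plan is to establish the five assertions roughly in the order stated, since each builds on the previous ones. First, part (1): that $S$ is a semigroup is a direct consequence of the cone conditions. If $\gamma, \gamma' \in S$, then $D_x\alpha(\gamma'\gamma) = D_{\alpha(\gamma)(x)}\alpha(\gamma')\circ D_x\alpha(\gamma)$; applying the cone-invariance for $\gamma$ first lands us in $C^u_{\epsilon/2}(\alpha(\gamma)(x)) \subset C^u_\epsilon(\alpha(\gamma)(x))$, and then applying cone-invariance for $\gamma'$ lands us in $C^u_{\epsilon/2}(\alpha(\gamma'\gamma)(x))$; the expansion estimates multiply, giving a factor of at least $4 \geq 2$, so even the contraction/expansion inequality improves under composition. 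The same for the stable cones under inverses. Hence $\gamma'\gamma \in S$. Part (2): for $\gamma \in S$, the standard cone criterion for hyperbolicity (see e.g.\ \cite[Section 6.2]{MR1326374}) applies: invariant transverse cone families that are uniformly expanded/contracted yield an Anosov splitting $E^u_\gamma(x) = \bigcap_{n\geq 0} D\alpha(\gamma)^n C^u_\epsilon(\alpha(\gamma)^{-n}x)$ and similarly for $E^s_\gamma$. Part (3): since $\alpha(\gamma_0)$ is Anosov with stable/unstable bundles $E^\sigma_{\gamma_0}$, for any $\epsilon > 0$ the cone $C^\sigma_\epsilon$ is, by uniform hyperbolicity and compactness of $M$, eventually mapped strictly inside $C^\sigma_{\epsilon/2}$ with expansion rate exceeding $2$ after finitely many iterates; take $N_0$ to be this number, so $\gamma_0^{N_0} \in S$.

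The substantive parts are (4) and (5). For (4), fix $N \geq N_0$ large. The idea is that conjugating $\alpha(\eta)$ by the strongly hyperbolic map $\alpha(\gamma_0^N)$ on both sides will, for "most" $\eta$, force the composite $\alpha(\gamma_0^N\eta\gamma_0^N)$ to preserve the cones and expand. Concretely, $D\alpha(\gamma_0^N\eta\gamma_0^N) = D\alpha(\gamma_0^N) \circ D\alpha(\eta) \circ D\alpha(\gamma_0^N)$ evaluated along the appropriate orbit. The outer factors $D\alpha(\gamma_0^N)$ crush everything into an arbitrarily thin cone around $E^u_{\gamma_0}$ with arbitrarily large expansion (as $N\to\infty$), provided the middle factor $D\alpha(\eta)$ does not send the relevant cone entirely into $E^s_{\gamma_0}$ — i.e.\ provided $D\alpha(\eta)$ does not collapse the $E^u_{\gamma_0}$-direction. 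The set of $\eta$ for which, at every point $x$, $D_x\alpha(\eta)$ maps $E^u_{\gamma_0}$-vectors to vectors with nonzero $E^u_{\gamma_0}$-component is the complement of a proper analytic (hence Zariski-closed, after passing to the algebraic hull) condition on $\rho(\eta)$: indeed, since $h$ conjugates $\alpha$ to $\rho$ and $E^\sigma_{\gamma_0}$ pushes forward to the algebraic stable/unstable subspaces of $D\rho(\gamma_0)$, the condition becomes a non-vanishing of a determinant-type polynomial in the matrix entries of $D\rho(\eta)$, cutting out a Zariski-open $W \subset G$. The care needed here is uniformity in $x$: one must choose $N$ depending on $\eta$ (allowed by the statement) but uniformly over $M$, which follows from compactness of $M$ together with continuity of the derivative cocycle and of the bundles $E^\sigma_{\gamma_0}$ — this uniformity argument, matching cone-widths and expansion constants across the whole manifold, is the main obstacle I expect.

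Finally, part (5): by part (4), $S$ contains $\gamma_0^N\eta\gamma_0^N$ for every $\eta$ in the nonempty Zariski-open set $W$ (nonempty because the identity lies in $W$, or at least because $W$ is a nonempty Zariski-open subset of the connected group $G$). By the Borel density theorem, the lattice $\Gamma$ is Zariski dense in $G$, so $\Gamma \cap W$ is Zariski dense in $G$ as well (a Zariski-dense subset meets every nonempty Zariski-open set in a Zariski-dense set, since $G$ is irreducible). The map $\eta \mapsto \gamma_0^N\eta\gamma_0^N$ — if $N$ could be taken uniform — would be an isomorphism of varieties, so its image would be Zariski dense; since $N$ varies with $\eta$, decompose $\Gamma\cap W = \bigcup_{N} W_N$ where $W_N = \{\eta : \gamma_0^N\eta\gamma_0^N \in S\}$, a countable union, so some $W_N$ is Zariski dense in $G$ (a countable union of Zariski-closed proper subsets cannot cover the Zariski closure of $\Gamma\cap W$, which is all of $G$; hence some $\overline{W_N}^{\,\mathrm{Zar}} = G$). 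Then $\gamma_0^N W_N \gamma_0^N \subset S$ has Zariski closure $\gamma_0^N G \gamma_0^N = G$, so $S$ is Zariski dense. This completes the proof.
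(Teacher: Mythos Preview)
Your treatment of (1)--(3) is fine and matches the paper. The real problems are in (4) and (5).

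For (4), your argument rests on the claim that ``$E^\sigma_{\gamma_0}$ pushes forward to the algebraic stable/unstable subspaces of $D\rho(\gamma_0)$'' via $h$, so that the transversality condition on $D_x\alpha(\eta)$ becomes a polynomial condition on $D\rho(\eta)$. This is the gap. The conjugacy $h$ is only a bi-H\"older homeomorphism, not a diffeomorphism (indeed, proving it is a diffeomorphism is the eventual goal of Theorem~\ref{SmoothThm}), so it does not relate the derivative cocycle $D\alpha$ to $D\rho$ and cannot push forward tangent bundles. The bundles $E^\sigma_{\gamma_0}(x)$ depend only H\"older-continuously on $x$ and carry no a priori algebraic structure; likewise the condition ``$D_x\alpha(\eta)E^u_{\gamma_0}(x)$ is transverse to $E^s_{\gamma_0}(\alpha(\eta)(x))$ for every $x$'' involves the derivative of the \emph{nonlinear} action and is not visibly a condition on $\rho(\eta)$ at all. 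The paper fills exactly this gap in Section~\ref{Sec:llppookjjjd}: for each ergodic $\alpha$-invariant measure $\mu$ one applies Zimmer's cocycle superrigidity (Theorem~\ref{CocycleSuperrigidity}) to write $D_x\alpha(\gamma)=C^\mu(\alpha(\gamma)x)D^\mu(\gamma)K^\mu(\gamma,x)C^\mu(x)^{-1}$ with $D^\mu$ a genuine representation of $G$, then uses a Grassmannian/stabilizer argument (Lemmas~\ref{orbitopen}--\ref{imageofbundle}) together with Ratner's theorem (via Proposition~\ref{prop:Ratner} and Lemma~\ref{lemma:measureshomo}) to cover $M$ by open sets on which the bundles are controlled by $D^\mu$. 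Only then can $W$ be defined by the algebraic transversality condition of Lemma~\ref{transversality}, and Proposition~\ref{abstractanosov} gives the cone estimates. None of this machinery can be bypassed by invoking $h$.

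Your argument for (5) also fails as written. You decompose the countable set $\Gamma\cap W=\bigcup_N W_N$ and assert that some $\overline{W_N}^{\,\mathrm{Zar}}$ must equal $G$ because a countable union of proper Zariski-closed sets cannot cover $G$. But you only know the union covers $\Gamma\cap W$, which is itself countable; nothing prevents each $W_N$ from being finite. The paper's argument is cleaner and avoids this entirely: the Zariski closure $\bar S$ of a semigroup is a \emph{group}, and since $\gamma_0\in\bar S$ (after taking $N_0=1$) we get $\gamma_0^{-N}\in\bar S$, hence $\eta=\gamma_0^{-N}(\gamma_0^N\eta\gamma_0^N)\gamma_0^{-N}\in\bar S$ for every $\eta\in\Gamma\cap W$. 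Thus $\Gamma\cap W\subset\bar S$, and Borel density finishes it.
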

 In what follows, we will always assume that $N_0=1$ by choosing our Anosov element as $\alpha(\gamma_0^{N_0})$ if necessary.

\begin{proof}[Proof of Proposition \ref{semigroup}]
We postpone the proof of item (\ref{zariskidensesemigroup}).  % until the next section.  % and proof the rest assuming this is true.

That $S$ is a semigroup is clear from definition.  That $\gamma_0^{N_0}\in S$ is straightforward by choosing $N_0>0$ large enough. (\ref{anosovsemigroup}) follows from standard cone estimates (see \cite[Section 6.4]{MR1326374}). We assume $N_0=1$ for the remainder.

Let $\bar S$ be the Zariski closure of $S$.  Then $\bar S$ is a group. By (\ref{zariskidensesemigroup}),  for every $\eta\in \Gamma\cap W$ there is an $N$ such that $\gamma_0^N\eta\gamma_0^N\in S\subset \bar S$.  As $\gamma_0\in S\subset \bar S$ and as $\bar S$ is a group, it follows that  $\eta\in \bar S$.  Hence $\Gamma\cap W\subset \bar S$.
Since $W$ is Zariski open in $G$ and $\Gamma$ is Zariski dense  in $G$ by Borel density theorem,
$\Gamma\cap W\subset \bar S$ is Zariski dense in $G$. Since  $\bar S$ is Zariski closed,  $\bar S=G$ and thus $S$ is Zariski dense.
\end{proof}

\def\Gl{\GL}

\subsection{Proof of item (\ref{zariskidensesemigroup}) of Proposition \ref{semigroup}}\label{Sec:llppookjjjd}
In this section, we follow and substantially extend the main argument in \cite{MR1367273}.
Recall that the derivative of the action $\alpha \colon \Gamma \to \diff(M)$ induces a linear cocycle $D_x\alpha(\gamma)$ over the action $\alpha$.    Recall also that, as $\alpha(\gamma_0)$ is Anosov for some $\gamma_0$, the semiconjugacy $h\colon M\to N/\Lambda$ is a homeomorphism.     The push-forward of the Haar measure on $N/\Lambda$ under $h^{-1}$  is then $\alpha$-invariant and, moreover, coincides with the measure of maximal entropy for $\alpha(\gamma_0)$.  By Theorem \ref{CocycleSuperrigidity}  (applied to the Jacobian-determinant cocycle) and Livsic's Theorem it follows that this measure is smooth.
Denote this smooth  $\alpha$-invariant  measure by $m$.
Note that, as the linear data associated to $\alpha(\gamma_0)$ is hyperbolic, the Haar measure on $N/\Lambda$ is ergodic whence $m$ is ergodic for the action $\alpha$.
Fix a trivialization of $TM=M\times \lien$ and an identification $\lien=\bR^d$.  %from the linear point of view.
Identify $\lien=\bR^d=V$  and $\GL(d,\bR)=\GL(V)=\GL(\lien)$ unless some confusion arises.
%Note that, as $\alpha$ preserves a smooth measure $m$, we view  $D_x\alpha(\gamma)$ as a $\SL(d, \R)$-valued cocycle.

By  Theorem \ref{CocycleSuperrigidity}, for each ergodic $\alpha$-invariant measure $\mu$, there are a measurable map $C^\mu\colon (M,\mu)\to \GL(d,\bR)$, a linear representation $D^\mu\colon G\to \GL(d,\bR)$, and a compact-group valued, measurable cocycle $K^\mu\colon \Gamma\times M \to SO(d)$ such that
\begin{equation}\label{triv}
D_x\alpha(\gamma)=C^\mu(\alpha(\gamma)(x))D^\mu(\gamma)K^\mu(\gamma,x)(C^\mu(x))^{-1}
\end{equation} %\marginnote{I see why $D$ is $\SL$ valued, why is $C$ SL valued?}
and
$K^\mu$ commutes with $D^\mu$:  for every $g\in G$, $\eta\in\Gamma$, and $x\in M$, $D^\mu(g)K^\mu(\eta,x)=K^\mu(\eta,x)D^\mu(g)$.
%\emph{When no confusion arises we shall drop the superscript $\mu$ on $D^\mu$, etc.}

%Indeed, there is a decomposition $V=\bigoplus_i V^\mu_i$ in such a way that each $V^\mu_i$ is a $D^\mu$-invariant subspace, with $p^\mu_i:V\to V^\mu_i$ the associated projections, and for every $i$, $V^\mu_i=U^\mu_i\otimes W^\mu_i$ and $D^\mu_i=D^\mu|V^\mu_i=\alpha^\mu_i\otimes id$ where $\alpha^\mu_i:G\to \SL(U^\mu_i)$ is an irreducible representation on $U_i$ and $id$ is the trivial representation on $W^\mu_i$ (i.e. $\dim(W^\mu_i)$ is the multiplicity with which the irreducible representation $\alpha^\mu_i$ appears in $D^\mu$), we shall denote $D^\mu=\bigoplus_i\alpha^\mu_i\otimes id$. We assume also that $\alpha^\mu_i$ is not conjugated to $\alpha^\mu_j$ if $i\neq j$.
%
%Finally, the cocycle $K^\mu$ is of the form $$K^\mu(\gamma,x)=\bigoplus_i id\otimes\kappa^\mu_i(\gamma,x)$$ where $\kappa^\mu_i:\Gamma\times M\to SO(W^\mu_i).$ is a compact valued cocycle.

Recall $\gamma_0$ is the distinguished element with $\alpha(\gamma_0)$ Anosov.  Fix an enumeration $\Gamma\sm\{\gamma_0\} = \{ \gamma_1, \gamma_2, \dots\}$.  For $j = 0, 1, 2, \dots$ let
$$\eta_j := \gamma_j\gamma_0\gamma_j^{-1}.$$
%Let $\langle\eta_1,\dots, \eta_r\rangle$ be a set of generators for $\Gamma$ and let $\gamma_i=\eta_i\gamma_0\eta_i^{-1}$.
Observe that $\alpha(\eta_j)$ is Anosov for every $j$.  For $x\in M$,
let $E^s_{\eta_j}(x)$ and $E^u_{\eta_j}(x)$ denote, respectively,  the stable or unstable bundle for $\alpha(\eta_j)$ at the point $x$.
For $\sigma=s,u$, let $\sigma=\dim E^\sigma_{\gamma_0}(x)$.
Note that $E^\sigma_{\eta_j}(x) = \left(D\alpha(\gamma_j)E^\sigma_{\gamma_0}\right)(x)$

For $g\in G$   let $E^{s,\mu}_g$ and    $E^{u,\mu}_g$ denote, respectively, the stable and unstable spaces of the linear map $D^\mu(g)$.  Note that $D^\mu(g)$ need not be hyperbolic whence the subspaces $E^{\sigma,\mu}_g$ may not be transverse.  However,  (as $K^\mu(\gamma,x)$ is compact-valued and commutes with $D^\mu$) for $\mu$-a.e. $x$, we have $$E^\sigma_{\eta_i}(x)=C^\mu(x)E^{\sigma,\mu}_{\eta_i}.$$

\newcommand{\bfS}{\mathbf{S}}

For $r= 0, 1, 2, \dots$ let $S^{r,\mu}\subset \Gl(V)$ be given by
	$$S^{r,\mu}= \bigcap_{\substack{i=1,\dots, r \\ \sigma=s,u}} \Stab(E^{\sigma,\mu}_{\eta_i})=\bigcap_{\substack{i=1,\dots, r \\ \sigma=s,u}}  \Stab(D^\mu(\gamma_i)E^{\sigma,\mu}_{\gamma_0}).$$
Note that each $S^{r,\mu} = \bfS^{r,\mu}(\R)$ where $\bfS^{r,\mu}$ is an algebraic group defined over $\R$.  Moreover, $\bfS^{r+1,\mu}\subset \bfS^{r,\mu}$ and   each $\bfS^{r,\mu}$ has finitely many components.
Counting dimension and connected components it follows there is a $r(\mu)$ so that
$ \bfS^{r,\mu} =  \bfS^{r(\mu),\mu} $ for all $r\ge r(\mu)$.
Let $\bfS^{\mu} = \bfS^{r(\mu),\mu} $ and $S^\mu= \bfS^{\mu} (\R).$
We then have
$$S^\mu = \bigcap_{\substack{\gamma\in \Gamma \\\sigma=s,u}} D^\mu (\gamma)\Stab(E^{\sigma,\mu}_{\gamma_0})D^\mu(\gamma)^{-1}.$$
It follows that $D^\mu(\Gamma)$ normalizes $S^\mu$ whence by Zariski density of $\Gamma$ in $G$,
$$S^\mu= \bigcap_{\substack{g\in G \\\sigma=s,u}} D^\mu(g)\Stab(E^{\sigma,\mu}_{\gamma_0})D^\mu(g)^{-1}.$$
%Given $\gamma\in \Gamma$ we have

For $\sigma=s,u$, denote by $Gr(V,\sigma)$ the Grassmannian of subspaces in $V$ of dimension $\dim E_{\gamma_0}^{\sigma,\mu}$. Let $$\Phi\colon \GL(V)\times\left( (Gr(V,s))^{r(\mu)}\times (Gr(V,u))^{r(\mu)}\right)\to (Gr(V,s))^{r(\mu)}\times (Gr(V,u))^{r(\mu)}$$ be the natural action.
As $\Phi$ is an algebraic action,
\begin{lemma}\label{orbitopen}
Let $E\in (Gr(V,s))^{r(\mu)}\times (Gr(V,u))^{r(\mu)}$. Then $\Orb_\Phi(E)$, the orbit of $E$ under $\Phi$, is open in its Zariski closure $\overline{\Orb_\Phi(E)}$.
\end{lemma}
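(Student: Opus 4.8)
The plan is to invoke the standard fact that an orbit of an algebraic group action is locally closed, i.e.\ open in its Zariski closure. Set $Y:=(Gr(V,s))^{r(\mu)}\times (Gr(V,u))^{r(\mu)}$, a projective variety over $\bR$ (a product of Grassmannians), and observe that $\Phi\colon \GL(V)\times Y\to Y$ is a morphism of varieties and that for each fixed $g\in\GL(V)$ the map $\Phi(g,\cdot)\colon Y\to Y$ is an isomorphism of varieties. Fix $E\in Y$ and consider the orbit morphism $\mu_E\colon \GL(V)\to Y$, $g\mapsto \Phi(g,E)$, whose image is exactly $\Orb_\Phi(E)$.

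First I would apply Chevalley's theorem on the image of a morphism of varieties: $\Orb_\Phi(E)=\mu_E(\GL(V))$ is a constructible subset of $Y$. Any constructible set contains a subset $U$ that is open and dense in its own Zariski closure; hence we obtain $U\subset \Orb_\Phi(E)$ with $U$ open and dense in $\overline{\Orb_\Phi(E)}$.

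Next I would upgrade this to openness of the whole orbit using homogeneity. For every $g\in\GL(V)$ the isomorphism $\Phi(g,\cdot)$ of $Y$ carries $\Orb_\Phi(E)$ onto itself and, commuting with the operation of taking Zariski closure, carries $\overline{\Orb_\Phi(E)}$ onto itself; consequently $\Phi(g,U)$ is again open in $\overline{\Orb_\Phi(E)}$ and is contained in $\Orb_\Phi(E)$. Fixing some $g_0$ with $\Phi(g_0,E)\in U$, any point $\Phi(h,E)$ of the orbit lies in $\Phi(hg_0^{-1},U)$, so that $\Orb_\Phi(E)=\bigcup_{g\in\GL(V)}\Phi(g,U)$ is a union of subsets open in $\overline{\Orb_\Phi(E)}$. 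Therefore $\Orb_\Phi(E)$ is open in $\overline{\Orb_\Phi(E)}$, as claimed.

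There is essentially no serious obstacle here: the statement is a special case of the general principle that orbit maps of algebraic actions have locally closed image (this is standard; see, e.g., Borel's book \textit{Linear Algebraic Groups}, I.1.8). The only points to keep straight are that the products of Grassmannians are genuine algebraic varieties on which $\GL(V)$ acts morphically, and that one works throughout in the Zariski topology (the corresponding statement would be false for, say, the real-analytic orbit).
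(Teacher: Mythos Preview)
Your argument is correct and matches the paper's approach: the paper does not give a proof at all, simply stating the lemma as an immediate consequence of the fact that $\Phi$ is an algebraic action (essentially the reference to Borel you cite). Your write-up just supplies the standard details behind that one-line justification.
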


%where
%\begin{eqnarray*}
%S^\mu&=&\bigcap_{i=1,\dots, r,\sigma=s,u} \Stab(E^{\sigma,\mu}_{\gamma_i})=\bigcap_{i=1,\dots, r,\sigma=s,u}  \Stab(D(\eta_i)E^\sigma_{\gamma_0})\\&=&\bigcap_{i=1,\dots, r,\sigma=s,u}  D(\eta_i)\Stab(E^\sigma_{\gamma_0})D(\eta_i)^{-1}\\
%&=&\bigcap_{\gamma\in \Gamma,\sigma=s,u} D(\gamma)\Stab(E^\sigma_{\gamma_0})D(\gamma)^{-1}\\
%&=&\bigcap_{g\in G,\sigma=s,u} D(g)\Stab(E^\sigma_{\gamma_0})D(g)^{-1}.
%\end{eqnarray*}

%Observation: This is what Zimmer calls a smooth action in his book. Since this $\Phi$ action is a boundary action, it is smooth.

Let  $\tau\colon M\to (Gr(V,s))^{r(\mu)}\times (Gr(V,u))^{r(\mu)}$ be defined by
		$$\tau(x)=\big((E^{s}_{\eta_i}(x))_i,(E^{u}_{\eta_i}(x) )_i\big)_{i = 0, \dots, r(\mu)}.$$
	%$$\tau(x)=\big((E^{s,\mu}_{\eta_i})_i,(E^{u,\mu}_{\eta_i})_i\big)_{i = 0, \dots, r(\mu)}.$$
As observed above, for $\mu$-a.e.\ $x$  $$\tau(x)=\Phi\left(C^\mu(x),\left((E^{s,\mu}_{\eta_i})_i,(E^{u,\mu}_{\eta_i})_i\right)\right).$$
Let $\Orb^\mu$ be the orbit under $\Phi$ of $\big((E^{s,\mu}_{\eta_i})_i,(E^{u,\mu}_{\eta_i})_i\big)_{i = 0, \dots, r(\mu)}$.  $\Phi$ induces a smooth parameterization $$\phi^\mu\colon \GL(V)/S^\mu\to \Orb^\mu.$$

Since $\tau$ is continuous, we have that $\tau(\supp(\mu))$ is compact. By the   Lemma \ref{orbitopen}, for a $\mu$-.a.e.\ $x$, the orbit  $\Orb_\Phi(\tau(x))=\Orb^\mu$ is open  in $\overline{\Orb^\mu}\supset \tau(\supp(\mu))$. Hence   $$U^\mu:=\tau^{-1}(\Orb^\mu),$$ is (relatively) open and dense in $\supp(\mu)$. Moreover $\mu(U^\mu)=1$.
Via the parametrization $\phi^\mu$, we have  a continuous map $\hat C^\mu\colon U^\mu\to \GL(V)/S^\mu$ given by  $$\hat C^\mu(x):=(\phi^\mu)^{-1}\circ\tau(x).$$ Moreover,  $C^\mu(x)S^\mu=\hat C^\mu(x)$ for $\mu$-a.e.\ $x$.

\begin{lemma}\label{imageofbundle}
Let $\mu$ be an  ergodic $\alpha$-invariant measure.  The set $U^\mu$ is $\alpha$-invariant and
for every $\gamma\in\Gamma$, % and  %every ergodic $\alpha$-invariant measure $\mu$,
  $x\in U^{\mu}$, and  $\sigma=s,u$ we have %that $E^\sigma_{\gamma_0}(x) = \hat C^\mu(x) E^{\sigma,\mu}_{\gamma_0}$ and
$$D_x\alpha(\gamma)E^\sigma_{\gamma_0}(x)=D_x\alpha(\gamma)\hat C^\mu(x)E^{\sigma,\mu}_{\gamma_0}=\hat C^{\mu}(\alpha(\gamma)(x))D^{\mu}(\gamma)E^{\sigma,\mu}_{\gamma_0}.$$
\end{lemma}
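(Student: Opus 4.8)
The plan is to prove Lemma \ref{imageofbundle} by first establishing the $\alpha$-invariance of $U^\mu$ and then proving the displayed chain of equalities by a density argument, using the continuity of the relevant maps to upgrade an almost-everywhere statement to an everywhere statement on $U^\mu$.

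\textbf{Step 1: $\alpha$-invariance of $U^\mu$.} Recall $U^\mu = \tau^{-1}(\Orb^\mu)$ where $\tau(x) = \big((E^s_{\eta_i}(x))_i, (E^u_{\eta_i}(x))_i\big)$ and $\Orb^\mu$ is a single $\Phi$-orbit. I would use the fact that the stable/unstable bundles of the Anosov diffeomorphisms $\alpha(\eta_i)$ transform equivariantly under the derivative cocycle: for any $\gamma \in \Gamma$, $D_x\alpha(\gamma)$ maps $E^\sigma_{\eta_i}(x)$ to the corresponding distribution of the conjugated diffeomorphism $\alpha(\gamma)\alpha(\eta_i)\alpha(\gamma)^{-1} = \alpha(\gamma\eta_i\gamma^{-1})$ at $\alpha(\gamma)(x)$. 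Since conjugation permutes the relevant distributions (and, after intersecting over all $\eta_i$, one gets the stabilizer $S^\mu$ which is $D^\mu(\Gamma)$-normalized), the tuple $\tau(\alpha(\gamma)(x))$ lies in the same $\Phi$-orbit as $\tau(x)$; more precisely $\tau(\alpha(\gamma)(x)) = \Phi(D_x\alpha(\gamma), \tau(x))$, so $\Orb_\Phi(\tau(\alpha(\gamma)(x))) = \Orb_\Phi(\tau(x)) = \Orb^\mu$. Hence $\alpha(\gamma)(U^\mu) = U^\mu$. I will need to be a little careful that the full tuple (over all $i = 0,\dots,r(\mu)$, not just a single $\eta_i$) transforms correctly, but this is exactly the bookkeeping built into the definition of $S^{r,\mu}$ and its stabilization.

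\textbf{Step 2: the equality of bundles.} The first equality $D_x\alpha(\gamma)E^\sigma_{\gamma_0}(x) = D_x\alpha(\gamma)\hat C^\mu(x)E^{\sigma,\mu}_{\gamma_0}$ is just the identity $E^\sigma_{\gamma_0}(x) = \hat C^\mu(x) E^{\sigma,\mu}_{\gamma_0}$ (the $i=0$ component of the parametrization $\phi^\mu$), valid for \emph{every} $x \in U^\mu$ by the definition of $\hat C^\mu = (\phi^\mu)^{-1}\circ\tau$ and the definition of $\tau$. For the second equality, I would start from the cocycle identity \eqref{triv}: for $\mu$-a.e.\ $x$, $D_x\alpha(\gamma) = C^\mu(\alpha(\gamma)(x)) D^\mu(\gamma) K^\mu(\gamma,x) (C^\mu(x))^{-1}$. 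Apply both sides to $E^\sigma_{\gamma_0}(x) = C^\mu(x)E^{\sigma,\mu}_{\gamma_0}$; since $K^\mu(\gamma,x)$ is $SO(d)$-valued and commutes with $D^\mu$, and $E^{\sigma,\mu}_{\gamma_0}$ is the stable/unstable space of $D^\mu(\gamma_0)$ — which is invariant under anything commuting with $D^\mu(\gamma_0)$, in particular under $K^\mu(\gamma,x)$ — we get $K^\mu(\gamma,x)E^{\sigma,\mu}_{\gamma_0} = E^{\sigma,\mu}_{\gamma_0}$. Wait: this requires that $K^\mu(\gamma,x)$ preserves $E^{\sigma,\mu}_{\gamma_0}$; since $K^\mu$ commutes with all of $D^\mu(G)$, in particular with $D^\mu(\gamma_0)$, and $E^{\sigma,\mu}_{\gamma_0}$ is a spectral subspace of $D^\mu(\gamma_0)$, this holds. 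Therefore $D_x\alpha(\gamma)E^\sigma_{\gamma_0}(x) = C^\mu(\alpha(\gamma)(x))D^\mu(\gamma)E^{\sigma,\mu}_{\gamma_0}$ for $\mu$-a.e.\ $x$, and since $C^\mu(\alpha(\gamma)(x))S^\mu = \hat C^\mu(\alpha(\gamma)(x))$ for $\mu$-a.e.\ $x$, and $D^\mu(\gamma)E^{\sigma,\mu}_{\gamma_0}$ is $S^\mu$-invariant by construction of $S^\mu$, we get the desired equality $\mu$-a.e.

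\textbf{Step 3: upgrade from a.e.\ to everywhere on $U^\mu$.} Both sides of the claimed identity, viewed as maps $U^\mu \to Gr(V,\sigma)$, are continuous: the left side because $x \mapsto E^\sigma_{\gamma_0}(x)$ and $x \mapsto D_x\alpha(\gamma)$ are continuous (the latter since $\alpha(\gamma)$ is $C^\infty$, or at least $C^1$, and $E^\sigma_{\gamma_0}$ is a continuous Anosov distribution), and the right side because $\hat C^\mu$ is continuous on $U^\mu$ (established in the paragraph preceding the lemma), $\alpha(\gamma)$ is a homeomorphism preserving $U^\mu$ by Step 1, and $D^\mu(\gamma)$ is a fixed linear map. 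Since the two continuous maps agree on a set of full $\mu$-measure inside $U^\mu$, and $U^\mu$ is relatively open and dense in $\supp(\mu)$ with $\mu(U^\mu) = 1$, they agree on the closure of a full-measure subset, hence on all of $\supp(\mu) \cap U^\mu = U^\mu$. This gives the equality for every $x \in U^\mu$.

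\textbf{Main obstacle.} The step I expect to require the most care is Step 1 — verifying that $U^\mu$ is genuinely $\alpha$-invariant, i.e.\ that $\tau(\alpha(\gamma)(x))$ lands in the \emph{same} orbit $\Orb^\mu$ rather than merely some $\Phi$-orbit of the same dimension. This hinges on the fact that $S^\mu$ (the stabilizer obtained after stabilization of the chain $\bfS^{r,\mu}$) is normalized by $D^\mu(\Gamma)$ — which is precisely why the enumeration $\{\eta_j\}$ runs over conjugates $\gamma_j\gamma_0\gamma_j^{-1}$ of the single element $\gamma_0$, and why the intersection defining $S^\mu$ stabilizes. I will make sure to invoke the identity $S^\mu = \bigcap_{\gamma \in \Gamma,\,\sigma} D^\mu(\gamma)\Stab(E^{\sigma,\mu}_{\gamma_0})D^\mu(\gamma)^{-1}$ explicitly and track how $\tau$ transforms under the cocycle. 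Everything else is routine continuity and the commutation properties of the compact correction $K^\mu$ supplied by Theorem \ref{CocycleSuperrigidity}.
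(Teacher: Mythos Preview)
Your overall strategy---establish the identities $\mu$-almost everywhere from the cocycle superrigidity decomposition, then upgrade to all of $U^\mu$ by continuity and density---matches the paper's, and Steps 2 and 3 are essentially correct. The paper phrases Step 2 slightly differently, routing through the identity $D_x\alpha(\gamma)E^\sigma_{\gamma_0}(x)=E^\sigma_{\gamma\gamma_0\gamma^{-1}}(\alpha(\gamma)(x))$ together with $D^\mu(\gamma)E^{\sigma,\mu}_{\gamma_0}=E^{\sigma,\mu}_{\gamma\gamma_0\gamma^{-1}}$ and the $S^\mu$-invariance of the latter, but your use of the cocycle identity \eqref{triv} and the fact that $K^\mu(\gamma,x)$ preserves the spectral subspaces $E^{\sigma,\mu}_{\gamma_0}$ gives the same conclusion.

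There is, however, a genuine gap in Step 1. The formula you write, $\tau(\alpha(\gamma)(x))=\Phi(D_x\alpha(\gamma),\tau(x))$, is false in general: applying $D_x\alpha(\gamma)$ to the $i$-th component $E^\sigma_{\eta_i}(x)$ yields $E^\sigma_{\gamma\eta_i\gamma^{-1}}(\alpha(\gamma)(x))$, not $E^\sigma_{\eta_i}(\alpha(\gamma)(x))$. So $\Phi(D_x\alpha(\gamma),\tau(x))$ is a \emph{reindexed} tuple, not $\tau(\alpha(\gamma)(x))$, and knowing that the reindexed tuple lies in $\Orb^\mu$ (trivially, since $\Phi$ acts) tells you nothing directly about $\tau(\alpha(\gamma)(x))$. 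The stabilization of $S^{r,\mu}$ and its $D^\mu(\Gamma)$-normalization do not by themselves repair this, because $\tau$ only records indices $0,\dots,r(\mu)$ while conjugation by $\gamma$ can send these to arbitrary indices. The paper closes this gap by working $\mu$-a.e.\ first: for $\mu$-a.e.\ $x$ one has $\tau(\alpha(\gamma)(x))=\Phi(C^\mu(\alpha(\gamma)(x)),E_0)\in\Orb^\mu$ directly from $E^\sigma_{\eta_i}(y)=C^\mu(y)E^{\sigma,\mu}_{\eta_i}$; it then exhibits a \emph{continuous} map on $U^\mu$ with values in $\Orb^\mu$ that agrees $\mu$-a.e.\ with $\tau\circ\alpha(\gamma)$, and invokes density of any full-$\mu$-measure set in $U^\mu\subset\supp(\mu)$. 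Your ``Main obstacle'' paragraph correctly senses the difficulty, but the fix is not bookkeeping with $S^\mu$---it is the same a.e.-then-continuity maneuver you already use in Step 3, applied one level earlier.
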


%Note that as  $U^\mu$ is $\Gamma$-invariant, for $x\in U^{\mu}$ the expression  $\hat C^{\mu}(\alpha(\gamma)(x))D^{\mu}(\gamma)E^{\sigma,\mu}_{\gamma_0}$ makes sense.
\begin{proof}[Proof of Lemma \ref{imageofbundle}]
For $\mu$-a.e.\ $x\in U^\mu$ we have that $\tau(\alpha(\gamma)(x)) = \Phi\left(D^\mu(\gamma), \tau(x)\right)$ whence, for such $x$,
	\begin{equation}\label{KKKLLLLDDDD}\alpha(\gamma)(x) \in \tau\inv\left(\Phi\left(D^\mu(\gamma), \tau(x)\right)\right).\end{equation}
Since $\tau\circ \alpha(\gamma)$ and $x\mapsto  \Phi\left(D^\mu(\gamma), \tau(x)\right)$ are continuous and since $U^\mu$ is open and dense in $\supp(\mu)$,  \eqref{KKKLLLLDDDD} holds for all $x\in U^\mu$.  It follows that $\alpha(\gamma)(x) \in U^\mu$ for all $x\in U^\mu$.

For any $x\in M$ and $\gamma\in \Gamma$ we have  that $$D_x\alpha(\gamma)E^\sigma_{\gamma_0}(x)=E^{\sigma}_{\gamma\gamma_0\gamma^{-1}}(\alpha(\gamma)(x)).$$  Also $$D^\mu(\gamma)E^{\sigma,\mu}_{\gamma_0}=E^{\sigma,\mu}_{\gamma\gamma_0\gamma^{-1}}.$$  Recall that  $S^\mu$   stabilizes each of the spaces $E^{\sigma,\mu}_{\gamma\gamma_0\gamma^{-1}}$ and that there is a measurable $C^\mu(x)$ with
	$$C^\mu(x) E^{\sigma,\mu}_{\gamma\gamma_0\gamma^{-1}} = E^{\sigma}_{\gamma\gamma_0\gamma^{-1}}(x)$$
for $\mu$-a.e.\ $x$.  As the  function $\hat C^\mu(x)$ and the bundles $D_x\alpha(\gamma)E^\sigma_{\gamma_0}(x)$ are continuous on $U^{\mu}$, the result follows.
%
%and use the measurable $C^\mu(x)$for the matching.
\end{proof}

%Moreover $U^\mu$ is $\Gamma$-invariant.

 %Moreover,
% For $\mu$-a.e. $x$, we have $$\hat C(x):=(\phi^\mu)^{-1}\circ\tau(x) =C(x)S^\mu.$$

We summarize the above with the following lemma.

\begin{lemma}\label{lemma:measureshomo}
There are countably many ergodic, $\alpha$-invariant probability measures $\mu_i$, and relatively-open, relatively-dense, $\alpha$-invariant sets $U_i\subset \supp(\mu_i)$,  and   continuous maps $\hat C^{\mu_i}\colon U_i\to \GL(V)/S^{\mu_i}$
such that
\begin{enumerate}
	\item $M$ is the    union $M = \bigcup_{i= 0}^\infty U_i$;
	\item $C^{\mu_i}(x)S^{\mu_i}=\hat C^{\mu_i}(x)$ for $\mu_i$-a.e.\ $x\in U_i$.
\end{enumerate}
%a relatively-open,
%For every $x\in M$ there is an ergodic, $\alpha$-invariant probability measure $\mu_x$, a relatively-open, relatively-dense, $\alpha$-invariant set $U_x\subset \supp(\mu_x)$
%and a continuous maps $\hat C^{\mu_x}\colon U_x\to \SL(V)/S^{\mu_x}$ such that $C^{\mu_x}(x)S^{\mu_x}=\hat C^{\mu_x}(x)$ for $\mu_i$-a.e.\ $x$.
%
%
\end{lemma}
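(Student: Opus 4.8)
The maps $\hat C^{\mu}$, the sets $U^{\mu}$, and property (2) have already been produced above for an \emph{arbitrary} ergodic $\alpha$-invariant probability measure $\mu$: recall that $U^{\mu}=\tau^{-1}(\Orb^{\mu})$ is relatively open, dense, and $\alpha$-invariant in $\supp\mu$ (Lemma \ref{imageofbundle}), has full $\mu$-measure, and carries the continuous map $\hat C^{\mu}\colon U^\mu\to\GL(V)/S^{\mu}$ with $C^{\mu}(x)S^{\mu}=\hat C^{\mu}(x)$ for $\mu$-a.e.\ $x$. Thus the only content left in the lemma is the covering statement (1). The plan is to attach to each $\alpha(\Gamma)$-orbit closure in $M$ a single ergodic measure whose support is exactly that orbit closure, to note that the corresponding set $U^{\mu}$ then already contains every point with a dense orbit in that orbit closure, and finally to show that there are only countably many orbit closures.

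The elementary fact driving this is the following: if $\mu$ is an ergodic $\alpha$-invariant measure and $y\in\supp\mu$ satisfies $\overline{\alpha(\Gamma)y}=\supp\mu$, then $y\in U^{\mu}$. Indeed, $\supp\mu\setminus U^{\mu}$ is a closed, $\alpha(\Gamma)$-invariant set; if it contained $y$ it would contain $\overline{\alpha(\Gamma)y}=\supp\mu$, forcing $U^{\mu}=\emptyset$ and contradicting $\mu(U^{\mu})=1$. To manufacture such measures, I use that $\alpha(\gamma_0)$ is Anosov, so $D\rho(\gamma_0)$ is hyperbolic and (Manning's theorem, as in Section \ref{sec:SmoothReduction}) the semiconjugacy $h\colon M\to N/\Lambda$ is a homeomorphism with $h\circ\alpha(\gamma)=\rho(\gamma)\circ h$. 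Given $y\in M$, Proposition \ref{prop:Ratner} applied to $h(y)$ gives an ergodic $\rho$-invariant probability measure with support $\overline{\rho(\Gamma)h(y)}$; its pushforward by $h^{-1}$ is an ergodic $\alpha$-invariant measure $\nu_y$ with $\supp\nu_y=h^{-1}\big(\overline{\rho(\Gamma)h(y)}\big)=\overline{\alpha(\Gamma)y}$. Hence every orbit closure is the support of an ergodic $\alpha$-invariant measure, and by the fact above $U^{\nu_y}$ contains every point whose $\alpha(\Gamma)$-orbit is dense in $\overline{\alpha(\Gamma)y}$, in particular $y$ itself.

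It remains to bound the number of orbit closures. By Proposition \ref{prop:Ratner}, transported through $h$, each orbit closure is a finite union of homogeneous sub-nilmanifolds of $N/\Lambda$ of one common dimension; their common ``direction'' is a rational, $\rho(\Gamma)$-invariant connected normal subgroup $N'\lhd N$, and the finitely many pieces project in the quotient nilmanifold $N/N'\Lambda$ to a finite $\rho(\Gamma)$-orbit, hence to a periodic point of the hyperbolic nilmanifold automorphism induced by $\rho(\gamma_0)$ on $N/N'\Lambda$. Since there are only countably many rational Lie subalgebras of $\lien$ (cf.\ Section \ref{sec:StructureNil}), and an Anosov automorphism of a compact nilmanifold has only countably many periodic points, the orbit closures form a countable family $Z_1,Z_2,\dots$. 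Choosing $y_j$ with $\overline{\alpha(\Gamma)y_j}=Z_j$ and putting $\mu_j:=\nu_{y_j}$, every $y\in M$ has $\overline{\alpha(\Gamma)y}=Z_j$ for some $j$ and therefore lies in $U^{\mu_j}$; thus $M=\bigcup_j U^{\mu_j}$, which with property (2) for each $\mu_j$ gives the lemma. (Note $M$ itself is an orbit closure, since $\rho(\gamma_0)$ and hence $\alpha(\gamma_0)$ is topologically transitive, so $m=h^{-1}_*(\text{Haar})$ may be taken as $\mu_0$.)

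The only genuinely new ingredient is the countability of the set of orbit closures; everything else is bookkeeping around the homogeneity and hyperbolicity statements in Proposition \ref{prop:Ratner} and Manning's theorem. The points requiring care are that the common direction of an orbit closure is a \emph{rational} $\rho(\Gamma)$-invariant subgroup, and that the associated base configuration is a periodic point of a hyperbolic automorphism; both follow from the structure theory of compact nilmanifolds recalled in Section \ref{sec:StructureNil} together with the finiteness of the periodic points of an Anosov diffeomorphism of each period. I expect verifying these rationality/periodicity claims cleanly — rather than the rest of the argument — to be the main obstacle.
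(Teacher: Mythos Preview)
Your argument is correct and is a genuinely different, somewhat cleaner route than the paper's. The paper proceeds by \emph{dimension induction}: it starts from the smooth measure $m$ with full support, passes to the complement $M\setminus U^{m}$, uses Proposition~\ref{prop:Ratner} to write $h(M\setminus U^{m})$ as a countable union of $\rho$-invariant homogeneous sub-nilmanifolds of strictly smaller dimension (density of $U^{m}$ is used here), attaches to each its ergodic measure, and repeats; termination is automatic after at most $d$ steps. You instead observe once and for all that for any ergodic $\mu$, the closed $\alpha$-invariant set $\supp\mu\setminus U^{\mu}$ is proper, so any $y$ with dense orbit in $\supp\mu$ already lies in $U^{\mu}$; since every $y$ has dense orbit in its own orbit closure and (via $h$ and Proposition~\ref{prop:Ratner}) that orbit closure is the support of some ergodic $\alpha$-invariant measure, you cover $M$ in one stroke by the sets $U^{\mu}$ indexed by the distinct orbit closures.

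The trade-off is exactly where you locate it. The paper's recursion sidesteps a global enumeration of orbit closures: at each stage it only needs that the $\rho$-orbit closures appearing in a given closed invariant set are countably many, which it asserts from hyperbolicity of $\rho(\gamma_0)$ without further detail. Your approach needs the same countability but stated globally, and you supply a concrete mechanism (rational connected $\rho(\Gamma)$-invariant subgroup for the ``direction'' plus a finite $\rho(\Gamma)$-orbit, hence a $\rho(\gamma_0)$-periodic point, in the quotient nilmanifold). That mechanism is correct in outline; making the rationality precise is the only place requiring care, as you note. In exchange, your argument does not use density of $U^{\mu}$ in $\supp\mu$ (only $\mu(U^{\mu})=1$, hence $U^{\mu}\neq\emptyset$), and it avoids the bookkeeping of the recursive decomposition.
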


%Summarizing the above,
% \begin{lemma}
%There are finitely many ergodic, $\alpha$-invariant measures $\mu_i$, $i=0,\dots, l$ with $\mu_0=m$ and corresponding relatively open, relatively dense, $\Gamma$-invariant sets $U^{\mu_i}\subset \supp(\mu_i)$
% such that $M$ is the disjoint union $$M=\bigcup_{i=0}^l U^{\mu_i}.$$  \marginnote{why disjoint}
%	%Moreover, $U^{\mu_i}\cap U^{\mu_j}=\emptyset$ if $i\neq j$.
%For each $i$, we have continuous maps $\hat C^{\mu_i}\colon U^{\mu_i}\to \SL(V)/S^{\mu_i}$ such that $C^{\mu_i}(x)S^{\mu_i}=\hat C^{\mu_i}(x)$ for $\mu_i$-a.e.\ $x$.
%\end{lemma}

\begin{proof}
We start with the smooth measure $\mu_0=m$ and the corresponding open set $U_0 = U^m$ as constructed above.  The image of $U_0$ under the conjugacy $h$, is a $\rho$-invariant, open dense subset of the nilmanifold $M$.  It follows that the complement of $h(U_0)$ coincides with the boundary of $h(U_0)$ and is a closed, $\rho$-invariant set.  In particular, the complement of $h(U_0)$ is saturated by orbit closures.  From Proposition \ref{prop:Ratner} and using that $U_0$ is dense in $M$,
 the complement of $h(U_0)$ is a   countable union
	$$N/\Lambda \sm h(U_0)= \bigcup   V_i$$ where each $V_i$ is a finite union of $\rho$-invariant sub-nilmanifolds of $N/\Lambda$ of  dimension at most $d-1$  where $d = \dim N$.  Moreover, each $V_i$ coincides with the support of an ergodic,  $\rho$-invariant   $\nu^{V_i}$.  We note (as $\rho(\gamma)$ is hyperbolic for some $\gamma$), that there are a countable number of such $V_i$.

Let $\mu^{V_i}:= (h^{-1})_* \nu^{V_i}$.  For each $\mu^{V_i}$ we may repeat the above procedure  and obtain sets $U_{V_i}$ such that $h(U_{V_i})$ is open and dense in $V_i$.  As $U_{V_i}$ is $\alpha$-invariant and $V_i$ is the finite union of submanifolds of dimension at most $d-1$, it follows that $V_i \sm h(U_{V_i}) $ is a countable union  $$V_i \sm h(U_{V_i}) = \bigcup W_j$$ where each $W_j$ is a finite union of $\rho$-invariant sub-nilmanifolds of $N/\Lambda$ of  dimension at most $d-2$.

Proceeding recursively, we define a countable collection of ergodic $\alpha$-invariant measures $\mu_i$ with corresponding sets $U_i$.  That every $x \in M$ is contained in a $U_i$ follows as the dimension of the complement decreases at each step of recursion.   %That the sets $U_i$ are disjoint follows are they are relatively open and relatively dense subsets of $\supp(\mu_i)$ with  full measure, and (as each $\mu_i$ is ergodic) the measures $\mu_i$ are pairwise disjoint.
\end{proof}

The set $W$ appearing in   (\ref{zariskidensesemigroup}) of Proposition \ref{semigroup} is defined as the set $W$ in the following lemma.

\begin{lemma}\label{transversality}
Let $\mu_i$ be the measures in
Lemma \ref{lemma:measureshomo}.
Let $W$ be the set of all $g\in G$ such that $D^{\mu_i}(g)E^{s,\mu_i}_{\gamma_0}$ is transverse to  $E^{u,\mu_i}_{\gamma_0}$  and $D^{\mu_i}(g)E^{u,\mu_i}_{\gamma_0}$ is transverse to   $E^{s,\mu_i}_{\gamma_0}$ for every $i$.
Then $W$ is a non-empty Zariski open set in $G$.
\end{lemma}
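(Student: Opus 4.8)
The plan is to show first that each of the (countably many) individual conditions cutting out $W$ defines a non‑empty Zariski open subset of $G$ containing $\gamma_0$, and then to argue that all these conditions in fact coincide, up to a constant conjugation, with a single Zariski open condition, so that $W$ is literally a non‑empty Zariski open set.

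First I would record that $D^{\mu_i}(\gamma_0)$ is a hyperbolic matrix for every $i$. Indeed $\alpha(\gamma_0)$ is Anosov, so every ergodic $\alpha$‑invariant measure, in particular each $\mu_i$, has all Lyapunov exponents bounded away from $0$; combining this with \eqref{triv}, the temperedness of $C^{\mu_i}$, and the fact that $K^{\mu_i}$ is $\SO(d)$‑valued and commutes with $D^{\mu_i}$, these exponents are exactly the logarithms of the moduli of the eigenvalues of $D^{\mu_i}(\gamma_0)$, so none of those moduli equals $1$. Hence $V=E^{s,\mu_i}_{\gamma_0}\oplus E^{u,\mu_i}_{\gamma_0}$, and in particular the two subspaces have complementary dimension. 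Choosing bases of $E^{s,\mu_i}_{\gamma_0}$ and $E^{u,\mu_i}_{\gamma_0}$, the condition ``$D^{\mu_i}(g)E^{s,\mu_i}_{\gamma_0}$ is transverse to $E^{u,\mu_i}_{\gamma_0}$'' becomes the non‑vanishing of a single determinant in the matrix entries of $D^{\mu_i}(g)$; since $D^{\mu_i}\colon G\to\GL(d,\bR)$ is the restriction to $G$ of a rational representation of $\bfG$, this determinant is a regular function on $G$, so the condition defines a Zariski open subset of $G$, and likewise for the companion condition with $s$ and $u$ interchanged. Each of these open sets contains $\gamma_0$, because $D^{\mu_i}(\gamma_0)E^{s,\mu_i}_{\gamma_0}=E^{s,\mu_i}_{\gamma_0}$ is transverse to $E^{u,\mu_i}_{\gamma_0}$ by the splitting above.

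The point that needs care is that $W$ is a \emph{countable} intersection of these open sets, which in general need not be open. To get around this I would show that for every $i$ the representation $D^{\mu_i}$ is conjugate, by a single matrix $Q_i\in\GL(d,\bR)$, to the linear data $D\rho$, with $Q_i$ carrying the stable (resp.\ unstable) subspace $E^{s}$ (resp.\ $E^{u}$) of the hyperbolic matrix $D\rho(\gamma_0)$ onto $E^{s,\mu_i}_{\gamma_0}$ (resp.\ $E^{u,\mu_i}_{\gamma_0}$). This follows once one knows that the derivative cocycle of $\alpha$ over $(\alpha,\mu_i)$ is measurably cohomologous to the constant cocycle $\gamma\mapsto D\rho(\gamma)$: comparing with \eqref{triv}, the cocycle $D^{\mu_i}\otimes K^{\mu_i}$ is then measurably cohomologous to the constant cocycle $D\rho$, and since $K^{\mu_i}$ is compact‑valued and commutes with $D^{\mu_i}$ and $D\rho|_\Gamma$ has Zariski‑dense image, the uniqueness up to conjugacy of the superrigidity representation forces $D^{\mu_i}$ to be conjugate to $D\rho$ by a constant matrix; tracking the $\gamma_0$‑stable and $\gamma_0$‑unstable subspaces produces the asserted $Q_i$. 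Conjugating the $i$‑th pair of conditions by $Q_i^{-1}$, which preserves transversality, each becomes ``$D\rho(g)E^{s}$ transverse to $E^{u}$'' resp.\ ``$D\rho(g)E^{u}$ transverse to $E^{s}$'', independently of $i$. Therefore
$$W=\{g\in G:\ D\rho(g)E^{s}\ \text{transverse to}\ E^{u}\ \text{and}\ D\rho(g)E^{u}\ \text{transverse to}\ E^{s}\},$$
which is a single Zariski open subset of $G$, and it is non‑empty since $\gamma_0\in W$ (as $D\rho(\gamma_0)$ is hyperbolic with stable and unstable subspaces $E^{s},E^{u}$ and $E^{s}\oplus E^{u}=\lien$).

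The main obstacle is thus the cohomology statement used in the previous paragraph: that the derivative cocycle over each ergodic measure $\mu_i$ is cohomologous to the constant linear data $D\rho$. For the smooth measure $\mu_0=m$ this is available from the measure‑of‑maximal‑entropy description of $m$ together with the Jacobian–determinant cocycle and Livšic arguments already invoked above; for the remaining $\mu_i$, supported over the lower‑dimensional $\rho$‑invariant sub‑nilmanifolds $V_i$, one must exploit that $h$ is a genuine conjugacy and that $\hat C^{\mu_i}$ extends continuously over the dense open set $U_i$ (Lemma \ref{lemma:measureshomo} and Lemma \ref{imageofbundle}) to transfer the cohomology relation from the linear side. (Any argument producing some such finiteness would suffice: it is enough to know that only finitely many distinct triples $(D^{\mu_i},E^{s,\mu_i}_{\gamma_0},E^{u,\mu_i}_{\gamma_0})$ occur up to the obvious symmetry, since a finite intersection of Zariski open sets is Zariski open, while non‑emptiness already follows softly from the fact that a countable union of proper real subvarieties of $G$ cannot be all of $G$.)
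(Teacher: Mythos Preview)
Your diagnosis of the issue is exactly right: each individual transversality condition is a non-empty Zariski open subset of $G$ containing $\gamma_0$, and the only problem is that a countable intersection of such sets need not be open. But your proposed resolution takes a detour through a claim you do not (and cannot easily) establish, when a much simpler fact suffices.

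Your main line of argument asks that the derivative cocycle over every ergodic $\alpha$-invariant measure $\mu_i$ be measurably cohomologous to the constant cocycle $D\rho$. You acknowledge this is the ``main obstacle'' and offer only a sketch: for $\mu_0=m$ you invoke the Jacobian--Liv\v{s}ic argument, but that argument only identifies the \emph{determinant} cocycle and shows $m$ is smooth; it does not give a cohomology between the full derivative cocycle and $D\rho$. For the lower-dimensional $\mu_i$ you gesture at transferring the relation via the conjugacy $h$ and the continuity of $\hat C^{\mu_i}$, but nothing in the surrounding material provides this. So the main route has a genuine gap.

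The fix is the one you bury in your final parenthetical: you only need \emph{finiteness} of the set of triples $(D^{\mu_i},E^{s,\mu_i}_{\gamma_0},E^{u,\mu_i}_{\gamma_0})$ up to conjugacy, and this is immediate from representation theory. A connected semisimple Lie group $G$ has only finitely many $d$-dimensional real representations up to equivalence (by the highest-weight classification, there are finitely many dominant integral weights bounded by any given dimension). Hence the $D^{\mu_i}$ fall into finitely many conjugacy classes. You already observed that if $D^{\mu_i}=Q\,D^{\mu_j}\,Q^{-1}$ then $Q$ carries $E^{\sigma,\mu_j}_{\gamma_0}$ to $E^{\sigma,\mu_i}_{\gamma_0}$ and the transversality condition for $\mu_i$ is equivalent to that for $\mu_j$. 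Thus the countably many conditions defining $W$ reduce to finitely many, $W$ is a finite intersection of Zariski open sets each containing $\gamma_0$, and the lemma follows. This is precisely the paper's argument; your cohomology-to-$D\rho$ claim, while plausible, is both unproved and unnecessary.
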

\begin{proof}
Up to conjugation, there at only finitely many representations of $G$ into $\GL(d, \R)$.  In particular, up to conjugation there are only finitely many values of  $D^{\mu_i}(g)$ and $E^{s,\mu_i}_{\gamma_0}$.  Since $\alpha(\gamma_0)$ is Anosov, for every $\mu_i$,
$E^{s,\mu_i}_{\gamma_0}$ is transverse to  $E^{u,\mu_i}_{\gamma_0}$.  Then $W$ is the finite intersection of Zariski open sets indexed by conjugacy classes of representations $D^{\mu_i}(g)$.
\end{proof}

With the above lemmas we show that  the set $W$  satisfies   condition of  (\ref{zariskidensesemigroup}) of Proposition \ref{semigroup} via the following proposition.

%
%
%The following proposition solves our problems.

\begin{proposition}\label{abstractanosov}
Let $f$ be an Anosov diffeomorphism with splitting $T_xM=E^s(x)\oplus E^u(x)$, and let $0<\epsilon\leq1$. Let $g$ be a diffeomorphism and assume   for every $x\in M$ that  $D_xg E^s(x)$ intersects transversally $E^u(g(x))$ and  $D_xg E^u(x)$ intersects transversally $E^s(g(x))$.
%and also interchanging $s$ and $u$.
Then there is $N>0$ such that for every $n\geq N$, writting $F=f^n\circ g\circ f^n$,  for every $x\in M$,  $$D_xF C^u_{\epsilon}(x)\subset  C^u_{\frac{1}{2}\epsilon}(F(x))$$ and $$D_xF^{-1} C^s_{\epsilon}(x)\subset  C^s_{\frac{1}{2}\epsilon}F^{-1}(x))$$ and for every vector $v\in C^u_{\epsilon}(x), |D_xFv|\geq 2|v|$ and for $v\in C^s_{\epsilon}(x), |D_xF^{-1}v|\geq 2|v|$.
\end{proposition}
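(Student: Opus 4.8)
The plan is to exploit the hyperbolicity of $f$ to dominate the bounded perturbation introduced by $g$, provided we iterate $f$ enough times on both sides. I will write all estimates with respect to an adapted (Lyapunov) metric for $f$, in which there is $\lambda>1$ with $|D_xf\,v|\geq\lambda|v|$ for $v\in E^u(x)$ and $|D_xf^{-1}v|\geq\lambda|v|$ for $v\in E^s(x)$; standard cone arguments (\cite[Section 6.4]{MR1326374}) then give that $f^n$ sends the $\epsilon$-unstable cone field into a much narrower cone field and expands vectors in it by $\geq\lambda^n$, and symmetrically for $f^{-n}$ on stable cones.

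The key point is a uniform transversality estimate coming from the hypothesis on $g$. Since $M$ is compact and $x\mapsto D_xg$, $x\mapsto E^s(x)$, $x\mapsto E^u(x)$ are continuous, and $D_xgE^s(x)\pitchfork E^u(g(x))$, $D_xgE^u(x)\pitchfork E^s(g(x))$ for every $x$, there is a uniform angle $\delta_0>0$ and a uniform bound $\|D_xg^{\pm1}\|\leq K$ for all $x$. Consequently, for any small cone parameter $\epsilon'$ (chosen below), $D_xg$ maps the $\epsilon'$-unstable cone at $x$ into the complement of the $\delta_0/2$-neighborhood of $E^s(g(x))$ — i.e. into a cone $C^u_{\epsilon''}(g(x))$ for some fixed $\epsilon''$ depending only on $\delta_0$, $K$, and the geometry — while distorting norms by at most $K^{\pm1}$. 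The same holds with $s$ and $u$ interchanged, using $D_xg^{-1}$.

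Now I compose. Given $\epsilon\in(0,1]$, first pick $n_1$ so that $f^{n_1}$ maps $C^u_\epsilon(x)$ into $C^u_{\epsilon'}(f^{n_1}(x))$ with $\epsilon'$ small enough that the previous paragraph applies; apply $D g$ to land in $C^u_{\epsilon''}$; then pick $n_2$ so that $f^{n_2}$ maps $C^u_{\epsilon''}$ into $C^u_{\epsilon/2}$. Taking $N=\max(n_1,n_2)$ and any $n\geq N$ works for the cone inclusion, since iterating $f$ further only narrows the cones. For the expansion estimate: on the $\epsilon$-unstable cone $f^n$ expands by $\geq\lambda^n$, $Dg$ contracts by at most $K$, and the final $f^n$ again expands by $\geq\lambda^n$, so $|D_xF\,v|\geq\lambda^{2n}K^{-1}|v|\geq 2|v|$ once $n$ is large enough that $\lambda^{2n}\geq 2K$. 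The stable-cone and $F^{-1}$ statements follow by the identical argument applied to $f^{-1}$, $g^{-1}$, and the stable cones, noting $F^{-1}=f^{-n}\circ g^{-1}\circ f^{-n}$. Enlarging $N$ to handle both cases simultaneously finishes the proof.

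The main obstacle is making the cone-tracking bookkeeping precise and uniform in $x$: one must check that the narrowness gained from $f^{n_1}$ survives the (bounded but not small) perturbation $Dg$, which is exactly where the quantitative transversality $\delta_0>0$ enters — the perturbed image is pushed off the stable direction by a definite angle, so it still lies in a fixed-width unstable cone, and then the second block of $f$-iterates can shrink it back below $\epsilon/2$ and restore expansion. None of the individual steps is deep, but getting the order of quantifiers right ($\epsilon$ given; then $\epsilon'$, $\epsilon''$ determined; then $N$) and invoking the adapted metric so that all constants are genuinely uniform over $M$ is the part that needs care.
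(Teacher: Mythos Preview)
Your proof is correct and follows essentially the same cone argument as the paper, which makes the identical three-step decomposition (narrow via $f^n$, pass through $g$ using the transversality to land in a fixed-width unstable cone, narrow again via $f^n$) but carries out the expansion estimate via an explicit block-matrix computation with respect to $E^s\oplus E^u$ rather than your more geometric bookkeeping. One small imprecision: after $Dg$ the cone width $\epsilon''$ may exceed $1$, so the claim that the final $f^n$ expands vectors in $C^u_{\epsilon''}$ by $\geq\lambda^n$ needs an extra constant factor (roughly $(\max(1,\epsilon''))^{-1}$), but this is a fixed constant absorbed by taking $n$ larger and does not affect the argument.
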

Recall that here $C^\sigma_\epsilon(x)$ denotes the $\epsilon$-cone  around $E^\sigma(x)$.
The proof is a  standard argument. We include it here    for completeness.

\begin{proof}[Proof of Proposition \ref{abstractanosov}]
By symmetry  it is sufficient to prove the result for the   unstable cone.  % the stable works by taking $h^{-1}$ and a possibly larger $N$.
We write all linear transformations with respect to the the continuous splitting $T_xM=E^s(x)\oplus E^u(x)$. Then,
$$D_xf=\left( \begin{array}{cc}
A(x) & 0  \\
0 & B(x)
\end{array} \right)$$
and
$$D_xg=\left( \begin{array}{cc}
a(x) & b(x)  \\
c(x) & d(x)
\end{array} \right)$$
where $a(x)\colon E^s(x)\to E^s(g(x))$, $b(x)\colon E^u(x)\to E^s(g(x))$,  $c(x)\colon E^s(x)\to E^u(g(x))$, and  $d(x)\colon E^u(x)\to E^u(g(x))$.
As  $D_xg E^u(x)$ and $E^s(g(x))$ are transverse, it follows that  $d(x)$ is invertible for every $x$. By  continuity of the bundle  $ E^u(x)$, continuity of the derivative $D_xg$, and compactness of $M$ there is $r>0$ such that $m(d(x))\geq r$ for every $x\in M$.  Here $m(L)$ denotes the %=\min_{|v|=1}|Lv|$ for a linear map $L$ is the
conorm of a linear map $m(L)=\|L^{-1}\|^{-1}$. Let $C=\max_{x\in m}\|D_xg\|$. Observe that
$$D_xf^n=\left( \begin{array}{cc}
A^{(n)}(x) & 0  \\
0 & B^{(n)}(x)
\end{array} \right)$$
where $A^{(n)}(x)=A(f^{n-1}(x))\dots A(x)$.  %, similarly with $B$.
Choose a norm on $TM$ adapted to $f$; that is, decomposing $v= v^s+ v^u$ acording to the splitting $T_xM = E^s(x) \oplus E^u(x)$ we have $|v| = \max \{|v^u| , |v^s|\}$ and there is a constant $\lambda<1$ such that for every $x\in M$ and $n\geq 0$, $\|A^{(n)}(x)\|\leq \lambda^{n}$ and $m(B^{(n)}(x))\geq \lambda^{-n}$.%\marginnote{define adapted}

Let $\epsilon>0$.   The $N$ in the proposition will depend on $r$, $C$, $\lambda$ and $\epsilon$. We first show for $N$ sufficiently large  that $DF$ preserves the $\epsilon$-unstable cone. First observe that for every positive real number $t>0$, natural number $n\geq 0$ and $x\in M$ we have that $$Df^nC^u_t(x)\subset C^u_{\lambda^{2n}t}(f^n(x)).$$

Now the next Lemma uses the  transversality between $DgE^u$ and $E^s$ is used; in particular, the proof used that for $r$ as defined above, $r>0$.
\begin{lemma}\label{coneforg}
There are $\delta_0>0$ and $T>0$ such that for every $0<\delta\leq\delta_0$ we have that $$D_xgC^u_{\delta}(x)\subset C^u_{T}(g(x)).$$
\end{lemma}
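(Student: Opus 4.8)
The plan is a direct cone estimate carried out in the continuous splitting $T_xM=E^s(x)\oplus E^u(x)$, using only the block form of $D_xg$ recorded above together with two quantitative consequences of the transversality hypothesis: a \emph{uniform} lower bound $r:=\inf_{x\in M}m(d(x))>0$ for the conorms of the blocks $d(x)\colon E^u(x)\to E^u(g(x))$ (this $r>0$ has already been produced above from continuity of $x\mapsto D_xg$, continuity of the bundles, and compactness of $M$), and a uniform upper bound $C$ for $\|D_xg\|$ measured in the adapted norm $|v|=\max\{|v^s|,|v^u|\}$, which then also bounds each of the blocks $a(x),b(x),c(x)$.

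First I would fix $x\in M$ and a nonzero $v\in C^u_\delta(x)$, decompose $v=v^s+v^u$ with $|v^s|\le\delta|v^u|$ (so that $v^u\ne0$), and write $w:=D_xgv=w^s+w^u$ in the splitting over $g(x)$, so that $w^s=a(x)v^s+b(x)v^u$ and $w^u=c(x)v^s+d(x)v^u$. The block bounds then give
\[
|w^s|\le C|v^s|+C|v^u|\le C(1+\delta)|v^u|, \qquad |w^u|\ge m(d(x))|v^u|-\|c(x)\|\,|v^s|\ge(r-C\delta)|v^u|.
\]
Next I would set $\delta_0:=\min\{1,\,r/(2C)\}$ and $T:=4C/r$. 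For $0<\delta\le\delta_0$ these inequalities yield $|w^u|\ge\tfrac r2|v^u|>0$ and $|w^s|\le 2C|v^u|\le T|w^u|$, that is, $D_xgv\in C^u_T(g(x))$; the case $v=0$ is trivial, and $\delta_0$, $T$ depend only on $r$ and $C$, hence are independent of $x$.

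I do not anticipate a genuine obstacle here: it is a textbook graph-transform-type estimate. The only point that must not be glossed over is that the transversality of $D_xgE^u(x)$ with $E^s(g(x))$ has to be used in its uniform form $\inf_{x\in M}m(d(x))>0$ rather than merely pointwise, and this uniformity is exactly what the compactness of $M$ and the continuity of $D_xg$ invoked just before the statement of the lemma provide.
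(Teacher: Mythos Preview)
Your proof is correct and follows essentially the same route as the paper's: you work in the block decomposition of $D_xg$ relative to the splitting, bound $|w^s|$ above and $|w^u|$ below in terms of $|v^u|$ using the constants $C$ and $r$, and then choose $\delta_0$ small enough that $r-C\delta_0>0$. The only difference is cosmetic---the paper leaves $T=C(\delta_0+1)/(r-C\delta_0)$ whereas you make the explicit choices $\delta_0=\min\{1,r/(2C)\}$ and $T=4C/r$.
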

Given Lemma \ref{coneforg}, for $N$ sufficiently large and   $n\geq N$,
\begin{eqnarray*}
D_xFC^u_\epsilon(x)&=&D_x(f^n\circ g\circ f^n)C^u_\epsilon(x)=D_{f^n(x)}(f^n\circ g)D_x f^nC^u_\epsilon(x)\\
&\subset& D_{f^n(x)}(f^n\circ g)C^u_{\lambda^{2n}\epsilon}(f^n(x))\\
&=&D_{g(f^n(x))}f^nD_{f^n(x)}gC^u_{\lambda^{2n}\epsilon}(f^n(x))\\
&\subset& D_{g(f^n(x))}f^nC^u_{T}(g(f^n(x)))\subset C^u_{\lambda^{2n}T}(f^n(g(f^n(x))))\\
&=&C^u_{\lambda^{2n}T}(F(x))
\end{eqnarray*}
Choosing $N$ large enough so that $\lambda^{2N}\epsilon\leq\delta_0$ and $\lambda^{2N}T\leq \frac{1}{2}\epsilon$ it follows that $F$ preserves the $\epsilon$-unstable cones.

We now consider the the growth of the vectors. Let $v\in C^u_\epsilon(x)$. Recall that $|v|=\max\{ |v^s|,|v^u|\}$, hence $|v|=|v^u|$. Since $DF$ preserves the $\epsilon$-unstable cone we have that $|(D_Fv)^s|\leq\frac{\epsilon}{2}|(D_Fv)^u|$ and, since $\epsilon\leq 1$, we have that $|D_Fv|=|(D_Fv)^u|$. Now,
\begin{eqnarray*}
(D_Fv)^u=B^{(n)}(g(f^n(x)))[c(f^n(x))A^{(n)}(x)v^s+d(f^n(x))B^{(n)}(x)v^u]
\end{eqnarray*}
so
\begin{eqnarray*}
|D_Fv|&=&|(D_Fv)^u|\geq\lambda^{-n}|c(f^n(x))A^{(n)}(x)v^s+d(f^n(x))B^{(n)}(x)v^u|\\
&\geq&\lambda^{-n}[r\lambda^{-n}|v^u|-C\lambda^n\epsilon|v^u|]\\
&=&\lambda^{-n}(r\lambda^{-n}-C\lambda^n\epsilon)|v|.
\end{eqnarray*}
Take $N$ large enough such that $\lambda^{-n}(r\lambda^{-n}-C\lambda^n\epsilon)\geq 2$ for all $n\geq N$.
%Take $v=v^s+v^u$ with $|v^s|\leq\epsilon|v^u|$ then a direct computation gives that $D_xFV^s$ equals
%\begin{eqnarray*}
%A^{(n)}(g(f^n(x)))a(f^n(x))A^{(n)}(x)v^s+B^{(n)}(g(f^n(x)))c(f^n(x))A^{(n)}(x)v^s
%\end{eqnarray*}
%and $D_xFv^u$ is
%\begin{eqnarray*}
%A^{(n)}(g(f^n(x)))b(f^n(x))B^{(n)}(x)v^u+B^{(n)}(g(f^n(x)))d(f^n(x))B^{(n)}(x)v^u
%\end{eqnarray*}
\end{proof}

\begin{proof}[Proof of Lemma \ref{coneforg}]
Take a vector $v=v^s+v^u$ in $C^u_{\delta_0}(x)$ ($\delta_0$ to be determined) then $$D_xgv=(a(x)v^s+b(x)v^u)+(c(x)v^s+d(x)v^u).$$ We prove that $$|a(x)v^s+b(x)v^u|\leq T|c(x)v^s+d(x)v^u|$$ for some $T>0$. Note
$$|a(x)v^s+b(x)v^u|\leq (C\delta_0+C)|v^u|=C(\delta_0+1)|v^u|$$ and $$|c(x)v^s+d(x)v^u|\geq r|v^u|-C\delta_0|v^u|=(r-C\delta_0)|v^u|.$$ Take $\delta_0$ such that $r-C\delta_0>0$ and let $T=\frac{C(\delta_0+1)}{(r-C\delta_0)}$.
\end{proof}

For $W$   as in Lemma \ref{transversality}, from Lemma \ref{imageofbundle} we verify for $\eta \in W\cap \Gamma$ that, with $f=\alpha(\gamma_0)$ and $g=\alpha(\eta)$, the transversality in Proposition \ref{abstractanosov} holds.  Item (\ref{zariskidensesemigroup}) of Proposition \ref{semigroup} the follows immediately from  Proposition \ref{abstractanosov}.
% with $f=\alpha(\gamma_0)$ and $g=\alpha(\eta)$ for  $\eta\in W\cap\Gamma$ where $W$ is as in Lemma \ref{transversality}.

\subsection{Abelian subactions without rank-one factors}
In this part we assume

\begin{hypothesis}\label{hypH}
$\bfH$ is a simply connected semisimple algebraic group defined over $\bQ$ whose all $\bR$-simple factors are either anisotropic or of $\bR$-rank 2 or higher. $H=\bfH(\R)^\circ$, and  $\Gamma\subset\bfH(\Z)\cap H$ is an arithmetic lattice in $H$. $\rho\colon\Gamma\to \Aut(M)$ is a $\Gamma$-action by linear automorphisms on a compact nilmanifold $M=N/\Lambda$. Let us denote also $\rho:\Gamma\to\Aut(N)$ the lift and assume that $D\rho:\Gamma\to\Aut(\lien)$ extends to a $\bQ$-rational representation $D\rho:\bfH\to\mathbf{GL}(d)$.
\end{hypothesis}

\begin{proposition}\label{RegularCentralizer} Let $\bfH$, $\Gamma$, $M$, $\rho$ be as in Hypothesis \ref{hypH}  and let $ S<\Gamma$ be a Zariski dense semigroup such that $D\rho(\eta)$ is hyperbolic for all $\eta\in S$. %Suppose every isotropic almost simple factor of $G$ has  rank $2$ or higher.
 Then there is an element $\gamma\in S$ such that:\begin{enumerate}
\item the identity component $Z_\bfH(\gamma)^\circ$ of the centralizer of $\gamma$ in $\bfH(\bR)$ is a maximal $\bQ$-torus contains a maximal $\bR$-split torus defined over $\bQ$ of $\bfH$ and   $\gamma\in Z_\bfH(\gamma)^\circ$; %Property (1) above is true for $\bfH$;
% \item the identity component $Z_\bfH(\gamma)^\circ$ of the centralizer of $\gamma$ in $\bfG(\bR)$ contains a maximal $\bR$-split torus and $\gamma\in Z_\bfG(\gamma)^\circ$;  % defined over $\bQ$ of $\bfG$;
\item $Z_\bfH(\gamma)^\circ\cap\Gamma$ contains a free abelian group $\Sigma\cong\bZ^{\rank_\bR\bfH}$ of finite index;
\item the restricted action $\rho|_\Sigma$ has no algebraic factor action of rank $1$.
\end{enumerate}
\end{proposition}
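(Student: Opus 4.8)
The plan is to find an element $\gamma \in S$ that is \emph{$\R$-regular} (and in fact simultaneously well-behaved over $\bQ$), so that its centralizer is a maximal torus, and then to argue that the Weyl-orbit structure forces the restricted action of the resulting abelian group to have no rank-one algebraic factors. I will break the argument into three parts corresponding to the three conclusions.

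First, I would produce the regular element. Since $S$ is Zariski dense in $\bfH$ and the set of $\R$-regular semisimple elements (those whose centralizer has minimal dimension, equal to $\rank_\R \bfH$ plus the dimension of the anisotropic part) is Zariski open and nonempty, $S$ meets it; one must check this open set is also nonempty intersected with the (Zariski dense) semigroup $S$, which is automatic from Zariski density. A more careful choice is needed to get everything defined over $\bQ$: I would use that $\bfH$ is defined over $\bQ$ and $\Gamma \subset \bfH(\bZ)$, and invoke the standard fact (e.g. via the implicit function theorem / Borel density together with the existence of $\bQ$-anisotropic-mod-center tori or, better, maximal $\bR$-split tori defined over $\bQ$ — using that $\bfH$ is $\bQ$-isotropic on each non-anisotropic factor) that the set of $\gamma \in \Gamma$ whose connected centralizer $Z_\bfH(\gamma)^\circ$ is a maximal torus containing a maximal $\R$-split torus defined over $\bQ$ is Zariski dense, hence meets $S$. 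The hyperbolicity hypothesis $D\rho(\eta)$ hyperbolic for $\eta \in S$ is what will be used later; for now it guarantees $\gamma$ is not too degenerate. This gives conclusion (1), and conclusion (2) follows because $Z_\bfH(\gamma)^\circ$ is a maximal $\bQ$-torus $\bfT$, so $\bfT(\bZ) \cap \Gamma = \bfT(\bZ)$ (up to finite index) is, by Dirichlet's unit theorem / the structure of $S$-units in tori, a finitely generated abelian group of rank $\rank_\R \bfT = \rank_\R \bfH$ (the $\bR$-anisotropic directions contribute compact factors and only torsion); passing to a finite-index subgroup $\Sigma \cong \bZ^{\rank_\R \bfH}$ kills the torsion.

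The substantive part is conclusion (3): ruling out rank-one algebraic factor actions of $\rho|_\Sigma$. Suppose $\pi \colon M \to M'$ is an algebraic factor with $\rho'(\Sigma')$ cyclic for some finite-index $\Sigma' < \Sigma$; lifting to Lie algebras, $D\pi \colon \lien \to \lien'$ is a $\Sigma$-equivariant surjection, so the weights of $D\rho|_\Sigma$ on $\lien'$ form a nonempty subset of the weights on $\lien$, and cyclicity of $\rho'(\Sigma')$ means these weights, viewed as characters of $\Sigma \cong \bZ^k$, are all \emph{proportional} (all powers of a single character up to finite index — i.e. lie on a single line through the origin in $\Hom(\Sigma,\R) \cong \R^k$ after tensoring with $\R$). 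Now since $D\rho$ extends to a $\bQ$-rational representation of $\bfH$ and $\Sigma \subset \bfT(\bZ)$ sits (finite-index) in a maximal $\bR$-split-containing torus, the weights of $D\rho|_\Sigma$ are the restrictions to $\Sigma$ of the weights of $D\rho$ as a representation of $\bfT$; because $\Sigma$ is Zariski dense in the split part of $\bfT$, two weights restrict proportionally to $\Sigma$ only if they were already proportional as restricted weights of $\bfH$ relative to a maximal $\R$-split torus. So a rank-one factor would produce a Weyl-group-invariant (since $\gamma$ regular forces the Weyl group of $\bfH$ to act on the relevant data, or more directly since $\Gamma$-conjugates of $\gamma$ by Zariski density sweep out the whole Weyl orbit — this is exactly the $S^\mu$ / normalizer argument recalled earlier for the linear data) collection of pairwise-proportional weights; but a nonzero representation of a semisimple group all of whose restricted weights are proportional to one another forces the group to be a single rank-one factor, contradicting Hypothesis \ref{hypH} (all $\R$-simple factors anisotropic or of $\R$-rank $\geq 2$, and an anisotropic factor contributes only the trivial restricted weight, which is excluded since the factor action must be nontrivial on a hyperbolic element).

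**The main obstacle** I anticipate is the bookkeeping in step (3): carefully relating (a) characters of the abstract abelian group $\Sigma$, (b) restricted weights of $\bfT$ over $\R$, and (c) restricted weights of $\bfH$ relative to a maximal $\R$-split torus, and showing that "cyclic image on a finite-index subgroup" really does translate into "all weights proportional" in the right sense — the subtlety is that $\rho'(\Sigma')$ could be cyclic because of cancellation among several non-proportional weights on a rank-one \emph{sublattice} of $\Sigma$, so one must use that $\Sigma$ is Zariski dense in the $\bR$-split torus (Borel density / the fact that $\Sigma$ has full rank $\rank_\R\bfH$) to rule this out, together with the fact that the factor $\pi$ is defined \emph{algebraically} (over $\bQ$), so $\ker D\pi$ is a $\bfT$-submodule and the weights of $\lien'$ are genuinely a sub-multiset of those of $\lien$ as $\bfT$-weights, not merely as $\Sigma$-characters. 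Once that reduction is in hand, invoking Hypothesis \ref{hypH} and the hyperbolicity of $D\rho(\gamma)$ (so that $\lien'$ carries a nontrivial restricted weight) finishes the argument; I would also need the earlier-established fact (cf. the $S^\mu$ construction and Zariski density of $\Gamma$) that $\Gamma$-conjugation makes the obstruction Weyl-invariant, or else argue directly on each $\R$-simple factor using that $D\rho$ restricted to a factor has all weights nontrivial by Lemma \ref{CartanHyp}.
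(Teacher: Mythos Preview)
Your overall strategy is right, but there is a genuine gap in part (3): you lack a valid mechanism for making the weight set of the putative rank-one factor Weyl-invariant. The factor $M_1$ (equivalently, the subspace $\lien_0 \subset \lien$) is only $\Sigma$-invariant, hence only $\bfT$-invariant; it is \emph{not} $\bfH$-invariant, so you cannot conclude that the weights of $\lien_1$ are a Weyl-stable subset of the weights of $\lien$ merely from representation theory of $\bfH$. Your two suggested workarounds both fail: ``$\Gamma$-conjugates of $\gamma$ sweep out the Weyl orbit'' moves $\gamma$ to a \emph{different} torus and says nothing about the weight set of a fixed $\bfT$-submodule; and the $S^\mu$/normalizer argument from Section~\ref{Sec:llppookjjjd} concerns the derivative cocycle over an invariant measure, not characters of a $\bQ$-torus.

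The paper fills this gap with a specific deep input you are missing: the results of Prasad--Rapinchuk \cite{MR1960120,MR2150880}, which guarantee that inside the Zariski-dense semigroup $S$ one can find $\gamma$ that is regular and $\bR$-regular \emph{and} whose centralizer torus $\bfT = Z_\bfH(\gamma)^\circ$ has the property that the canonical $\Gal(\bar\bQ/\bQ)$-action on $X^*(\bfT)$ contains the full Weyl group $W(\bfH,\bfT)$. Since the factor $\lien_1$ is defined over $\bQ$ (it comes from a rational normal subgroup), its weight set in $X^*(\bfT)$ is automatically Galois-stable, hence Weyl-stable by this special choice of $\gamma$. From there your final step works: a Weyl-invariant set of nonzero weights restricted to the maximal $\bR$-split subtorus cannot lie on a single line when every $\bR$-simple factor has rank $\ge 2$, contradicting cyclicity of $\rho_1(\Sigma)$. (The same Prasad--Rapinchuk/Prasad--Raghunathan package also supplies the precise statements you wave at for (1) and (2); your ``standard fact'' for (1) and Dirichlet-type argument for (2) are not quite enough as stated, since you need $\bfT$ to contain a maximal $\bR$-split torus defined over $\bQ$, which is not automatic for an arbitrary regular element.)
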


\begin{proof} The proof is based on the works of Prasad and Rapinchuk \cite{MR1960120, MR2150880}.

We recall that, if $\bfT\subset \bfH$ is an algebraic torus defined over $\bQ$, then there is a canonical $\Gal(\bar\bQ/\bQ)$-action on the character group $X^*(\bfT)$ defined over $\bar\bQ$, given by $(\sigma.\chi)(t)=\sigma^{-1}(\chi(\sigma(t)))$. Moreover, if $\bfT$ is a maximal $\bQ$-torus in a semisimple algebraic group $\bfH$ defined over $\bQ$, then the Galois action permute the roots.

An element $\gamma\in\bfH(\bR)$ is called {\bf regular} (resp. {\bf $\bR$-regular}), if the number of eigenvalues of $\Ad_\gamma$ that are equal to $1$ (resp. on the unit circle), counted with multiplicity, is minimum possible. It is called {\bf hyperregular}, if the number of eigenvalues of $\bigwedge \Ad_\gamma$ %\marginnote{  $\bigcap\Ad_\gamma$:correct notation.}
that are equal to $1$, again counted with multiplicity, is minimum possible.  Here $\bigwedge \Ad_\gamma$ denotes  the action on the exterior powers of $\lieh$, $\bigwedge \lieh$. It is known that hyperregular elements are regular, and that both hyperregular and regular are Zariski open conditions \cite[Remark 1.2]{MR0302822}. Furthermore, for a regular and $\bR$-regular element $\gamma$, $Z_\bfH(\gamma)^\circ$ is a maximal torus that contains $\gamma$ (see \cite[Introduction]{MR1960120}).

By the discussion in \cite[pg.\ 240--241]{MR2150880}, there is a Zariski-dense subset $S'\subset   S$ such that for every $\gamma\in S'$:
\begin{enumerate}
% \item[(1)]   the identity component $Z_\bfH(\gamma)^\circ$ of the centralizer of $\gamma$ in $\bfH(\bR)$ is a maximal $\bQ$-torus contains a maximal $\bR$-split torus defined over $\bQ$ of $\bfG$ and   $\gamma\in Z_\bfH(\gamma)^\circ$; %Property (1) above is true for $\bfH$;

 \item[(1)] Property (1) holds;
  \item[(2')] $\gamma$ is regular and $\bR$-regular;
 \item[(3')] For $\bfT=Z_\bfH(\gamma)^\circ$, the Galois action contains all elements from the Weyl group $W(\bfH,\bfT)$, and the cyclic group $\langle\gamma\rangle$ is Zariski dense in $\bfT$.
\end{enumerate}

By Zariski openess of hyperregular elements, we can find a $\gamma\in S'$ that is hyper-regular.
Prasad-Raghunathan proved in \cite[Lemma 1.15]{MR0302822} that hyperregularity and $\bR$-regularity together implies (2) for $\gamma$. It remains to prove (3).

  We assume $\rho_1\colon \Sigma\to\Aut(M_1)$ is a  rank-one algebraic factor of $\rho|_\Sigma$ and obtain a contradiction. Write $M=N/\Lambda$ and $M_1=N_1/\Lambda_1$. By passing to a subgroup if necessary, we main assume $\rho_1(\Sigma)$ is a cyclic group.

Since $M_1$ is a compact quotient nilmanifold, $N_1=N/N_0$ where $N_0$ is a $\rho|_\Sigma$-invariant subgroup of $N$ defined over $\bQ$. The Lie algebra $\lien_0$ is hence $D\rho|_\Sigma$-invariant rational subspace of $\lien$.

Notice that: the cyclic group $\langle\gamma\rangle$ is Zariski dense in $\bfT$; $\langle\gamma\rangle\cap\Sigma$ has finite index in $\langle\gamma\rangle$; and $\bfT$ is Zariski connected. It follows that $\langle\gamma\rangle\cap\Sigma$ is also Zariski dense in $\bfT$, therefore $\lien_0\otimes_\bR\bC$ is $D\rho|_\bfT$ invariant. So $\rho|_\bfT$ projects to a $\bQ$-representation $D\rho_1$ of the $\bQ$-torus $\bfT$ on $\lien_1\otimes_\bR\bC$, where $\lien_1$ is the Lie algebra of $N_1$.

By property (1), $\bfT$ contains a maximal $\bR$-split $\bQ$-torus $\bfT_s$ of $\bfH$. Denote $r=\dim\bfT_s=\rank_\bR\bfH$. The restriction to $\bfT_s$ is a morphism $X^*(\bfT)\to X^*(\bfT_s)$ defined over $\bQ$, and hence interwines the canonical actions by $\Gal(\bar\bQ/\bQ)$ on both character groups. By \cite[Corollary 21.4]{MR1102012}, every element $\omega\in W(\bfH,\bfT_s)$ is represented by the restriction of some element $\tilde\omega\in W(\bfH, \bfT)$ to $\bfT_s$. Hence the Galois action on $X^*(\bfT_s)$ contains the action by the Weyl group $W(\bfH,\bfT_s)$, thanks to property (3').

The $\bR$-torus $\bfT$ decomposes as an almost direct product $\bfT_a\cdot\bfT_s$ where $\bfT_a$ is a maximal $\bR$-anisotropic torus in $\bfT$. $\bfT_a\cap\bfT_s$ is a finite subgroup of torsion elements in $\bfT_s\cong\bfH_m^r$. In particular, $\bfT_a(\bR)\cap\bfT_s(\bR)$ is a finite subgroup all of whose elements have order $2$. Therefore for $t\in\bfT(\bR)$ and $g=t^2$ there is a unique decomposition $g=g_ag_s$ with $g_a\in\bfT_a(\bR)$ and $g_s\in\bfT_s(\bR)$. Moreover, $g_s$ is in $\bfT_s(\bR)^\circ\cong(\bR_{>0})^r$.

Without loss of generality, one may replace $\Sigma$ with $\{\sigma^2:\sigma\in\Sigma\}$. Then all elements $\sigma\in\Sigma$ can be decomposed as above. $\sigma\to \sigma_a$ and $\sigma\to \sigma_s$ are group morphisms on $\Sigma$.

For any non-trivial $\sigma\in\Sigma$, $\sigma_s$ is not trivial. Otherwise $\sigma=\sigma_a$ lie in the compact $\bfT_a(\bR)$. Since $\Sigma\subset\Lambda$ is discrete, $\sigma$ must be an torsion element, contradicting our assumption that $\Sigma$ is free abelian. It follows that $\sigma\to \sigma_s$ is an isomorphism from $\Sigma$ to $\Sigma_s=\{\sigma_s:\sigma\in\Sigma\}$. Furthermore, again because $\bfT_a(\bR)$ is compact and $\Sigma$ is discrete, $\Sigma_s$ is discrete and hence is a lattice in $\bfT_s(\bR)^\circ$.

Fix a basis $\sigma_1,\cdots,\sigma_r$ of $\Sigma$ and write $(\sigma_i)_s$ as $(e^{\theta_{i1}},\cdots, e^{\theta_{ir}})$ in $\bfT_s(\bR)^\circ\cong(\bR_{>0})^r$. Then $(\theta_{ij})$ is a non-degenerate matrix. Define a group morphism $\cL\colon X^*(\bfT_s)\to\bR^r$ by
$$\cL(\chi)=\big(\log|\chi((\sigma_1)_s)|,\cdots,\log|\chi((\sigma_r)_s)|\big).$$
Recall that with $\bfT_s$ identified with $\bfH_m^r$, the coordinate maps $\pi_j\colon (t_1,\cdots, t_r)\to t_j$ form a basis of $X^*(\bfT_s)\cong\bZ^r$. Note $\cL(\pi_j)=(\theta_{1j},\cdots,\theta_{rj})$. Thus by non-degeneracy of $(\theta_{ij})$, $\cL$ embeds $X^*(\bfT_s)$ as a lattice into $\bR^r$.

For any character $\chi\in X^*(\bfT)$, $\tchi\colon t\to \chi(t)\overline{\chi(\overline t)}$ is defined over $\bR$ and its restriction to $\bfT_a$ is trivial. In particular, for $\sigma\in\Sigma$, $\tchi(\sigma)=\tchi(\sigma_s)$.  Notice that $\chi|_{\bfT_s}$ is defined over $\bR$, and hence $\tchi|_{\bfT_s}=(\chi|_{\bfT_s})^2$.

Denote by $\Lambda\subset X^*(\bfT)$ the sets of weights of the $\bQ$-representation $D\rho_1$ of $\bfT$, which is invariant under the canonical Galois action. For $\gamma\in S$, by \cite{MR0358865}, $D\rho(\gamma)$ is a hyperbolic matrix, and hence so is $D\rho_1(\gamma)$. It follows, because $\gamma\in\bfT(\bR)$, that $\tlambda(\gamma)=|\lambda(\gamma)|^2\neq 1$ for $\lambda\in\Lambda$. Thus $\tlambda((\gamma^2)_s)=\tlambda(\gamma^2)=\tlambda(\gamma)^2\neq 1$.

In particular, $\tlambda|_{\bfT_s}$ is non-trivial. The set $$\tLambda_s:=\{\tlambda|_{\bfT_s}:\lambda\in \Lambda\}=\{(\lambda|_{\bfT_s})^2:\lambda\in \Lambda\}$$ is invariant under the Galois action, and therefore $W(\bfH,\bfT_s)$-invariant. One can alway decompose $\bfH$ into a direct product $\prod\bfH_i$ of almost $\bR$-simple factors in such a way that $\bfT_s=\prod{\bfT_{s,i}}$ where ${\bfT_{s,i}}$ is a maximal $\bR$-split torus in $\bfH_i$. Fix a $\lambda\in\Lambda$, its restriction $\tlambda|_{{\bfT_{s,i}}}$ to some ${\bfT_{s,i}}$ is non-trivial. The action by $W(\bfH_i,{\bfT_{s,i}})\subset W(\bfH,\bfT_s)$ on $X^*(\bfT_{s,i})$ preserves no proper rank-one subgroup. Hence as $\dim{\bfT_{s,i}}\geq 2$ by assumption, the $W(\bfH,{\bfT_{s,i}})$-orbit of $\tlambda|_{{\bfT_{s,i}}}$ is not contained in any cyclic subgroup. Thus, the same is true for the $W(\bfH,\bfT_s)$-orbit of $\tlambda|_{\bfT_s}$, and for the $W(\bfH,\bfT)$-orbit of $\tlambda$. In other words, there is no cyclic subgroup of $X^*(\bfT_s)$ that contains $\tLambda_s$, which is the same as that there is no one-dimensional subgroup of $\bR^r$ that contains $\cL(\tLambda_s)$.

However, on the other hand, as $\rho_1(\Sigma)$ is assumed to be a cyclic group $\{A^n\}$, there are integers $n_1,\cdots, n_r$ such that $\rho_1(\sigma_i)=A^{n_i}$. Then for each $\lambda\in\Lambda$, there is $a_\lambda\in\bC$ such that $\lambda(\sigma_i)=a_\lambda^{n_i}$. Therefore,
$$\begin{aligned}
\cL(\tlambda|_{\bfT_s})=&\big(\log|\tlambda((\sigma_1)_s)|,\cdots,\log|\tlambda((\sigma_r)_s|\big)
=\big(\log\tlambda(\sigma_1),\cdots,\log\tlambda(\sigma_r)\big)\\
=&\big(2\log|\lambda(\sigma_1)|,\cdots,2\log|\lambda(\sigma_r)|\big)\\
=&2\log|a_\lambda|(n_1,\cdots,n_r)
\end{aligned}$$ belongs to a given one-dimensional subspace for every $\lambda\in\Lambda$.
This produces the desired contradiction and completes the proof.
\end{proof}

\subsection{Proof of Propostion \ref{AbelianSubaction}}

We deduce Proposition \ref{AbelianSubaction} from Propositions \ref{semigroup} and \ref{RegularCentralizer}.

\begin{proof}[Proof of Proposition \ref{AbelianSubaction}]
Under Hypothesis \ref{hyp7}, Theorem \ref{Arithmeticity} applies to $\bfG$ and $\Gamma$. Let $\bfH$ and $\phi:\bfH\mapsto\bfG$ be as in Theorem \ref{Arithmeticity}. Denote $H=\bfH(\bR)^\circ$.

Let $\hat\Gamma=\phi^{-1}(\Gamma)\cap\bfH(\bZ)$. Then $\hat\Gamma$ is a lattice in $H$ and is of finite index in $\bfH(\bZ)$. Define $\hat\Gamma$-actions $\hat\alpha=\alpha\circ\phi$ and $\hat\rho=\rho\circ\phi$ which act through $\Gamma$. Note $\hat\rho$ is the linear data of $\hat\alpha$.

By the discussion in Section \ref{sec:StructureNil}, there is a $\bQ$-structure of $\lien$ such that $D\rho$ sends $\Gamma$ into $\GL(d,\bQ)$. Hence the image of $\hat\Gamma$ under $D\hat\rho$ is in $\GL(d,\bQ)$ as well. Applying Theorem \ref{Superrigidity} with $k=\bQ$ and $\bfJ=\GL(d)$, we know that, after restricting to a finite index subgroup $\hat\Gamma'\subset\hat\Gamma$, $D\hat\rho$ extends to a representation $\bfH\to\mathbf{GL}(d)$ defined over $\bQ$. We replace $\hat\Gamma$ with $\hat\Gamma'$ in the sequel.

We apply Proposition \ref{semigroup} to obtain a Zariski open set $W\subset G$ and a semigroup $S\subset \Gamma$. Let $\hat W=\phi^{-1}(W)$, which is a Zariski open set in $H$. Note that because no $\bQ$-simple factor of $\bfH$ is $\bR$-anisotropic, $\bfH(\bZ)$, and hence $\hat\Gamma$ as well, are Zariski dense in $\bfH$ by Borel density theorem (see \cite[Corollary 4.5.6]{MR3307755}).

In addition, define $\hat S=\phi^{-1}(S)\cap\hat\Gamma$, and fix a preimage $\hat\gamma_0\in\phi^{-1}(\gamma_0)$.

Note $\hat\alpha=\alpha\circ\phi$ defines a $C^\infty$ action by $\hat\Gamma$. We claim that the $(H,\hat\Gamma,\hat S,\hat W, \hat\alpha,\hat\gamma_0)$, instead of $(G, \Gamma, S, W, \alpha, \gamma_0)$, also satisfies the conclusion of Proposition \ref{semigroup}. In fact, properties (1)-(4) follows directly from the corresponding properties in Proposition \ref{semigroup}. Property (5) follows for $\hat S$ exactly as in the proof of \ref{semigroup} using  now that $\hat\Gamma$ is Zariski dense in $H$ and $\hat W$ is Zariski open.

Recall that if $\hat\alpha(\gamma)$ is Anosov then $D\hat\rho(\gamma)$ is a hyperbolic matrix. Hence $(H,\hat\Gamma,\hat S,\hat\rho)$ satisfies Hypothesis \ref{hypH} as well as the conditions in Proposition \ref{RegularCentralizer}. Let $\hat\gamma\in\hat S$ and $\hat\Sigma\subset Z_\bfH(\hat\gamma)^\circ\cap\Lambda$ be given by  Proposition \ref{RegularCentralizer}. Then $\hat\rho|_{\hat\Sigma}$ has no rank-one algebraic factor.
Since
%As remarked in the proof of Proposition \ref{RegularCentralizer},
 $\hat\gamma\in Z_\bfH(\hat\gamma)^\circ$ and  as $\hat\Sigma$ is of finite index in $Z_\bfH(\hat\gamma)^\circ\cap\Lambda$, a non-trivial power $\hat\gamma^k$ is in $\hat\Sigma$.  Then $\hat\alpha(\hat\gamma^k)$ is an Anosov diffeomorphism as $\hat\alpha(\hat\gamma)$ is.

Consider $\Sigma=\phi(\hat\Sigma)\subset\Gamma$, then as $\hat\rho|_{\hat\Sigma}$ acts through $\rho|_\Sigma$, it has no rank-one algebraic factor actions. Moreover, $\alpha(\gamma)$ is Anosov for $\gamma=\phi(\hat\gamma)\in\Sigma$.\end{proof}

%By the discussion in Section \ref{sec:SmoothReduction}, Theorem \ref{SmoothThm} follows.

\section{Cohomological obstruction to lifting actions on nilmanifolds}\label{sec:Lifting}

\def\inn{\mathrm{Inn}}
\def\Out{\mathrm{Out}}
\def\out{\Out }
\def\Aut{\mathrm{Aut}}
\def\aut{\Aut }

In this section, we justify Remark \ref{LiftingRmk}, which in turn gives Corollary \ref{AnosovLifting}.

Let $M$ be a finite CW complex.  Let $\Lambda_M= \pi_1(M)/K$ be a quotient  of the fundamental group of $M$.  Let $\td M$ be the normal cover of $M$ with deck transformation group $\Lambda_M$.   %We write the action of the deck group on the right.
Let $\Gamma$ be a finitely generated discrete group and let $\alpha \colon \Gamma \to \Homeo(M)$ be an action.
Given $\gamma\in \Gamma$  select an arbitrary lift $\td \alpha(\gamma)\colon \td M\to \td M$ of $\alpha(\gamma)\colon M\to M$.  Given $\lambda\in \Lambda_M$   select any  $x\in \td M$ and  let $\td \alpha(\gamma)_*\colon \Lambda_M \to \Lambda_M$ be such that
\begin{equation}\label{eq:kkkkkkkllllllloooooooo} \td \alpha(x\lambda) = \td \alpha(x) \td \alpha(\gamma) _*(\lambda) .\end{equation}
Note $\alpha_*(\lambda) $ is independent of the choice of $x$ by continuity.

Given a second lift $\td\alpha'(\gamma) $ there is some $\lambda'\in \Lambda_M$ so that  for all $x$,
	$$\td\alpha'(\gamma)(x) = \td \alpha(\gamma)(x)\lambda'.$$
Then \begin{equation}\label{eq:eeeerrrrrssssttttta}
\td \alpha'(x\lambda) = \td \alpha(\gamma) (x\lambda)\lambda'=
\td \alpha(\gamma) (x) \td \alpha(\gamma) _*(\lambda) \lambda'= \td \alpha'(\gamma) (x) (\lambda')\inv\td \alpha(\gamma) _*(\lambda) \lambda'.\end{equation}
It follows that $\td \alpha(\gamma)_*$ is defined up to $\inn(\Lambda_M)$.  We thus obtain a well-defined homomorphism $\alpha_\#\colon \Gamma \to \Out(\Lambda_M).$

Let $N$ be a simply connected, $m$-dimensional, nilpotent Lie group and let $\Lambda\subset N$ be a lattice.  Let $P_*\colon \Lambda_M\to \Lambda$ be a surjective homomorphism.  As the kernel of $P_*$ is normal in $\Lambda_M$, whether or not $\alpha_\#$ preserves the kernel of $P_*$ is well defined.  We assume for the remainder that $ \alpha_\#$ preserves this kernel.  Replacing $\Lambda_M$ with $\Lambda_M/(\mathrm{Ker}(P_*))$ if necessary, we may further assume    that $P_*$ is an isomorphism.  We then identify $\Lambda_M$ and $\Lambda$ for the remainder
%$P_*$ then induces and action $\rho_\#\colon \Gamma \to \Out (\Lambda)$.
and continue to write  $\td M$ for the normal cover of $M$ with deck group $\Lambda$.  Recall $P\colon M\to N/\Lambda$ is the continuous map induced by $P_*$.

 Recall we have a series of central extensions in \eqref{eq:centralextensionN} and \eqref{eq:centralextensionL}.
%\begin{equation}\label{eq:centralextensionN} N= N_1 \to N_2 \to \dots \to N_{r-1} \to N_r = \{e\}\end{equation}
%and
%\begin{equation}\label{eq:centralextensionL} \Lambda= \Lambda_1 \to \Lambda_2\to \dots\to \Lambda_{r-1} \to \Lambda_r = \{e\}.\end{equation}
Let $Z_i $ denote the kernel of $N_i\to N_{i+1}$.  Then $Z_i \cap \Lambda_i\simeq \Z^{d_i}$ is the center of $\Lambda_i$ and is the kernel of each map $\Lambda_i \to \Lambda_{i+1}  $.
As each $Z_i\cap \Lambda_i$ is the center of $\Lambda_i$, an element $\psi\in \out (\Lambda_i)$  restricts to an automorphism $\restrict{\psi}{Z_i\cap \Lambda_i} \in \aut(Z_i\cap \Lambda_i)$.
Moreover, as automorphisms fix centers, % and as each $Z_i\cap \Lambda_i$ is normal in $\Lambda_i$,
we have natural maps
$$\out(\Lambda_0) \to \out(\Lambda_1) \to \dots \to\Out(\Lambda_{r-1}) .$$
In particular, $\alpha_\#$ induces representations $\alpha_{\#, i} \colon \Gamma \to \Aut(Z_i\cap \Lambda_i)=\GL(d_i,\bZ)$.

We have the following proposition  which guarantees the action   $\alpha$ lifts to an action of $\homeo(\td M) $   given the vanishing of certain cohomological obstructions.

\begin{proposition}\label{prop:lifting2}
%Let $M= N/\Lambda$ be a compact nilmanifold of dimension $m$.  Let $\Gamma$ be a finitely generaed discrete group and let $\alpha\colon \Gamma \to  \Homeo(M)$ be an action.
 Suppose for every
representation $\alpha_{\#, i} \colon \Gamma \to \Aut(Z_i\cap \Lambda_i)$,  the cohomology group $H^2_{\alpha_{\#, i}}(\Gamma,\bR^{d_i})$ is trivial.
Then, there is a finite-index subgroup $\Gamma'\subset \Gamma$ such the restricted action $\alpha \colon \Gamma'\to \Homeo(M)$ lifts to an action $\td \alpha \colon \Gamma'\to \Homeo (\td M) $.
\end{proposition}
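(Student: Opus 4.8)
The strategy is to lift the action one central extension at a time, working up the tower \eqref{eq:centralextensionL}. At each stage $i$, we have already produced (after passing to a finite-index subgroup) a lift $\td\alpha_i$ of the induced action on $M_i = N_i/\Lambda_i$; the task is to lift this further to the cover $M_{i-1}\to M$ whose deck group is $\Lambda_{i-1}$, an extension of $\Lambda_i$ by the central $\Z^{d_{i-1}}$. Equivalently, using that $N_{i-1}/\Lambda_{i-1}$ is a principal $\T^{d_{i-1}}$-bundle over $N_i/\Lambda_i$, the obstruction to extending the lift across this $\T^{d_{i-1}}$-bundle is a class in a group cohomology $H^2$ with coefficients in $Z_{i-1}\cap\Lambda_{i-1}\simeq\Z^{d_{i-1}}$, twisted by the representation $\alpha_{\#,i-1}$. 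So the first step is to set up the inductive framework and reduce Proposition \ref{prop:lifting2} to the single-step lifting problem for a central $\T^{d}$-extension.

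\textbf{Key steps.} (1) \emph{Base case:} the action on $M_{r-1}$ is an action on a torus $\T^{d_{r-1}}$ (since $\Lambda_{r-1}$ is abelian), and lifting a $\Gamma$-action on $\T^d$ to $\R^d$ is governed by a class in $H^2_{\alpha_{\#,r-1}}(\Gamma,\Z^d)$; its image in $H^2_{\alpha_{\#,r-1}}(\Gamma,\R^d)$ is the obstruction to lifting on a finite-index subgroup, and this vanishes by hypothesis, so after passing to a finite-index subgroup the class in $H^2(\Gamma,\Z^d)$ is torsion, hence — again passing to a further finite-index subgroup, using that $\Gamma$ has finite-index subgroups killing torsion classes, or directly that a torsion class in $H^2(\Gamma,\Z^d)$ pulls back to $0$ on a suitable finite-index subgroup — it vanishes. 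This is essentially the argument already sketched in the paragraph following Theorem \ref{ConjThm}. (2) \emph{Inductive step:} given a lift $\td\alpha$ to $\homeo(\td M_i)$, one wants to lift to $\homeo(\td M_{i-1})$. Choose for each generator $\gamma_\ell$ an arbitrary lift $\bar\alpha(\gamma_\ell)$ of $\td\alpha_i(\gamma_\ell)$ across the principal $\T^{d_{i-1}}$-bundle $M_{i-1}\to M_i$ (possible because the bundle is pulled back $\Gamma$-equivariantly, or simply because $\R^{d_{i-1}}\to\T^{d_{i-1}}$ has the lifting property for the relevant maps on the universal cover). The failure of $\gamma\mapsto\bar\alpha(\gamma)$ to be a homomorphism is measured, on the $\Z^{d_{i-1}}$-level, by a $2$-cocycle $c\colon\Gamma\times\Gamma\to\Z^{d_{i-1}}$ with the twisting $\alpha_{\#,i-1}$; adjusting the lifts $\bar\alpha$ corresponds to changing $c$ by a coboundary. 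Its class in $H^2_{\alpha_{\#,i-1}}(\Gamma,\R^{d_{i-1}})$ vanishes by hypothesis, so as in the base case, after passing to a finite-index subgroup the integral class vanishes, yielding a genuine lift $\td\alpha\colon\Gamma'\to\homeo(\td M_{i-1})$. (3) \emph{Conclusion:} iterating from $i=r-1$ down to $i=0$ and intersecting the finitely many finite-index subgroups produced gives a single finite-index $\Gamma'\subset\Gamma$ and a lift $\td\alpha\colon\Gamma'\to\homeo(\td M)$.

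\textbf{Main obstacle.} The conceptually delicate point is the passage from the vanishing of the obstruction in real cohomology $H^2_{\alpha_{\#,i}}(\Gamma,\R^{d_i})$ to its vanishing in integral cohomology $H^2_{\alpha_{\#,i}}(\Gamma,\Z^{d_i})$ after restricting to a finite-index subgroup — one must argue that a torsion twisted cohomology class of a finitely generated group dies on a finite-index subgroup, and that the relevant exact sequence $H^1(\Gamma,\T^{d_i})\to H^2(\Gamma,\Z^{d_i})\to H^2(\Gamma,\R^{d_i})$ behaves as expected under this twisting. Here one invokes the criteria of \cite{MR0228504} and \cite{MR642850} cited in the introduction, which is precisely where those references enter. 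A secondary bookkeeping issue is organizing the induction so that the lift of the $\Gamma$-action on $\td M_i$ — not merely on $M_i$ — is available at each stage, i.e.\ carrying along the compatibility between the chosen lifts at consecutive levels; this is routine but must be stated carefully, exactly parallel to the $\Lambda_M$-equivariant bookkeeping in Section \ref{sec:4}.
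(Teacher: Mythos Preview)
Your proposal is correct and follows essentially the same inductive strategy as the paper: lift one central extension at a time, identify the obstruction at each stage as a $2$-cocycle $\beta_i$ over $\alpha_{\#,i}$ with values in $Z_i\cap\Lambda_i\simeq\Z^{d_i}$, and use the vanishing of $H^2_{\alpha_{\#,i}}(\Gamma,\R^{d_i})$ to kill the integral class after passing to a finite-index subgroup.

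One correction on the role of the references: \cite{MR0228504} and \cite{MR642850} are \emph{not} invoked in the proof of the proposition itself; they are used only to verify the \emph{hypothesis} $H^2_{\alpha_{\#,i}}(\Gamma,\R^{d_i})=0$ in the specific cases of Remark~\ref{LiftingRmk}. The passage from real to integral vanishing on a finite-index subgroup is handled by a self-contained homological argument (Claim~\ref{claim:vanish} in the paper): since $H^2_{\alpha_{\#,i}}(\Gamma,\Q^{d_i})=H^2_{\alpha_{\#,i}}(\Gamma,\Z^{d_i})\otimes\Q=0$, the long exact sequence from $0\to\Z\to\Q\to\Q/\Z\to 0$ shows $\beta_i$ is the image of some $\eta\in H^1_{\alpha_{\#,i}}(\Gamma,(\Q/\Z)^{d_i})$; as $\Gamma$ is finitely generated, $\eta$ takes values in a finite subgroup $(\frac1q\Z/\Z)^{d_i}$, and $\eta^{-1}(0)$ is the desired finite-index subgroup. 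Your exact sequence with $\T^{d_i}$ in place of $(\Q/\Z)^{d_i}$ works just as well, but be explicit that finite generation of $\Gamma$ is what forces the $1$-cocycle to land in a finite subgroup.
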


In particular,  whether or not the action $\alpha\colon \Gamma \to \homeo(M)$ lifts (when restricted to a finite-index subgroup) is determined only by the data of the linear representations $\alpha_{\#, i}$ associated to $\alpha$.

%
%\begin{proposition}\label{prop:lifting}
%Suppose for all $d\le m$, and every representation $\rho\colon  \Gamma \to \GL(d, \Z)$, the cohomology group $H^2_\rho(\Gamma; \R^d)$ is trivial.
%
%Then there is a finite-index subgroup $\Gamma'\subset \Gamma$ such the restricted action $\alpha \colon \Gamma'\to \Homeo(M)$ lifts to an action $\td \alpha \colon \Gamma'\to \Homeo (\td M)$.
%\end{proposition}
%
%Note that we don't actually need the finiteness of cohomology for all representations; rather we need the vanishing of certain cohomology classes for specific representations constructed from the action $\rho_\#$.  See Proposition \ref{prop:lifting2} for a stronger refined statement.

The vanishing of $H^2_{\rho}(\Gamma; \bR^d)$ has been studied in \cite{MR642850} and \cite{MR0228504}. In particular, it is known to vanish in   cases %\ref{liftrem:1} and
  \ref{liftrem:3}  of Remark \ref{LiftingRmk};   case  \ref{liftrem:2}  follows from computations in \cite{MR1866848}. Cases  \ref{liftrem:4} and  \ref{liftrem:5}  will be discussed in Section \ref{sec:LiftingMeasure}.

\subsection{Candidate liftings and  defect functional}\label{sec:candidatelifting}Recall we identify $\Lambda\subset N$ with the deck group of the cover $\td M\to M$.
For $\gamma \in \Gamma$ consider an arbitrary lift $\td \alpha(\gamma) \colon \Gamma \to \Homeo(\td M)$.  The collection of lifts $\{\td \alpha(\gamma) :\gamma\in \Gamma \}$ need not assemble into an action.
The defect of the  lifts  $\{\td \alpha(\gamma)\}$ forming a coherent action is measured by  the associated defect functional $\beta \colon \Gamma \times \Gamma \to \Lambda$ defined      by
\begin{equation}\label{eq:defect} \td \alpha(\gamma_1)(\td \alpha(\gamma_2)(x)) \beta(\gamma_1, \gamma_2)=  \td \alpha(\gamma_1\gamma_2)(x) .\end{equation}
Note that the value $\beta(\gamma_1, \gamma_2)$ is independent of the choice of $x$ by continuity.  % and discreteness of $\Lambda$.
Clearly,
\begin{claim}
$\alpha \colon \Gamma \to \homeo(M)$ lifts to an action $\td \alpha \colon \Gamma \to \homeo(N)$ if and only if the lifts $\td \alpha(\gamma)$ above can be chosen so that $\beta \equiv e$.
\end{claim}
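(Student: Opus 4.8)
The plan is to simply unwind the definition \eqref{eq:defect} of the defect functional; the claim is essentially a tautology once one keeps track of what ``the action lifts'' means. First I would record the easy direction: suppose $\alpha\colon\Gamma\to\homeo(M)$ lifts to an action $\td\alpha\colon\Gamma\to\Homeo(\td M)$ (equivalently $\Homeo(N)$ when $M=N/\Lambda$), i.e.\ a homomorphism with $\pi\circ\td\alpha(\gamma)=\alpha(\gamma)\circ\pi$ for the covering projection $\pi\colon\td M\to M$. Then for each $\gamma$ the map $\td\alpha(\gamma)$ is in particular a valid lift of $\alpha(\gamma)$, so it is one of the ``arbitrary lifts'' considered above; and since $\td\alpha$ is a homomorphism, both sides of \eqref{eq:defect} evaluate to $\td\alpha(\gamma_1\gamma_2)$, forcing $\beta(\gamma_1,\gamma_2)=e$ for all $\gamma_1,\gamma_2$. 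Thus this choice of lifts realizes $\beta\equiv e$.

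For the converse I would start from a choice of lifts $\{\td\alpha(\gamma):\gamma\in\Gamma\}$ whose defect functional vanishes, which by \eqref{eq:defect} (and the already-noted base-point independence of the value $\beta(\gamma_1,\gamma_2)$, coming from continuity and connectedness of $\td M$) means exactly $\td\alpha(\gamma_1)\circ\td\alpha(\gamma_2)=\td\alpha(\gamma_1\gamma_2)$ for all $\gamma_1,\gamma_2\in\Gamma$. Taking $\gamma_1=\gamma_2=e$ gives that $\td\alpha(e)$ is an idempotent homeomorphism of $\td M$, hence $\td\alpha(e)=\id_{\td M}$; then $\td\alpha(\gamma)\circ\td\alpha(\gamma^{-1})=\td\alpha(e)=\id$ and likewise on the other side, so each $\td\alpha(\gamma)$ is a homeomorphism with inverse $\td\alpha(\gamma^{-1})$. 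Therefore $\gamma\mapsto\td\alpha(\gamma)$ is a group homomorphism $\Gamma\to\Homeo(\td M)$, and because every $\td\alpha(\gamma)$ was chosen as a lift of $\alpha(\gamma)$ we have $\pi\circ\td\alpha(\gamma)=\alpha(\gamma)\circ\pi$; that is, $\td\alpha$ is a lift of the action $\alpha$, as required.

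I do not expect any genuine obstacle here; the proof is a one-line manipulation in each direction. The only two points deserving a word of care are (i) the independence of $\beta(\gamma_1,\gamma_2)$ from the auxiliary point $x$ in \eqref{eq:defect} — already observed in the text and a consequence of continuity plus connectedness of the cover — and (ii) that a lift through $\pi$ of a homeomorphism of $M$ is automatically a homeomorphism of $\td M$, which is standard covering-space theory but in our situation also falls out of the identity $\td\alpha(\gamma)\circ\td\alpha(\gamma^{-1})=\id$ derived above. The finer structure of $\beta$ as a (twisted) $2$-cocycle, and the way its cohomology class governs the lifting problem, is not needed for this claim; that is the content developed afterward and used in Proposition \ref{prop:lifting2}, so I would leave it to the surrounding discussion.
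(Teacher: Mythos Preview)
Your proposal is correct; the paper does not even supply a proof of this claim, prefacing it with ``Clearly,'' since it is indeed just an unwinding of the definition. Your careful verification of both directions is exactly what is implicit in that word.
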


 We consider the range of $\beta$ modulo the kernel of the map $\Lambda\to \Lambda_i$.  Write $\Delta_i$ for the kernel of this  projection.
Let $\beta_i \colon \Gamma \times \Gamma \to \Lambda_i$ denote the induced map $\beta_i(\gamma_1, \gamma_2) = \beta(\gamma_1, \gamma_2)  \bmod \Delta_i.$
Below, we will have the inductive hypothesis that $\beta_{i+1}\equiv e$.  Note that this ensures that $\beta_{i}$ takes values in  $(Z\cap \Lambda_{i})\simeq \Z^{d_{i}}$.  In particular,  if $\beta_{i+1}\equiv e$ then the representation $\alpha_{\#, i}\colon \Gamma \to \Out(\Lambda_i)$ induces a linear representation $\alpha_{\#, i}\colon \Gamma \to \GL(d_i, \Z)$ on the range of $\beta_i$.

We recall the definition of group cohomology for non-trivial representations. Let $V$ be an abelian group and let $\psi\colon \Gamma\to\Aut(V)$ be a $\Gamma$-action on $V$.   % by group automorphisms.
 Denote by $C^k(\Gamma, V) $ the space of functions from $\Gamma^{k }$ to $V$. Define a map $d_{\psi,k}\colon C^k(\Gamma,V)\to C^{k+1}(\Gamma,V)$ by
\begin{align*}
&d_{\psi,k}f(\gamma_1,\cdots,\gamma_{k+1})\\
=&\psi(\gamma_1).f(\gamma_2,\cdots,\gamma_{k+1})+\sum_{j=1}^k(-1)^{j } f(\gamma_1,\dots,\gamma_{j-1},\gamma_j\gamma_{j+1},\gamma_{j+2},\dots,\gamma_{k+1})\\
&+(-1)^{k+1}f(\gamma_1,\dots,\gamma_k).
\end{align*}
It is a standard fact that $d_{\psi,k+1}\circ d_{\psi,k}=0$ and thus
$$0\to C(\Gamma,V)\xrightarrow{d_{\psi,1}}C(\Gamma^2,V)\xrightarrow{d_{\psi,2}}C(\Gamma^3,V)
\xrightarrow{d_{\psi,3}}\cdots$$
forms a cochain complex, denoted by $X_\psi(\Gamma,V)$. The group cohomology $H_\psi^\bullet(\Gamma,V)$ consists of the homology groups of $X_\psi(\Gamma,V)$. $f\in C^k(\Gamma, V)$ is called a $k$-cocyle if $f$ is in the kernel of $d_{\psi,k}$ and a $k$-coboundary if $f$ is in the image of $d_{\psi,k-1}$.

\begin{claim}\label{claim:2cocycle} Assume $\beta_{i+1}\equiv e$.  Then
$\beta_{i}$ is 2-cocycle over the representation $ \alpha_{\#, i}$.
\end{claim}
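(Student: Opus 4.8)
The plan is to verify directly that $\beta_i$ satisfies the cocycle identity $d_{\alpha_{\#,i},2}\beta_i = 0$ in $C^3(\Gamma, \mathbf{Z}^{d_i})$, using the associativity of composition of the lifts $\td\alpha(\gamma)$ together with the inductive hypothesis $\beta_{i+1}\equiv e$. First I would record the basic consequence of $\beta_{i+1}\equiv e$: modulo $\Delta_{i+1}$ the lifts $\td\alpha(\gamma)$ compose coherently, so $\beta(\gamma_1,\gamma_2)$ always lies in $\Delta_{i+1}/\Delta_i$, which under the identification of Section~\ref{sec:Lifting} is exactly $Z\cap\Lambda_i \simeq \mathbf{Z}^{d_i}$, a \emph{central} (hence abelian) subgroup on which $\alpha_{\#,i}$ acts linearly. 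Thus $\beta_i$ is a well-defined map $\Gamma\times\Gamma\to\mathbf{Z}^{d_i}$ and it makes sense to ask whether it is a $2$-cocycle.

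The heart of the argument is the standard associativity computation. Apply \eqref{eq:defect} to the triple $\gamma_1,\gamma_2,\gamma_3$ in two ways: grouping as $(\gamma_1\gamma_2)\gamma_3$ gives
$$\td\alpha(\gamma_1\gamma_2)(\td\alpha(\gamma_3)(x))\,\beta(\gamma_1\gamma_2,\gamma_3) = \td\alpha(\gamma_1\gamma_2\gamma_3)(x),$$
and expanding $\td\alpha(\gamma_1\gamma_2)$ via \eqref{eq:defect} again and grouping as $\gamma_1(\gamma_2\gamma_3)$ yields a second expression for $\td\alpha(\gamma_1\gamma_2\gamma_3)(x)$. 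Comparing the two, and using that applying the homeomorphism $\td\alpha(\gamma_1)$ to a point of the form $y\cdot\lambda$ produces $\td\alpha(\gamma_1)(y)\cdot\td\alpha(\gamma_1)_*(\lambda)$ as in \eqref{eq:kkkkkkkllllllloooooooo}, one obtains the relation
$$\td\alpha(\gamma_1)_*\big(\beta(\gamma_2,\gamma_3)\big)\,\beta(\gamma_1,\gamma_2\gamma_3) = \beta(\gamma_1,\gamma_2)\,\beta(\gamma_1\gamma_2,\gamma_3)$$
in $\Lambda$. Now reduce modulo $\Delta_i$. Since $\beta_i$ lands in the center $Z\cap\Lambda_i$, the group operation there is written additively, $\td\alpha(\gamma_1)_*$ descends to $\alpha_{\#,i}(\gamma_1)$ (well-defined on the center, independent of the choice of lift since inner automorphisms act trivially on the center), and the relation becomes
$$\alpha_{\#,i}(\gamma_1)\cdot\beta_i(\gamma_2,\gamma_3) - \beta_i(\gamma_1\gamma_2,\gamma_3) + \beta_i(\gamma_1,\gamma_2\gamma_3) - \beta_i(\gamma_1,\gamma_2) = 0,$$
which is precisely $d_{\alpha_{\#,i},2}\beta_i(\gamma_1,\gamma_2,\gamma_3)=0$ with the sign conventions of the complex $X_{\alpha_{\#,i}}(\Gamma, \mathbf{Z}^{d_i})$ defined above.

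The only genuinely delicate point — and the one I would write out carefully — is the bookkeeping that justifies replacing $\td\alpha(\gamma_1)_*$ by the \emph{linear} representation $\alpha_{\#,i}(\gamma_1)$ on the range of $\beta_i$: this requires knowing both that $\beta_i$ takes values in the central subgroup $Z\cap\Lambda_i$ (so that the ambiguity of $\td\alpha(\gamma_1)_*$ modulo $\mathrm{Inn}(\Lambda_i)$ disappears) and that this is genuinely the representation called $\alpha_{\#,i}$ in the statement. Everything else is the routine $2$-cocycle verification. One subtlety to watch is that the $\beta(\gamma_2,\gamma_3)$ term is the argument being hit by $\td\alpha(\gamma_1)_*$ while $\beta(\gamma_1,\gamma_2)$ is not — this asymmetry is exactly what matches the first and last terms of $d_{\psi,2}$ and is forced by which lift sits on the outside in \eqref{eq:defect}; I would make sure the order of composition in \eqref{eq:defect} is tracked consistently so the signs come out right.
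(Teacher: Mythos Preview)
Your proposal is correct and follows essentially the same approach as the paper: both derive the identity $\td\alpha(\gamma_1)_*\beta(\gamma_2,\gamma_3)\,\beta(\gamma_1,\gamma_2\gamma_3)\,\beta(\gamma_1\gamma_2,\gamma_3)^{-1}\,\beta(\gamma_1,\gamma_2)^{-1}=e$ from associativity of the triple composition via \eqref{eq:defect} and \eqref{eq:kkkkkkkllllllloooooooo}, then reduce modulo $\Delta_i$ using centrality to obtain the $2$-cocycle equation. The paper presents this as a single chain of equalities starting and ending at $\td\alpha(\gamma_1)\circ\td\alpha(\gamma_2)\circ\td\alpha(\gamma_3)(x)$, while you phrase it as comparing the two parenthesizations of $\td\alpha(\gamma_1\gamma_2\gamma_3)(x)$, but the content is identical.
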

\begin{proof}

For $\gamma_1, \gamma_2, \gamma_3\in \Gamma$ we have that
\begin{align*}
\td \alpha(\gamma_1)\circ \td \alpha(\gamma_2)\circ &\td \alpha(\gamma_3)(x)
\td \alpha(\gamma_1)_*\beta(\gamma_2, \gamma_3)
\beta(\gamma_1, \gamma_2\gamma_3)
\beta(\gamma_1 \gamma_2,\gamma_3)\inv
\beta(\gamma_1, \gamma_2)\inv\\
&= \td \alpha(\gamma_1)\left( \td \alpha(\gamma_2)\circ \td \alpha(\gamma_3)(x)
\beta(\gamma_2, \gamma_3)\right)
\beta(\gamma_1, \gamma_2\gamma_3)
\beta(\gamma_1 \gamma_2,\gamma_3)\inv
\beta(\gamma_1, \gamma_2)\inv\\
&= \td \alpha(\gamma_1)\left( \td \alpha(\gamma_2 \gamma_3)(x)
 \right)
\beta(\gamma_1, \gamma_2\gamma_3)
\beta(\gamma_1 \gamma_2,\gamma_3)\inv
\beta(\gamma_1, \gamma_2)\inv\\
&= \td \alpha(\gamma_1 \gamma_2 \gamma_3)(x)
\beta(\gamma_1 \gamma_2,\gamma_3)\inv
\beta(\gamma_1, \gamma_2)\inv\\
&= \td \alpha(\gamma_1 \gamma_2 )\circ \td \alpha(\gamma_3)(x)
\beta(\gamma_1, \gamma_2)\inv\\
&= \td \alpha(\gamma_1) \circ \td \alpha( \gamma_2 )\circ \td \alpha(\gamma_3)(x)
\end{align*}
whence
$$\td \alpha(\gamma_1)_*\beta(\gamma_2, \gamma_3)
\beta(\gamma_1, \gamma_2\gamma_3)
\beta(\gamma_1 \gamma_2,\gamma_3)\inv
\beta(\gamma_1, \gamma_2)\inv \equiv e.$$
Taken modulo $\Delta_i$ the terms commute and
\begin{align*}
d_{(\alpha_{\#,i}),2} \beta_{i}&(\gamma_1, \gamma_2,\gamma_3) \\
&=
{\alpha_{\#,i}}(\gamma_1)\beta_{i}(\gamma_2,\gamma_3)\left[\beta_i(\gamma_1\gamma_2,\gamma_3)\right]\inv\beta_{i}(\gamma_1,\gamma_2\gamma_3) \left[ \beta_{i}(\gamma_1,\gamma_2)\right]\inv\\
&=\left(\td \alpha(\gamma_1)_*\beta(\gamma_2, \gamma_3)
\beta(\gamma_1, \gamma_2\gamma_3)
\beta(\gamma_1 \gamma_2,\gamma_3)\inv
\beta(\gamma_1, \gamma_2)\inv   \right) \mod \Delta_i\\
&= e.\qedhere\end{align*}
\end{proof}

\subsection {Vanishing of defect  given vanishing of cohomology}

 Proposition \ref{prop:lifting2}   follows from    the following lemma.
\begin{lemma}\label{lem:inductivelifting}
Suppose for some $1\le i\le r$  the lifts $\{\td \alpha(\gamma):\gamma\in \Gamma\}$ are chosen so that $\beta_{i+1}\equiv e$. % for all $i+1 \le k\le r$.
  Then, under the hypotheses of Proposition \ref{prop:lifting2}, there is a finite-index subgroup $\hat \Gamma\subset \Gamma$ and  choice of lifts
$\{\hat  \alpha(\gamma):\gamma\in \hat \Gamma\}$ so that, for the new defect functional defined by $$\hat  \alpha(\gamma_1)(\hat \alpha(  \gamma_2)(x)) \hat \beta(\gamma_1, \gamma_2) = \left[ \hat \alpha(\gamma_1 \gamma_2)(x)\right ],$$
$\hat \beta_i \equiv e$.  %vanishes. %  for all
%$i \le k\le r$.
\end{lemma}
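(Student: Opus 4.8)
The plan is to reduce the statement to a cohomological fact about the $2$-cocycle $\beta_i$, and then to clear denominators and run a finite-orbit argument. First I would record how the defect functional changes when the lifts are modified: if we replace $\{\td\alpha(\gamma)\}$ by $\hat\alpha(\gamma)(x)=\td\alpha(\gamma)(x)\cdot c(\gamma)$ for an arbitrary set-map $c\colon\Gamma\to\Lambda$, then expanding \eqref{eq:defect} and applying \eqref{eq:kkkkkkkllllllloooooooo} yields
$$\hat\beta(\gamma_1,\gamma_2)=c(\gamma_1)^{-1}\big(\td\alpha(\gamma_1)_*(c(\gamma_2))\big)^{-1}\beta(\gamma_1,\gamma_2)\,c(\gamma_1\gamma_2).$$
The subgroup $\Delta_{i+1}=\ker(\Lambda\to\Lambda_{i+1})$ is the preimage in $\Lambda$ of the center $Z_i\cap\Lambda_i$ of $\Lambda_i$, and it is characteristic in $\Lambda$ (the $(i+1)$st term of the upper central series); so if $c$ takes values in $\Delta_{i+1}$, the displayed identity shows that $\hat\beta_{i+1}\equiv e$ is preserved, while modulo $\Delta_i$ all four factors lie in the abelian group $Z_i\cap\Lambda_i\cong\bZ^{d_i}$ and $\td\alpha(\gamma_1)_*$ restricts there to $\alpha_{\#,i}(\gamma_1)$ (inner automorphisms fix centers), giving $\hat\beta_i=\beta_i-d_{\alpha_{\#,i},1}(c\bmod\Delta_i)$. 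Since $\beta_i$ is a $2$-cocycle over $\alpha_{\#,i}$ by Claim \ref{claim:2cocycle}, it therefore suffices to find a finite-index $\hat\Gamma\subset\Gamma$ on which $\beta_i$ is an integral coboundary — i.e.\ $[\beta_i|_{\hat\Gamma}]=0$ in $H^2_{\alpha_{\#,i}}(\hat\Gamma,\bZ^{d_i})$ — and to transport its integral primitive back up along the surjection $\Delta_{i+1}\twoheadrightarrow Z_i\cap\Lambda_i$.

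Next I would make the hypothesis $H^2_{\alpha_{\#,i}}(\Gamma,\bR^{d_i})=0$ usable over $\bZ$. Using that $\Gamma$ is finitely presented, hence of type $\mathrm{FP}_2$ (true for every $\Gamma$ treated in this paper), $H^2_{\alpha_{\#,i}}(\Gamma,\bZ^{d_i})$ is a finitely generated abelian group, and tensoring a finite free $\bZ[\Gamma]$-resolution with $\bR$ gives $H^2_{\alpha_{\#,i}}(\Gamma,\bR^{d_i})\cong H^2_{\alpha_{\#,i}}(\Gamma,\bZ^{d_i})\otimes_\bZ\bR$. Vanishing of the left side makes $[\beta_i]$ a torsion class: $n\beta_i=d_{\alpha_{\#,i},1}c'$ for some $n\ge1$ and some $c'\colon\Gamma\to\bZ^{d_i}$. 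Put $u:=\tfrac1n c'\colon\Gamma\to\tfrac1n\bZ^{d_i}$, so $d_{\alpha_{\#,i},1}u=\beta_i$; as $\beta_i$ is integer-valued, $\bar u:=u\bmod\bZ^{d_i}$ is a $1$-cocycle over $\alpha_{\#,i}$ with values in the finite group $A:=(\tfrac1n\bZ/\bZ)^{d_i}$, which is $\alpha_{\#,i}(\Gamma)$-invariant because $\alpha_{\#,i}(\Gamma)\subset\GL(d_i,\bZ)$.

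The decisive step, and the one I expect to be the crux, is the finite-orbit reduction. The cocycle relation $\bar u(\gamma_1\gamma_2)=\alpha_{\#,i}(\gamma_1)\bar u(\gamma_2)+\bar u(\gamma_1)$ is exactly the statement that $\gamma\cdot a:=\alpha_{\#,i}(\gamma)(a)+\bar u(\gamma)$ is an action of $\Gamma$ on the finite set $A$; I would take $\hat\Gamma:=\Stab(0)$, which has finite index by orbit--stabilizer. For $\gamma\in\hat\Gamma$ we have $\bar u(\gamma)=0$, i.e.\ $u(\gamma)\in\bZ^{d_i}$, so $c_i:=u|_{\hat\Gamma}$ is a map $\hat\Gamma\to\bZ^{d_i}\cong Z_i\cap\Lambda_i$ with $d_{\alpha_{\#,i},1}c_i=\beta_i|_{\hat\Gamma}$. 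Finally I would lift $c_i$ along $\Delta_{i+1}\twoheadrightarrow Z_i\cap\Lambda_i$ to a set-map $c\colon\hat\Gamma\to\Delta_{i+1}\subset\Lambda$, set $\hat\alpha(\gamma):=\td\alpha(\gamma)\cdot c(\gamma)$ for $\gamma\in\hat\Gamma$, and read off from the transformation formula of the first step that $\hat\beta_i\equiv e$ (and simultaneously $\hat\beta_{i+1}\equiv e$), which is the assertion of the lemma. Apart from the finite-orbit trick, the only points requiring care are the bookkeeping identification of $\Delta_{i+1}$ with the preimage of $Z_i\cap\Lambda_i$ — needed precisely so that the correction is central at level $i$ yet invisible at level $i+1$ — and the passage from $\bR$- to $\bZ$-coefficients, for which the finiteness of $\Gamma$ is used; the denominator clearing is then routine.
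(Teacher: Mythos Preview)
Your proposal is correct and follows essentially the same route as the paper: reduce to showing $[\beta_i]$ is integral torsion via $H^2(\Gamma,\bZ^{d_i})\otimes\bR\cong H^2(\Gamma,\bR^{d_i})=0$, then pass to a finite-index subgroup on which the rational primitive becomes integral, and finally correct the lifts by (a lift of) this primitive. Your affine-action/stabilizer formulation of the finite-index step is exactly the paper's observation that the zero set of the $(\tfrac1n\bZ/\bZ)^{d_i}$-valued $1$-cocycle is a finite-index subgroup, and your explicit transformation formula for $\hat\beta$ is the same computation the paper carries out in its final claim; if anything, your remark that one needs $\Gamma$ of type $\mathrm{FP}_2$ (rather than merely finitely generated) for finite generation of $H^2$ is slightly more careful than the paper's phrasing.
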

%(Note that if $\hat \beta_i$ vanishes then $\hat \beta_{k}$ vanishes for $k\ge i$.)

Clearly $\beta_r\equiv e$   for any choice of lifts $\{\td \alpha (\gamma):\gamma \in \Gamma\}$.   Proposition \ref{prop:lifting2} then follows from finite induction using Lemma \ref{lem:inductivelifting}.

In order to prove Lemma \ref{lem:inductivelifting}, we recall some elementary properties of group cohomology. First, note that if $\psi$ is a morphism into $\Aut(\bZ^d)=\GL(d,\bZ)$ then, for any abelian group $A$,  $\psi$ induces a morphism $\Gamma\to\Aut(A^d)$ which we still denote  by $\psi$. Moreover, we have the relation $$X_\psi(\Gamma,A^d)=X_\psi(\Gamma,\bZ^d)\otimes_\bZ A$$ between cochain complexes.
Any short exact sequence $0\to A\to B\to C\to 0$ of abelian groups induces a short exact sequence $$0\to X_\psi(\Gamma,A^d)\to X_\psi(\Gamma,B^d)\to X_\psi(\Gamma,C^d)\to 0$$ of cochain complexes. It follows that there is a long exact sequence
$$\cdots\to H_\psi^k(\Gamma,C^d)\to H_\psi^{k+1}(\Gamma,A^d)\to H_\psi^{k+1}(\Gamma,B^d)\to H_\psi^{k+1}(\Gamma,C^d)\to\cdots$$ between group cohomologies.
Finally, we recall that by the universal coefficients theorem, there is a short exact sequence
$$0\to H_\psi^k(\Gamma,\Z^d)\otimes_\bZ A\to H_\psi^k(\Gamma,A^d)\to\mathrm{Tor}(H_\psi^{k+1}(\Gamma,\bZ^d),A)\to 0.$$ In particular, when $A$ is a flat $\bZ$-module, or equivalently when $A$ is torsion-free, $\mathrm{Tor}(H_\psi^{k+1}(\Gamma,\bZ^d),A)$ vanishes and $H_\psi^k(\Gamma,\Z^d)\otimes_\bZ A\cong H_\psi^k(\Gamma,A^d)$.

\begin{claim}\label{claim:vanish}Under the assumption that $H^2_{\alpha_{\#, i}}(\Gamma;\bR^{d_i})=0$, after restricting to  a finite-index subgroup  $\hat \Gamma \subset \Gamma$, we have that $\beta_i$ vanishes in $ H^2_{\alpha_{\#, i}}(\hat\Gamma;\bZ^{d_i})$.\end{claim}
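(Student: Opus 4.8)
Write $\psi=\alpha_{\#,i}\colon\Gamma\to\GL(d_i,\bZ)$. Under the running hypothesis $\beta_{i+1}\equiv e$, Claim \ref{claim:2cocycle} gives that $\beta_i$ is a $2$-cocycle over $\psi$ with values in $\bZ^{d_i}$, so it defines a class $[\beta_i]\in H^2_\psi(\Gamma,\bZ^{d_i})$. The plan is to lift $[\beta_i]$ to a degree-one cohomology class with \emph{finite} coefficients and then kill that class after restriction to a finite-index subgroup; naturality of the connecting homomorphism will then transport the vanishing back to $[\beta_i]$.

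First I would upgrade the hypothesis to rational coefficients. By the universal-coefficients sequence recalled in the text, applied with the torsion-free module $\bR$, we have $H^2_\psi(\Gamma,\bZ^{d_i})\otimes_\bZ\bR\cong H^2_\psi(\Gamma,\bR^{d_i})=0$; since $G\otimes_\bZ\bR\cong(G\otimes_\bZ\bQ)\otimes_\bQ\bR$ vanishes if and only if $G\otimes_\bZ\bQ$ does, it follows that $H^2_\psi(\Gamma,\bZ^{d_i})\otimes_\bZ\bQ=0$, i.e.\ $H^2_\psi(\Gamma,\bQ^{d_i})=0$ (universal coefficients again, $\bQ$ being torsion-free). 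Now feed the coefficient short exact sequence $0\to\bZ\to\bQ\to\bQ/\bZ\to0$, in its $d_i$-fold form, into the long exact sequence: the segment $H^1_\psi(\Gamma,(\bQ/\bZ)^{d_i})\xrightarrow{\ \delta\ }H^2_\psi(\Gamma,\bZ^{d_i})\to H^2_\psi(\Gamma,\bQ^{d_i})=0$ shows the connecting map $\delta$ is surjective, so $[\beta_i]=\delta(c)$ for some $c\in H^1_\psi(\Gamma,(\bQ/\bZ)^{d_i})$.

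Next I would represent $c$ by a crossed homomorphism $\phi\colon\Gamma\to(\bQ/\bZ)^{d_i}$, that is $\phi(\gamma\gamma')=\phi(\gamma)+\psi(\gamma)\phi(\gamma')$. Since $\Gamma$ is finitely generated and every $\psi(\gamma)\in\GL(d_i,\bZ)$ preserves each subgroup $\tfrac1m\bZ^{d_i}/\bZ^{d_i}$, the values of $\phi$ on a finite generating set, hence on all of $\Gamma$, lie in a single finite $\psi$-invariant subgroup $A:=\tfrac1m\bZ^{d_i}/\bZ^{d_i}\subset(\bQ/\bZ)^{d_i}$. Set $\hat\Gamma:=\{\gamma\in\Gamma:\phi(\gamma)=0\}$. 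The crossed-homomorphism identity, together with $\phi(e)=0$ and $\phi(\gamma^{-1})=-\psi(\gamma^{-1})\phi(\gamma)$, shows $\hat\Gamma$ is a subgroup; and it has finite index: the two homomorphic sections $\gamma\mapsto(0,\gamma)$ and $\gamma\mapsto(\phi(\gamma),\gamma)$ of the projection $A\rtimes_\psi\Gamma\to\Gamma$ have images of index $|A|$, and $\hat\Gamma$ is the $\Gamma$-component of their intersection, so $[\Gamma:\hat\Gamma]\le|A|$. By construction $\phi|_{\hat\Gamma}\equiv0$, hence $\mathrm{res}_{\hat\Gamma}(c)=0$ in $H^1_\psi(\hat\Gamma,(\bQ/\bZ)^{d_i})$.

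Finally, I would invoke that the restriction maps commute with the connecting homomorphisms of the coefficient long exact sequence (restriction is induced by a map of the defining cochain complexes compatible with $0\to\bZ\to\bQ\to\bQ/\bZ\to0$), whence $\mathrm{res}_{\hat\Gamma}([\beta_i])=\mathrm{res}_{\hat\Gamma}(\delta(c))=\delta(\mathrm{res}_{\hat\Gamma}(c))=0$ in $H^2_\psi(\hat\Gamma,\bZ^{d_i})$. Since $\mathrm{res}_{\hat\Gamma}([\beta_i])$ is exactly the class of $\beta_i$ viewed as a $2$-cocycle over $\alpha_{\#,i}|_{\hat\Gamma}$, this is the assertion of the claim. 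The one step demanding genuine care is the finiteness of $[\Gamma:\hat\Gamma]$; once the semidirect-product picture is in place this is immediate, and everything else is a routine application of the long exact sequences and universal coefficients already quoted in the text.
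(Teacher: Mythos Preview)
Your proof is correct and follows essentially the same route as the paper: both pass from $H^2_\psi(\Gamma,\bR^{d_i})=0$ to $H^2_\psi(\Gamma,\bQ^{d_i})=0$ via universal coefficients, then use the long exact sequence for $0\to\bZ\to\bQ\to\bQ/\bZ\to0$ to lift $[\beta_i]$ to a class in $H^1_\psi(\Gamma,(\bQ/\bZ)^{d_i})$, represent it by a crossed homomorphism landing in a finite subgroup $\tfrac1m\bZ^{d_i}/\bZ^{d_i}$ (using that $\Gamma$ is finitely generated and $\psi$ is integral), and pass to the kernel of that crossed homomorphism. Your version is somewhat more explicit on two points the paper leaves implicit---the semidirect-product argument for why $\hat\Gamma$ has finite index, and the naturality of the connecting map under restriction---while the paper additionally remarks that $H^2_\psi(\Gamma,\bZ^{d_i})$ is in fact finite, though this is not used in the argument.
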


\begin{proof}
By universal coefficients theorem $H^2_{\alpha_{\#, i}}(\Gamma;\bZ^{d_i})\otimes_\bZ\bR=H^2_{\alpha_{\#, i}}(\Gamma;\bR^{d_i})=0$. Hence all elements in $H^2_{\alpha_{\#, i}}(\Gamma;\bZ^{d_i})$ are of torsion and again by the universal coefficients theorem $H^2_{\alpha_{\#, i}}(\Gamma;\bQ^{d_i})=H^2_{\alpha_{\#, i}}(\Gamma;\bZ^{d_i})\otimes_\bZ\bQ=0$. Moreover, as $\Gamma$ is  finitely generated, $H^2_{\alpha_{\#, i}}(\Gamma;\bZ^{d_i})$ is a finitely generated abelian group. Thus $H^2_{\alpha_{\#, i}}(\Gamma;\bZ^{d_i})$ is finite.

On the other hand, take the long exact sequence
$$\cdots\to H^1_{\alpha_{\#, i}}(\Gamma;(\bQ/\bZ)^{d_i})\to H^2_{\alpha_{\#, i}}(\Gamma;\bZ^{d_i})\to H^2_{\alpha_{\#, i}}(\Gamma;\bQ^{d_i})\to\cdots$$ We see that $\beta_i$ is the image of some $\eta\in H^1_{\alpha_{\#, i}}(\Gamma;(\bQ/\bZ)^{d_i})$. Since $\Gamma$ is finitely generated, there is a denominator $q$ such that $\eta$ can be chosen to take values in the finite abelian group $(\frac1q\bZ/\bZ)^{d_i}$. The zero set $\eta\inv(0)$ is a finite-index subgroup $\hat\Gamma$ of $\Gamma$, which establishes the claim.\end{proof}

Identifying $Z_i\cap\Lambda_i$ with $\Z^{d_i}$, from Claim \ref{claim:vanish} it follows under the hypotheses of Proposition \ref{prop:lifting2} that by restricting to a finite-index subgroup $\hat \Gamma \subset \Gamma$, we have that  $\beta_{i}$ is 2-coboundary over $\alpha_{\#, i}$.  That is, there is a function $\eta\colon \hat \Gamma \to Z_i\cap \Lambda_i$ with
$$d_{(\alpha_{\#,i}),1}\eta(\gamma_1,   \gamma_2) :=
\alpha_{\#, i} (\gamma_1) \eta(  \gamma_2) \cdot [ \eta(\gamma _1  \gamma_2)]\inv \cdot \eta(\gamma_1) = \beta_{ i} (\gamma_1,   \gamma_2)$$
for all $\gamma_1, \gamma_2\in \hat \Gamma$.

Note that $\eta$ takes vales in $\Lambda/\Delta_{i}$.  Given $\gamma\in \hat  \Gamma$,  let  $\td \eta(\gamma)\in \Lambda$ be any choice of representative.
We  use $\td \eta$ to correct the original choice of lifts $\td \alpha(\gamma)$:   given $\gamma\in \hat \Gamma$, let $\hat \alpha(\gamma) = \td \alpha (\gamma)   \td \eta(\gamma) $.
With the new family of lifts $\{\hat \alpha(\gamma):   \gamma \in \hat \Gamma\}$ define a new defect functional $\hat \beta\colon \hat  \Gamma\times \hat  \Gamma \to \Lambda$ as in the lemma and similarly define induced functionals $\hat \beta_k$.

We have
\begin{claim}
The defect $\hat \beta_i$ vanishes.
\end{claim}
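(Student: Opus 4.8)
The plan is to show that with the corrected lifts $\hat\alpha(\gamma) = \td\alpha(\gamma)\,\td\eta(\gamma)$, the new defect functional $\hat\beta_i$, which by construction takes values in $Z_i\cap\Lambda_i$, is identically $e$. The starting point is to compute $\hat\beta$ in terms of the old defect $\beta$ and the correction $\td\eta$. First I would plug the definition $\hat\alpha(\gamma) = \td\alpha(\gamma)\,\td\eta(\gamma)$ into the defining equation
$$\hat\alpha(\gamma_1)\bigl(\hat\alpha(\gamma_2)(x)\bigr)\,\hat\beta(\gamma_1,\gamma_2) = \hat\alpha(\gamma_1\gamma_2)(x),$$
and expand both sides. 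On the left, $\hat\alpha(\gamma_1)(\hat\alpha(\gamma_2)(x)) = \td\alpha(\gamma_1)\bigl(\td\alpha(\gamma_2)(x)\,\td\eta(\gamma_2)\bigr) = \td\alpha(\gamma_1)(\td\alpha(\gamma_2)(x))\cdot \td\alpha(\gamma_1)_*(\td\eta(\gamma_2))$, using the equivariance relation \eqref{eq:kkkkkkkllllllloooooooo} for the lift $\td\alpha(\gamma_1)$. Then $\td\alpha(\gamma_1)(\td\alpha(\gamma_2)(x)) = \td\alpha(\gamma_1\gamma_2)(x)\,\beta(\gamma_1,\gamma_2)^{-1}$ from \eqref{eq:defect}. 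On the right, $\hat\alpha(\gamma_1\gamma_2)(x) = \td\alpha(\gamma_1\gamma_2)(x)\,\td\eta(\gamma_1\gamma_2)$. Comparing, and noting $x$ is arbitrary, I would obtain an identity in $\Lambda$ of the form
$$\beta(\gamma_1,\gamma_2)^{-1}\cdot \td\alpha(\gamma_1)_*(\td\eta(\gamma_2))\cdot \hat\beta(\gamma_1,\gamma_2) = \td\eta(\gamma_1\gamma_2),$$
so that $\hat\beta(\gamma_1,\gamma_2) = \td\alpha(\gamma_1)_*(\td\eta(\gamma_2))^{-1}\cdot\beta(\gamma_1,\gamma_2)\cdot\td\eta(\gamma_1\gamma_2)$ (possibly up to reordering of commuting factors, which I would track carefully; the point is that once one passes mod $\Delta_i$, everything lands in the abelian group $Z_i\cap\Lambda_i$ where ordering is irrelevant).

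Next I would pass to the quotient mod $\Delta_i$. Since $\beta_{i+1}\equiv e$ (the inductive hypothesis), $\beta_i$ takes values in $Z_i\cap\Lambda_i\cong\bZ^{d_i}$, and the conjugation action $\td\alpha(\gamma_1)_*$ descends on this central subgroup to the linear representation $\alpha_{\#,i}(\gamma_1)$. Likewise $\td\eta$ mod $\Delta_i$ is the function $\eta\colon\hat\Gamma\to\Lambda/\Delta_i$, which we may take to land in $Z_i\cap\Lambda_i$ after our choice of representatives (its image in $\Lambda_{i+1}$ is trivial since $\hat\beta_{i+1}$ needs to stay zero — I would check this compatibility, which follows because $\eta$ was constructed as a cochain into $Z_i\cap\Lambda_i$ itself). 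Then the displayed formula for $\hat\beta_i$ becomes exactly
$$\hat\beta_i(\gamma_1,\gamma_2) = \alpha_{\#,i}(\gamma_1)\eta(\gamma_2)^{-1}\cdot\beta_i(\gamma_1,\gamma_2)\cdot\eta(\gamma_1\gamma_2)\cdot\bigl(\text{reorder}\bigr),$$
and writing the group $Z_i\cap\Lambda_i$ additively this is $\beta_i - d_{(\alpha_{\#,i}),1}\eta$, possibly with a correction by $\eta(\gamma_1)$; matching against the coboundary relation
$$d_{(\alpha_{\#,i}),1}\eta(\gamma_1,\gamma_2) = \alpha_{\#,i}(\gamma_1)\eta(\gamma_2)\cdot[\eta(\gamma_1\gamma_2)]^{-1}\cdot\eta(\gamma_1) = \beta_i(\gamma_1,\gamma_2)$$
established just before the claim, one reads off $\hat\beta_i\equiv e$.

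The main obstacle — really the only subtle point — is bookkeeping the non-commutativity of $\Lambda$ before the reduction mod $\Delta_i$, and making sure the signs/inverses and the placement of the $\eta(\gamma_1)$ term in $d_{(\alpha_{\#,i}),1}\eta$ match up with the expansion of $\hat\beta$. Because $\Lambda$ is only nilpotent, the naive "$\hat\beta = \beta - d\eta$" identity does not hold on the nose in $\Lambda$; it holds only after projecting to $Z_i\cap\Lambda_i$, where all the relevant factors commute (the conjugation by $\td\alpha(\gamma)_*$ becoming the linear action, the commutators among $\td\eta$-values and $\beta$-values dying because they live in a deeper stage of the lower central series, which is killed in $N_i$). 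I would therefore do the computation first in $\Lambda$, then carefully reduce mod $\Delta_i$ and invoke centrality to rearrange into the coboundary expression, matching against Claim \ref{claim:2cocycle}'s computation style. Once $\hat\beta_i\equiv e$, Lemma \ref{lem:inductivelifting} is proved, and backward induction from $i=r$ (where $\beta_r\equiv e$ trivially) down to $i=0$ yields a choice of lifts with $\beta\equiv\beta_0\equiv e$, i.e.\ a genuine lifted action, proving Proposition \ref{prop:lifting2}.
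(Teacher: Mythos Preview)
Your approach is the same as the paper's, and the overall plan is correct. However, there is a concrete computational slip that is not merely ``bookkeeping'': in your expansion of $\hat\alpha(\gamma_1)(\hat\alpha(\gamma_2)(x))$ you wrote
\[
\hat\alpha(\gamma_1)\bigl(\hat\alpha(\gamma_2)(x)\bigr)=\td\alpha(\gamma_1)\bigl(\td\alpha(\gamma_2)(x)\,\td\eta(\gamma_2)\bigr),
\]
forgetting that $\hat\alpha(\gamma_1)=\td\alpha(\gamma_1)\,\td\eta(\gamma_1)$ itself carries a right-multiplication by $\td\eta(\gamma_1)$. The correct expansion is
\[
\hat\alpha(\gamma_1)\bigl(\hat\alpha(\gamma_2)(x)\bigr)=\td\alpha(\gamma_1)\bigl(\td\alpha(\gamma_2)(x)\,\td\eta(\gamma_2)\bigr)\,\td\eta(\gamma_1),
\]
which after using \eqref{eq:kkkkkkkllllllloooooooo} and \eqref{eq:defect} yields
\[
\beta(\gamma_1,\gamma_2)^{-1}\,\td\alpha(\gamma_1)_*(\td\eta(\gamma_2))\,\td\eta(\gamma_1)\,\hat\beta(\gamma_1,\gamma_2)=\td\eta(\gamma_1\gamma_2).
\]
This extra $\td\eta(\gamma_1)$ is precisely the source of the $\eta(\gamma_1)$ term you were worried about; it is not a commutator artifact or a sign ambiguity. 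Reducing modulo $\Delta_i$ (where everything lands in the central $Z_i\cap\Lambda_i$ and $\td\alpha(\gamma_1)_*$ becomes $\alpha_{\#,i}(\gamma_1)$), you get
\[
\hat\beta_i(\gamma_1,\gamma_2)=\beta_i(\gamma_1,\gamma_2)\cdot\bigl[\alpha_{\#,i}(\gamma_1)\eta(\gamma_2)\cdot\eta(\gamma_1\gamma_2)^{-1}\cdot\eta(\gamma_1)\bigr]^{-1}=\beta_i(\gamma_1,\gamma_2)\cdot\bigl[d_{(\alpha_{\#,i}),1}\eta(\gamma_1,\gamma_2)\bigr]^{-1}=e,
\]
exactly as in the paper. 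With this correction your argument is complete.
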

\begin{proof}
By definition, we have
\begin{align*}
\td \alpha(\gamma_1)\left(\td \alpha(\gamma_2)(x) \td \eta(\gamma_2) \right)\td\eta(\gamma_1)
\hat \beta(\gamma_1,  \gamma_2)&=
	  \td \alpha(\gamma_1\gamma_2)(x)\td \eta(\gamma_1\gamma_2) .
\end{align*}
With $\beta$ the defect   of $\td \alpha$ we have
\begin{align*}
\td \alpha(\gamma_1 \gamma_2)(x)     \beta(\gamma_1,\gamma_2)\inv
\td \alpha(\gamma_1)_* (\td \eta(\gamma_2) )
\td\eta(\gamma_1)
\hat \beta(\gamma_1,  \gamma_2)&=
	  \td \alpha(\gamma_1\gamma_2)(x)\td \eta(\gamma_1\gamma_2)
\end{align*}
and
\begin{align*}
   \beta(\gamma_1,\gamma_2)\inv
\td \alpha(\gamma_1)_* (\td \eta(\gamma_2) )
\td\eta(\gamma_1)
\hat \beta(\gamma_1,  \gamma_2)&=
	 \td \eta(\gamma_1\gamma_2)
\end{align*}
Modulo $\Delta_{i}$ we have
\begin{align*}
 \hat \beta(\gamma_1,  \gamma_2)  \bmod \Delta_{i}
=&
\beta_i (\gamma_1, \gamma_2) \cdot   \bigg( \alpha_{\#, i}  (\gamma_1)(  \eta(\gamma_2)) \inv\cdot    \eta(\gamma_1\gamma_2)\cdot  \eta(\gamma_1)\inv \bigg)\\
 =& \beta_{i} (\gamma_1, \gamma_2)  \cdot  [d\eta(\gamma_1, \gamma_2)]\inv\\
=& e.\qedhere
\end{align*}\end{proof}
Lemma \ref{lem:inductivelifting} follows from the  above claims.  %, and then implies Proposition \ref{prop:lifting2} and Proposition \ref{prop:lifting} as remarked earlier.

\subsection{Vanishing of defect  in the case of an invariant measure}
\label{sec:LiftingMeasure}
Consider first an action $\alpha \colon \Gamma \to \Homeo(\T^d)$ on a torus preserving a probability measure $\mu$.
It will follow from the proof of the more general Proposition \ref{prop:generalmeasruelift} below that the action $\alpha$ lifts establishing \ref{liftrem:4} of Remark \ref{LiftingRmk}.
\begin{proposition}\label{prop:measure torus}
Suppose the action $\alpha \colon \Gamma \to \homeo(\T^d)$ preserves a Borel probability measure $\mu$.
 Then   $  \alpha $ lifts to an  action $\td \alpha \colon \hat \Gamma \to \Homeo(\R^d)$ when restricted to a finite-index subgroup $\hat \Gamma \subset \Gamma$.
\end{proposition}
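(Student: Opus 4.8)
The plan is to identify the obstruction to lifting with a group-cohomology class and then use the invariant measure to trivialize that class over $\bR$. As in Section \ref{sec:Lifting}, I would fix arbitrary lifts $\td\alpha(\gamma)\colon\bR^d\to\bR^d$ of the homeomorphisms $\alpha(\gamma)$ and write, following \eqref{eq:1}, $\td\alpha(\gamma)(y)=\rho(\gamma)y+u_\gamma(y)$, where $\rho\colon\Gamma\to\GL(d,\bZ)$ is the linear data and $u_\gamma$ is $\bZ^d$-periodic, hence descends to a continuous map $u_\gamma\colon\T^d\to\bR^d$. The defect functional $\beta\colon\Gamma\times\Gamma\to\bZ^d$ of \eqref{eq:defect} is, by Claim \ref{claim:2cocycle} (for $M=\T^d$ the group $N=\bR^d$ is abelian, so the central series has length one and the hypothesis $\beta_{i+1}\equiv e$ is automatic), a $2$-cocycle over $\rho$; and, exactly as in Lemma \ref{lem:inductivelifting}, the restricted action $\alpha|_{\hat\Gamma}$ lifts to an action on $\bR^d$ as soon as, after replacing $\Gamma$ by a finite-index subgroup $\hat\Gamma$, the class $[\beta]$ vanishes in $H^2_\rho(\hat\Gamma,\bZ^d)$ --- one then corrects the lifts by an integer $1$-cochain making $\beta$ a coboundary. (Note that one cannot invoke Proposition \ref{prop:lifting2} directly, since $H^2_\rho(\Gamma,\bR^d)$ need not vanish; it is only the particular class $[\beta]$ that will be shown to vanish.)

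Next I would derive the cochain identity governing $\beta$. Writing $\tau_v$ for translation by $v\in\bZ^d$, the relation $\td\alpha(\gamma_1)\circ\td\alpha(\gamma_2)=\tau_{-\beta(\gamma_1,\gamma_2)}\circ\td\alpha(\gamma_1\gamma_2)$ read off in the above coordinates gives, as an identity of $\bR^d$-valued maps on $\T^d$,
\[
u_{\gamma_1\gamma_2}=\rho(\gamma_1)u_{\gamma_2}+u_{\gamma_1}\circ\alpha(\gamma_2)+\beta(\gamma_1,\gamma_2).
\]
Now set $c(\gamma):=\int_{\T^d}u_\gamma\,d\mu\in\bR^d$. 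Integrating the identity against $\mu$ and using the $\alpha$-invariance of $\mu$, which gives $\int_{\T^d}u_{\gamma_1}\circ\alpha(\gamma_2)\,d\mu=\int_{\T^d}u_{\gamma_1}\,d\big((\alpha(\gamma_2))_*\mu\big)=c(\gamma_1)$, I obtain
\[
c(\gamma_1\gamma_2)=\rho(\gamma_1)c(\gamma_2)+c(\gamma_1)+\beta(\gamma_1,\gamma_2),
\]
i.e. $\beta=-d_{\rho,1}c$ as $\bR^d$-valued cochains. In particular the image of $[\beta]$ under $H^2_\rho(\Gamma,\bZ^d)\to H^2_\rho(\Gamma,\bR^d)$ is zero.

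From here I would run the argument of the proof of Claim \ref{claim:vanish} with the hypothesis ``$H^2_\rho(\Gamma,\bR^d)=0$'' weakened to ``$[\beta]$ maps to $0$ in $H^2_\rho(\Gamma,\bR^d)$''. Since $\Gamma$ is finitely presented (a lattice as in Hypothesis \ref{HigherRank}), $H^2_\rho(\Gamma,\bZ^d)$ is a finitely generated abelian group and, by universal coefficients, $H^2_\rho(\Gamma,\bR^d)=H^2_\rho(\Gamma,\bZ^d)\otimes_\bZ\bR$; hence $[\beta]$ is a torsion class and therefore also vanishes in $H^2_\rho(\Gamma,\bQ^d)$. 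The long exact sequence of $0\to\bZ^d\to\bQ^d\to(\bQ/\bZ)^d\to0$ then exhibits $[\beta]$ as the image of some $\eta\in H^1_\rho(\Gamma,(\bQ/\bZ)^d)$; finite generation of $\Gamma$ lets one take a representative of $\eta$ valued in $(\tfrac1q\bZ/\bZ)^d$ for some $q$, and its zero set is a finite-index subgroup $\hat\Gamma\subset\Gamma$ on which $[\beta]$ restricts to $0$. Thus $\beta|_{\hat\Gamma}=d_{\rho,1}m$ for some integer $1$-cochain $m\colon\hat\Gamma\to\bZ^d$, and replacing $\td\alpha(\gamma)$ by $\hat\alpha(\gamma):=\tau_{m_\gamma}\circ\td\alpha(\gamma)$ for $\gamma\in\hat\Gamma$ gives a family whose defect functional is $\beta|_{\hat\Gamma}-d_{\rho,1}m\equiv e$; hence $\hat\alpha(\gamma_1)\circ\hat\alpha(\gamma_2)$ and $\hat\alpha(\gamma_1\gamma_2)$ are two lifts of $\alpha(\gamma_1\gamma_2)$ agreeing at a point, so are equal, and (after normalizing $\hat\alpha$ at the identity) $\hat\alpha\colon\hat\Gamma\to\Homeo(\bR^d)$ is an action lifting $\alpha|_{\hat\Gamma}$.

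I expect the one step with genuine content to be the averaging identity $\beta=-d_{\rho,1}c$: everything rests on the $\alpha$-invariance of $\mu$ collapsing $\int_{\T^d}u_{\gamma_1}\circ\alpha(\gamma_2)\,d\mu$ to $c(\gamma_1)$, which is exactly what makes $c$ a bona fide trivializing cochain. The remaining steps --- passing from ``$[\beta]$ torsion'' to ``$[\beta]$ vanishes on a finite-index subgroup'' and correcting the lifts --- are routine and parallel arguments already carried out in Section \ref{sec:Lifting}.
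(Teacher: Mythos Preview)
Your proof is correct and is essentially the paper's own argument, specialized directly to the torus. The paper deduces Proposition~\ref{prop:measure torus} from the more general Proposition~\ref{prop:generalmeasruelift} (taking $h\colon\T^d\to\{e\}$), whose proof constructs $\omega_\gamma(x)=\td H(\td\alpha(\gamma)(x))^{-1}\rho(\gamma)(\td H(x))$ and averages it against $\mu$ to obtain a real $1$-cochain $\eta$ with $d_{\rho,1}\eta=\beta$; with $\td H=\id$ on $\bR^d$ this gives $\omega_\gamma=-u_\gamma$ and $\eta=-c$, exactly your identity $\beta=-d_{\rho,1}c$. The passage from ``$[\beta]$ vanishes over $\bR$'' to ``$[\beta]$ vanishes over $\bZ$ on a finite-index subgroup'' and the final correction of lifts are identical to the paper's Claim~\ref{claim:vanish} and Lemma~\ref{lem:inductivelifting}.
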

In the case of actions on nilmanifolds or, more generally, actions on CW complexs admitting $\pi_1$-factors, the corresponding result is more complicated.  Recall we fix a connected finite CW-complex $M$ and an action $\alpha \colon \Gamma \to \Homeo(M)$.  We also fix a simply connected nilpotent Lie group $N$, a lattice $\Lambda\subset N$, and a normal cover $\td M$ of $M$ whose deck group is identified with $\Lambda$.

Recall the sequences of $N_i$ and $\Lambda_i$ in \eqref{eq:centralextensionN} and \eqref{eq:centralextensionL}.
For each $i$ let $\td M_i$ denote the intermediate normal cover of $M$ with deck group  $\Lambda_i$.  
The natural identification of $\Lambda_i$ with the deck groups of $\td M_i\to M$ and $N_i\to N_i/\Lambda_i$ induces a map $P_i\colon  M \to N_i/\Lambda_i$.  \new{Recall (as we identify $\Lambda_M$ and  $\Lambda$) we have distinguished lifts $\td P_i\colon \td M_i\to N_i$ with $\td P_i(x \lambda) = \td P_i(x)  \lambda $.}
Suppose for some $1\le i \le r-1$ that the action $\alpha \colon \Gamma \to \Homeo(M)$ lifts to an action $\td \alpha \colon \Gamma \to \homeo(\td M_{i+1})$.
We then obtain an action $\rho_{i+1}\colon \Gamma \to \aut(\Lambda_{i+1})$ which uniquely extends to an action $\rho_{i+1}\colon \Gamma \to \aut(N_{i+1})$; in particular,  $\rho_{i+1}\colon \Gamma \to \aut(N_{i+1}/\Lambda_{i+1})$ defines a $\pi_1$ factor of $\alpha$.

Below is the general  proposition guaranteeing the lifting of an action given an invariant measure.  Note that we use the existence of a semiconjugacy to guarantee the lifting.

\begin{proposition}\label{prop:generalmeasruelift} With the above setup,
suppose there is a continuous $h\colon M\to N_{i+1}/\Lambda_{i+1}$ homotopic to $P_{i+1}$ which lifts to %a $\Lambda_{i+1}$-equivariant 
a map $\td h \colon \td M_{i+1}\to N_{i+1}$ and which intertwines the actions $\td \alpha_{i+1}\colon \Gamma \to \homeo(\td M_{i+1})$ and $\rho_{i+1}\colon \Gamma\to \aut (N_{i+1})$ \new{and is $\Lambda_{i+1}$-equivariantly homotopic to $P_{i+1}$.  }

Then, if the action $\alpha \colon \Gamma \to \homeo(M)$ preserves a Borel probability measure $\mu$, the action $\alpha$ lifts to an action $\td \alpha_i\colon \hat  \Gamma\to  \homeo(\td M_i)$ when restricted to a  finite-index subgroup $\hat \Gamma \subset \Gamma$.
\end{proposition}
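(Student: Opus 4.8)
The plan is to identify the obstruction to lifting $\alpha$ one further level as a class $[\beta_i]\in H^2_{\alpha_{\#,i}}(\Gamma,\bZ^{d_i})$ — where, as in Section~\ref{sec:candidatelifting}, $d_i$ is the rank of the central kernel $Z_i\cap\Lambda_i\cong\bZ^{d_i}$ of $\Lambda_i\to\Lambda_{i+1}$ — and then to show, using the intertwining map $h$ together with the invariant measure $\mu$, that this class dies in real coefficients; the argument then concludes exactly as in Proposition~\ref{prop:lifting2}. Concretely, I would first choose for each $\gamma\in\Gamma$ an arbitrary lift $\td\alpha_i(\gamma)\colon\td M_i\to\td M_i$ of $\td\alpha_{i+1}(\gamma)$. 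Since the $\td\alpha_{i+1}(\gamma)$ already form an action, the associated defect functional $\beta_i\colon\Gamma\times\Gamma\to Z_i\cap\Lambda_i$, defined by $\td\alpha_i(\gamma_1)\td\alpha_i(\gamma_2)(x)\beta_i(\gamma_1,\gamma_2)=\td\alpha_i(\gamma_1\gamma_2)(x)$, takes values in $Z_i\cap\Lambda_i$, and by the computation of Claim~\ref{claim:2cocycle} it is a $2$-cocycle over $\alpha_{\#,i}$. Granting that $[\beta_i]$ vanishes in $H^2_{\alpha_{\#,i}}(\Gamma,\bR^{d_i})$, Claim~\ref{claim:vanish} (which uses only that $\Gamma$ is finitely generated) supplies a finite-index subgroup $\hat\Gamma\subset\Gamma$ on which $\beta_i$ is a $\bZ^{d_i}$-coboundary, and correcting the lifts by the trivializing $1$-cochain exactly as in the proof of Lemma~\ref{lem:inductivelifting} makes the defect vanish, i.e.\ produces the desired lift $\td\alpha_i\colon\hat\Gamma\to\Homeo(\td M_i)$. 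So the whole proof reduces to vanishing of $[\beta_i]$ over $\bR^{d_i}$.

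To extract that vanishing I would use $h$ to build an averageable cochain. As in Section~\ref{sec:lifting}, the homotopy lifting property for the $\bT^{d_i}$-bundle $p_{i,i+1}\colon N_i/\Lambda_i\to N_{i+1}/\Lambda_{i+1}$ lifts $h$ to $\phi\colon M\to N_i/\Lambda_i$, homotopic to $P_i$, with a lift $\td\phi\colon\td M_i\to N_i$ satisfying $\td\phi(x\lambda)=\td\phi(x)\lambda$ and $\td p_{i,i+1}\circ\td\phi=\td h$. Because $\td h$ intertwines $\td\alpha_{i+1}$ and $\rho_{i+1}$ and the kernel of $\td p_{i,i+1}$ is central, the maps $\td\phi\circ\td\alpha_i(\gamma)$ and $\rho_i(\gamma)\circ\td\phi$ project to the same map under $\td p_{i,i+1}$, so there is a $c\colon\Gamma\times\td M_i\to\liez_i\cong\bR^{d_i}$ with $\td\phi(\td\alpha_i(\gamma)(x))=\rho_i(\gamma)(\td\phi(x))\exp(c(\gamma,x))$, and a short computation from the definition of $\beta_i$ gives
\begin{equation*}
c(\gamma_1\gamma_2,x)=\alpha_{\#,i}(\gamma_1)\,c(\gamma_2,x)+c(\gamma_1,\td\alpha_i(\gamma_2)(x))+\log\beta_i(\gamma_1,\gamma_2).
\end{equation*}
Reducing modulo $Z_i\cap\Lambda_i$ the integral term disappears and $c$ descends to a genuine continuous cocycle $\bar c\colon\Gamma\times M\to\bT^{d_i}$ over $\alpha$, twisted by $\alpha_{\#,i}$. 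Picking a (fibrewise measurable) lift $c'\colon\Gamma\times M\to\bR^{d_i}$ of $\bar c$, the expression $b(\gamma_1,\gamma_2,x):=c'(\gamma_1\gamma_2,x)-\alpha_{\#,i}(\gamma_1)c'(\gamma_2,x)-c'(\gamma_1,\alpha(\gamma_2)x)$ is then $\bZ^{d_i}$-valued, and comparing $c$ with the pullback of $c'$ to $\td M_i$ (the two differ by a $\bZ^{d_i}$-valued function) shows that $b$ represents the same class as $\log\beta_i$ in the relevant cohomology.

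The final step is the averaging. Set $\eta(\gamma):=\int_M c'(\gamma,x)\,d\mu(x)\in\bR^{d_i}$. Integrating the definition of $b$ against $\mu$ and using that $\mu$ is $\alpha(\gamma_2)$-invariant to handle the last term gives $\int_M b(\gamma_1,\gamma_2,x)\,d\mu(x)=\eta(\gamma_1\gamma_2)-\alpha_{\#,i}(\gamma_1)\eta(\gamma_2)-\eta(\gamma_1)=-\,d_{(\alpha_{\#,i}),1}\eta(\gamma_1,\gamma_2)$, so the $\mu$-average of $b$ is an $\bR^{d_i}$-coboundary. Combined with the previous step this yields $[\beta_i]=0$ in $H^2_{\alpha_{\#,i}}(\Gamma,\bR^{d_i})$, which, as explained, completes the proof; Proposition~\ref{prop:measure torus} is the special case $N_{i+1}=\{e\}$, $h$ trivial.

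I expect the main obstacle to be the cohomological bookkeeping in the middle step: passing from the defect $\beta_i$ living on the cover $\td M_i$ to a cocycle on $M$ that the invariant measure can actually integrate. The subtle point is that the arbitrary lifts $\td\alpha_i(\gamma)$ need not induce $\rho_i(\gamma)$ on $\Lambda_i$ — they differ by a homomorphism $\delta_\gamma\colon\Lambda_{i+1}\to Z_i\cap\Lambda_i$ — so $c(\gamma,\cdot)$ is not $\Lambda_i$-invariant and $\bar c(\gamma,\cdot)$ need not be null-homotopic on $M$; one has to check that this discrepancy contributes only coboundary terms to the averaged identity (e.g.\ because $\gamma\mapsto\delta_\gamma$ and the translation cocycle of $\td\alpha_{i+1}$ on $\td M_{i+1}$ are themselves cocycles whose pairing averages to a coboundary). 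Once $\bar c$ is placed in the right form, invariance of $\mu$ does the rest essentially formally.
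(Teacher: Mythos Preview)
Your overall strategy coincides with the paper's: build from the intertwiner $h$ a $Z_i$-valued cochain, integrate it against $\mu$, and recognize $\beta_i$ as its coboundary in $\bR^{d_i}$; then finish via Claim~\ref{claim:vanish} and the correction argument of Lemma~\ref{lem:inductivelifting}. So the architecture is right.

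Where you diverge is in the middle step, and the ``main obstacle'' you anticipate is in fact not there. The paper observes that because you are lifting from $\td M_{i+1}$ to $\td M_i$ through the \emph{central} kernel $Z_i\cap\Lambda_i$, any two lifts of $\td\alpha_{i+1}(\gamma)$ differ by a central element, and hence (see \eqref{eq:eeeerrrrrssssttttta}) the induced automorphism $\td\alpha_i(\gamma)_*$ of $\Lambda_i$ is \emph{independent} of the choice of lift. One simply \emph{defines} $\rho_i\colon\Gamma\to\Aut(N_i)$ to be the unique extension of this well-defined $\alpha_{i,*}\colon\Gamma\to\Aut(\Lambda_i)$. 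With this $\rho_i$, your $\delta_\gamma$ is identically trivial by construction, and the function
\[
\td\omega_\gamma(x):=\td\phi(\td\alpha_i(\gamma)(x))^{-1}\,\rho_i(\gamma)(\td\phi(x))=\exp(-c(\gamma,x))
\]
is genuinely $\Lambda_i$-invariant: using $\td\phi(x\lambda)=\td\phi(x)\lambda$ and $\td\alpha_i(\gamma)(x\lambda)=\td\alpha_i(\gamma)(x)\cdot\rho_i(\gamma)(\lambda)$, one gets $\td\omega_\gamma(x\lambda)=\rho_i(\gamma)(\lambda)^{-1}\td\omega_\gamma(x)\rho_i(\gamma)(\lambda)=\td\omega_\gamma(x)$ since $\td\omega_\gamma(x)\in Z_i$. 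Thus $\td\omega_\gamma$ descends directly to a continuous $\omega_\gamma\colon M\to Z_i\cong\bR^{d_i}$, and the identity you derived collapses (after integration and invariance of $\mu$) to $\beta_i=d_{(\rho_i),1}\eta$ with $\eta(\gamma)=\int_M\omega_\gamma\,d\mu$.

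In short: your reduce-to-$\bT^{d_i}$-and-lift-back maneuver, the measurable $c'$, and the delicate comparison of $b$ with $\log\beta_i$ are all unnecessary once $\rho_i$ is defined correctly. The paper's proof is exactly your argument with that simplification made.
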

Proposition \ref{prop:measure torus} follows from Proposition \ref{prop:generalmeasruelift} with $h\colon \T^d\to \{e\}$. For measure preserving actions $  \alpha \colon \Gamma\to \homeo(N/\Lambda)$ on nilmanifolds, we automatically obtain the lifting of $ \alpha $ to  $ \td  \alpha_{r-1} \colon \hat  \Gamma\to \homeo(\td M_{r-1})$.  We can then define the $\pi_1$-factor $\rho_{r-1}\colon \hat \Gamma \to \aut(N_{r-1}/\Lambda_{r-1})$.  If $\rho_{r-1}$ satisfies the hypotheses of Theorem \ref{main:factorsfull}, the semiconjugacy satisfying the hypotheses of Proposition \ref{prop:generalmeasruelift} exists and we may  lift $ \alpha $ to  $\td  \alpha_{r-2} \colon \bar   \Gamma\to \homeo(\td M_{r-2})$.  We    then recursively verify whether or not the induced  $\pi_1$-factors $\rho_i\colon \hat \Gamma \to \aut(N_{i}/\Lambda_{i})$ satisfy the hypotheses of Theorem \ref{main:factorsfull} in order to continue to lift the action.  Under   Hypotheses \ref{HigherRank}, if $\rho(\gamma_0)$ is hyperbolic for some $\gamma_0\in \Gamma$ then the same  arguments as in Section \ref{sec:6} show that the representations $\rho_i$ satisfy the hypotheses of Theorem \ref{main:factorsfull} (after restricting to finite index subgroups and extending from $G$ to $L$ as in Section \ref{sec:verify1}.)  It follows that at each step an $h$ satisfying the hypotheses of Proposition \ref{prop:generalmeasruelift} can be found.
This establishes \ref{liftrem:5} of Remark \ref{LiftingRmk}.

\begin{proof}[Proof of Proposition \ref{prop:generalmeasruelift}]
Recall that we assume $\alpha \colon \Gamma \to \homeo(M)$ lifts to $\td \alpha_{i+1}\colon \Gamma \to \homeo (\td M_{i+1})$.  For every $\gamma\in \Gamma$ choose an arbitrary lift $\td \alpha_i(\gamma)\in \homeo (\td M_i)$ of $\td \alpha_{i+1}(\gamma)$.
For $\gamma\in \Gamma$ let  $\td \alpha_{i}(\gamma)_{*}\colon \Lambda_i\to \Lambda_i$ be defined as in \eqref{eq:kkkkkkkllllllloooooooo}.
Note that given a second lift $\td\alpha'_i(\gamma) $, we have  $\td \alpha_{i}'(\gamma) = \td \alpha_{i}(\gamma) \lambda'$ for a central $\lambda'\in Z_i\cap \Lambda_i$.
In particular, from \eqref{eq:eeeerrrrrssssttttta} we have for $\lambda\in \Lambda_i$ that
	$$\td \alpha'_{i}(\gamma)_*(\lambda)=( \lambda')\inv  \td \alpha_{i}(\gamma)_*(\lambda) \lambda'=   \td \alpha_{i}(\gamma)_*(\lambda).$$
Thus any choice of lifts $\{\td \alpha_i(\gamma):\gamma\in \Gamma\}$ of the action $\td \alpha_{i+1}$ induces a representation $\alpha_{i,*}\colon \Gamma \to \Aut (\Lambda_i)$.  This in turn induces a representation $\rho_i\colon \Gamma \to \Aut (N_i)$ which in turn induces a $\pi_1$-factor of $\td \alpha$ on $N_i/\Lambda_i$.

Fix   an arbitrary family of lifts $\{\td \alpha_{i} (\gamma):\gamma\in \Gamma\}$ of the action $\td \alpha_{i+1}$.
We define the defect functional $\beta_i(\gamma_1, \gamma_2)$ as in \eqref{eq:defect}.   As we assume
$ \alpha_{i+1} $ lifts, we have that $\beta_i$ has range $Z_i\cap \Lambda_i$.  % where
% $Z_i$ is  the kernel of $N_i\to N_{i+1}$.

Recall we have $\td h\colon  M  \to N_{i+1}/\Lambda_{i+1}$ homotopic to $P_{i+1}$ and lifting to a map $\td h\colon \td M_{i+1} \to N_{i+1}$ which  intertwines the actions of $\td \alpha_{i+1}$ and $\rho_{i+1}$.
Let $H\colon M \to N_{i}/\Lambda_i$ be any continuous map, homotopic to $P_i$ and lifting $h$.  As discussed in Section \ref{sec:lifting} we may find a lift  \new{$\td H\colon  \td  M_i\to N_i$ of $H$ which is $\Lambda_i$-equivariantly homotopic to  $\td P_i$ and also lifts $\td h\colon \td M_{i+1} \to N_{i+1}$.}

Given $\gamma\in \Gamma $ and $x\in \td M_{i+1}$ let $$\td \omega_\gamma(x) = H(\td \alpha _i (\gamma)(x)) \inv \rho_i(\gamma) (H(x)).$$
%Recall we write $Z_i$ for the kernel of $N_i\to N_{i+1}$.
Using that $\td h$ interwines the actions of $\td \alpha_{i+1}$ and $\rho_{i+1}$ and that \new{$\td H(x\lambda) = \td H(x)  \lambda $ for $\lambda\in \Lambda_i$,} we verify for every $\gamma$ that
\begin{enumerate}
	\item $\td\omega_\gamma$ is $\Lambda_i$-invariant, and
	\item $\td \omega_\gamma(x) \in Z_i$ for every $x$.
\end{enumerate}
It follows that $\td \omega_\gamma$ induces a function $  \omega_\gamma\colon M\to Z_i.$ %\simeq \R^{d_i}$.
Recall $\mu$ is the invariant measure for the action $\alpha$ on $M$. Identifying $Z_i\simeq \R^{d_i}$, define $\eta\colon \Gamma \to Z_i$ by $$\eta(\gamma) = \int _M \omega _\gamma \ d \mu.$$
%Identifying $Z_i\simeq \R^{d_i}$ and
Viewing $\restrict{\rho_i}{Z^i}\in \Aut(Z_i)\simeq \GL(\R^{d_i})$ we claim
\begin{claim}
$d_{\rho_i,1}\eta = \beta_i$.
\end{claim}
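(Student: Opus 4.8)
The plan is to unwind the definitions of $\beta_i$ and $\eta$ and compute the coboundary $d_{\rho_i,1}\eta(\gamma_1,\gamma_2) = \rho_i(\gamma_1)\eta(\gamma_2) - \eta(\gamma_1\gamma_2) + \eta(\gamma_1)$ directly, using the $\alpha$-invariance of $\mu$ to rewrite the middle term. First I would establish the pointwise cocycle-type identity satisfied by the functions $\td\omega_\gamma$. Writing $\td\omega_\gamma(x) = \td H(\td\alpha_i(\gamma)(x))^{-1}\,\rho_i(\gamma)(\td H(x))$ (additively, once we identify $Z_i\simeq\bR^{d_i}$ and recall these values land in the abelian group $Z_i$), a direct substitution gives
\begin{align*}
\td\omega_{\gamma_1\gamma_2}(x) &= -\td H(\td\alpha_i(\gamma_1)\td\alpha_i(\gamma_2)(x)\,\beta_i(\gamma_1,\gamma_2)^{-1}) + \rho_i(\gamma_1\gamma_2)(\td H(x))\\
&= -\td H(\td\alpha_i(\gamma_1)\td\alpha_i(\gamma_2)(x)) + \beta_i(\gamma_1,\gamma_2) + \rho_i(\gamma_1)\rho_i(\gamma_2)(\td H(x)),
\end{align*}
where I used $\td H(x\lambda) = \td H(x) + \lambda$ for $\lambda\in\Lambda_i$ (which holds since $\td H$ is $\Lambda_i$-equivariantly homotopic to $\td P_i$), that $\beta_i$ takes central values, and that $\td\alpha_i(\gamma_1\gamma_2) = \td\alpha_i(\gamma_1)\td\alpha_i(\gamma_2)\beta_i(\gamma_1,\gamma_2)^{-1}$ from \eqref{eq:defect}. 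On the other hand, $\td\omega_{\gamma_1}(\td\alpha_i(\gamma_2)(x)) + \rho_i(\gamma_1)\td\omega_{\gamma_2}(x) = -\td H(\td\alpha_i(\gamma_1)\td\alpha_i(\gamma_2)(x)) + \rho_i(\gamma_1)(\td H(\td\alpha_i(\gamma_2)(x))) + \rho_i(\gamma_1)(-\td H(\td\alpha_i(\gamma_2)(x)) + \rho_i(\gamma_2)\td H(x))$, and the two middle terms cancel. Comparing, I obtain the key identity
$$\td\omega_{\gamma_1\gamma_2}(x) = \beta_i(\gamma_1,\gamma_2) + \td\omega_{\gamma_1}(\td\alpha_i(\gamma_2)(x)) + \rho_i(\gamma_1)\td\omega_{\gamma_2}(x),$$
which descends to the corresponding identity for $\omega_\gamma\colon M\to Z_i$ and $\td\alpha(\gamma)$ replaced by $\alpha(\gamma)$ acting on $M$.

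Next I would integrate this identity against $\mu$. Since $\beta_i(\gamma_1,\gamma_2)$ is a constant (independent of $x$), $\int_M\beta_i(\gamma_1,\gamma_2)\,d\mu = \beta_i(\gamma_1,\gamma_2)$. Since $\mu$ is $\alpha$-invariant, $\int_M\omega_{\gamma_1}(\alpha(\gamma_2)(x))\,d\mu(x) = \int_M\omega_{\gamma_1}\,d\mu = \eta(\gamma_1)$. And since $\rho_i(\gamma_1)$ is a fixed linear map on $Z_i\simeq\bR^{d_i}$, it commutes with the integral: $\int_M\rho_i(\gamma_1)\omega_{\gamma_2}(x)\,d\mu(x) = \rho_i(\gamma_1)\eta(\gamma_2)$. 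Therefore $\eta(\gamma_1\gamma_2) = \beta_i(\gamma_1,\gamma_2) + \eta(\gamma_1) + \rho_i(\gamma_1)\eta(\gamma_2)$, which rearranges to $\rho_i(\gamma_1)\eta(\gamma_2) - \eta(\gamma_1\gamma_2) + \eta(\gamma_1) = -\beta_i(\gamma_1,\gamma_2)$. Here I should double-check the sign conventions in the definition of $d_{\rho_i,1}$ versus the additive/multiplicative conventions used for $\beta_i$; modulo matching these conventions consistently (the paper's $\beta_i$ is a $2$-cocycle and the point is that it becomes a coboundary after adjusting the lifts), this gives $d_{\rho_i,1}\eta = \beta_i$ (up to the sign that is absorbed when one passes from $\eta$ to $\td\eta$ and corrects the lifts as $\hat\alpha_i(\gamma) = \td\alpha_i(\gamma)\td\eta(\gamma)^{\pm1}$).

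The main obstacle, and the part requiring genuine care rather than routine bookkeeping, is verifying the two claimed properties of $\td\omega_\gamma$: that it is $\Lambda_i$-invariant and that it takes values in $Z_i$. The $\Lambda_i$-invariance uses $\td H(x\lambda) = \td H(x)\lambda$ together with $\rho_i(\gamma)|_{Z_i\cap\Lambda_i}$ being the relevant automorphism and the equivariance $\td\alpha_i(\gamma)(x\lambda) = \td\alpha_i(\gamma)(x)\,\td\alpha_i(\gamma)_*(\lambda)$; one checks $\td\omega_\gamma(x\lambda) = \td H(\td\alpha_i(\gamma)(x)\td\alpha_i(\gamma)_*(\lambda))^{-1}\rho_i(\gamma)(\td H(x)\lambda) = \td\alpha_i(\gamma)_*(\lambda)^{-1}\td\omega_\gamma(x)\,\rho_i(\gamma)(\lambda)$, and the hypothesis $\rho_i(\gamma)(\lambda) = \td\alpha_i(\gamma)_*(\lambda)$ for $\lambda\in\Lambda_i$ (which is exactly how $\rho_i$ was defined from the lift) makes this equal $\td\omega_\gamma(x)$ since the value is central. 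That $\td\omega_\gamma(x)\in Z_i$ follows from projecting everything to $N_{i+1}$: applying $\td p_{i,i+1}$ and using that $\td h = \td p_{i,i+1}\circ\td H$ intertwines $\td\alpha_{i+1}$ and $\rho_{i+1}$, we get $\td p_{i,i+1}(\td\omega_\gamma(x)) = \td h(\td\alpha_{i+1}(\gamma)(x))^{-1}\rho_{i+1}(\gamma)(\td h(x)) = e$, so $\td\omega_\gamma(x)\in\ker\td p_{i,i+1} = Z_i$. Once these two properties are in hand the rest is the formal integration argument above, and the fact that $Z_i$ is abelian is what makes the integral and the additive rewriting legitimate. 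I would also note that $\omega_\gamma$ is continuous hence $\mu$-integrable, so $\eta$ is well-defined.
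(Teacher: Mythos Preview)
Your approach is essentially identical to the paper's: both derive the pointwise identity relating $\omega_{\gamma_1\gamma_2}$, $\omega_{\gamma_1}\circ\alpha(\gamma_2)$, $\rho_i(\gamma_1)\omega_{\gamma_2}$, and $\beta_i$, and then integrate against the $\alpha$-invariant measure $\mu$. Your verification of the two properties of $\td\omega_\gamma$ (being $\Lambda_i$-invariant and $Z_i$-valued) is correct and matches what the paper asserts without detail.

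The only slip is the sign you yourself flagged: from \eqref{eq:defect} one has $\td\alpha_i(\gamma_1\gamma_2)(x)=\td\alpha_i(\gamma_1)(\td\alpha_i(\gamma_2)(x))\,\beta_i(\gamma_1,\gamma_2)$, not with $\beta_i^{-1}$. With this correction your computation yields $\td\omega_{\gamma_1\gamma_2}(x)=-\beta_i(\gamma_1,\gamma_2)+\td\omega_{\gamma_1}(\td\alpha_i(\gamma_2)(x))+\rho_i(\gamma_1)\td\omega_{\gamma_2}(x)$, and after integrating you obtain exactly $d_{\rho_i,1}\eta=\beta_i$ with no residual sign to absorb.
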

\begin{proof}
We have for any $x\in \td M_i$ that
$$\td \alpha_i(\gamma_1 \gamma_2)(x) = \td \alpha_i (\gamma_1)(\td \alpha_i(\gamma_2)(x))\beta_i(\gamma_1,\gamma_2).$$
Applying the map $H$ to both sides we have
\begin{align*}
\rho_i(\gamma_1\gamma_2)(H(x))&\cdot \td \omega_{\gamma_1 \gamma_2}(x)\inv
=H(\td \alpha_i(\gamma_1 \gamma_2)(x)) \\
& = H\left(\td \alpha_i (\gamma_1)(\td \alpha_i(\gamma_2)(x))\right) \cdot \beta_i(\gamma_1,\gamma_2)\\
& = \rho_i(\gamma_1)(H(\td \alpha_i (\gamma_2)(x)))\cdot \td \omega_{\gamma_1}(\td \alpha_i( \gamma_2)(x)) \inv
\cdot  \beta_i(\gamma_1,\gamma_2)\\
& = \rho_i(\gamma_1)(\rho_i (\gamma_2)(H(x)))\cdot
\rho_i(\gamma_1)(\td \omega_{\gamma_2}(x))\inv \cdot
\td \omega_{\gamma_1}(\td \alpha_i( \gamma_2)(x)) \inv \cdot
 \beta_i(\gamma_1,\gamma_2)\\
& = \rho_i(\gamma_1\gamma_2) (H(x)))\cdot
\rho_i(\gamma_1)(\td \omega_{\gamma_2}(x))\inv \cdot
\td \omega_{\gamma_1}(\td \alpha_i( \gamma_2)(x)) \inv \cdot
 \beta_i(\gamma_1,\gamma_2).
\end{align*}
It follows that for any $x\in  \td M_i$
\begin{align*}
 \beta_i(\gamma_1,\gamma_2)
 & = \td \omega_{\gamma_1 \gamma_2}(x)\inv \cdot \rho_i(\gamma_1)(\td \omega_{\gamma_2}(x))  \cdot
\td \omega_{\gamma_1}(\td \alpha_i( \gamma_2)(x)).
\end{align*}
Using that the measure $\mu$ is $ \alpha ( \gamma_2)$-invariant,
it follows for any $x\in  \td M$   that
\begin{align*}
 \beta_i(\gamma_1,\gamma_2)
 & =   \omega_{\gamma_1 \gamma_2}(x)\inv \cdot \rho_i(\gamma_1)(  \omega_{\gamma_2}(x))
 \cdot \omega_{\gamma_1}(  \alpha ( \gamma_2)(x))\\
  &= \rho_i(\gamma_1)(\eta(\gamma_2) )\cdot  \eta(\gamma_1 \gamma_2)\inv \cdot\eta(\gamma_1)
 \\
 &= d_{\rho_i,1}\eta(\gamma_1, \gamma_2). \qedhere
\end{align*}\end{proof}
It follows that $\beta_i$ vanishes as seen as an element of $H^2_{\rho_i}(\Gamma; \R^{d_i})$.  As discussed above, by passing to a finite-index subgroup $\hat \Gamma \subset \Gamma$, it follows that $\beta_i$ vanishes as an element of $H^2_{\rho_i}(\Gamma; \Z^{d_i})$ where the lattice $\Z^{d_i}$ is identified with $Z_i\cap \Lambda_i$.
Then, as in the proof of Lemma \ref{lem:inductivelifting}, we may correct the chosen of lifts $\td \alpha_i(\gamma)$ for $\gamma\in \hat \Gamma$ into a coherent action that lifts  the action $  \alpha $.  The proposition follows.
\end{proof}

%\bibliographystyle{AWBmath}
%
%\bibliography{bibliography}
%

\end{document}